\documentclass[11pt]{article}

\usepackage[utf8]{inputenc}
\usepackage[utf8]{inputenc}

\usepackage[round]{natbib}   % omit 'round' option if you prefer square brackets
\usepackage{graphicx}
\usepackage{float}
\usepackage{subcaption}

\usepackage{amsmath, amssymb, amsfonts, amsthm}
\usepackage{mathrsfs}
\usepackage{bbm}
\usepackage{mathtools}
\usepackage{derivative}
\usepackage{braket}
\usepackage{algorithm}      % For pseudocode with captions
\usepackage{algpseudocodex}  % For pseudocode
\usepackage{outlines}   % for sublists

\usepackage{setspace}

\usepackage{hyperref}
\hypersetup{
  colorlinks   = true,  %Colours links instead of ugly boxes
  urlcolor     = blue,  %Colour for external hyperlinks
  linkcolor    = blue,  %Colour of internal links
  citecolor    = red    %Colour of citations
}

\usepackage[toc,page]{appendix}
\usepackage[nottoc,numbib]{tocbibind}
\usepackage{placeins}

\usepackage{url}
\usepackage{hyperref}

\usepackage{textcomp}
\usepackage{gensymb}
\usepackage[version=4]{mhchem}
\usepackage[T1]{fontenc}

\usepackage{siunitx}
\newtheorem{theorem}{Theorem}[section]
\newtheorem{lemma}[theorem]{Lemma}
\newtheorem{corollary}[theorem]{Corollary}
\newtheorem{proposition}[theorem]{Proposition}

\newtheorem{remark}[theorem]{Remark}

\theoremstyle{definition}
\newtheorem{definition}[theorem]{Definition}

\newcommand{\dsdel}[2]{\frac{\partial #1}{\partial #2}}

\usepackage{enumerate}

\renewcommand{\le}{\leqslant} 
\renewcommand{\ge}{\geqslant} 
\renewcommand{\leq}{\leqslant} 
\renewcommand{\geq}{\geqslant}

\newcommand{\ind}{\mathds{1}}

\newcommand{\mc}[1]{\mathcal{#1}}

\DeclareMathOperator{\E}{\mathds{E}}
\DeclareMathOperator{\pr}{\mathds{P}}

{}
\usepackage{comment}

\usepackage{amsmath,amssymb,amsfonts,amsbsy,latexsym,dsfont}
\usepackage{amsmath,bm}
\usepackage{amsthm}

\numberwithin{equation}{section}  

\theoremstyle{plain}

\newtheorem{Remark}[theorem]{Remark}

\makeatletter
\renewcommand*{\@fnsymbol}[1]{\ensuremath{\ifcase#1  \mathsection\or  *\or \dagger\or \ddagger\or
  \mathparagraph\or \|\or **\or \dagger\dagger
   \or \ddagger\ddagger \else\@ctrerr\fi}}
\makeatother

\usepackage{authblk}

\title{The Keller--Segel Model for Chemotaxis on Metapopulation Graphs and Graphons}
\author[1,$\mathsection$]{Daecheol Kim}
\author[1,$\mathsection$]{Yefei Zhang}
\author[1,$\mathsection$]{Yuxuan Zhao}
\author[1,2]{Daniel B. Cooney}
\affil[1]{Department of Mathematics, University of Illinois Urbana-Champaign, Urbana, IL, USA}
\affil[2]{Carl R. Woese Institute for Genomic Biology, University of Illinois Urbana-Champaign, Urbana, IL, USA}
\affil[$\mathsection$]{\small{These authors contributed equally and should be considered joint first authors. }}

\begin{document}

\maketitle
\begin{abstract}
Many models of spatial populations use directed motion towards environmental attractants as a mechanism for spatial pattern formation. While prior approaches typically focus on partial differential equation models for directed motion in continuous space, it is natural to consider the dynamics of biological and social systems on a metapopulation featuring well-mixed local patches and spatial motion between patches. In this paper, we formulate a version of the Keller--Segel model for chemotaxis on a graph metapopulation, modeling a chemoattractant that can diffuse across patches and a population of individuals who move through a biased random walk through comparing chemoattractant levels along edges of the metapopulation network. We also derive a continuum limit for our model describing chemotaxis in a population on patches connected by a graphon, showing that our resulting integro-differential equation model arises as the limit of our discrete patch model for families of large deterministic or random graphs. Through linear stability analysis, we derive conditions on the chemotactic sensitivity required to produce instability of a spatially uniform equilibrium for our Keller--Segel models in terms of the reaction parameters and the eigenvalues of the Laplacian operator for the metapopulation graph or graphon. Through a mix of weakly nonlinear stability analysis, numerical simulations, and analytical bounds on solutions, we can also characterize the form of pattern-forming instability in our graphon model and show how our choice of demographic dynamics for the population and the graphon adjacency function can impact the long-time total population size and chemoattractant concentration achieved in emergent spatial patterns. 
\end{abstract}
\begin{spacing}{0.01}
\renewcommand{\baselinestretch}{0.1}\normalsize
\tableofcontents
\end{spacing}
%\singlespacing
\newpage

\section{Introduction}

Directed motion of individuals plays an important role in producing the emergence of spatial pattern and collective behaviors in a range of natural and social systems, from the aggregation of slime molds through chemotaxis \citep{keller1970initiation} to herding and flocking behaviors in groups of large animals \citep{topaz2006nonlocal,cucker2007emergent}. Mathematical models of chemotaxis-like phenomena often consist of systems of partial differential equations describing the spatial profile of a population and a substance that serves as a chemical or social signaling mechanism for the population, with the diffusion of both population and signal helping to smooth out the population and the directed motion of individuals towards the signaling substance serving to promote spatial pattern formation and the emergence of ``hot spots'' featuring a high density of population and signal. A substantial modeling literature has developed regarding these questions of directed motion, exploring how the rules for motion in individual-based models can promote various mean-field partial differential equation (PDE) limits and characterizing threshold quantities for the onset of pattern formation for a wide variety of functional forms for reaction dynamics and chemotactic sensitivity in the PDE models \citep{painter2002volume,hillen2009user,painter2019mathematical}. 

In the past several decades, researchers have used the Keller--Segel model and related models for chemotaxis as motivation for exploring the role of directed motion in human social systems \citep{hasan2020transport,neto2019returns,juchem2015capital,kong2025boundary,short2008statistical,alsenafi2018convection}. This has led to the formulation of PDE models with directed motion that have been used to describe economic aggregation and labor-driven migration towards cities, spatial patterns of residential burglaries \citep{short2008statistical,alsenafi2018convection}, and the spread of wealth within cities and the emergence of gentrification \citep{hasan2020transport,lindstrom2020qualitative,mauro2025dynamic}.  However, in the context of human social systems, it is not necessarily the case that the assumption of a continuous spatial domain or local motion governed by a PDE would be the appropriate way to describe directed motion, as individuals could move to cities far away or move within a city using a public transportation network. For this reason, it may be more appropriate to model such social systems using nonlocal spatial motion or a spatial domain consisting of a metapopulation featuring cities as nodes and a graph of connectivity describing possible motion between the nodes. For this purpose, it is helpful to formulate rules for directed and diffusive motion for individuals between patches in a metapopulation in terms of biased random walks along the metapopulation network and then to look to derive continuum limits of these models that take into account the nonlocal rules for motion between spatial locations.

Metapopulation models have also been used to study ecological competition coupled with purely diffusive motion, exploring how patterns in prey and predator density can emerge in systems of ODEs describing reaction-diffusion dynamics on a metapopulation graph \citep{levin1974,SegelLevin1976,gibert2019laplacian,slavik2020lotka,kumar2026stability}. Similar spatial patterns have been observed in nonlinear reaction-diffusion models for evolutionary games on networks \citep{fahimipour2022sharp,roy2026diffusion} and in models for the spread of contagion in city-structured populations \citep{yu2025spatial,yu2026dynamics,magrogan2026reaction}, and related reaction-diffusion models have explored how the structure of the metapopulation graph influences the speed of spatial spread \citep{rayfield2023spread}, Turing instability thresholds on graphs \citep{pranesh2024effect,othmer1971instability,SegelLevin1976,philipp2026emergent}, and basic reproduction numbers for epidemic models with spatial, age, or social structure \citep{delmas2022infinite,pang2026stochastic}. Other network metapopulation models have extended the work on metapopulation dynamics to incorporate cross-diffusive effects and cross-diffusive instabilities \citep{kuehn2024cross,lawton2024interspecific}, exploring the possibility of pattern-forming instabilities for reaction-diffusion models on families of random graphs \citep{ide2016turing,singh2026emergence}, or describing the effects of directed motion between patches using piecewise movement rates in which individuals always move along edges of the metapopulation network towards nodes that are deemed as more desirable for individuals. In this paper, we will look to extend this approach to describing directed motion through biased random walks on networks, and to derive continuum limits of metapopulation models that provide an analytically convenient description of directed motion with a dispersal network.

For large dense networks, graph limit theory offers a principled continuum representation: the connectivity of a graph sequence can be encoded by a measurable kernel $W:[0,1]^2\to[0,\infty)$ (a graphon), and discrete graph couplings converge to integral operators built from $W$. Rigorous graphon limits for diffusion-type evolution equations on deterministic dense graphs and on $W$-random graphs were developed by Medvedev \citep{medvedev2014nonlinear,medvedev2014Wrandom}, with extensions to other semilinear models including sparse random graphs \citep{kaliuzhnyi2017semilinear}. Graphon dynamical systems have since been used as continuum models for consensus and opinion dynamics \citep{lee2018consensus,bonnet2022consensus,during2024breaking,qiao2025consensus}, synchronization in oscillator networks \citep{medvedev2015stability,medvedev2020kuramoto,chiba2018bifurcations,kuehn2019power,arches2026predicting}, and pattern-forming bifurcations on large graphs \citep{bellieregraphon,bramburger2023pattern,medvedev2024turing,bramburger2025persistence,yagasaki2026bifurcations}. Building on this framework, we derive a graphon limit for a Keller--Segel-type chemotaxis model on graphs and use the continuum description to characterize linear instability thresholds and weakly nonlinear pattern selection on large networks.

In this paper, we formulate a model of metapopulation dynamics that combines local reaction dynamics within patches with directed motion along the edges of the metapopulation network formulated in terms of biased random walks. As a baseline model of directed motion, we will consider a version of the Keller--Segel model for chemotaxis, which describes the spatial dynamics of a population with density $n(t,\vec{x})$ and the concentration of a chemical $c(t,\vec{x})$ that individuals use as a signal for spatial aggregation \citep{keller1970initiation,keller1971traveling,hillen2009user,painter2019mathematical}. In $d$-dimensional continuous space, the Keller--Segel model can be written in terms of the system of partial differential equations 
\begin{equation} \label{eq:KS-PDE}
    \begin{aligned}
        \frac{\partial n}{\partial t}&=\eta_n\nabla\cdot\big(\nabla n-\chi n\nabla c\big)+f(n)\\
        \frac{\partial c}{\partial t}&=\eta_c\Delta c +\alpha n -\beta c,
    \end{aligned}
\end{equation}
capturing the assumptions that individuals have net reproduction with rate $f(n)$, produce chemoattractant with rate $\alpha$, and perform both undirected diffusive motion and directed motion up gradients of the chemoattractant, while the chemoattractant decays at rate $\beta$ and diffuses within the spatial domain. We measure the overall mobility of the individuals and chemoattractant with the respective diffusivities $\eta_n$ and $\eta_c$, and we can describe the strength of directed motion for individuals through the chemotactic sensitivity $\chi$. For a positive number $\bar n$ satisfying $f(\bar n)=0$ and $f'(\bar n)<0$, the spatially uniform state $(n(x),c(x)) = \left(\bar n, \frac{\alpha \bar n}{\beta} \right)$ will be an equilibrium solution to this system of PDEs, and this equilibrium will be stable to all spatially uniform perturbations. However, by choosing sufficiently large chemotactic sensitivity $\chi$, this uniform state will be unstable to spatially heterogeneous perturbations and spatial patterns can emerge in the profiles of the population and the chemoattractant. 

We extend this exploration of pattern formation due to directed motion of a population towards a chemical attractant by formulating an analogue of the Keller--Segel model on a discrete metapopulation. In this model, we describe the population's motion between patches in our metapopulation as a biased random walk along the edges of the metapopulation graph, we consider a simple diffusive motion between edges for the chemoattractant, and we use identical reaction terms to the PDE model from Equation \eqref{eq:KS-PDE}. For our biased random walk, we assume that individuals choose to move between a given pair of patches with a probability based on the levels of chemoattractant at the two patches, and we parameterize the strength of the bias in motion using a sensitivity parameter $\chi$ that allows us to interpolate between a completely unbiased movement rule for $\chi = 0$ and deterministic movement to the higher quality patch as $\chi \to \infty$. By linearizing around the spatially uniform state, we are able to show that spatial patterns can emerge in our metapopulation model for sufficiently strong bias in motion towards patches with greater chemoattractant concentrations, and we show that the threshold sensitivity required for the onset of patterns depends on the parameters of the reaction dynamics and the eigenvalues of graph Laplacian describing the connectivity between patches in our metapopulation. We also perform numerical simulations of our graph metapopulation model for the case of two patches and logistic reaction dynamics, showing that the total population size and chemoattractant decreases with the onset of patterns and the increase in chemotactic sensitivity. 

While the discrete graph model provides a nice baseline for understanding how the structure of metapopulation network can contribute to onset of spatial patterns, it becomes more cumbersome to obtain analytically interpretable results for our model as the number of patches grows. To obtain more analytical understanding of pattern formation and chemotactic motion on metapopulations, we turn to graphons and a system of integro-differential equations to describe the behavior of our model in the limit of infinitely many patches. We show that the resulting Keller--Segel model on graphs serves as the continuum limit of the discrete metapopulation model over any fixed time interval $[0,T]$ both for dense families of deterministic and random graphs. In particular, we prove convergence for deterministic weighted graphs and for independent-edge random graphs sampled from cell averages of the limiting kernel, as well as a relabeling-invariant limit under cut-distance convergence of the kernels and compatible convergence of the initial profiles (Theorems~\ref{thm:det-Lp-conv}, \ref{thm:RG-limit}, and~\ref{thm:det-cut}). These convergence results for fixed time intervals allow us to understand how the early growth of spatial patterns from initial conditions close to the uniform equilibrium can occur in both our graph and graphon Keller--Segel models, and the convergence results for families of random graphs gives us additional motivation to use small-world graphons as a model for metapopulation structure in the presence of frequent movement to nearby patches and rarer long-range movement throughout the metapopulation. 

Our graphon Keller--Segel model is particularly amenable to analytical calculations, and we perform a mix stability analysis, numerical simulation, and a priori estimation to understand properties of emerging parameters. Our linear stability analysis allows us to demonstrate how the threshold chemotactic sensitivity for pattern formation depends on reaction parameters, movement rates, and the eigenvalues of the graphon Laplacian operator. Through weakly nonlinear analysis, we are able to derive conditions for whether our pattern-forming instability produces a supercritical or subcritical pitchfork bifurcation. In particular, we find that using a uniform band kernel with sufficiently wide radius or a small-world / Watts--Strogatz graphon with sufficiently high rewiring probability can produce subcritical instabilities, allowing for the possibility of bistability of a patterned state and the spatially uniform equilibrium for chemotactic sensitivities below the threshold for instability of the uniform state. In addition, we use the asymptotic approximations of our patterned equilibria to predict the total population size and the total chemical concentration across the spatial domain, showing that these quantities can achieve lower or higher levels in the patterned state relative to the uniform state depending on whether the reaction function $f(n)$ is locally concave or convex at the equilibrium point $\bar n$ for the reaction dynamics. This prediction is confirmed by numerical simulation, and our numerical results suggest that the trend of the increase or decrease in population size and chemical concentration will continue as the chemotactic sensitivity increases further beyond the onset of instability. 

Beyond our stability analysis calculations, we also explored the total population size and chemoattractant concentration by deriving long-time upper bounds for the mass of solutions to our graph and graphon Keller--Segel models. We show that long-time solutions feature at most the mass achieved in the spatially uniform state whenever the reaction function $f(n)$ is strictly concave for non-negative populations sizes, and that any patterned steady state will feature a lower population size across the spatial domain than in the spatially uniform state for such reaction functions (as discussed in Proposition~\ref{prop:logistic-total-mass-deficit}). This result applies to a range of functions $f(n)$ that appear often in the ecological literature and in related population models for social systems, such as logistic growth, the theta-logistic model, and models featuring saturating birth rates in the form of Holling Type II functional response. More generally, these insights into qualitative behavior of patterns through formal asymptotic expansions and estimates of the mass of solutions suggest that the use of graphon models of pattern formation via directed motion can be particularly helpful to understand how populations gain or lose through following different rules for movement between patches in metapopulations.

\subsection{Structure of the Paper}

The remainder of the paper is organized as follows. Section~\ref{sec:notation-conventions} collects the notation and conventions we use throughout the paper. Section~\ref{sec:model-formulation} derives the graph and graphon Keller--Segel models from the individual biased random walk rules. Section~\ref{sec:graph-LSA} analyzes the linear instability on finite graphs, highlighting the two-patch reduction. Section~\ref{sec:graphon-limits} rigorously establishes the deterministic, random, and cut-distance graphon limits. Section~\ref{sec:graph-model-stability} presents the linear and weakly nonlinear analyses for the continuum graphon model, concluding with the exact global mass deficit result. Finally, Section~\ref{sec:numericalsimulations} provides numerical simulations that confirm our analytical predictions, and Section~\ref{sec:discussion} discusses our main conclusions and outlines directions for future work.

We have organized the paper so that the core sections on pattern formation in the graph model (Section \ref{sec:graph-LSA}), the results on convergence to the graphon limits (Section \ref{sec:graphon-limits}), and the stability analysis of the graphon Keller--Segel model (Section \ref{sec:graph-model-stability}) are relatively self-contained. These sections can be read in any order depending on reading interest, with the sections depending primarily on the material presented in Section \ref{sec:model-formulation} on the formulation of the graph and graphon models. The simulation results in Section \ref{sec:numericalsimulations} present simulation results from both the graph and graphon models, and complement the analytical characterizations provided in Sections \ref{sec:graph-LSA} and \ref{sec:graph-model-stability}.

%%%%%%%%%%%%%%%%%%%%%%%%%%%%%%%%%%%%%%%%%%%%%%%%%%%%%%%%%%%%%%%%%%%%%%%%%%%%%%%%%%%%%%%%%%%%%%%%%%%%%%%%%%%%%%%%%%%%
\subsection{Notation and Conventions}
% \addcontentsline{toc}{section}{Notation and Conventions}
\label{sec:notation-conventions}

We collect the main notation used throughout the paper. Lower-case $n$ and $c$ are reserved for the population density and the chemoattractant concentration, respectively. Generic positive constants are denoted by $C$, $C_T$, $C_{R,p}$, etc., and may change from line to line. We avoid using lower-case $c$ for generic constants.

\paragraph{States and parameters.}
Let $n_i(t)$ and $c_i(t)$ denote the population density and chemoattractant concentration on a finite graph, with continuum counterparts $n(t,x)$ and $c(t,x)$ for $x\in I:=[0,1]$. The state vector is $U=(n,c)$ with initial data $U_{\mathrm{in}}=(n_{\mathrm{in}},c_{\mathrm{in}})$. A positive homogeneous equilibrium is denoted by $\bar U=(\bar n,\bar c)$, satisfying $\bar n>0$, $f(\bar n)=0$, and $\bar c=\alpha\bar n/\beta$.

The parameters $\eta_n$ and $\eta_c$ represent mobility coefficients, $\alpha$ and $\beta$ are the chemoattractant production and degradation rates, and $\chi$ is the chemotactic sensitivity. The local population reaction is governed by $f$, with specific assumptions introduced as needed.

\paragraph{Graphs and graphons.}
For an undirected $N$-node weighted graph $G=(V,E)$, let $A=(A_{ij})$ be its symmetric nonnegative adjacency matrix and $D=\operatorname{diag}(d_1,\ldots,d_N)$ be the degree matrix, where $d_i=\sum_{j=1}^N A_{ij}$. A graphon $W:I^2\to[0,\infty)$ is a symmetric measurable function, and $K$ denotes a generic measurable kernel. The discrete and continuous Laplacian operators are defined as
\begin{align*}
    L=\frac{A-D}{N},
    \qquad
    (\mathcal L_Ku)(x)=\int_I K(x,y)\big(u(y)-u(x)\big)\,dy.
\end{align*}
These operators are non-positive for symmetric nonnegative weights or kernels.

\paragraph{Function spaces and constants.}
All $L^p$ spaces are defined with respect to the Lebesgue measure. We denote $L^p(I)^2:=L^p(I;\mathbb R^2)$ and, for $U=(n,c)$, define the norm
\begin{align*}
    \|U\|_{L^p(I)^2}
    :=\begin{cases}
        \big(\|n\|_{L^p(I)}^p+\|c\|_{L^p(I)}^p\big)^{1/p},
        & 1\le p<\infty,\\[3pt]
        \max\{\|n\|_{L^\infty(I)},\|c\|_{L^\infty(I)}\},
        & p=\infty.
    \end{cases}
\end{align*}

%%%%%%%%%%%%%%%%%%%%%%%%%%%%%%%%%%%%%%%%%%%%%%%%%%%%%%%%%%
%%%%%%%%%%%%%%%%%%%%%%%%%%%%%%%%%%%%%%%%%%%%%%%%%%%%%%%%%%

\section{Formulating the Keller--Segel Model on Graphs and Graphons}
\label{sec:model-formulation}

In this section, we formulate a Keller--Segel model for chemotaxis on finite graphs and introduce its continuum limit in terms of a graphon partial differential equation. Our approach derives directed motion from biased random walks on networks, providing a direct connection between individual-based movement rules and mean-field population dynamics. An advantage of this formulation is that it naturally applies even in very small networks,
including two-patch systems, where directed movement can still be defined through biased transitions between patches. This contrasts with existing agent-based derivations of chemotaxis-type models, which typically assume that individuals choose among multiple neighboring patches when determining their movement direction \citep{short2008statistical,alsenafi2018convection,hillen2009user}. We then explain how these discrete models may be posed on families of dense graphs and how graphons arise naturally as continuum limits of such graph sequences, following the framework developed by \cite{medvedev2014nonlinear}.

\paragraph{Chemotaxis as biased random walks on graphs.}

Let $G=(V,E)$ be an undirected weighted graph with $N$ nodes and adjacency matrix
$A=(A_{ij})_{i,j=1}^N$, where $A_{ij}\ge 0$ denotes the weight of the edge between nodes $i$
and $j$. At each node $i$ we track the population density $n_i(t)\ge 0$ and the
concentration $c_i(t)\in\mathbb{R}$ of a chemoattractant.

We model directed motion as a biased random walk along graph edges. Individuals at node $j$
attempt to move to node $i$ at rate $\eta_n A_{ij}$, and the probability of accepting such a move
depends on the relative chemoattractant concentrations at the two nodes. Here we interpret the movement rate as a combination of an overall mobility rate $\eta_n$ for individuals and, depending on the formulation of the metapopulation graph, the value $A_{ij}$ of the adjacency matrix can be interpreted as either an additional weight modifying the mobility rate along the edge connecting patches $i$ and $j$ or a probability that individuals actually consider the possibility of moving along this edge. Given that individuals are considering the possibility of moving between patches $i$ and $j$, we assume that the probability that individuals actually move is given by the following function of the levels of chemoattractant $c_i$ and $c_j$ at patches $i$ and $j$
\begin{equation}
\mathds P(j \to i)
= \frac{e^{\chi c_i}}{e^{\chi c_i} + e^{\chi c_j}},
\label{eq:transition_prob}
\end{equation}
where $\chi \geq 0$ is called the chemotactic sensitivity, parameterizing how sensitivity in the movement probability depends on the chemoattractant concentrations at the two patches. This exponential response corresponds to an exponential-type preference rule and ensures biased motion toward higher concentration, and this form of the movement probability would produce the standard Keller--Segel model for chemotaxis in Equation \eqref{eq:KS-PDE} with linear sensitivity in the case of a nearest-neighbor lattice and the limit of infinitely many patches with infinitesimal distance between patches. Such biased movement rules with exponential weights have also been used to derive chemotaxis-like PDE models in a range of other models of directed motion on topics ranging from urban dynamics to eco-evolutionary dynamics with directed motion towards beneficial environments \citep{alsenafi2018convection,yao2025spatial,yao2026pattern,zhao2026pattern}
.

\begin{remark}
Continuous-time random walks on networks are typically modeled using either edge-based transition rates or node-based neighbor selection \citep{petit2021random}. Our chemotactic rule adopts the edge-based perspective: the transition probability across an edge depends exclusively on the signal concentrations at its two endpoints. Therefore, the dynamics are well-defined on a simple two-patch graph as well as on an arbitrary metapopulation network. Although it is common to derive PDE models of directed motion using biased random walks defined by individuals using attractant-dependent probabilities of choosing to move to one of their neighboring nodes on a nearest-neighbor lattice \citep{short2008statistical,alsenafi2018convection,hillen2009user,zhao2026pattern}, our formulation of a biased random walk along edges is more conducive to studying metapopulation networks with heteregeneous patch degrees. 
\end{remark}

Under these assumptions on the movement rules for individuals, the expected
evolution of the population density at node $i$ can be described by the following ODE 
\begin{equation}
\frac{d n_i}{dt}
= \frac{\eta_n}{N} \sum_{j=1}^N A_{ij}
\left[
\frac{e^{\chi c_i} n_j - e^{\chi c_j} n_i}
     {e^{\chi c_i} + e^{\chi c_j}}
\right]
+ f(n_i),
\label{eq:KS_graph_n}
\end{equation}
where $\eta_n>0$ is the mobility rate and $f$ describes local population growth (for example,
logistic growth).

The chemoattractant is assumed to diffuse along the graph and to be produced and degraded
locally according to
\begin{equation}
\frac{d c_i}{dt}
= \frac{\eta_c}{N} \sum_{j=1}^N A_{ij}\left(\frac{c_j - c_i}{2}\right)
+ \alpha n_i - \beta c_i,
\label{eq:KS_graph_c}
\end{equation}
where $\eta_c>0$ is the diffusion rate, $\alpha>0$ the production rate, and $\beta>0$ the
decay rate.

We assume that the population reaction $f:\mathbb{R}\to\mathbb{R}$ is locally Lipschitz. For positivity and global-in-time bounds, we use the following additional assumptions on the reaction function
\begin{equation}\label{eq:f-structure}
    f(n)=n h(n)\quad(n\ge0),
    \qquad
    h\in C^1([0,\infty)),
    \qquad
    \sup_{s\ge0}h(s)\le C_h.
\end{equation}
The function $h(n)$ corresponds to a per-capita net birth rate for the population, so taking a reaction term of the form $f(n) = n h(n)$ captures the biological interpretation of birth and death dynamics in our population and ensures that no population growth or decline is possible when there are no individuals present (corresponding to the extinction state $n = 0$ being an equilibrium of the reaction dynamics. For the case of logistic growth, our reaction function is $f(n)=rn(1-n/\bar n)$ with $r,\bar n>0$, so the per-capita growth rate is given by $h(n)=r(1-n/\bar n)$ and we can check that the maximal growth rate is $C_h=r$.

Equations \eqref{eq:KS_graph_n}--\eqref{eq:KS_graph_c} form a discrete Keller--Segel system
on the graph $G$. If $f$ admits a root $\bar{n}$ with $f'(\bar{n})<0$, then we can use this root to define a spatially uniform equilibrium state for our Keller--Segel system, which is characterized by population size and chemoattractant concentration satisfying
\begin{equation}
n_i \equiv \bar{n} \: \: \mathrm{and} \: \:
\qquad
c_i \equiv \frac{\alpha \bar{n}}{\beta}
\end{equation}
at each node $i$. In particular, our choice to define our biased random walk as with attractant-dependent movement probabilities between pairs of edges $i$ and $j$ permits this equilibrium point in the reaction dynamics to be a spatially uniform equilibrium of our metapopulation model, and we can use this formulation of our graph Keller--Segel model to explore pattern-forming instabilities of the uniform state on metapopulations in terms of the Laplacian matrix for the metapopulation networks and the reaction and movement parameters for the population and chemical attractant.

\paragraph{Dense-graph scaling and graphons.}

To study large-network behavior, we consider sequences of dense graphs
$\{G_N\}_{N\to\infty}$ with adjacency matrices $A^{(N)}$. For such graphs, interaction terms
are naturally rescaled by $1/N$, leading to a mean-field description. Graphons provide a
convenient continuum framework for describing the limits of these sequences.

A graphon is a symmetric measurable function $W:[0,1]^2\to[0,\infty)$, which represents the limiting connectivity of the graph sequence. Associated with $W$ is
the graphon Laplacian operator
\begin{equation}
(\mathcal{L}_W \phi)(x)
= \int_0^1 W(x,y)\bigl(\phi(y)-\phi(x)\bigr)\,dy.
\label{eq:graphon_laplacian}
\end{equation}

\paragraph{The graphon Keller--Segel model.}

In the graphon limit, node-based quantities $n_i(t)$ and $c_i(t)$ are replaced by functions
$n(x,t)$ and $c(x,t)$ defined for $x\in[0,1]$. Formally passing to the limit in
\eqref{eq:KS_graph_n}--\eqref{eq:KS_graph_c} yields the graphon Keller--Segel system
\begin{subequations} \label{eq:our-graphon-model}
\begin{align}
\partial_t n(x,t)
&= \eta_n \int_0^1 W(x,y)
\left[
\frac{e^{\chi c(x,t)} n(y,t) - e^{\chi c(y,t)} n(x,t)}
     {e^{\chi c(x,t)} + e^{\chi c(y,t)}}
\right] \, dy
+ f(n(x,t)),
\label{eq:KS_graphon_n}
\\
\partial_t c(x,t)
&= \eta_c \int_0^1 W(x,y)\left[\frac{c(y,t)-c(x,t)}{2}\right]\,dy
+ \alpha n(x,t) - \beta c(x,t).
\label{eq:KS_graphon_c}
\end{align}
\end{subequations}

This nonlocal integro-differential system provides a deterministic continuum model for
chemotaxis on large dense graphs. The kernel $W(x,y)$ is our graphon, describing the nonlocal connectivity between patches, and corresponds to the continuum limit of the adjacency matrix for the metapopulation graph. We will show in Section \ref{sec:graphon-limits} that the graphon $W(x,y)$ and the corresponding formulation of our Keller--Segel model will arise in the large-node limit of sequences of dense
graphs, including both deterministic and random graph families in the sense of graph limit
theory \citep{medvedev2014nonlinear}. This continuum description will be particularly beneficial in performing analytical explorations of pattern formation using linear and weakly nonlinear stability analysis, as we can use tools typically applied to partial differential equations to understand how reaction dynamics and directed motion on graphons can help to produce emergent spatial patterns. 

A particularly important class of examples of adjacency functions is given by distance-based graphons on the unit
circle, where the distance between $x,y\in[0,1]$ is defined by
\[
d(x,y)=\min\{|x-y|,\,1-|x-y|\}.
\]
In this setting we use a slight abuse of notation to write our graphon adjacency function in the form $W(x,y)=W(d(x,y))$ for a nonnegative kernel $W(\cdot)$, or we can equivalently write $W(x,y)=W(x-y)$ for an even, periodic connectivity kernel $W(\cdot)$. Considering families of distance-dependent kernels has the mathematical advantage use Fourier series and the convolution theorem to perform our linear and weakly nonlinear stability analyses, and this approach has advantage from a modeling perspective in allowing us to study how the rules for nonlocal connectivity help to shape emerging patterns. 

These class of distance-dependent graphons has already been used frequently in models of collective behavior and synchronization on large graph-structured populations \citep{medvedev2014nonlinear,kaliuzhnyi2017semilinear,chiba2018bifurcations,lee2018consensus,kuehn2019power,carmona2022stochastic}, and distance-dependent graphons have been used as a point of comparison with discrete ring graphs in the study of spatial pattern formation on graphons \citep{bramburger2023pattern,bramburger2025persistence}. In particular, there are several families of distance-dependent graphons that have been frequently used for analytically characterizing nonlocal dispersal or interactions, often derived from capturing the continuum limit of established families of random graphs with distant-dependent connectivity. 

The simplest distance-dependent graphon is described by a constant kernel, which describes equal connectivity between all points $(x,y) \in [0,1]^2$. Constant kernels appear in the Erd\H{o}s--R\'enyi graphon
\[
W(x,y)=p,
\]
corresponding to the continuum limit of the expected connectivity for Erd\H{o}s--R\'enyi graphs in which edges between each pair of nodes appear independently with probability $p$. We can also describe a distance-dependent graphons with piecewise constant connectivity, considering a connectivity function
\begin{equation}
W(x,y)=
\begin{cases}
1-p, & d(x,y)\le \rho,\\[2pt]
p, & d(x,y)>\rho,
\end{cases}
\end{equation}
where $\rho \in (0,1/2)$ describes the local neighborhood radius in the graphon and $p \in [0,1/2)$ describes the probability that a given local edge rewires to connect to a randomly chosen node in the network. 
For $0<\rho<1/2$, this defines a simple small-world graphon, where $\rho$ is the local interaction radius and $p$ controls the long-range connection probability.
This graphon adjacency function was introduced by \cite{medvedev2014small} to study a continuum Kuramoto model with small-world connectivity, drawing inspiration from network models including the \cite{watts1998collective} rewiring model, the \cite{newman1999renormalization} shortcut construction, and the \cite{monasson1999diffusion} model for diffusion on small-world graphs. This form of connectivity can be helpful for clarifying differences between the effects of local neighborhood radius and the likelihood of forming long-range connections, interpolating between cases of localized movement networks for small $\rho$ and $p$ and models of near complete graphon connectivity as $\rho$ or $p$ approach $1/2$.

Although we will focus primarily on piecewise constant graphons in our analysis of our continuum Keller--Segel model, it is also possible to consider a wide range of graphon connectivity functions with different rules for decreasing connectivity over distant. One family of models studied in nonlocal Kuramoto models is an exponentially decaying connectivity function
\[
W(x,y)=e^{-\kappa d(x,y)},
\]
where $\kappa>0$ controls the decay of interaction strength over distance. This connectivity function was introduced by \cite{kuramoto2002coexistence}, and was later apply by \cite{chiba2018bifurcations} to illustrate bifurcation conditions in graphon dynamical systems.

\paragraph{Relation between discrete and continuum dynamics.}

The graphon Keller--Segel system \eqref{eq:KS_graphon_n}--\eqref{eq:KS_graphon_c} may be
interpreted as the continuum limit of the discrete graph dynamics
\eqref{eq:KS_graph_n}--\eqref{eq:KS_graph_c}. Under suitable assumptions on the graph
sequence and initial data, solutions of the discrete system converge to solutions of the
graphon model as $N\to\infty$, extending convergence results previously established for
nonlinear diffusion equations on graph limits
\citep{medvedev2014nonlinear,bramburger2023pattern,bramburger2025capturing,bramburger2025persistence,petit2021random,zhang2025ginzburg,zhang2026graphon}.

This formulation provides a unified framework for analyzing chemotaxis-driven pattern
formation across discrete graphs and their graphon limits. In the following sections, we
study well-posedness, linear and weakly nonlinear instabilities, and convergence properties
of both the discrete and continuum Keller--Segel models.

\section{Emergence of Spatial Patterns in Keller--Segel Model on Graph Metapopulations}
\label{sec:graph-LSA}

In this section, we demonstrate the onset of spatial patterns in the Keller--Segel model on graph metapopulations, showing how patterns emerge due to the bias of traversing an edge between patches $i$ and $j$ based on the difference in chemoattractant concentration at these neighboring patches. We will use a linear stability analysis for the ODE system of Equations \eqref{eq:KS_graph_n} and \eqref{eq:KS_graph_c} around the uniform equilibrium given by $(n_i,c_i) = \left(\bar n, \frac{\alpha \bar n}{\beta}\right)$ for each $i \in \{1,\cdots,N\}$. Performing this stability analysis for an $N$-patch metapopulation will require studying the signs of the eigenvalues of a $2N \times 2N$ Jacobian matrix for our system, so we will look to simplify our analysis to understand the contributions of the reaction dynamics at each patch, the rules for movement between patches, and the connectivity of the metapopulation network. 

To simplify our stability analysis, we will use the master stability function approach \citep{pecora1998master,brechtel2018master,fahimipour2022sharp}, which allows us to decompose our exploration of the stability of our system of $2N$ linearized ODEs into studying the eigenvalues of $2\times 2$ matrices corresponding to the behavior on each patch and the eigenvalues of the $N \times N$ graph Laplacian matrix for our metapopulation matrix. This approach was initially used to demonstrate the emergence of Turing instability in reaction-diffusion systems on networks \citep{othmer1971instability,levin1974,othmer1974non,SegelLevin1976,ashkenazi1977spatial}, and has later been applied to study other forms of pattern formation to study pattern formation in metapopulations under nonlinear diffusion between patches \citep{brechtel2018master,fahimipour2022sharp,lawton2024interspecific}. This approach has also been used to study a range of problems for stability of equilibria in dynamical systems on networks and hypergraphs \citep{pecora1998master,russo2009contraction,tang2019master,mulas2020coupled,acharyya2024master}, and will apply readily to our question of studying chemotactic directed motion based on biased random walks between patches.

To use this approach, we first introduce some key quantities used to describe the eigenvalues of the linearization of our graph Keller--Segel system. We describe the connectivity between patches of our metapopulation through and undirected weighted graph $G=(V,E)$ with $|V|=N$ and symmetric adjacency matrix $A=(A_{ij})_{i,j=1}^N$ (consisting of weights $A_{ij}\ge 0$ for the edge between patches $i$ and $j$). We denote the degree of patch $i$ by
\begin{equation}
    d_i=\sum_{j=1}^N A_{ij}
\end{equation}
and the corresponding $N\times N$ degree matrix
\begin{equation}
    D=\mathrm{diag}(d_1,\dots,d_N).
\end{equation}
These quantities describing connectivity and patch degree then allow us to introduce the graph Laplacian matrix 
and
\begin{equation}
    L:=\frac{A-D}{N}
\end{equation}
whose action on a vector $\vec{v}$ gives us
\begin{equation}
 (L \vec{v})_i=\frac1N \sum_{j=1}^N A_{ij}(v_j-v_i).
\end{equation}
With our choice of sign convention for the graph Laplacian $L$, we know that $L$ will have one eigenvalue satisfying $\lambda_0 = 0$ and $N-1$ eigenvalues $\lambda_k < 0$ for $k \in \{1,\cdots,N-1\}$. This sign convention emphases the stabilizing effect of diffusion along edges of the metapopulation graph, resulting in the necessity of including directed motion to induce pattern formation in our graph Keller--Segel model.

The graph Laplacian will help us to describe the decomposition of the linearization of the graph Keller--Segel system using the master stability function approach, so we now show how the patch structure of our model will impact the resulting Jacobian matrix for our linearized system of ODEs. 

\paragraph{Homogeneous equilibrium and linearization of movement probabilities.}
We recall that we formulate the probability of moving between patch $i$ and patch $j$ as the following mapping based on the chemoattractant concentrations $c_i$ and $c_j$ at the two patches
\begin{equation}\label{eq:fermi-general}
\mathds P(i\to j)=\frac{e^{\chi c_j}}{e^{\chi c_i}+e^{\chi c_j}},\quad i\sim j,
\end{equation}
where the chemotactic sensitivity $\chi\ge 0$ describes the strength of bias towards patches with greater levels of chemoattractant. We use this movement probability to write the Keller--Segel model on the metapopulation graph $G$ as the following system of ODEs
\begin{equation}\label{eq:KS-graph}
\begin{aligned}
\dot n_i &= \frac{\eta_n}{N}\sum_{j=1}^N A_{ij}\Big(n_j \mathds P(j\to i)-n_i \mathds P(i\to j)\Big) + f(n_i),\\
\dot c_i &= \frac{\eta_c}{N}\sum_{j=1}^N A_{ij}\frac{c_j-c_i}{2} + \alpha n_i - \beta c_i,
\end{aligned}
\end{equation}
for each patch $i \in \{1,\cdots,N\}$ with parameters $\eta_n,\eta_c,\alpha,\beta>0$ and population net growth rate given by $f(n)=n\,h(n)$ as described in Equation~\eqref{eq:f-structure}. We assume that there exists an equilibrium point $\bar{n}>0$ for the population dynamics satisfying $f(\bar{n})=0$ and set the corresponding equilibrium value of the chemoattractant to $\bar{c}:=\frac{\alpha}{\beta}\bar{n}$, and we can check from Equations \eqref{eq:KS-graph} that the metapopulation state $n_i\equiv \bar{n}$ and $c_i\equiv \bar{c}$ for each patch $i$ will be a homogeneous equilibrium of Equation \eqref{eq:KS-graph}. At any such spatially uniform state, the equal concentrations of chemical attractant at each patch allow us to see that $\mathds P(i\to j)=\tfrac12$ for each edge in the metapopulation network, and we can further see that the movement probabilities will satisfy 
\begin{equation}\label{eq:fermi-derivs-general}
\left.\frac{\partial \mathds P(i\to j)}{\partial c_i}\right|_{(\bar{n},\bar{c})}=-\frac{\chi}{4} \: \: \mathrm{and} \: \:
\left.\frac{\partial \mathds P(i\to j)}{\partial c_j}\right|_{(\bar{n},\bar{c})}=\frac{\chi}{4}
\end{equation}
at this spatially uniform state. These properties of the movement probabilities $\mathds P(i \to j)$ will help us to derive the linearization of our graph Keller--Segel model around the spatially uniform state.

\paragraph{Linearization of Graph Keller-Segel Model and Kronecker Decomposition of the Jacobian Matrix}
To construct the Jacobian matrix describing the linearization of our system around the spatially uniform state, we will write our variables in the following order: 
\begin{equation}
\vec{x} = (n_1, \cdots, n_N, c_1, \cdots, c_N) \in \mathbb{R}^{2N}.
\end{equation}
With this representation of the state variables, we are able to perform a Kronecker decomposition of our Jacobian matrix in terms of the reaction terms and movement rules for each patch.
We provide detailed calculations of the entries of the $2N \times 2N$ Jacobian matrix for our linearization in Section \ref{app:graph-jacobian}. Our calculations in the appendix allow us to see that the Jacobian matrix $J$ can be written in terms of the following decomposition 
\begin{equation}
    J = P \otimes I_N + C \otimes L,
\end{equation}
where $I_N$ is the $N \times N$ identity matrix, $L$ is the ($N \times N$) graph Laplacian matrix for our metapopulation network, $P$ is a $2 \times 2$ matrix that describes the effects of local reaction dynamics in the form
\begin{equation}
P = \begin{pmatrix}
f'(\bar{n}) & 0 \\
\alpha & -\beta
\end{pmatrix},
\end{equation}
and $C$ is a $2 \times 2$ matrix describing the effects of possible diffusive and directed motion of the population and chemoattractant that is given by
\begin{equation}
C=\frac{1}{2} \begin{pmatrix}
\eta_n & -\eta_n \chi \bar{n} \\
0 & \eta_c 
\end{pmatrix} .
\end{equation}

We will now use this decomposition of the Jacobian matrix to determine the eigenvalues of $J$ in terms of the eigenvalues of the graph Laplacian $L$ and the structure of the $2 \times 2$ matrices $P$ and $C$. In particular, we can apply the approach used by \cite{brechtel2018master} and \cite{fahimipour2022sharp} to see that we can determine the $2N$ eigenvalues of $J$ by computing the eigenvalues of matrices of the form
\begin{equation} \label{eq:eigenvaluematrices}
P + \lambda_k C 
= \begin{pmatrix} f'(\bar n) + \displaystyle\frac{\eta_n \lambda_k}{2} & - \displaystyle\frac{\eta_n \chi \bar n \lambda_k}{2} \\ \alpha & -\beta + \frac{\eta_c \lambda_k}{2} \end{pmatrix}
\end{equation}
for each eigenvalue $\lambda_k$ of the graph Laplacian $L$. 
\begin{Remark}
A justification for this approach for calculating the eigenvalues of our $2N \times 2N$ Jacobian matrix $J$ in terms of examining the eigenvalues of the graph Laplacian $L$ and the eigenvalues of the $2 \times 2$ matrices $P + \lambda_k C$ is provided in the papers by \cite{brechtel2018master} and \cite{fahimipour2022sharp}. Their derivation of the eigenvalues and structure of associated eigenvectors uses the Kronecker product of each eigenvector of the graph Laplacian $L$ with the eigenvectors of each corresponding matrix $P + \lambda_k C$, using the mixed product property of the Kronecker product to show that all of the eigenvectors of $J$ can be constructed by this procedure. 
\end{Remark}

\paragraph{Condition for Instability for Each Laplacian Eigenvalue} 

We will now derive conditions for instability of the spatially uniform state by calculating the conditions to have an eigenvalue with positive real part in any of the matrices given by Equation \eqref{eq:eigenvaluematrices}. In particular, for the eigenvalue $\lambda_0 = 0$, we see that the matrix in Equation \eqref{eq:eigenvaluematrices} reduces to the matrix 
\begin{align*}
P = \begin{pmatrix}
f'(\bar n) & 0 \\ \alpha & -\beta
\end{pmatrix},
\end{align*}
which corresponds to the linearization for the reaction dynamics in a single patch. Using our assumptions that $f'(\bar n) < 0$ and $\beta > 0$ to ensure that $(n,c) = \left(\bar n, \frac{\alpha \bar n}{\beta} \right)$ is a stable equilibrum under the reaction dynamics, we can deduce that the two eigenvalues of $P$ will have negative real part. In particular, this means that the uniform state will not be destabilized by a perturbation that is uniform across each of the patches in the metapopulation. 

For the other $N-1$ eigenvalues of the graph Laplacian, we can look to Equation \eqref{eq:eigenvaluematrices} to see whether the matrix $P + \lambda_k C$ can have an unstable eigenvalue for some $\lambda_k$. Noting that  $\lambda_k < 0$ for $k \in \{1,\cdots,N-1\}$, we can use our assumptions that $f'(\bar n) < 0$ and $\beta > 0$ to deduce that
\begin{equation}
  \operatorname{tr}\left(P + \lambda_k C)\right) = f'(\bar{n}) - \beta + \lambda_k\left(\frac{\eta_n}{2} + \frac{\eta_c}{2}\right) < 0
\end{equation}
for each nonzero eigenvalue $\lambda_k$ of the graph Laplacian $L$. We therefore see that the only way to achieve instability of the spatial uniform state is for the determinant to be negative for some eigenvalue $\lambda_k < 0$, which will be achieved when
\begin{equation}
\operatorname{det}\left(P + \lambda_k C)\right) = -\left(f'(\bar{n}) + \lambda_k \frac{\eta_n}{2}\right)\left(\beta - \lambda_k \frac{\eta_c}{2}\right) +\lambda_k \alpha \frac{\eta_n \chi \bar{n}}{2}.
\end{equation}
In particular, we can see that this determinant condition can be achieved for any nonzero eigenvalue $\lambda_k$ of the graph Laplacian for sufficiently strong chemotactic sensitivity $\chi$. In fact, we find that there is a family of thresholds $\chi^*_k$ for the onset of instability for each negative eigenvalue $\lambda_k$, which are given by
\begin{equation}\label{eq:ODE-threshold}
 \chi_k^* = \frac{-\frac{1}{2} \eta_n \eta_c \lambda_k^2 + \left[\beta \eta_n - \eta_c f'(\bar{n})\right] \lambda_k + 2 \beta f'(\bar{n})}{\alpha \eta_n \bar{n} \lambda_k},
 \end{equation}
 where we can use our assumptions that $f'(\bar n) < 0$ and that all other reaction movement parameters are positive to deduce that this threshold $\chi^*_k$ will always correspond to a positive number. Because we can consider a perturbation proportional to any of the eigenvectors corresponding to a nonzero eigenvalue of the graph Laplacian, we can therefore see that the minimum chemotactic sensitivity required to generate pattern formation on our metapopulation is given by
 \begin{equation}
\chi^* := \min_{k \in \{1,\cdots,N-1\}} \chi^*_{k} = \min_{k \in \{1,\cdots,N-1\}} \left(  \frac{-\frac{1}{2} \eta_n \eta_c \lambda_k^2 + \left[\beta \eta_n - \eta_c f'(\bar{n})\right] \lambda_k + 2 \beta f'(\bar{n})}{\alpha \eta_n \bar{n} \lambda_k}\right)
 \end{equation}

For arbitrary graph metapopulations, we will to consider each of the nonzero eigenvalues of the graph Laplacian to help determine the threshold $\chi^*$ required for the onset of pattern formation. This can become complicated or computationally intensive for a large number of patches, so we will turn to understanding pattern formation in the graphon Keller--Segel model for more insight into how the rules for metapopulation connectivity help to shape the onset of spatial pattern formation. However, for the case of a two-patch metapopulation, we can look to obtain a more complete characterization of pattern onset using our stability analysis in terms of the thresholds computed above.

\paragraph{Example: two patches.}
One of the simplest examples of our metapopulation model is the case of two patches and the symmetric adjacency matrix with $A_{12} = A_{21} = 1$. For this metapopulation network, the graph Laplacian is given by
\begin{equation}
L=\frac{A-D}{N}=
\frac{1}{2}
\begin{pmatrix}
-1 & 1\\
1 & -1
\end{pmatrix},
\end{equation}
whose eigenvalues are $\lambda_0=0$ and $\lambda_1=-1$. 

Because $\lambda_1 = -1$ is the only nonzero eigenvalue of the graph Laplacian, we can apply Equation \eqref{eq:ODE-threshold} to see that the threshold chemotactic instability for our two-patch network is given by
\begin{equation}
\chi^*
=
\frac{\bigl(\beta+\frac{\eta_c}{2}\bigr)\bigl(\eta_n-2f'(\bar n)\bigr)}
     {\alpha\,\eta_n\,\bar n}.
\end{equation}

We can further explore the emergence of patterns on our two-patch network by considering numerical simulations of the system of ODEs of the graph Keller--Segel model from Equations \eqref{eq:KS_graph_n} and \eqref{eq:KS_graph_c}. In particular, we can consider the long-time outcome achieved on each patch starting from a small perturbation from the spatially uniform state, and we can study how directed motion impacts the emerging differences in population and chemical concentration at each patch. In the top row of Figure \ref{fig:twopatch-simulation}, we first explore the population sizes $n_1(t)$ and $n_2(t)$ and the chemical concentrations $c_1(t)$ and $c_2(t)$ after a fixed number of time-steps for various values of the chemotactic sensitivity $\chi$ in the case of the logistic reaction function $f(n) = r n \left( 1 - n / \bar n\right)$. We find that the population and chemical return to their spatially uniform values of $\bar n$ and $\bar c$ when $\chi < \chi^*$, but we see an increasing disparity between the population sizes and chemical concentration in each patch as $\chi$ increases beyond $\chi^*$. '

We also use numerical simulations to evaluate how the total population size $n_1(t) + n_2(t)$ and the total chemical concentration $c_1(t) + c_2(t)$ changes under numerical simulations for fixed time $t$ and various values of $\chi$. In the bottom rows of Figure \ref{fig:twopatch-simulation}, we show that the total population and chemical concentration agrees with the values achieved in the spatially uniform state, but that the masses of both quantities decrease below the uniform equilibrium value as $\chi$ increases past $\chi^*$. This suggests that directed motion and the achievement of a spatially heterogeneous steady state may actually result in a decreased overall population size and a lower overall concentration of the chemoattractant. Although individuals perform biased motion to regions with greater attractant levels and those individuals subsequently produce additional attractant, the formation of spatial patterns in our metapopulation results in greater decreases in population size and chemoattractant at patch 2 than the corresponding increases seen in patch 1. We will later see similar behavior in our graphon model, and this illustrating in terms of two patches provides a particularly salient demonstration on the relative benefits and costs of spatial pattern formation on the masses of solutions at patches serving as peaks and troughs of emerging patterns.

\begin{figure}[!ht]
    \centering
    \includegraphics[width=\textwidth]{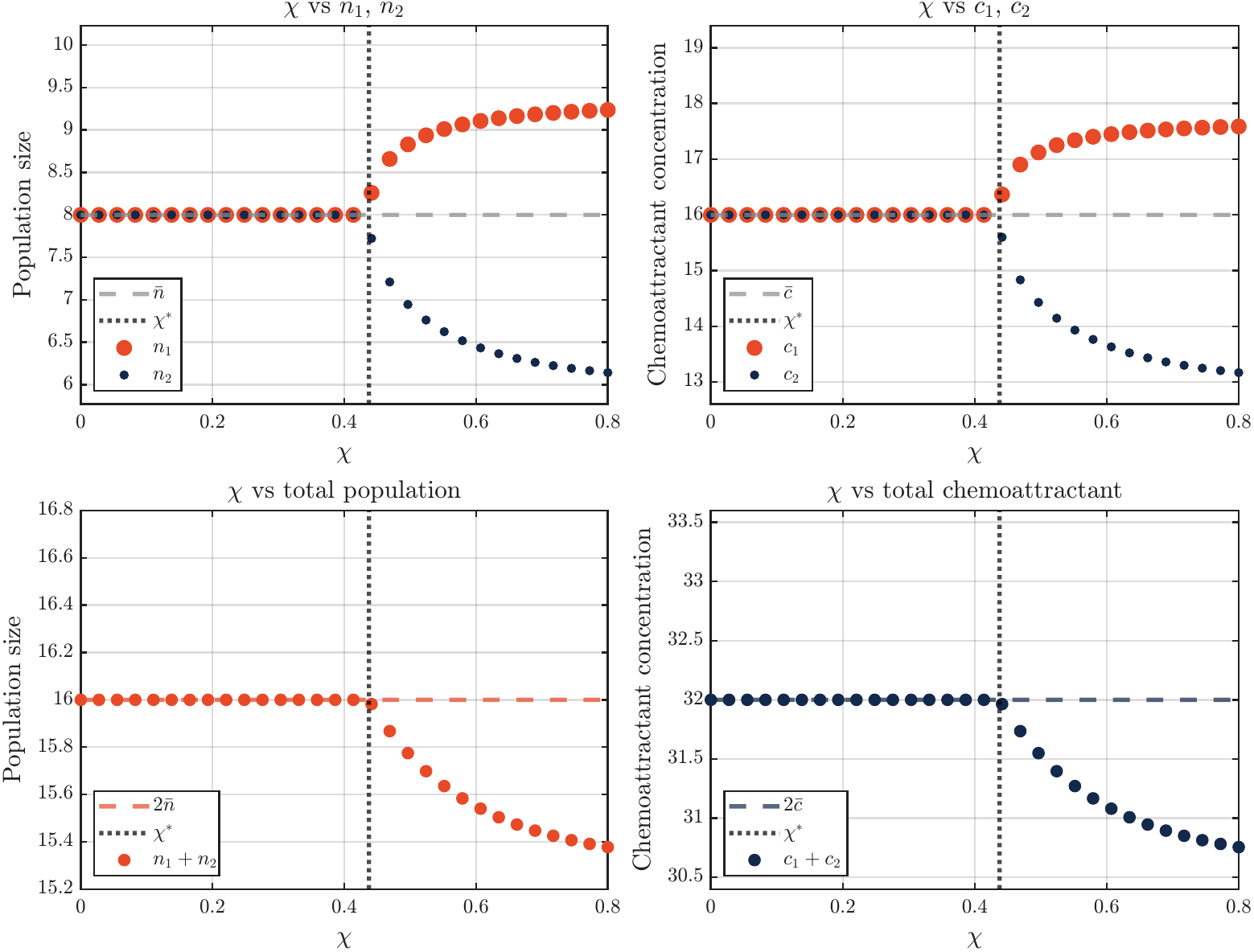}
    \caption{Comparison of long-time outcomes for simulations of the Steady states of the two-patch Keller--Segel system of Equation \eqref{eq:KS-graph}, plotted as a function of the chemotactic sensitivity $\chi$ for a fixed initial condition starting close to the uniform equilibrium state. Top row:  population sizes $n_1,n_2$ (left) and chemoattractant concentrations $c_1,c_2$ (right) on patches 1 and 2. Orange dots describe the population $n_1$ (left) and chemical concentration $c_1$ (right) in patch 1, blue dots describe the corresponding quantities in patch 2, and the gray dashed line describes the values of these quantities achieved in the spatially uniform state. Bottom row: total population size $n_1+n_2$ (left) and chemical concentration $c_1+c_2$ (right) across the two patches. The dots describe the total quantities achieved across the two patches under numerical simulations, while the dashed lines describe the corresponding total quantity achieved in the uniform equilibrium state. In each panel, the dotted vertical line describes the threshold chemotactic sensitivity $\chi^*=0.4375$. For each $\chi$, the system is integrated to $t=2\times 10^5$ from $(n_1,n_2,c_1,c_2)(0)=(\bar n+0.1,\,\bar n-0.1,\,\bar c,\,\bar c)$. The other reaction and movement parameters are fixed at $\eta_n=0.05$, $\eta_c=0.08$, $\alpha=0.2$, $\beta=0.1$,$f(n)=rn(1-n/\bar n)$ with $r=0.1$, $\bar n=8$, and $\bar c=16$.
}
    \label{fig:twopatch-simulation}
\end{figure}

We can gain further analytical intuition for the decrease in the masses of population and chemical concentration by looking at the conditions required to achieve an equilibrium state in Equations \eqref{eq:KS_graph_n} and \eqref{eq:KS_graph_n}. In particular, for any non-negative steady-state solutions, we can add together the righthand sides for the steady-state relation for population size to see that summing the equations over the two patches yields
\begin{equation} \label{eq:two-patch-n-steady-sum}
    f(n_1)+f(n_2)=0,
\end{equation}
and we can similarly sum the two steady-state equations for the chemical attractant to see that
\begin{equation} \label{eq:two-patch-c-steady-sum}
\beta(c_1+c_2)=\alpha(n_1+n_2).
\end{equation}
We note that the logistic reaction function considered in the simulations in Figure \ref{fig:twopatch-simulation} is one example of a strictly concave reaction function $f(n)$, so we can look to use this concavity property to understand the mass of steady states. In particular, for any strictly concave reaction function $f(n)$, we can apply Jensen's inequality to the righthand side of Equation \eqref{eq:two-patch-n-steady-sum} to estimate that 
\begin{equation*}
    0=\frac{f(n_1)+f(n_2)}{2} < f\left(\frac{n_1+n_2}{2}\right)
   \text{ for } n_1\ne n_2.
\end{equation*}
Since $f(s)>0$ if and only if $0<s<\bar n$, we can therefore deduce that any heterogeneous steady state (with $n_1 \ne n_2$) must have total population size satisfying
\begin{equation} \label{eq:n-sum-inequality-two-patch}
    n_1+n_2<2\bar n,
\end{equation}
and we can correspondingly use Equations \eqref{eq:two-patch-c-steady-sum} and Equation \eqref{eq:n-sum-inequality-two-patch} to see that the total chemical concentration satisfies 
\begin{equation}
    c_1+c_2=\frac{\alpha}{\beta}(n_1+n_2)<2\bar c.
\end{equation}
These estimates show that any spatially patterned uniform state will feature a lower total population size and chemical concentration than the uniform equilibrium state if the reaction term $f(n)$ is strictly concave for non-negative population sizes, regardless of the value of the chemotactic sensitivity $\chi$. We will show in Section \ref{sec:graphon-concave-mass-decay} that this decrease in mass in patterned states holds as well for the metapopulation Keller--Segel models for any number of discrete patches or in the graphon limit with infinitely many patches (see Proposition~\ref{prop:logistic-total-mass-deficit} in Section~\ref{sec:graphon-concave-mass-decay} for details).

%%%%%%%%%%%%%%%%%%%%%%%%%%%%%%%%%%%%%%%%%%%%%%%%%%%%%%%%%%
%%%%%%%%%%%%%%%%%%%%%%%%%%%%%%%%%%%%%%%%%%%%%%%%%%%%%%%%%%
\section{Graph limits for the Keller--Segel system}
\label{sec:graphon-limits}

This section establishes the graphon Keller--Segel system as a continuum limit of the discrete metapopulation dynamics on each fixed time interval $[0,T]$. Dense graph limit theory represents finite graphs by step kernels and formulates convergence in the cut metric up to measure-preserving relabelings \citep{lovasz2006denseLimits,lovasz2012large}. Further measure-theoretic and probabilistic background is provided in \cite{janson2010graphons} and \cite{bollobas2010cutMetric}.

For a limiting kernel $W:I^2\to\mathbb R$, the discrete system is embedded into the continuum via the cell-average step kernel $W^{(N)}$ on the uniform partition $\mathcal P_N$ (Definition~\ref{def:step-discretization}), which satisfies $W^{(N)}\to W$ in $L^p(I^2)$ whenever $1\le p<\infty$ and $W\in L^p(I^2)$ (Lemma~\ref{lem:PiN-Lp}). The convergence is proved using a stability--consistency framework combining local $L^p$ Lipschitz estimates of the nonlocal drift on bounded sets with Gr\"{o}nwall's inequality. For sampled graphs, Hoeffding's inequality provides moment bounds for centered row sums evaluated along the deterministic weighted-graph solution. Integrating these bounds in time and applying the stability estimate controls the error between the sampled and weighted solutions. The cut-distance limit follows from a cut-norm consistency estimate, under the condition that the kernels and initial data are compared using identical measure-preserving relabelings.

\paragraph{Main results.}
Fix $T>0$. Under the structural assumptions of Proposition~\ref{prop:graphon_bounds}\textup{(ii)}, the graphon and cell-average graph solutions satisfy $L^\infty$ bounds on $[0,T]$ that are uniform in $N$; the discrete bounds follow from Corollary~\ref{cor:discrete_bounds}. These bounds extend to the sampled graphs, whose normalized row sums are bounded by $1$, yielding bounds independent of the graph realization. For deterministic convergence with signed kernels, we retain an explicit boundedness hypothesis. Under the hypotheses of the respective results, we prove the following:

\begin{enumerate}[(R1)]
  \item \textbf{Deterministic limit in $L^p$.}
  For each $1\le p<\infty$, the step reconstruction of the $N$-node dynamics driven by $W^{(N)}$ converges to the graphon solution in $C([0,T];L^p(I)^2)$, provided that $W\in L^p(I^2)$ and the uniform integral and solution bounds hold (Theorem~\ref{thm:det-Lp-conv}).

  \item \textbf{Random graph limit in $L^p$.}
  For a symmetric kernel $0\le W\le1$, consider independent-edge graphs sampled from $W^{(N)}$ on the fixed partition $\mathcal P_N$. The reconstructed solutions converge in probability in $C([0,T];L^p(I)^2)$ for every $1\le p<\infty$. For any $\delta\in(0,1)$, the difference between the sampled and deterministic weighted-graph solutions is bounded by $C_{T,p}\sqrt{\log(2/\delta)/N}$ with probability at least $1-\delta$. The total error is bounded by the deterministic discretization error plus this fluctuation term; if $W$ is Lipschitz and $U_{\mathrm{in}}$ is suitably regular (e.g., $U_{\mathrm{in}}\in W^{1,p}(I)^2$), the total error rate is $O_{\pr}(N^{-1/2})$ (Theorem~\ref{thm:RG-limit} and Remark~\ref{rmk:rates}).

  \item \textbf{Cut-distance limit.}
  Let $K_N$ and $W$ be nonnegative kernels with uniformly bounded row and column integrals. If $K_N\to W$ in cut distance and the initial profiles converge in $L^1(I)^2$ under the same measure-preserving relabelings that align the kernels, then the relabeled solutions converge in $C([0,T];L^p(I)^2)$ for every $1\le p<\infty$. Convergence is established first in $L^1$, with higher exponents following via interpolation under uniform $L^\infty$ bounds (Theorem~\ref{thm:det-cut} and Remark~\ref{rmk:cut-initial}).
\end{enumerate}

\noindent
Statements \textup{(R1)}--\textup{(R3)} establish the primary graph limits. The deterministic and random limits in \textup{(R1)} and \textup{(R2)} adapt continuum-limit arguments for nonlinear diffusion on deterministic \citep{medvedev2014nonlinear} and $W$-random graphs \citep{medvedev2014Wrandom}. Related $L^2$ limits were obtained by \cite{kuehn2019power} for second-order oscillator equations rewritten as first-order systems. We extend this framework to coupled equations with signal-dependent transport and locally Lipschitz reactions via uniform finite-time bounds. In \textup{(R2)}, edge probabilities are defined by cell averaging on a fixed partition, and row-wise concentration yields a high-probability bound for the sampled-to-weighted solution error.

Statement \textup{(R3)} relates to cut-norm convergence results for reaction-diffusion equations with linear diffusion \citep{zhang2026graphon}. We establish an analogous finite-time limit for the chemotactic system via a cut-norm consistency estimate for the nonlinear transport term, with convergence holding under relabelings that align both kernels and initial profiles. Auxiliary results on pointwise discretization and sampled-graph cut-norm concentration are collected in Appendices~\ref{app:R4-pointwise} and~\ref{app:R5-cutconc}. These results are not used in the proofs of \textup{(R1)}--\textup{(R3)}.

\subsection{Preliminaries and notation}

We establish a unified functional-analytic framework for both the finite-dimensional graph ODEs and the continuum graphon equation. This requires specifying the kernel conventions, the nonlocal drift $\mathcal F_K$, and the step-function reconstruction that embeds the discrete $N$-node dynamics into the continuum setting.

Fix $T>0$. We adopt the state variables and product norms established in Section~\ref{sec:notation-conventions}. The deterministic limits assume real-valued kernels under uniform integral bounds, while the random-graph limits require $0\le W\le1$ a.e. For undirected graphs, $W$ is symmetric; consequently, the row and column integral bounds defined below coincide.

Since $|I|=1$, for $1\le p<\infty$ we have
\begin{align*}
    \|u\|_{L^p(I)}\le\|u\|_{L^\infty(I)},
    \qquad
    \|U\|_{L^p(I)^2}\le 2^{1/p}\|U\|_{L^\infty(I)^2}.
\end{align*}
The integral operators defined below act componentwise on vector-valued functions.

\paragraph{Integral operators, graphon Laplacian, and Schur bounds.}
For a measurable kernel $K$ on $I^2$ define
\begin{align*}
  (T_K u)(x):=\int_I K(x,y)u(y)\,dy,
  \qquad
  m_K(x):=\int_I K(x,y)\,dy.
\end{align*}
We use the non-positive graphon Laplacian convention
\begin{align*}
  (\mathcal L_K u)(x):=\int_I K(x,y)\big(u(y)-u(x)\big)\,dy
  =T_Ku(x)-m_K(x)u(x).
\end{align*}
Thus the diffusion term $\int_I K(x,y)(u(y)-u(x))\,dy$ is written as $\mathcal L_Ku$. We set
\begin{align*}
  M_x(|K|):=\operatorname*{ess\,sup}_{x\in I}\int_I |K(x,y)|\,dy,
  \qquad
  M_y(|K|):=\operatorname*{ess\,sup}_{y\in I}\int_I |K(x,y)|\,dx.
\end{align*}
If $M_x(|K|),M_y(|K|)<\infty$, then Schur's test and Riesz--Thorin interpolation yield, for all $p\in[1,\infty]$,
\begin{align*}
  \|T_{|K|}\|_{L^p\to L^p}\le M_x(|K|)^{1/p'}\,M_y(|K|)^{1/p},
  \qquad \frac1p+\frac1{p'}=1.
\end{align*}

The endpoint estimate $\|T_Ku\|_{L^\infty}\le M_x(|K|)\|u\|_{L^\infty}$ requires only $M_x(|K|)<\infty$; under this assumption, $\|m_K\|_{L^\infty}\le M_x(|K|)$ as well. If $K\in L^2(I^2)$, then $T_K$ is Hilbert--Schmidt and $\|T_K\|_{L^2\to L^2}\le\|K\|_{L^2(I^2)}$. If $K\in L^\infty(I^2)$, both integral bounds are at most $\|K\|_{L^\infty(I^2)}$, so the Schur estimate gives $\|T_{|K|}\|_{L^p\to L^p}\le\|K\|_{L^\infty(I^2)}$ for every $1\le p\le\infty$.

\paragraph{Model parameters and the exponential chemotactic preference.}
Let $W$ be a measurable kernel on $I^2$, and let $U=(n,c)$ with
$n,c:[0,T]\times I\to\mathbb R$.
We consider parameters $\eta_n,\eta_c,\alpha,\beta\ge 0$ and $\chi\in\mathbb R$.
Define the \emph{Exponential chemotaxis preference} function
\begin{align*}
  S(a,b):=\frac{e^{\chi a}}{e^{\chi a}+e^{\chi b}}
  =\frac1{1+e^{\chi(b-a)}},
\end{align*}
so that $0\le S\le 1$ and $S(a,b)+S(b,a)=1$ for all $a,b\in\mathbb R$.
Moreover, $S$ is globally Lipschitz with $|\partial_a S|=|\partial_b S|\le |\chi|/4$.

\medskip
\noindent\textbf{Interpretation of $S$.}
We interpret $S(a,b)$ as the probability (or preference) of choosing a \emph{destination}
with signal value $a$ given a \emph{source} with signal value $b$.
In particular, in the discrete model the transition probability from $k$ to $\ell$ is
$S(c_\ell,c_k)=1-S(c_k,c_\ell)$, whereas the gain term at node $k$ takes the form
$S(c_k,c_\ell)\,n_\ell$.

\medskip
Throughout Section~\ref{sec:graphon-limits} we assume that the reaction term $f:\mathbb R\to\mathbb R$ is locally Lipschitz.

\begin{definition}[Keller--Segel drift]\label{def:KS-drift}
For a measurable kernel $K$ on $I^2$ and $U=(n,c)$, define the (nonlocal) Keller--Segel drift $\mathcal F_K[U]:=(F_K^{(n)}(U),F_K^{(c)}(U))$ componentwise by
\begin{align}\label{eq:KS-conti}
\left\{
\begin{aligned}
F^{(n)}_K(U)(x)
&:=
\eta_n\int_I K(x,y)
\Big(
S\big(c(x),c(y)\big)n(y)
-\big(1-S\big(c(x),c(y)\big)\big)n(x)
\Big)\,dy
+ f\big(n(x)\big),\\
F^{(c)}_K(U)(x)
&:=
\frac{\eta_c}{2}\mathcal L_K c(x)+\alpha n(x)-\beta c(x)
=\frac{\eta_c}{2}\int_I K(x,y)\big(c(y)-c(x)\big)\,dy+\alpha n(x)-\beta c(x).
\end{aligned}
\right.
\end{align}
\end{definition}

Whenever $U\in L^\infty(I)^2$ and $K\in L^1(I^2)$, the integral terms in \eqref{eq:KS-conti}
are absolutely convergent for a.e.\ $x$ and define functions in $L^1(I)$ (componentwise).
If in addition $M_x(|K|)<\infty$, then $\mathcal F_K[U]\in L^\infty(I)^2$ and the pointwise formulation
$\partial_tU=\mathcal F_K[U]$ makes sense in $L^\infty$.

\paragraph{Graphon (continuum) system and mild formulation.}
The graphon Keller--Segel system associated with a kernel $W$ is the ODE on $L^\infty(I)^2$,
\begin{align*}
  \partial_t U(t,x)=\mathcal F_W[U(t)](x),
  \qquad
  U(0,x)=U_{\mathrm{in}}(x),
\end{align*}
equivalently the mild formulation
\begin{align*}
  U(t)=U_{\mathrm{in}}+\int_0^t \mathcal F_W[U(s)]\,ds,
  \quad t\in[0,T].
\end{align*}

The next proposition records the basic a priori information needed for all convergence arguments. Local well-posedness ensures that the continuum equation is a legitimate limiting dynamical system. The positivity and $L^\infty$ bounds are equally important: they provide deterministic bounds on the nonlinear exponential sensitivity factors and on the reaction term, so that the constants in the stability estimates below can be chosen uniformly along the approximating graph sequence.

\begin{proposition}[Graphon well-posedness and bounds]
\label{prop:graphon_bounds}
Assume $M_x(|W|)<\infty$ and let $f:\mathbb R\to\mathbb R$ be locally Lipschitz.
\begin{enumerate}
\item[\textup{(i)}]
For every $U_{\mathrm{in}}\in L^\infty(I)^2$, there is a unique maximal solution
$U\in C^1([0,t_{\max});L^\infty(I)^2)$ of
$\partial_tU=\mathcal F_W[U]$, $U(0)=U_{\mathrm{in}}$. If $t_{\max}<\infty$, then
\begin{align*}
\limsup_{t\uparrow t_{\max}}\|U(t)\|_{L^\infty(I)^2}=\infty.
\end{align*}

\item[\textup{(ii)}]
Assume in addition that $W\ge0$ a.e., $U_{\mathrm{in}}=(n_{\mathrm{in}},c_{\mathrm{in}})\ge0$
componentwise, and
\begin{align*}
f(r)=r h(r)\quad(r\ge0),
\qquad
h\in C^1([0,\infty)),
\qquad
\sup_{r\ge0}h(r)\le C_h.
\end{align*}
Writing $M_x=M_x(W)$, the solution is global and nonnegative. Moreover, for every $T>0$,
\begin{align*}
R_1(T)&:=\|n_{\mathrm{in}}\|_{L^\infty}e^{(\eta_nM_x+C_h)T},\\
R_2(T)&:=
\begin{cases}
\max\{\|c_{\mathrm{in}}\|_{L^\infty},\,\alpha R_1(T)/\beta\}, & \beta>0,\\[2pt]
\|c_{\mathrm{in}}\|_{L^\infty}+\alpha R_1(T)T, & \beta=0,
\end{cases}
\end{align*}
and
\begin{align*}
\sup_{t\in[0,T]}\|n(t)\|_{L^\infty}\le R_1(T),
\qquad
\sup_{t\in[0,T]}\|c(t)\|_{L^\infty}\le R_2(T).
\end{align*}
\end{enumerate}
\end{proposition}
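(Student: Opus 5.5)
For part (i) I would treat $\partial_tU=\mathcal F_W[U]$ as an ODE on the Banach space $X=L^\infty(I)^2$ and apply Picard--Lindel\"of. The key is that $\mathcal F_W$ is Lipschitz on bounded sets of $X$. Fix $R>0$ and $U,V\in X$ with $\|U\|_X,\|V\|_X\le R$. Since $S$ is globally Lipschitz with constant $|\chi|/4$ in each argument and $0\le S\le1$, the integrand in $F^{(n)}_W$ is bounded pointwise by
\begin{align*}
|W(x,y)|\Big(|n(y)-\tilde n(y)|+|n(x)-\tilde n(x)|+\tfrac{|\chi|}{2}R\big(|c(x)-\tilde c(x)|+|c(y)-\tilde c(y)|\big)\Big).
\end{align*}
Integrating in $y$ and using $M_x(|W|)<\infty$ gives an $L^\infty$ bound $C(R)\|U-V\|_X$. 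The reaction contributes $\mathrm{Lip}(f|_{[-R,R]})\|n-\tilde n\|_{L^\infty}$, and the $c$-equation is linear with operator norm at most $\eta_c M_x(|W|)+\alpha+\beta$. Measurability of $\mathcal F_W[U]$ follows from Fubini. Local existence, uniqueness, the $C^1$ regularity of the maximal solution, and the blow-up alternative then follow from the standard Banach-space ODE theory for locally Lipschitz vector fields.

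For part (ii) I would first prove nonnegativity, then derive a priori bounds that exclude blow-up. For the $n$-equation, I rewrite it as
\begin{align*}
\partial_t n(x)=\eta_n\int W\,S\,n(y)\,dy-a(t,x)\,n(x),
\end{align*}
where $a=\eta_n\int W(1-S)\,dy-h(n)$. Here I would first replace $f$ by the modified reaction $\tilde f(r)=r\,h(|r|)$, which agrees with $f$ on $r\ge0$. Then $a$ is bounded on compact time intervals of existence. For the $c$-equation I write $\partial_t c=\tfrac{\eta_c}{2}T_Wc-(\tfrac{\eta_c}{2}m_W+\beta)c+\alpha n$. The system then has the form $\partial_tU=B(t)U-a(t)U$, where $B(t)$ is a positivity-preserving operator (using $W\ge0$, $S\ge0$, $\alpha\ge0$) and $a$ is a bounded multiplier. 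Passing to the integrating factor $e^{\int a}$, the Picard iterates preserve the nonnegative cone, so $U\ge0$. Uniqueness then identifies this solution with the original one.

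With positivity in hand, I bound $n$ using the gain term:
\begin{align*}
\partial_t n\le \eta_n\int W\,n(y)\,dy+C_h n\le(\eta_nM_x+C_h)\|n(t)\|_{L^\infty}.
\end{align*}
Integrating gives $\|n(t)\|_\infty\le\|n_{\mathrm{in}}\|_\infty+(\eta_nM_x+C_h)\int_0^t\|n\|_\infty$, and Gr\"onwall yields $R_1(T)$. For $c$, I use a comparison/maximum-principle argument. Let $M(t)=\max\{\|c_{\mathrm{in}}\|_\infty,\alpha R_1/\beta\}$. Since $\mathcal L_W c\le m_W(M-c)$ when $c\le M$, the function $(c-M)_+$ satisfies $\partial_t(c-M)_+\le \tfrac{\eta_c}{2}\int W(c(y)-M)_+\,dy$ in the mild sense, with zero initial datum. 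Gr\"onwall in $L^\infty$ then forces $(c-M)_+\equiv0$. When $\beta=0$, I integrate $\partial_t c\le \tfrac{\eta_c}{2}T_W(c-\|c\|_\infty)+\alpha R_1$ instead, which gives the linear-growth bound. Together with $c\ge0$, these bounds exclude blow-up, so $t_{\max}=\infty$.

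I expect the main obstacle to be the rigorous positivity step together with the $c$-comparison in $L^\infty$ without pointwise time derivatives. I would handle both through the mild formulation for each fixed $x$: for a.e. $x$ the map $t\mapsto U(t,x)$ is absolutely continuous, so scalar ODE comparison can be applied pointwise, with the nonlocal terms controlled by the essential supremum.
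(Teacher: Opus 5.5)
Part (i), the Gr\"onwall bound for $n$, and your positive-part barrier argument for $c$ when $\beta>0$ follow the paper's proof. The paper compares $c$ with the solution $\bar c(t)$ of $\dot{\bar c}=\alpha R_1(T)-\beta\bar c$, $\bar c(0)=\|c_{\mathrm{in}}\|_{L^\infty}$, rather than with your constant barrier. On $\{c>\bar c\}$, however, its estimate is exactly the one you write for $(c-M)_+$, and it closes with the same integral-comparison and Gr\"onwall step.

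The genuine difference is positivity. The paper works directly with negative parts. On $\{n<0\}$ it uses $S\ge0$, $-(1-S)n(x)\ge0$, $f(0)=0$ and a Lipschitz constant $L_f$ of $f$ on $[-R,R]$ to get $\partial_t n\ge-(\eta_nM_x+L_f)\|n^-(t)\|_{L^\infty}$. So $\|n^-\|_{L^\infty}$ satisfies a homogeneous integral Gr\"onwall inequality and vanishes. With $n\ge0$ in hand, the same estimate on $\{c<0\}$ gives $c\ge0$.

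Your route is also correct: freeze $S$ and $h$ along the solution, view $U$ as solving a linear equation $\partial_tV=(B(t)-a(t))V$ with cone-preserving $B$, conjugate by $e^{\int a}$, and use positivity of the Picard iterates plus linear uniqueness. It handles the coupled system in one stroke and exhibits positivity as cone invariance. It is heavier, though. You must check that the frozen coefficients are regular enough in time (your $\tilde f(r)=rh(|r|)$ is what makes the reaction a continuous bounded multiplier). You must invoke uniqueness for the linear problem. You must then identify the maximal solutions of the modified and original problems, including equality of their existence times via the blow-up alternative. The paper needs only $f(0)=0$ and local Lipschitz continuity, with no modification of $f$.

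The step that fails as written is the case $\beta=0$. Since
\[
\tfrac{\eta_c}{2}\mathcal L_Wc(x)=\tfrac{\eta_c}{2}T_W\bigl(c-\|c\|_{L^\infty}\bigr)(x)+\tfrac{\eta_c}{2}m_W(x)\bigl(\|c\|_{L^\infty}-c(x)\bigr)
\]
and the last term is nonnegative, your inequality $\partial_tc\le\tfrac{\eta_c}{2}T_W(c-\|c\|_{L^\infty})+\alpha R_1$ goes the wrong way wherever $c(x)$ lies well below its essential supremum. What you wrote is the formal maximum-principle bound at a maximizing point, and such a point need not exist in $L^\infty$.

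The fix is your own positive-part argument with a moving barrier $M(t)=\|c_{\mathrm{in}}\|_{L^\infty}+\alpha R_1(T)t$. On $\{c(t,x)>M(t)\}$ one has $c(y)-c(x)\le(c(y)-M(t))_+$ and $\alpha n(x)-\alpha R_1(T)\le0$. Hence $\partial_t(c-M)\le\tfrac{\eta_c}{2}\int_IW(x,y)\,(c(y)-M(t))_+\,dy$ there, and Gr\"onwall gives $(c-M)_+\equiv0$. This is precisely the paper's barrier $\bar c(t)$ with $\beta=0$; its ODE barrier treats both cases at once.

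One caveat you share with the paper concerns the step $C_hn(x)\le C_h\|n(t)\|_{L^\infty}$. That step, and the integral Gr\"onwall that follows, need $C_h\ge0$. The statement should be read that way, which costs nothing since $C_h$ may be enlarged. Indeed, if $\eta_nM_x+C_h<0$, the claimed bound $\sup_{[0,T]}\|n\|_{L^\infty}\le R_1(T)$ already fails at $t=0$.
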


\begin{proof}
See Appendix~\ref{app:graphon-bounds}.
\end{proof}

We now pass from the continuum kernel to finite graphs. The following step projection is the canonical deterministic way to represent a graphon on an $N$-node network. It is also the discretization for which the finite ODEs can be written exactly as a graphon equation with a step kernel. This exact reconstruction is what lets us avoid any formal limiting argument in the proof of Theorem~\ref{thm:det-Lp-conv}: after reconstruction, both the finite and continuum solutions live in the same Banach space and differ only through their initial data and kernels.

\begin{definition}[Cell-average discretization]
\label{def:step-discretization}
Let $\mathcal P_N=\{I_k\}_{k=1}^N$ be the uniform partition of $I$, where
$I_k=[(k-1)/N,k/N)$.
Define $\Pi_N:L^p(I)\to L^p(I)$ by cell averages,
\begin{align*}
  (\Pi_N u)(x):=\frac1{|I_k|}\int_{I_k}u(y)\,dy,
  \quad x\in I_k,
\end{align*}

For a kernel $W\in L^1(I^2)$ define its step-kernel approximation
\begin{align*}
  W^{(N)}:=(\Pi_N\otimes \Pi_N)W,
  \quad
  W^{(N)}(x,y)
  :=\frac1{|I_k||I_\ell|}\int_{I_k}\int_{I_\ell}W(u,v)\,du\,dv,
  \quad (x,y)\in I_k\times I_\ell.
\end{align*}
\end{definition}

\begin{lemma}[Cell-average convergence]
\label{lem:PiN-Lp}
For every $1\le p<\infty$,
\begin{enumerate}
\item[\textup{(i)}]
$\|\Pi_N u-u\|_{L^p(I)}\to0$ for all $u\in L^p(I)$,

\item[\textup{(ii)}]
$\|W^{(N)}-W\|_{L^p(I^2)}\to 0$ for all $W\in L^p(I^2)$.
\end{enumerate}
\end{lemma}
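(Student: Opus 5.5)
The plan is the standard density-plus-uniform-boundedness argument. The key observation is that $\Pi_N$ is a contraction on $L^p(I)$ for every $1\le p<\infty$, and $\Pi_N\otimes\Pi_N$ is a contraction on $L^p(I^2)$. Indeed, on each cell $I_k$ Jensen's (or Hölder's) inequality gives
\[
\Big|\frac1{|I_k|}\int_{I_k}u\Big|^p\le\frac1{|I_k|}\int_{I_k}|u|^p .
\]
Integrating over $I_k$ and summing over $k$ yields $\|\Pi_Nu\|_{L^p(I)}\le\|u\|_{L^p(I)}$. The same computation on the product cells $I_k\times I_\ell$, each of measure $N^{-2}$, gives $\|W^{(N)}\|_{L^p(I^2)}\le\|W\|_{L^p(I^2)}$. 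Note that $W^{(N)}$ is exactly the conditional expectation of $W$ onto the $\sigma$-algebra generated by the product partition $\{I_k\times I_\ell\}$.

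Next I would prove convergence on a dense class, namely continuous functions $\phi\in C(I)$ (respectively $\Phi\in C(I^2)$), which are dense in $L^p$ for $p<\infty$. Continuity on a compact set implies uniform continuity with modulus $\omega$. For $x\in I_k$ we then have $|\Pi_N\phi(x)-\phi(x)|\le\omega(1/N)$, and for $(x,y)\in I_k\times I_\ell$ we have $|\Phi^{(N)}(x,y)-\Phi(x,y)|\le\omega(\sqrt2/N)$. Hence both errors tend to $0$ uniformly, and therefore in $L^p$ since the domains have finite measure.

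Finally I would run an $\varepsilon/3$ argument. Given $u\in L^p(I)$ and $\varepsilon>0$, choose $\phi$ continuous with $\|u-\phi\|_{L^p}<\varepsilon$. Then
\[
\|\Pi_Nu-u\|_{L^p}\le\|\Pi_N(u-\phi)\|_{L^p}+\|\Pi_N\phi-\phi\|_{L^p}+\|\phi-u\|_{L^p}\le 2\varepsilon+\|\Pi_N\phi-\phi\|_{L^p}.
\]
Letting $N\to\infty$ gives $\limsup_N\|\Pi_Nu-u\|_{L^p}\le 2\varepsilon$, which proves (i). The identical argument with $\Pi_N\otimes\Pi_N$ on $I^2$ proves (ii).

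No step is a real obstacle; the only point needing care is the contraction estimate for the two-dimensional averaging. One must check that the normalization $1/(|I_k||I_\ell|)$ is matched by the cell measure $|I_k\times I_\ell|$. Alternatively, one can note that $\Pi_N\otimes\Pi_N$ acts as a conditional expectation and invoke its $L^p$-contractivity directly. This also explains why $p=\infty$ is excluded: continuous functions are not dense in $L^\infty$, and the statement indeed fails for discontinuous $L^\infty$ functions.
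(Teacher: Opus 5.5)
Your proposal is correct and follows essentially the same route as the paper: Jensen's inequality makes $\Pi_N$ and $\Pi_N\otimes\Pi_N$ contractions on $L^p$, uniform continuity gives uniform (hence $L^p$) convergence for continuous functions, and density with the triangle inequality extends this to all of $L^p$. Your added details — the moduli $\omega(1/N)$ and $\omega(\sqrt2/N)$ and the conditional-expectation interpretation — spell out what the paper's short proof leaves implicit.
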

\begin{proof}
By Jensen's inequality, both $\Pi_N$ and $\Pi_N\otimes\Pi_N$ are $L^p$-contractions. For continuous functions, uniform continuity on the compact domains $I$ and $I^2$ guarantees uniform convergence of the step approximations. Since continuous functions are dense in $L^p$, the triangle inequality and the contraction property extend this convergence to all $L^p$ functions, proving both claims.
\end{proof}

\begin{remark}[The $L^\infty$ case]\label{rmk:Linfty-step}
In general, $W^{(N)}\to W$ need not hold in $L^\infty(I^2)$.
If $W$ is uniformly continuous on $I^2$, then $\|W^{(N)}-W\|_{L^\infty(I^2)}\to 0$.
In particular, if $W$ is Lipschitz with constant $L$, then
$\|W^{(N)}-W\|_{L^\infty(I^2)}\le L\sqrt2\,N^{-1}$.
\end{remark}

\begin{remark}[Cell averaging versus point sampling]\label{rmk:cell-vs-point}
The step kernel $W^{(N)}=(\Pi_N\otimes\Pi_N)W$ is defined by cell averages and therefore depends only on $W$ as an $L^p(I^2)$-function, that is, on its a.e.\ equivalence class. In particular, it does not require selecting a pointwise representative of $W$.

A deterministic pointwise discretization $W(x_k^{(N)},x_\ell^{(N)})$ on a fixed grid, on the other hand, can depend on how $W$ is defined on a null set and can fail to converge without additional regularity. Appendix~\ref{app:R4-pointwise} gives the corresponding convergence result when $W$ admits a uniformly continuous representative.
\end{remark}

The next step is to identify the finite graph dynamics associated with the step kernel. The factor $1/N$ in \eqref{eq:KS-discrete} is the dense-graph scaling: each node interacts with $O(N)$ possible neighbors, but the total input remains of order one. With this scaling, the sums in the finite system are exactly Riemann-type integrals against the step kernel $W^{(N)}$.

\paragraph{Discrete step-graph dynamics and step reconstruction.}
We now define the discrete $N$-node Keller--Segel system associated with $W^{(N)}$.
Set
\begin{align*}
  \mathsf w^{(N)}_{k\ell}:=N^2\int_{I_k}\int_{I_\ell}W(u,v)\,du\,dv,
  \quad 1\le k,\ell\le N,
\end{align*}
so that $W^{(N)}(x,y)=\mathsf w^{(N)}_{k\ell}$ for $(x,y)\in I_k\times I_\ell$.
The discrete dynamics are
\begin{align}\label{eq:KS-discrete}
\left\{
\begin{aligned}
\dot n^{(N)}_k
&=
\eta_n \frac1N
\sum_{\ell=1}^N
\mathsf{w}^{(N)}_{k\ell}
\Big(
S(c^{(N)}_k,c^{(N)}_\ell)n^{(N)}_\ell
-\big(1-S(c^{(N)}_k,c^{(N)}_\ell)\big)n^{(N)}_k
\Big)
+f\big(n^{(N)}_k\big),
\\[1ex]
\dot c^{(N)}_k
&=
\frac{\eta_c}{2} \frac1N
\sum_{\ell=1}^N
\mathsf{w}^{(N)}_{k\ell}\big(c^{(N)}_\ell-c^{(N)}_k\big)
+\alpha n^{(N)}_k-\beta c^{(N)}_k,
\qquad k=1,\dots,N.
\end{aligned}
\right.
\end{align}

We initialize by cell averages of $U_{\mathrm{in}}=(n_{\mathrm{in}},c_{\mathrm{in}})$:
\begin{align*}
n_k^{(N)}(0)=N\int_{I_k} n_{\mathrm{in}}(x)\,dx,
\qquad
c_k^{(N)}(0)=N\int_{I_k} c_{\mathrm{in}}(x)\,dx.
\end{align*}
We identify vectors $v=(v_k)_{k=1}^N$ with step functions on $I$ by setting $v(x):=v_k$ for $x\in I_k$;
under this identification $\Pi_N v=v$.
In particular, $\Pi_N U^{(N)}(t)$ denotes the step reconstruction of the discrete solution.

\begin{lemma}\label{lem:recon}
The step reconstruction $\Pi_N U^{(N)}$ satisfies
\begin{align*}
\Pi_N U^{(N)}(t)
=\Pi_NU_{\mathrm{in}}
+\int_0^{t}\mathcal F_{W^{(N)}}\!\big[\Pi_N U^{(N)}(s)\big]\,ds.
\end{align*}
\end{lemma}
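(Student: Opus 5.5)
The plan is to show that, for a step function $V$ on $\mathcal P_N$, the drift $\mathcal F_{W^{(N)}}[V]$ is itself a step function on $\mathcal P_N$. Its value on $I_k$ should be exactly the right-hand side of \eqref{eq:KS-discrete} evaluated at the vector $(V_k)$. Integrating in time then gives the claim.

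First I treat the initial data. By construction $n_k^{(N)}(0)=N\int_{I_k}n_{\mathrm{in}}=(\Pi_N n_{\mathrm{in}})(x)$ for $x\in I_k$, and likewise for $c$. Hence $\Pi_N U^{(N)}(0)=\Pi_N U_{\mathrm{in}}$. Under the identification of vectors with step functions we also have $\Pi_N U^{(N)}(t)=U^{(N)}(t)$, viewed as the step function taking value $(n_k^{(N)}(t),c_k^{(N)}(t))$ on $I_k$.

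Next I compute the drift. Fix $x\in I_k$ and let $V=(n,c)$ be a step function with values $(n_\ell,c_\ell)$ on $I_\ell$. Split $\int_I=\sum_\ell\int_{I_\ell}$. On $I_k\times I_\ell$ the kernel equals $W^{(N)}(x,y)=\mathsf w^{(N)}_{k\ell}$. The integrand $S(c(x),c(y))n(y)-(1-S(c(x),c(y)))n(x)$ is constant there, equal to $S(c_k,c_\ell)n_\ell-(1-S(c_k,c_\ell))n_k$. Since $|I_\ell|=1/N$, we obtain
\[
F^{(n)}_{W^{(N)}}(V)(x)=\eta_n\frac1N\sum_{\ell=1}^N\mathsf w^{(N)}_{k\ell}\Big(S(c_k,c_\ell)n_\ell-\big(1-S(c_k,c_\ell)\big)n_k\Big)+f(n_k).
\]
The same computation gives
\[
F^{(c)}_{W^{(N)}}(V)(x)=\frac{\eta_c}{2}\frac1N\sum_\ell \mathsf w^{(N)}_{k\ell}(c_\ell-c_k)+\alpha n_k-\beta c_k.
\]
This step is exact, not an approximation, because all the integrands are constant on each cell. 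The reaction and linear terms are evaluated pointwise at constants, so they also take the value $f(n_k)$, $\alpha n_k$ and $\beta c_k$ on $I_k$. All integrals are finite: $W\in L^1$ implies each $\mathsf w^{(N)}_{k\ell}$ is finite, so the step kernel is bounded.

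Finally I integrate in time. The discrete system \eqref{eq:KS-discrete} is a finite-dimensional ODE with locally Lipschitz right-hand side, since $S$ is smooth and $f$ is locally Lipschitz. On its existence interval, $U^{(N)}$ is therefore $C^1$ and satisfies $U^{(N)}(t)=U^{(N)}(0)+\int_0^t(\text{RHS})\,ds$ componentwise. By the drift computation, the right-hand side at time $s$, viewed as a step function, is $\mathcal F_{W^{(N)}}[\Pi_NU^{(N)}(s)]$. The integral of a step-function-valued continuous map is the step function of the integrals, so the Bochner integral in $L^\infty(I)^2$ agrees cellwise with the ODE integrals. Combining this with the first step gives the identity.

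I expect no real obstacle here. The only point needing care is bookkeeping: the identity holds for every $t$ in the interval on which the discrete solution exists, which covers $[0,T]$ under the bounds assumed later, and it holds for every $x$ in each cell, not merely almost every $x$, since step functions are defined pointwise.
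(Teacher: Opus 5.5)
Your proposal is correct and follows the paper's argument. The paper's proof is exactly the observation that $W^{(N)}$ and $\Pi_NU^{(N)}$ are constant on each $I_k\times I_\ell$, so every integral in $\mathcal F_{W^{(N)}}[\Pi_NU^{(N)}]$ equals the corresponding sum in \eqref{eq:KS-discrete}; your checks of the initial data, of the time integration within the finite-dimensional step-function subspace, and of the restriction to the discrete existence interval spell out what the paper leaves implicit.
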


\begin{proof}
On $I_k\times I_\ell$, both $W^{(N)}$ and $\Pi_N U^{(N)}$ are constant, so each integral in $\mathcal F_{W^{(N)}}[\Pi_N U^{(N)}]$ equals the corresponding sum in \eqref{eq:KS-discrete}.
\end{proof}

The reconstruction identifies the discrete system with the graphon equation on the space of step functions. The continuum well-posedness result therefore also provides existence, positivity, and bounds for the discrete solutions, with constants independent of $N$.

\begin{corollary}[Discrete well-posedness and uniform bounds]
\label{cor:discrete_bounds}
Under the hypotheses of Proposition~\ref{prop:graphon_bounds}\textup{(ii)}, the system \eqref{eq:KS-discrete} with cell-average initial data has a unique global nonnegative $C^1$ solution for every $N$. With $R_1(T)$ and $R_2(T)$ from that proposition, for every $T>0$,
\begin{align*}
\sup_{t\in[0,T]}\max_k n_k^{(N)}(t)\le R_1(T),
\qquad
\sup_{t\in[0,T]}\max_k c_k^{(N)}(t)\le R_2(T).
\end{align*}
\end{corollary}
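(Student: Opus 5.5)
The plan is to derive Corollary~\ref{cor:discrete_bounds} directly from Proposition~\ref{prop:graphon_bounds}, using the exact reconstruction of Lemma~\ref{lem:recon}. The key observation is that the graphon equation with kernel $W^{(N)}$ leaves the finite-dimensional subspace $V_N\subset L^\infty(I)^2$ of $\mathcal P_N$-step functions invariant. If $U\in V_N$ and the kernel is constant on every cell $I_k\times I_\ell$, then $\mathcal F_{W^{(N)}}[U]$ is again a step function: on $I_k$ it equals the right-hand side of \eqref{eq:KS-discrete}.

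\textbf{Step 1: the hypotheses of Proposition~\ref{prop:graphon_bounds}(ii) pass to the step kernel.}
\begin{itemize}
\item $W\ge0$ a.e.\ gives $\mathsf w^{(N)}_{k\ell}\ge0$.
\item Averaging preserves the row-integral bound, $M_x(W^{(N)})\le M_x(W)$. Indeed, for $x\in I_k$,
\[
\int_I W^{(N)}(x,y)\,dy=N\int_{I_k}m_W(u)\,du\le M_x(W).
\]
\item Cell averages of nonnegative data are nonnegative, and $\|\Pi_N U_{\mathrm{in}}\|_{L^\infty}\le\|U_{\mathrm{in}}\|_{L^\infty}$ componentwise.
\end{itemize}
Applying Proposition~\ref{prop:graphon_bounds}(ii) with kernel $W^{(N)}$ and initial datum $\Pi_N U_{\mathrm{in}}$ then yields a unique global nonnegative solution $\tilde U\in C^1([0,\infty);L^\infty(I)^2)$. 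Its bounds are $R_1$ and $R_2$ built from $M_x(W^{(N)})$ and $\|\Pi_N U_{\mathrm{in}}\|_{L^\infty}$. Since $R_1$ and $R_2$ are monotone in these quantities, they are dominated by $R_1(T)$ and $R_2(T)$ computed from $W$ and $U_{\mathrm{in}}$.

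\textbf{Step 2: identify $\tilde U$ with the discrete solution.}
\begin{itemize}
\item The ODE \eqref{eq:KS-discrete} has a locally Lipschitz right-hand side, because $S$ is smooth and $f$ is locally Lipschitz. Picard--Lindelöf therefore gives a unique maximal $C^1$ solution $U^{(N)}$.
\item By Lemma~\ref{lem:recon}, its step reconstruction solves the mild graphon equation with kernel $W^{(N)}$ and datum $\Pi_N U_{\mathrm{in}}$.
\item Uniqueness in Proposition~\ref{prop:graphon_bounds}(i) gives $\Pi_N U^{(N)}=\tilde U$ on the discrete existence interval.
\item The $L^\infty$ bounds on $\tilde U$ exclude blow-up of the finite ODE. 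Hence the discrete solution is global, nonnegative, and satisfies $\max_k n_k^{(N)}(t)=\|\tilde n(t)\|_{L^\infty}\le R_1(T)$, and likewise for $c$.
\end{itemize}

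The only delicate point is making sure the constants are genuinely independent of $N$, which is the monotonicity check in Step 1. For $M_x$ this is averaging of the row-degree function. The essential-supremum bound for cell averages is immediate. Everything else is a direct transfer through the reconstruction lemma.
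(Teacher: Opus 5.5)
Your proposal is correct and follows the same route as the paper. You transfer the hypotheses of Proposition~\ref{prop:graphon_bounds}\textup{(ii)} to $W^{(N)}$ and $\Pi_N U_{\mathrm{in}}$ via $M_x(W^{(N)})\le M_x(W)$ and the $L^\infty$-contraction of $\Pi_N$, then use the invariance of $\mathcal P_N$-step functions together with Lemma~\ref{lem:recon} to identify the discrete solution with the graphon solution. Your explicit uniqueness and blow-up identification, and the monotonicity check of $R_1,R_2$, spell out steps the paper leaves implicit.
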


\begin{proof}
Cell averaging preserves nonnegativity and satisfies
\begin{align*}
M_x(W^{(N)})\le M_x(W),
\qquad
\|\Pi_Nu\|_{L^\infty}\le\|u\|_{L^\infty}
\quad (u\in L^\infty(I)).
\end{align*}
The drift $\mathcal F_{W^{(N)}}$ maps the space of $\mathcal P_N$-step functions into itself, and its restriction is the discrete system by Lemma~\ref{lem:recon}. Applying Proposition~\ref{prop:graphon_bounds}\textup{(ii)} to $W^{(N)}$ and $\Pi_NU_{\mathrm{in}}$ gives global existence, uniqueness, nonnegativity, and the stated bounds.
\end{proof}

The bounds $R_1(T)$ and $R_2(T)$ established above depend on $T$ and grow exponentially in time. However, for logistic population dynamics, the concavity of the reaction term ensures that solutions remain uniformly bounded independent of time. This global boundedness holds uniformly across the continuous and discrete settings regardless of the chemotactic sensitivity.

\begin{corollary}[Uniform bounds for logistic growth]
\label{cor:logistic-uniform-bounds}
Assume the hypotheses of Proposition~\ref{prop:graphon_bounds}~\textup{(ii)}, with $f(n)=rn(1-n/\bar n)$, $r,\bar n>0$, and $\beta>0$. Set
\begin{align*}
    R_n:=\max\Big\{\|n_{\mathrm{in}}\|_{L^\infty},
                 \bar n\Big(1+\frac{\eta_nM_x(W)}r\Big)\Big\},
                 \qquad
    R_c:=\max\Big\{\|c_{\mathrm{in}}\|_{L^\infty},
                            \frac{\alpha R_n}{\beta}\Big\}.
\end{align*}
Then $\|n(t)\|_{L^\infty}\le R_n$ and $\|c(t)\|_{L^\infty}\le R_c$ for all $t\ge0$.
The same bounds hold for the discrete solutions with cell-average weights and initial data, uniformly in $N$. They also apply to the sampled graphs of Section~\ref{ssec:random-limit}, substituting the uniform row-sum bound $1$ for $M_x(W)$.
\end{corollary}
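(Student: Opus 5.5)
The plan is a comparison argument based on the pointwise ODE structure of the $n$-equation, followed by the linear comparison for $c$ already used in Proposition~\ref{prop:graphon_bounds}\textup{(ii)}. By that proposition the solution is global and nonnegative, and $n(t)\in L^\infty$ for every $t$. Since $S\in[0,1]$ and $W\ge0$, the gain term satisfies
\begin{align*}
\eta_n\int_I W(x,y)S(c(x),c(y))n(y)\,dy\le \eta_nM_x(W)\|n(t)\|_{L^\infty}.
\end{align*}
The loss term $-\eta_n\int_I W(1-S)\,dy\;n(x)$ is nonpositive because $n\ge0$. Hence, for a.e.\ $x$,
\begin{align*}
\partial_t n(t,x)\le \eta_nM_x\,m(t)+rn(t,x)\Big(1-\frac{n(t,x)}{\bar n}\Big),\qquad m(t):=\|n(t)\|_{L^\infty}.
\end{align*}

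The key step is to show that the level $R_n$ is never exceeded. Set $g(s):=\eta_nM_x s+rs(1-s/\bar n)$. At $s=R_n\ge\bar n(1+\eta_nM_x/r)$ we have
\begin{align*}
g(R_n)=R_n\Big(\eta_nM_x+r-\frac{rR_n}{\bar n}\Big)\le0.
\end{align*}
Moreover, $g(s)<0$ for $s>\bar n(1+\eta_nM_x/r)$. To avoid differentiability issues of $m(t)$, I will argue by contradiction with a strict margin. Fix $\varepsilon>0$ and let $t_0$ be the first time at which $m(t)\ge R_n+\varepsilon$. Continuity of $t\mapsto U(t)$ in $L^\infty$ ensures that $t_0>0$ and $m(t)\le R_n+\varepsilon$ on $[0,t_0]$. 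At points $x$ where $n(t,x)$ lies close to $R_n+\varepsilon$, the right-hand side is at most
\begin{align*}
\eta_nM_x(R_n+\varepsilon)+rn\Big(1-\frac{n}{\bar n}\Big),
\end{align*}
which is strictly negative for $n$ near $R_n+\varepsilon$, since $n(1-n/\bar n)$ is decreasing there. So $n$ cannot cross into the level set $\{n>R_n+\varepsilon-\delta\}$.

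I expect this step to be the main obstacle, because the ess sup in $x$ must be handled carefully. A cleaner route is to work on the mild formulation at fixed $x$. Each $n(\cdot,x)$ solves an ODE whose forcing is bounded by $\eta_nM_x\sup_{[0,t_0]}m$, so scalar ODE comparison applies pointwise. Taking the ess sup over $x$ then yields $m(t)<R_n+\varepsilon$ on $[0,t_0]$, which contradicts the definition of $t_0$. Letting $\varepsilon\to0$ gives $\|n(t)\|_{L^\infty}\le R_n$ for all $t\ge0$.

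For $c$, the solution is nonnegative and $\mathcal L_Wc(x)\le M_x(\|c\|_{L^\infty}-c(x))$. A pointwise comparison gives
\begin{align*}
\partial_tc\le \frac{\eta_c}{2}M_x\big(\|c\|_{L^\infty}-c\big)+\alpha R_n-\beta c.
\end{align*}
The same first-exceedance argument at level $R_c+\varepsilon$ then closes. At points where $c$ is near the supremum, the diffusion contribution is nearly $0$ and $\alpha R_n-\beta c<0$. This is exactly the argument already used for $R_2(T)$ in the $\beta>0$ case, with $R_1(T)$ replaced by $R_n$. For the discrete solutions, apply the result to the step kernel $W^{(N)}$ with $\Pi_NU_{\mathrm{in}}$, using $M_x(W^{(N)})\le M_x(W)$ and $\|\Pi_Nu\|_{L^\infty}\le\|u\|_{L^\infty}$ as in Corollary~\ref{cor:discrete_bounds}. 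This shows that the constants do not depend on $N$. For sampled graphs, the same argument goes through verbatim with the normalized row sums bounded by $1$ in place of $M_x(W)$.
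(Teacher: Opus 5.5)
Your proof is correct, but it takes a different route from the paper's. The paper uses no margin $\varepsilon$ and no first-exceedance time; it works directly with the excess $(n-R_n)^+$. On $\{n>R_n\}$ it splits $n(y)\le R_n+(n(y)-R_n)^+$ in the gain term, which bounds the gain by $\eta_nM_x(W)R_n+\eta_nM_x(W)\|(n-R_n)^+\|_{L^\infty}$. Since $f$ is decreasing on $[R_n,\infty)\subset[\bar n/2,\infty)$, it also has $f(n)\le f(R_n)\le-\eta_nM_x(W)R_n$. Together these give the homogeneous linear inequality $\partial_t(n-R_n)\le\eta_nM_x(W)\|(n-R_n)^+\|_{L^\infty}$. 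Lemma~\ref{lem:app-integral-comparison} and Gr\"onwall, with zero initial excess, then give $n\le R_n$, and the $c$-bound is the same computation using $\alpha R_n-\beta R_c\le0$.

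You instead freeze the whole nonlocal gain at the a priori level $\eta_nM_x(R_n+\varepsilon)$ and run a nonlinear scalar comparison at each $x$. That is why you need the strict margin and the continuity of $t\mapsto\|n(t)\|_{L^\infty}$. You should state explicitly the fact that closes your bootstrap. The frozen field $\phi(s)=\eta_nM_x(R_n+\varepsilon)+rs(1-s/\bar n)$ satisfies $\phi(R_n+\varepsilon)=g(R_n+\varepsilon)<0$, so its positive root $s^*$ lies strictly below $R_n+\varepsilon$. By monotonicity of the flow in the initial datum, every comparison solution starting at or below $R_n$ then stays below $\max\{R_n,s^*\}<R_n+\varepsilon$, uniformly in $x$. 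For $c$, the corresponding fact is $\alpha R_n<\beta(R_c+\varepsilon)$, which puts the equilibrium of the frozen linear comparison ODE below $R_c+\varepsilon$.

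The paper's route is shorter and reuses the appendix lemma verbatim. Yours is more elementary and closer to an invariant-region argument, at the cost of the $\varepsilon$-bootstrap. Both rely on the same fact that $t\mapsto n(t,x)$ is absolutely continuous for a.e.\ $x$.

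Two minor points do not affect correctness. Your first ``level set'' sketch is imprecise as written and can be dropped in favor of the pointwise comparison. Your $c$-argument is also not literally the one in Proposition~\ref{prop:graphon_bounds}, which compares with an explicit solution $\bar c(t)$.
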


\begin{proof}
By definition, $R_n\ge\bar n$, which implies that $f$ is decreasing on $[R_n,\infty)$ and $\eta_nM_x(W)R_n+f(R_n)\le0$. On the set $\{n>R_n\}$, the nonnegativity of $n$ and the bound $0\le S\le1$ give
\begin{align*}
    \partial_t(n-R_n)
    \le \eta_nM_x(W)\|(n-R_n)^+\|_{L^\infty}.
\end{align*}
Lemma~\ref{lem:app-integral-comparison} and Gr\"onwall's inequality imply $n\le R_n$. Since $\alpha R_n-\beta R_c\le0$, it follows that on $\{c>R_c\}$,
\begin{align*}
    \partial_t(c-R_c)
    \le\frac{\eta_cM_x(W)}2\|(c-R_c)^+\|_{L^\infty}.
\end{align*}
The same comparison principle gives $c\le R_c$. The discrete bounds follow identically by replacing spatial integrals with normalized sums. For the sampled graphs, all normalized row sums are bounded by $1$.
\end{proof}

%%%%%%%%%%%%%%%%%%%%%%%%%%%%%%%%%%%%%%%%%%%%%%%%%%%%%%%%%%
\subsection{Deterministic finite graph limit in \texorpdfstring{$L^p$}{Lp}}
\label{ssec:Det_lim}

We first prove the deterministic graphon limit. This result establishes that when the graph is formed using the canonical weighted step approximation of $W$, the time-evolving solution of the $N$-node Keller--Segel model converges to the continuous graphon solution in $C([0,T];L^p(I)^2)$ for every $1\le p<\infty$. This represents the cleanest version of the continuum limit: it involves no random sampling noise, meaning the only sources of error arise from approximating the initial data via cell averages and approximating the kernel by $W^{(N)}$.

The proof follows a stability--consistency scheme. The stability part is Lemma~\ref{lem:Lip-Lp}: on an $L^\infty$-bounded set of states, the nonlinear drift $\mathcal F_K$ is Lipschitz in $L^p$. The consistency part is Lemma~\ref{lem:Lp-cons}: evaluating the drift with $W^{(N)}$ rather than $W$ produces an error controlled by $\|W^{(N)}-W\|_{L^p(I^2)}$. The exponential preference term does not require any special compactness argument; on bounded sets it is handled by the global bounds $0\le S\le 1$ and $|\partial_aS|=|\partial_bS|\le |\chi|/4$. Once these two estimates are available, the theorem follows by subtracting the two mild formulations and applying Gr\"{o}nwall's inequality.

\begin{lemma}[$L^p$ stability]\label{lem:Lip-Lp}
Let $1\le p\le\infty$, $\|U_i\|_{L^\infty(I)^2}\le R$, and
$M_x(|K|),M_y(|K|)<\infty$. Set
\begin{align*}
\kappa_{K,p}:=\|T_{|K|}\|_{L^p\to L^p}+M_x(|K|).
\end{align*}
Let $L_f(R)$ be a Lipschitz constant of $f$ on $[-R,R]$.
Then there exists $C_{R,p}(K)<\infty$, depending only on $R$, $L_f(R)$, the model parameters, and $\kappa_{K,p}$, such that
\begin{align*}
\|\mathcal F_K[U_1]-\mathcal F_K[U_2]\|_{L^p(I)^2}
\le C_{R,p}(K)\|U_1-U_2\|_{L^p(I)^2}.
\end{align*}
\end{lemma}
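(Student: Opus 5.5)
The plan is to write each component of the drift difference as a sum of elementary pieces. Each piece should be either a pointwise multiplication operator with an $L^\infty$-bounded coefficient, or an integral operator dominated by $T_{|K|}$ acting on $|U_1-U_2|$, possibly multiplied by an $L^\infty$-bounded function. Then I would bound each piece in $L^p$ using $\|T_{|K|}\|_{L^p\to L^p}$ and $\|m_{|K|}\|_{L^\infty}\le M_x(|K|)$. Throughout, write $U_i=(n_i,c_i)$, $\delta n=n_1-n_2$, $\delta c=c_1-c_2$, and $S_i(x,y)=S(c_i(x),c_i(y))$.

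\emph{Chemoattractant component.} This component is linear, so it is the easy case:
\begin{align*}
F^{(c)}_K(U_1)-F^{(c)}_K(U_2)=\frac{\eta_c}{2}\big(T_K\delta c-m_K\delta c\big)+\alpha\,\delta n-\beta\,\delta c .
\end{align*}
Its $L^p$ norm is at most $\big(\frac{\eta_c}{2}\kappa_{K,p}+\beta\big)\|\delta c\|_{L^p}+\alpha\|\delta n\|_{L^p}$. Here I use that $|T_K u|\le T_{|K|}|u|$ pointwise.

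\emph{Population component.} The reaction part is immediate: $|f(n_1)-f(n_2)|\le L_f(R)|\delta n|$ pointwise. For the transport part, I first rewrite the integrand as $S_i n_i(y)-(1-S_i)n_i(x)$ and split the difference into four terms.
\begin{enumerate}
\item $\int K\,S_1\,\delta n(y)\,dy$. Since $0\le S_1\le1$, this is bounded pointwise by $T_{|K|}|\delta n|$.
\item $\int K\,(S_1-S_2)\,n_2(y)\,dy$.
\item $-\delta n(x)\int K(1-S_1)\,dy$. This is bounded by $m_{|K|}|\delta n|\le M_x(|K|)|\delta n|$.
\item $n_2(x)\int K(S_1-S_2)\,dy$.
\end{enumerate}
For terms 2 and 4 I use the global Lipschitz bound $|\partial_aS|,|\partial_bS|\le|\chi|/4$, which gives
\begin{align*}
|S_1-S_2|\le\frac{|\chi|}{4}\big(|\delta c(x)|+|\delta c(y)|\big).
\end{align*}
Together with $|n_2|\le R$, both terms 2 and 4 are bounded pointwise by
\begin{align*}
\frac{|\chi|R}{4}\big(M_x(|K|)\,|\delta c(x)|+T_{|K|}|\delta c|(x)\big).
\end{align*}

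\emph{Collecting the bounds.} Taking $L^p$ norms of the pointwise bounds gives
\begin{align*}
\|F^{(n)}_K(U_1)-F^{(n)}_K(U_2)\|_{L^p}\le\big(\eta_n\kappa_{K,p}+L_f(R)\big)\|\delta n\|_{L^p}+\frac{\eta_n|\chi|R}{2}\,\kappa_{K,p}\,\|\delta c\|_{L^p}.
\end{align*}
I would then combine the two components using $\|a\|_{L^p}+\|b\|_{L^p}\le 2^{1-1/p}\|(a,b)\|_{L^p(I)^2}$ for $p<\infty$, and the max-norm for $p=\infty$. This yields $C_{R,p}(K)$, depending only on $R$, $L_f(R)$, $\eta_n,\eta_c,\alpha,\beta,\chi$ and $\kappa_{K,p}$, as the statement requires.

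\emph{Main obstacle.} The only delicate step is terms 2 and 4, where the difference of the nonlinear preference factors involves $\delta c$ evaluated at both $x$ and $y$. The key is to split $|\delta c(x)|+|\delta c(y)|$ so that the $y$-dependent piece goes through $T_{|K|}$ and the $x$-dependent piece through $m_{|K|}$. This is exactly why the constant involves $\kappa_{K,p}$, the sum of the two operator norms, and why both $M_x(|K|)$ and $M_y(|K|)$ are needed (via Schur's test) to control $T_{|K|}$ on $L^p$.
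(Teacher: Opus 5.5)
Your proposal is correct and follows the paper's proof essentially step for step. The paper uses the same splitting $S_1\widetilde n(y)-(1-S_1)\widetilde n(x)+(S_1-S_2)\big(n_2(y)+n_2(x)\big)$ of the transport integrand and the same Lipschitz bound $|S_1-S_2|\le\frac{|\chi|}{4}\big(|\widetilde c(x)|+|\widetilde c(y)|\big)$, and it arrives at exactly your constants: $L_f(R)+\eta_n\kappa_{K,p}$ and $\frac{\eta_n|\chi|R}{2}\kappa_{K,p}$ for the $n$-component, and $\beta+\frac{\eta_c}{2}\kappa_{K,p}$ and $\alpha$ for the $c$-component. Your write-up is slightly more explicit than the paper's, since you spell out how the $x$- and $y$-parts of $|\delta c(x)|+|\delta c(y)|$ are routed through $m_{|K|}$ and $T_{|K|}$ and how the two components combine into the product norm.
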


\begin{proof}
Write $\widetilde n=n_1-n_2$, $\widetilde c=c_1-c_2$, and
$S_i=S(c_i(x),c_i(y))$. The $c$-component is bounded by
\begin{align*}
\left(\beta+\frac{\eta_c}{2}\kappa_{K,p}\right)\|\widetilde c\|_{L^p}
+\alpha\|\widetilde n\|_{L^p}.
\end{align*}
For the $n$-component, use
\begin{align*}
S_1\widetilde n(y)-(1-S_1)\widetilde n(x)
+(S_1-S_2)\big(n_2(y)+n_2(x)\big)
\end{align*}
and
$|S_1-S_2|\le\frac{|\chi|}{4}(|\widetilde c(x)|+|\widetilde c(y)|)$.
The operator bounds for $T_{|K|}$ then give
\begin{align*}
\|F_K^{(n)}(U_1)-F_K^{(n)}(U_2)\|_{L^p}
\le
\big(L_f(R)+\eta_n\kappa_{K,p}\big)\|\widetilde n\|_{L^p}
+\frac{\eta_n|\chi|R}{2}\kappa_{K,p}\|\widetilde c\|_{L^p}.
\end{align*}
Combining the two components proves the claim.
\end{proof}

\begin{remark}[Uniformity for step kernels]\label{rmk:step-Lip}
If $M_x(|W|),M_y(|W|)<\infty$, then
\begin{align*}
M_x(|W^{(N)}|)\le M_x(|W|),
\qquad
M_y(|W^{(N)}|)\le M_y(|W|),
\end{align*}
and hence $\sup_N C_{R,p}(W^{(N)})<\infty$.
For $p=2$, it is enough that $W\in L^2(I^2)$ and $M_x(|W|)<\infty$, since
$\kappa_{W^{(N)},2}\le\|W\|_{L^2(I^2)}+M_x(|W|)$.
\end{remark}

\begin{lemma}[$L^p$ consistency]\label{lem:Lp-cons}
Let $1\le p<\infty$, $\|U\|_{L^\infty(I)^2}\le R$, and $W\in L^p(I^2)$. Then
\begin{align*}
\|\mathcal F_{W^{(N)}}[U]-\mathcal F_W[U]\|_{L^p(I)^2}
\le 2R\left\|\left(\frac{\eta_c}{2},\eta_n\right)\right\|_{\ell^p}
\|W^{(N)}-W\|_{L^p(I^2)}.
\end{align*}
\end{lemma}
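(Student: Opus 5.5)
The drift $\mathcal F_K[U]$ depends on $K$ linearly once $U$ is fixed, so I will set $\Delta:=W^{(N)}-W$ and write
\begin{align*}
\mathcal F_{W^{(N)}}[U]-\mathcal F_W[U]=\Big(\eta_n\int_I \Delta(x,y)\,G_U(x,y)\,dy,\ \frac{\eta_c}{2}\int_I \Delta(x,y)\big(c(y)-c(x)\big)\,dy\Big),
\end{align*}
where
\begin{align*}
G_U(x,y):=S\big(c(x),c(y)\big)n(y)-\big(1-S(c(x),c(y))\big)n(x).
\end{align*}
The reaction terms $f(n)$, $\alpha n$ and $\beta c$ do not involve the kernel, so they cancel exactly. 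This reduces the claim to bounding two linear integral operators with kernel $\Delta$ applied to bounded integrands.

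Next I will bound the integrands pointwise. Since $0\le S\le1$ and $\|U\|_{L^\infty(I)^2}\le R$, I get
\begin{align*}
|G_U(x,y)|\le S|n(y)|+(1-S)|n(x)|\le R\le 2R,
\end{align*}
and $|c(y)-c(x)|\le 2R$ for a.e. $(x,y)$. Therefore
\begin{align*}
|F^{(n)}_{W^{(N)}}(U)(x)-F^{(n)}_W(U)(x)|\le 2R\eta_n\int_I|\Delta(x,y)|\,dy,
\end{align*}
and the $c$-component satisfies the same bound with $\eta_n$ replaced by $\eta_c/2$.

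To pass to $L^p$ norms, I will apply Jensen (or Hölder) in $y$ on the probability space $I$, which gives $\big(\int_I|\Delta(x,y)|\,dy\big)^p\le\int_I|\Delta(x,y)|^p\,dy$. Integrating in $x$ then gives $\big\|\int_I|\Delta(\cdot,y)|\,dy\big\|_{L^p(I)}\le\|\Delta\|_{L^p(I^2)}$.

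Finally I will combine the two components using the product norm from Section~\ref{sec:notation-conventions}:
\begin{align*}
\|\cdot\|_{L^p(I)^2}^p\le (2R)^p\big(\eta_n^p+(\eta_c/2)^p\big)\|\Delta\|_{L^p(I^2)}^p,
\end{align*}
which is the stated bound with the factor $\|(\eta_c/2,\eta_n)\|_{\ell^p}$.

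I expect no real obstacle. The only technical points are measurability and absolute convergence of the integrals. These are handled because $W\in L^p\subset L^1$ and $W^{(N)}$ is bounded in $L^p$ by Lemma~\ref{lem:PiN-Lp}, so Fubini applies and the integrals are finite a.e.
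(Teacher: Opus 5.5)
Your proof is correct and follows the paper's argument: the kernel-independent terms cancel, the integrands are bounded pointwise using $0\le S\le 1$ and $|n|,|c|\le R$, Hölder/Jensen gives $\bigl\|\int_I|\Delta(\cdot,y)|\,dy\bigr\|_{L^p(I)}\le\|\Delta\|_{L^p(I^2)}$, and the two components are combined in the product norm. Your observation that $|G_U|\le R$ is slightly sharper than the $2R$ used in the paper, though, like the paper, you relax it to $2R$ to match the stated constant.
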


\begin{proof}
Set $H=W^{(N)}-W$ and $a(x):=\int_I|H(x,y)|\,dy$.
Since $|n|,|c|\le R$ and $0\le S\le1$,
\begin{align*}
|F_{W^{(N)}}^{(c)}(U)(x)-F_W^{(c)}(U)(x)|\le\eta_cR\,a(x),
\qquad
|F_{W^{(N)}}^{(n)}(U)(x)-F_W^{(n)}(U)(x)|\le2\eta_nR\,a(x).
\end{align*}
H\"older's inequality gives $\|a\|_{L^p(I)}\le\|H\|_{L^p(I^2)}$. Taking the product $L^p$ norm proves the estimate.
\end{proof}

We now assemble the two estimates. The theorem is stated with an explicit $L^\infty$ boundedness hypothesis so that it also applies to signed kernels or to settings where positivity is not used.
Under the structural nonnegativity assumptions of Proposition~\ref{prop:graphon_bounds}\textup{(ii)}, this hypothesis is automatically satisfied on every finite time interval by that proposition and Corollary~\ref{cor:discrete_bounds}.

\begin{theorem}[Deterministic graphon limit in $L^p$]
\label{thm:det-Lp-conv}
Fix $T>0$ and $1\le p<\infty$. Assume
\begin{align*}
W\in L^p(I^2),
\qquad
M_x(|W|)+M_y(|W|)<\infty,
\qquad
U_{\mathrm{in}}\in L^\infty(I)^2.
\end{align*}
Let $U$ and $U^{(N)}$ be the graphon and discrete solutions driven by $W$ and $W^{(N)}$,
respectively, with initial data $U_{\mathrm{in}}$ and $\Pi_NU_{\mathrm{in}}$. Suppose that
\begin{align*}
\sup_N\sup_{t\in[0,T]}
\Big(\|U(t)\|_{L^\infty(I)^2}+\|\Pi_NU^{(N)}(t)\|_{L^\infty(I)^2}\Big)
\le R_T.
\end{align*}
Then, for some $C_T<\infty$ independent of $N$,
\begin{align*}
\sup_{t\in[0,T]}\|\Pi_NU^{(N)}(t)-U(t)\|_{L^p(I)^2}
\le
e^{C_TT}
\Big(
\|\Pi_NU_{\mathrm{in}}-U_{\mathrm{in}}\|_{L^p(I)^2}
+C_TT\|W^{(N)}-W\|_{L^p(I^2)}
\Big).
\end{align*}
In particular, $\Pi_NU^{(N)}\to U$ in $C([0,T];L^p(I)^2)$.
\end{theorem}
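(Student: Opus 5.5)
The plan is a direct stability--consistency argument on the common space $L^p(I)^2$, made possible by Lemma~\ref{lem:recon}. Write $V_N(t):=\Pi_NU^{(N)}(t)$ and $E_N(t):=V_N(t)-U(t)$. Lemma~\ref{lem:recon} gives the mild formulation $V_N(t)=\Pi_NU_{\mathrm{in}}+\int_0^t\mathcal F_{W^{(N)}}[V_N(s)]\,ds$. The graphon solution satisfies $U(t)=U_{\mathrm{in}}+\int_0^t\mathcal F_W[U(s)]\,ds$. Subtracting and adding and subtracting $\mathcal F_{W^{(N)}}[U(s)]$ inside the integral splits the drift difference into two pieces:
\begin{align*}
\mathcal F_{W^{(N)}}[V_N]-\mathcal F_{W^{(N)}}[U]
\qquad\text{and}\qquad
\mathcal F_{W^{(N)}}[U]-\mathcal F_{W}[U].
\end{align*}
The first is a stability term with a fixed kernel; the second is a consistency term with a fixed state.

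For the stability term, both $V_N(s)$ and $U(s)$ lie in the $L^\infty$ ball of radius $R_T$ by hypothesis. Lemma~\ref{lem:Lip-Lp} with $K=W^{(N)}$ and $R=R_T$ therefore bounds the first piece by $C_{R_T,p}(W^{(N)})\|E_N(s)\|_{L^p}$. The point requiring care is uniformity of this constant in $N$. This is supplied by Remark~\ref{rmk:step-Lip}: cell averaging does not increase $M_x(|W|)$ or $M_y(|W|)$. Hence, via the Schur bound, $\kappa_{W^{(N)},p}\le M_x(|W|)^{1/p'}M_y(|W|)^{1/p}+M_x(|W|)$ uniformly, and so $\sup_N C_{R_T,p}(W^{(N)})=:C_1<\infty$. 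For the consistency term, Lemma~\ref{lem:Lp-cons} with $R=R_T$, which needs only $W\in L^p(I^2)$ and $\|U(s)\|_{L^\infty}\le R_T$, gives the bound $C_2\|W^{(N)}-W\|_{L^p(I^2)}$ uniformly in $s\in[0,T]$, where $C_2=2R_T\|(\eta_c/2,\eta_n)\|_{\ell^p}$.

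Taking $L^p$ norms of the integral (Minkowski's integral inequality, with continuity of $s\mapsto E_N(s)$ in $L^p$ coming from $C^1$ in $L^\infty$) gives
\begin{align*}
\|E_N(t)\|_{L^p}\le\|\Pi_NU_{\mathrm{in}}-U_{\mathrm{in}}\|_{L^p}+C_2T\|W^{(N)}-W\|_{L^p(I^2)}+C_1\int_0^t\|E_N(s)\|_{L^p}\,ds.
\end{align*}
Gr\"onwall's inequality then yields the stated bound with $C_T:=\max\{C_1,C_2\}$. Since $U_{\mathrm{in}}\in L^\infty\subset L^p$, Lemma~\ref{lem:PiN-Lp}(i) and (ii) send both terms in parentheses to zero, which gives convergence in $C([0,T];L^p(I)^2)$.

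The main obstacle is only the uniform-in-$N$ Lipschitz constant. The exponential preference $S$ is harmless, because of the global bounds $0\le S\le1$ and $|\partial S|\le|\chi|/4$ already built into Lemma~\ref{lem:Lip-Lp}, and the local Lipschitz property of $f$ is handled by the a priori bound $R_T$. Everything else is bookkeeping.
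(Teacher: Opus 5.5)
Your proposal is correct and follows the paper's proof essentially line for line: you subtract the two mild formulations (via Lemma~\ref{lem:recon}) and split the drift difference into a stability term handled by Lemma~\ref{lem:Lip-Lp}, with the $N$-uniform constant coming from Remark~\ref{rmk:step-Lip}, and a consistency term handled by Lemma~\ref{lem:Lp-cons}, then close with Gr\"onwall and Lemma~\ref{lem:PiN-Lp}. Your explicit tracking of the constants ($C_T=\max\{C_1,C_2\}$, the Schur bound for $\kappa_{W^{(N)},p}$) only makes explicit what the paper leaves implicit.
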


\begin{proof}
Set $E_N(t):=\Pi_NU^{(N)}(t)-U(t)$.
By Lemma~\ref{lem:recon}, $\Pi_NU^{(N)}$ satisfies the mild equation with kernel $W^{(N)}$. Subtracting the two mild formulations gives
\begin{align*}
E_N(t)
&=E_N(0)
+\int_0^t
\Big(
\mathcal F_{W^{(N)}}[\Pi_NU^{(N)}(s)]
-\mathcal F_{W^{(N)}}[U(s)]
\Big)\,ds
+\int_0^t
\Big(
\mathcal F_{W^{(N)}}[U(s)]
-\mathcal F_W[U(s)]
\Big)\,ds.
\end{align*}
By the assumed $L^\infty$ bounds, Lemmas~\ref{lem:Lip-Lp} and~\ref{lem:Lp-cons}, and Remark~\ref{rmk:step-Lip},
\begin{align*}
\|E_N(t)\|_{L^p(I)^2}
&\le
\|E_N(0)\|_{L^p(I)^2}
+C_{R_T,p}\int_0^t\|E_N(s)\|_{L^p(I)^2}\,ds
+C_{R_T,p}^*\,t\,\|W^{(N)}-W\|_{L^p(I^2)},
\end{align*}
with constants independent of $N$. Gr\"onwall's inequality gives the stated estimate. Finally, Lemma~\ref{lem:PiN-Lp} imply $\|E_N(0)\|_{L^p(I)^2}\to0$, and $\|W^{(N)}-W\|_{L^p(I^2)}\to0$, which proves the convergence.
\end{proof}

\begin{remark}[A general deterministic graphon limit formulation]
\label{rmk:general-KN}
The cell-average form of $W^{(N)}$ is not needed in the stability--consistency argument. Retain the assumptions of Theorem~\ref{thm:det-Lp-conv} on $W$ and its solution $U$, and let $U_N$ solve the equation with a deterministic kernel $K_N$ and initial data $U_{\mathrm{in},N}$. Suppose that the solutions are uniformly bounded in $L^\infty(I)^2$ on $[0,T]$ and
\begin{align*}
\sup_N\big(M_x(|K_N|)+M_y(|K_N|)\big)<\infty,
\qquad
\|K_N-W\|_{L^p(I^2)}
+\|U_{\mathrm{in},N}-U_{\mathrm{in}}\|_{L^p(I)^2}\to0.
\end{align*}
Then $U_N\to U$ in $C([0,T];L^p(I)^2)$ by the same estimates. This formulation also covers the pointwise discretization in Appendix~\ref{app:R4-pointwise}.
\end{remark}

\subsection{Random graph limit in \texorpdfstring{$L^p$}{Lp}}
\label{ssec:random-limit}

We next pass from deterministic weighted graphs to independent-edge dense random graphs sampled from the cell-averaged kernel $W^{(N)}$. This corresponds to the dense stochastic block model on the fixed partition $\mathcal P_N$. Under the canonical grid labeling identifying nodes with intervals ($k \leftrightarrow I_k$), adjacency matrices are naturally identified with step kernels on $I^2$. A relabeling-invariant formulation in the cut distance is provided in Section~\ref{ssec:cutnorm-limits}.

The deterministic $L^p$ convergence theorem cannot be directly applied to the empirical kernel $W^{(N)}_\xi$ because the pointwise Bernoulli fluctuations are of order one in $L^p(I^2)$. Instead, the sampled-graph solution is first compared with the deterministic weighted-graph solution. Evaluating the kernel fluctuation on the deterministic solution yields centered row sums with bounded deterministic coefficients. Fixed-time moment estimates control the time integral of this row-wise fluctuation, yielding an $O_{\pr}(N^{-1/2})$ sampled-to-weighted error for each fixed $1\le p<\infty$. Combining this high-probability estimate with the deterministic approximation error establishes convergence to the graphon solution.

\subsubsection{Random graph sampling and the associated discrete dynamics}
\label{sssec:RG-setup}

Throughout we assume that $W\in L^\infty(I^2)$ satisfies $0\le W\le 1$ a.e.\ and $W(x,y)=W(y,x)$ a.e. We also assume~\eqref{eq:f-structure} and take $U_{\mathrm{in}}\in L^\infty(I)^2$ to be deterministic and componentwise nonnegative.
We use the partition $\mathcal P_N$ and the cell-average weights defined above. Since $0\le W\le1$, these weights can be used as edge probabilities; we write $\mathsf p^{(N)}_{k\ell}:=\mathsf w^{(N)}_{k\ell}$ for $1\le k,\ell\le N$.

We sample an undirected graph from these block probabilities. For notational convenience, we also sample diagonal variables: let $\{\xi^{(N)}_{k\ell}\}_{1\le k\le \ell\le N}$ be independent Bernoulli random variables with
\begin{align*}
\E \xi^{(N)}_{k\ell}=\mathsf{p}^{(N)}_{k\ell},
\end{align*}
and set $\xi^{(N)}_{\ell k}:=\xi^{(N)}_{k\ell}$. The diagonal variables do not affect the dynamics, since all $k=\ell$ summands vanish in both the $n$- and $c$-equations. Setting $\xi^{(N)}_{kk}=0$ leaves both discrete dynamics unchanged. It changes the empirical kernel by at most $N^{-1}$ in $L^1(I^2)$ and hence in cut norm.
Define the empirical step kernel of the sampled graph by
\begin{align*}
W^{(N)}_\xi(x,y):=\xi^{(N)}_{k\ell},
\quad (x,y)\in I_k\times I_\ell.
\end{align*}
Equivalently, $W^{(N)}_\xi$ is the empirical graphon under the canonical labeling $k\leftrightarrow I_k$.

This is the independent-edge dense stochastic block model on the fixed partition $\mathcal P_N$. In the latent-position $W$-random graph model, the nodes are instead sampled as i.i.d.\ uniform points $X_1,\dots,X_N$, with edge probabilities $W(X_k,X_\ell)$ for $k<\ell$. These pairs avoid each fixed null subset of $I^2$ almost surely, so that construction depends only on the a.e.\ equivalence class of $W$. The theorem below concerns the fixed-partition construction, not the latent-position model. Here $\E W^{(N)}_\xi=W^{(N)}$, so the random-graph kernel error splits into the deterministic discretization term $W^{(N)}-W$ and the fluctuation term $W^{(N)}_\xi-W^{(N)}$.

\medskip
\noindent\textbf{DWG vs.\ sampled-graph dynamics.}
Let $U^{(N)}=(n^{(N)},c^{(N)})$ be the solution of the discrete system \eqref{eq:KS-discrete}
with weights $\mathsf w^{(N)}_{k\ell}=\mathsf{p}^{(N)}_{k\ell}$ (the DWG dynamics), and let
$U^{\xi,(N)}=\big(n^{\xi,(N)},c^{\xi,(N)}\big)$ be the solution of the same system with weights
$\mathsf w^{(N)}_{k\ell}=\xi^{(N)}_{k\ell}$ (the sampled-graph dynamics).
Both systems start from the same deterministic cell-average initial data.

By the reconstruction argument of Lemma~\ref{lem:recon}, their step reconstructions solve the mild formulations
driven by $W^{(N)}$ and $W^{(N)}_\xi$, respectively:
\begin{align*}
\partial_t\Pi_N U^{(N)}(t)=\mathcal F_{W^{(N)}}[\Pi_N U^{(N)}(t)],
\qquad
\partial_t\Pi_N U^{\xi,(N)}(t)=\mathcal F_{W^{(N)}_\xi}[\Pi_N U^{\xi,(N)}(t)].
\end{align*}

\medskip
\noindent\textbf{Uniform bounds.}
Since $0\le \mathsf{p}^{(N)}_{k\ell}\le 1$ and $0\le \xi^{(N)}_{k\ell}\le 1$, we have the row-sum bound
\begin{align*}
\max_{1\le k\le N}\ \frac{1}{N}\sum_{\ell=1}^N \mathsf{p}^{(N)}_{k\ell}\le 1,
\qquad
\max_{1\le k\le N}\ \frac{1}{N}\sum_{\ell=1}^N \xi^{(N)}_{k\ell}\le 1.
\end{align*}
The space of step functions on $\mathcal P_N$ is invariant for both kernels. Proposition~\ref{prop:graphon_bounds}~\textup{(ii)}, applied to $W^{(N)}$ and $W_\xi^{(N)}$ with row-mass bound $1$, gives bounds for both reconstructed solutions.
In particular, for each fixed $T>0$, there exists a deterministic constant $R_T<\infty$ such that
\begin{align*}
\sup_{t\in[0,T]}\Big(\|\Pi_N U^{(N)}(t)\|_{L^\infty(I)^2}+\|\Pi_NU^{\xi,(N)}(t)\|_{L^\infty(I)^2}\Big)
\le R_T,
\end{align*}
uniformly in $N$ and (for $U^{\xi,(N)}$) uniformly over the graph realization. All subsequent constants depending only on this bound can be chosen to be deterministic and independent of $N$ and of the realization of $\xi$.

\subsubsection{Row-wise fluctuation and moment estimates}
\label{sssec:rowwise}

Since the coefficients $g^{(c)}_{k\ell}(t)$ and $g^{(n)}_{k\ell}(t)$ are evaluated on the deterministic weighted-graph solution, all randomness arises from the centered Bernoulli variables $\xi^{(N)}_{k\ell}-\mathsf p^{(N)}_{k\ell}$. Bounding these deterministic coefficients allows Hoeffding's inequality to control the moments of the row-wise sums. Applying Minkowski's inequality then directly yields an integrated $L^p$ bound for the temporal fluctuation.

For the deterministic weighted-graph solution $U^{(N)}$, set
\begin{align*}
    g^{(c)}_{k\ell}(t)&:=c^{(N)}_\ell(t)-c^{(N)}_k(t),\\
    g^{(n)}_{k\ell}(t)&:=
    S\big(c^{(N)}_k(t),c^{(N)}_\ell(t)\big)n^{(N)}_\ell(t)
    -\big(1-S\big(c^{(N)}_k(t),c^{(N)}_\ell(t)\big)\big)n^{(N)}_k(t),\\
    \Xi^{(j)}_k(t)&:=\frac1N\sum_{\ell=1}^N
       \big(\xi^{(N)}_{k\ell}-\mathsf p^{(N)}_{k\ell}\big)
       g^{(j)}_{k\ell}(t),\quad j\in\{n,c\}.
\end{align*}
Define $\Xi^{(N)}(t)$ by its two components at each node:
\begin{align*}
    \Xi^{(N)}_k(t)
    :=\Big(\eta_n\Xi^{(n)}_k(t), \frac{\eta_c}{2}\Xi^{(c)}_k(t)\Big).
\end{align*}
Then $\Pi_N\Xi^{(N)}$ is the difference between the sampled and weighted vector fields evaluated on $\Pi_NU^{(N)}$.

\begin{lemma}[Integrated fluctuation estimate]\label{lem:row-conc}
Fix $T>0$ and $1\le p<\infty$. There is a deterministic constant $C_{T,p}<\infty$, independent of $N$ and $q$, such that, for $q\ge\max\{2,p\}$,
\begin{align*}
    \E\bigg[
       \Big(\int_0^T\|\Pi_N\Xi^{(N)}(t)\|_{L^p(I)^2}\,dt\Big)^q
    \bigg]^{1/q}
    \le C_{T,p}\sqrt{\frac qN}.
\end{align*}
\end{lemma}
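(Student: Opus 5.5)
The plan is to freeze the time variable, bound the moments of each row sum, and then integrate in time using Minkowski's inequality.

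\textbf{Step 1: fixed-time moments of a single row sum.} For fixed $t$ and fixed $k$, the coefficients $g^{(j)}_{k\ell}(t)$ are deterministic, because they are built from the deterministic weighted-graph solution $U^{(N)}$. By the uniform bound $R_T$ and $0\le S\le1$, they satisfy $|g^{(c)}_{k\ell}|\le 2R_T$ and $|g^{(n)}_{k\ell}|\le 2R_T$. Within row $k$, the variables $\xi^{(N)}_{k\ell}-\mathsf p^{(N)}_{k\ell}$, $\ell=1,\dots,N$, are independent, centered, and take values in intervals of length $1$. The symmetrization $\xi_{\ell k}=\xi_{k\ell}$ only couples different rows, so independence within a row holds. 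Hoeffding's inequality then gives the sub-Gaussian tail
\begin{align*}
\pr\big(|\Xi^{(j)}_k(t)|>s\big)\le 2\exp\!\big(-Ns^2/(8R_T^2)\big).
\end{align*}
Integrating this tail yields the moment bound $\E|\Xi^{(j)}_k(t)|^q\le (C R_T\sqrt{q/N})^q$ for all $q\ge1$, with an absolute constant $C$.

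\textbf{Step 2: from node moments to the $L^p$ norm at fixed time.} By definition,
\begin{align*}
\|\Pi_N\Xi^{(N)}(t)\|_{L^p}^p=\frac1N\sum_k\Big(\eta_n^p|\Xi^{(n)}_k|^p+(\eta_c/2)^p|\Xi^{(c)}_k|^p\Big).
\end{align*}
Since $q/p\ge1$, Minkowski's inequality in $L^{q/p}(\Omega)$ gives
\begin{align*}
\big\|\,\|\Pi_N\Xi^{(N)}(t)\|_{L^p}^p\big\|_{L^{q/p}(\Omega)}\le\frac1N\sum_k\Big(\eta_n^p\|\Xi^{(n)}_k\|_{L^q(\Omega)}^p+(\eta_c/2)^p\|\Xi^{(c)}_k\|_{L^q(\Omega)}^p\Big).
\end{align*}
Combining this with Step 1 yields
\begin{align*}
\E\big[\|\Pi_N\Xi^{(N)}(t)\|_{L^p}^q\big]^{1/q}\le C_p R_T(\eta_n+\eta_c)\sqrt{q/N},
\end{align*}
uniformly in $t\in[0,T]$. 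This is the only place the hypothesis $q\ge p$ is used; $q\ge2$ is simply a convenient normalization for the Hoeffding moment constant.

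\textbf{Step 3: integrate in time.} Apply Minkowski's integral inequality in $L^q(\Omega)$:
\begin{align*}
\Big\|\int_0^T\|\Pi_N\Xi^{(N)}(t)\|_{L^p}\,dt\Big\|_{L^q(\Omega)}\le\int_0^T\big\|\,\|\Pi_N\Xi^{(N)}(t)\|_{L^p}\big\|_{L^q(\Omega)}\,dt\le T\,C_pR_T(\eta_n+\eta_c)\sqrt{q/N}.
\end{align*}
This gives the claim with $C_{T,p}:=TC_pR_T(\eta_n+\eta_c)$, which is deterministic and independent of $N$ and $q$.

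\textbf{Main obstacle: measurability.} The delicate point is justifying the interchange of expectation and time integration. This requires the map $(t,\omega)\mapsto\|\Pi_N\Xi^{(N)}(t)\|_{L^p}$ to be jointly measurable. It holds because $g^{(j)}_{k\ell}$ is continuous in $t$ and the randomness enters only through the finitely many variables $\xi$. Hence the integrand is continuous in $t$ for each realization and is a finite combination of random variables for each $t$. This is where I would be careful; the constants themselves are straightforward once Hoeffding is applied to the frozen deterministic coefficients.
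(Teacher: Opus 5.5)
Your proposal is correct and follows essentially the same route as the paper. Both arguments apply Hoeffding to the row sums with frozen deterministic coefficients bounded by $2R_T$, convert the sub-Gaussian tail into a $\sqrt{q/N}$ moment bound, use Minkowski in $L^{q/p}(\Omega)$ to reach the fixed-time $L^p$ norm, and finish with Minkowski's integral inequality in time. Your remarks on within-row independence and joint measurability fill in details the paper leaves implicit.
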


\begin{proof}
The coefficients $g^{(j)}_{k\ell}(t)$ are deterministic and bounded by $2R_T$. For fixed $k,t,j$, the summands in $\Xi^{(j)}_k(t)$ are independent and centered. Hoeffding's inequality gives
\begin{align*}
    \pr\big(|\Xi^{(j)}_k(t)|>u\big)
    \le 2\exp\Big(-\frac{Nu^2}{8R_T^2}\Big).
\end{align*}
Integrating this bound yields $\E\big[|\Xi^{(j)}_k(t)|^q\big]^{1/q}\le C_T\sqrt{q/N}$ for $q\ge2$, uniformly in $k,t,j$. For $q\ge p$, Minkowski's inequality gives
\begin{align*}
    \big(\E\|\Pi_N\Xi^{(N)}(t)\|_{L^p(I)^2}^q\big)^{1/q}
    \le C\bigg[
       \frac1N\sum_{k=1}^N\sum_{j\in\{n,c\}}
       \big(\E|\Xi^{(j)}_k(t)|^q\big)^{p/q}
    \bigg]^{1/p}
    \le C_{T,p}\sqrt{\frac qN}.
\end{align*}
A second application of Minkowski's inequality to the time integral proves the claim. Independence between different rows or times is not used.
\end{proof}

\subsubsection{Random graphon limit: stability and convergence}
\label{sssec:RG-limit}

We now convert the row-wise fluctuation estimate into the convergence of solutions. Because the sampled and deterministic weighted graph systems share the same initial data, the evolution of their difference is governed solely by the random fluctuation $\Xi^{(N)}$ and a Lipschitz term proportional to the difference between their states. Applying Gr\"onwall's inequality ensures that the resulting dynamic error remains of the same order as the row-wise fluctuation. Finally, combining this high-probability estimate with the deterministic limit established earlier yields the full convergence to the graphon solution.

\begin{theorem}[Random graphon limit]\label{thm:RG-limit}
Assume the random-graph set-up above. Fix $T>0$ and $1\le p<\infty$. There is a deterministic $C_{T,p}<\infty$ such that, for every $\delta\in(0,1)$, with probability at least $1-\delta$,
\begin{align*}
    \sup_{t\in[0,T]}
    \|\Pi_NU^{\xi,(N)}(t)-U(t)\|_{L^p(I)^2}
    \le
    \sup_{t\in[0,T]}\|\Pi_NU^{(N)}(t)-U(t)\|_{L^p(I)^2}
    +C_{T,p}\sqrt{\frac{\log(2/\delta)}{N}}.
\end{align*}
Consequently, $\Pi_NU^{\xi,(N)}\to U$ in probability in $C([0,T];L^p(I)^2)$.
\end{theorem}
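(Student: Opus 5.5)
The plan is to compare the sampled-graph solution with the deterministic weighted-graph (DWG) solution. We then add the deterministic error through the triangle inequality:
\begin{align*}
\|\Pi_NU^{\xi,(N)}(t)-U(t)\|_{L^p}\le \|\Pi_NU^{\xi,(N)}(t)-\Pi_NU^{(N)}(t)\|_{L^p}+\|\Pi_NU^{(N)}(t)-U(t)\|_{L^p}.
\end{align*}
So it suffices to show that $D_N(t):=\Pi_NU^{\xi,(N)}(t)-\Pi_NU^{(N)}(t)$ satisfies $\sup_{t\le T}\|D_N(t)\|_{L^p}\le C_{T,p}\sqrt{\log(2/\delta)/N}$ with probability at least $1-\delta$.

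\textbf{Error equation.} Both reconstructions solve mild equations with the same initial data $\Pi_NU_{\mathrm{in}}$. Subtracting gives
\begin{align*}
D_N(t)=\int_0^t\Big(\mathcal F_{W^{(N)}_\xi}[\Pi_NU^{\xi,(N)}]-\mathcal F_{W^{(N)}_\xi}[\Pi_NU^{(N)}]\Big)ds+\int_0^t\Big(\mathcal F_{W^{(N)}_\xi}[\Pi_NU^{(N)}]-\mathcal F_{W^{(N)}}[\Pi_NU^{(N)}]\Big)ds.
\end{align*}
The second integrand is exactly $\Pi_N\Xi^{(N)}(s)$, because the reaction and decay terms cancel.

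\textbf{Stability term.} For the first integrand, apply Lemma~\ref{lem:Lip-Lp} with $K=W^{(N)}_\xi$ on the ball of radius $R_T$. Since $0\le\xi\le1$ and $\xi$ is symmetric, we have $M_x,M_y\le1$, and Schur's bound gives $\kappa_{K,p}\le2$. The Lipschitz constant $C_{R_T,p}$ is therefore deterministic and uniform in $N$ and in the realization. Gr\"onwall then yields
\begin{align*}
\sup_{t\le T}\|D_N(t)\|_{L^p}\le e^{C_{R_T,p}T}\int_0^T\|\Pi_N\Xi^{(N)}(s)\|_{L^p}\,ds=:e^{CT}Y_N.
\end{align*}

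\textbf{Probabilistic step.} It remains to turn the moment bound of Lemma~\ref{lem:row-conc}, $\E[Y_N^q]^{1/q}\le C_{T,p}\sqrt{q/N}$ for $q\ge q_0:=\max\{2,p\}$, into a tail bound. Markov's inequality gives
\begin{align*}
\pr(Y_N>u)\le \big(C\sqrt{q/N}/u\big)^q.
\end{align*}
Choose $u=eC\sqrt{q/N}$ with $q=\max\{q_0,\log(2/\delta)\}$, so that the right side is at most $e^{-q}\le\delta/2<\delta$. Since $\log(2/\delta)\ge\log2$, we have $q\le (q_0/\log 2)\log(2/\delta)$, so $u\le C'_{T,p}\sqrt{\log(2/\delta)/N}$ after enlarging the constant by a factor depending only on $p$. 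Combining this with the triangle inequality gives the stated high-probability bound.

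\textbf{Convergence in probability.} By Theorem~\ref{thm:det-Lp-conv}, whose $L^\infty$ hypothesis holds by Proposition~\ref{prop:graphon_bounds}(ii) and Corollary~\ref{cor:discrete_bounds}, the deterministic term tends to $0$. Take $\delta$ fixed and $N\to\infty$, so the fluctuation term also vanishes.

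\textbf{Main obstacle.} The delicate point is that Lemma~\ref{lem:row-conc} only controls fluctuations evaluated along the deterministic DWG solution. The randomness must never enter the coefficients, or else the Hoeffding argument breaks. The splitting above is designed so that the random kernel acts on the random state only through the stability term, which is controlled by the realization-independent Schur bound. The rest is routine bookkeeping of the constants.
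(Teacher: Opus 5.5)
Your proposal is correct and follows the paper's proof essentially step for step: you subtract the two mild equations and identify the kernel-fluctuation term as $\Pi_N\Xi^{(N)}$ evaluated along the deterministic weighted-graph solution. You then control the remaining difference with Lemma~\ref{lem:Lip-Lp} using the realization-independent row/column mass bound $1$, apply Gr\"onwall, convert the moment bound of Lemma~\ref{lem:row-conc} into a tail bound by Markov's inequality with $q$ of order $\log(1/\delta)$, and invoke Theorem~\ref{thm:det-Lp-conv} for convergence in probability.

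The only differences are cosmetic. You take $q=\max\{2,p,\log(2/\delta)\}$, giving failure probability at most $\delta/2$, instead of the paper's $\max\{2,p,\log(1/\delta)\}$. You also spell out the Schur bound $\kappa_{K,p}\le 2$ and the estimate $q\le (q_0/\log 2)\log(2/\delta)$, which the paper leaves implicit.
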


\begin{proof}
Set $d_N(t):=\|\Pi_N(U^{\xi,(N)}-U^{(N)})(t)\|_{L^p(I)^2}$. Subtracting the mild equations and evaluating the kernel fluctuation on $\Pi_NU^{(N)}$, and noting that the sampled kernels have row and column mass at most $1$, Lemma~\ref{lem:Lip-Lp} gives
\begin{align*}
    d_N(t)\le C_{T,p}\int_0^t d_N(s)\,ds
        +\int_0^t\|\Pi_N\Xi^{(N)}(s)\|_{L^p(I)^2}\,ds.
\end{align*}
Gr\"onwall's inequality therefore implies
\begin{align*}
    \sup_{t\in[0,T]}d_N(t)
    \le e^{C_{T,p}T}
       \int_0^T\|\Pi_N\Xi^{(N)}(s)\|_{L^p(I)^2}\,ds.
\end{align*}
By Lemma~\ref{lem:row-conc}, for $q\ge\max\{2,p\}$,
\begin{align*}
    \E\bigg[
       \Big(\sup_{t\in[0,T]}d_N(t)\Big)^q
    \bigg]^{1/q}
    \le C_{T,p}\sqrt{\frac qN}.
\end{align*}
Choose $q=\max\{2,p,\log(1/\delta)\}$. Markov's inequality gives
\begin{align*}
    \pr\bigg(\sup_{t\in[0,T]}d_N(t)
          >eC_{T,p}\sqrt{\frac qN}\bigg)
    \le e^{-q}\le\delta.
\end{align*}
For fixed $p$, $q\le C_p\log(2/\delta)$, so the triangle inequality gives the stated bound after appropriately increasing $C_{T,p}$. Theorem~\ref{thm:det-Lp-conv} proves the convergence in probability.
\end{proof}

\begin{remark}[On rates]\label{rmk:rates}
The estimate in Theorem~\ref{thm:RG-limit} separates the total error into a deterministic discretization term $\sup_{t\in[0,T]}\|\Pi_N U^{(N)}(t)-U(t)\|_{L^p(I)^2}$ and a random fluctuation term of order $\sqrt{\log(2/\delta)/N}$. For every fixed $T>0$ and $1\le p<\infty$, the sampled-to-weighted solution error is $O_{\pr}(N^{-1/2})$.

If $W$ is Lipschitz on $I^2$ and the components of $U_{\mathrm{in}}$ are mildly regular (e.g., Lipschitz or in $W^{1,p}(I)^2$), then the deterministic approximation error in Theorem~\ref{thm:det-Lp-conv} satisfies
\begin{align*}
    \|W^{(N)}-W\|_{L^p(I^2)}
    +\|\Pi_NU_{\mathrm{in}}-U_{\mathrm{in}}\|_{L^p(I)^2}
    =O(N^{-1}).
\end{align*}
In this regime, the fluctuation term dominates, and the total solution error is $O_{\pr}(N^{-1/2})$.
\end{remark}

%%%%%%%%%%%%%%%%%%%%%%%%%%%%%%%%%%%%%%%%%%%%%%%%%%%%%%%%%%
\subsection{Cut-norm formulation of graphon limits}
\label{ssec:cutnorm-limits}

The deterministic and random limits established above rely on the canonical labeling of vertices induced by the partition $\mathcal P_N$. While this provides a convenient setting for comparing solutions in $L^p(I)^2$, it does not reflect the intrinsic geometry of dense graph limit theory. Because graphons are defined only up to measure-preserving relabelings, the convergence of unlabeled dense graphs is most naturally expressed using the cut metric.

In this subsection, we present a relabeling-invariant formulation. We show that if a kernel sequence $K_N$ converges to $W$ in the cut distance and the initial data converge in $L^1(I)^2$ under the same measure-preserving relabelings, then the corresponding Keller--Segel solutions converge in $C([0,T];L^1(I)^2)$. Furthermore, by interpolating the difference between the solutions, the uniform $L^\infty$ bounds established previously extend this $L^1$ convergence to yield finite-time convergence in $C([0,T];L^p(I)^2)$ for every $1 \le p < \infty$. The kernel assumptions for this result require nonnegativity and uniform bounds on row and column integrals, but not a pointwise bound by $1$. This formulation is applicable to networks where vertex labels lack intrinsic meaning, connecting the sampled step graphs with the standard cut-distance topology.

\subsubsection{Cut norm, cut distance, and relabeling}

\begin{definition}[Cut norm and the $\infty\to 1$ operator norm]
Let $K\in L^1(I^2)$. Define the cut norm
\begin{align*}
  \|K\|_{\square}
  :=\sup_{A,B\subset I}
  \left|\int_A\int_B K(x,y)\,dx\,dy\right|,
\end{align*}
where the supremum is taken over all measurable sets $A,B\subset I$.
For the integral operator $T_K$ defined above, set
\begin{align*}
  \|K\|_{\infty\to 1}
  :=\sup_{\|f\|_{L^\infty(I)}\le 1}\ \|T_K f\|_{L^1(I)}
  =\sup_{\|f\|_{L^\infty}\le 1,\ \|g\|_{L^\infty}\le 1}
  \left|\int_{I^2} K(x,y)\,f(y)\,g(x)\,dx\,dy\right|.
\end{align*}
\end{definition}

\begin{remark}[Useful comparisons]\label{rmk:cut-compare}
These norms are equivalent up to absolute constants:
\begin{align*}
  \|K\|_{\square}\le \|K\|_{\infty\to 1}\le 4\|K\|_{\square}.
\end{align*}
Moreover, since $|I^2|=1$, one has
\begin{align*}
  \|K\|_{\square}\le \|K\|_{L^1(I^2)}\le \|K\|_{L^p(I^2)}
  \text{ for every }1\le p\le\infty.
\end{align*}
\end{remark}

\begin{definition}[Relabeling and cut distance]
For a measure-preserving bijection $\varphi:I\to I$, define the pullbacks
\begin{align*}
  K^\varphi(x,y):=K(\varphi(x),\varphi(y)),
  \qquad
  U^\varphi(x):=U(\varphi(x)).
\end{align*}
The cut distance between kernels is
\begin{align*}
  \delta_{\square}(K,W):=\inf_{\varphi}\ \|K^\varphi-W\|_{\square},
\end{align*}
where the infimum is taken over all measure-preserving bijections $\varphi$.
\end{definition}

\begin{lemma}[Equivariance under relabeling]
\label{lem:equivariance}
Let $K\in L^1(I^2)$. For every measure-preserving bijection $\varphi$ and every bounded state $U=(n,c)$,
\begin{align*}
  \mathcal F_{K^\varphi}[U^\varphi]=(\mathcal F_K[U])^\varphi.
\end{align*}
In particular, if $U$ solves $\partial_t U=\mathcal F_K[U]$ with initial data $U_{\mathrm{in}}$, then $U^\varphi$ solves
$\partial_t U^\varphi=\mathcal F_{K^\varphi}[U^\varphi]$ with initial data $U_{\mathrm{in}}^\varphi$.
\end{lemma}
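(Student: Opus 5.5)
The plan is a direct pointwise computation in each component of $\mathcal F_{K^\varphi}[U^\varphi]$, a change of variables $y=\varphi(z)$ in the nonlocal integrals, and then an application of the uniqueness statement in Proposition~\ref{prop:graphon_bounds}\textup{(i)} for the claim about solutions.

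\emph{The $c$-component.} Fix $x\in I$ and write $U^\varphi=(n\circ\varphi,c\circ\varphi)$. Then
\begin{align*}
F^{(c)}_{K^\varphi}(U^\varphi)(x)=\frac{\eta_c}{2}\int_I K(\varphi(x),\varphi(z))\big(c(\varphi(z))-c(\varphi(x))\big)\,dz+\alpha n(\varphi(x))-\beta c(\varphi(x)).
\end{align*}
Because $\varphi$ is a measure-preserving bijection, $\int_I g(\varphi(z))\,dz=\int_I g(y)\,dy$ for every integrable $g$. Apply this with $g(y)=K(\varphi(x),y)(c(y)-c(\varphi(x)))$; the result is exactly $F^{(c)}_K(U)(\varphi(x))$.

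\emph{The $n$-component.} The same substitution handles the $n$-equation, with integrand $K(\varphi(x),y)\big(S(c(\varphi(x)),c(y))n(y)-(1-S(c(\varphi(x)),c(y)))n(\varphi(x))\big)$. The reaction term $f(n(\varphi(x)))$ is already a pullback.

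\emph{Measure-theoretic care.} I expect this to be the only real obstacle. One must check that the integrands are integrable for a.e.\ $x$ and that the identity holds a.e. Joint measurability and $L^1(I^2)$ membership of $K^\varphi$ follow because $(x,z)\mapsto(\varphi(x),\varphi(z))$ preserves Lebesgue measure on $I^2$. Together with $U\in L^\infty$ and $|S|\le1$, this makes the integrands absolutely integrable for a.e.\ $x$, by Fubini as in the remark after Definition~\ref{def:KS-drift}. Moreover, the set of $x$ where the identity could fail is $\varphi^{-1}$ of a null set, which is again null. Hence $\mathcal F_{K^\varphi}[U^\varphi]=(\mathcal F_K[U])^\varphi$ as elements of $L^1$, and as elements of $L^\infty$ when $M_x(|K|)<\infty$.

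\emph{Solutions.} The map $V\mapsto V^\varphi$ is a linear isometry of $L^\infty(I)^2$ (and of each $L^p(I)^2$). It therefore commutes with time derivatives and Bochner integrals. Pulling back the mild formulation gives
\begin{align*}
U^\varphi(t)=U_{\mathrm{in}}^\varphi+\int_0^t(\mathcal F_K[U(s)])^\varphi\,ds=U_{\mathrm{in}}^\varphi+\int_0^t\mathcal F_{K^\varphi}[U^\varphi(s)]\,ds.
\end{align*}
So $U^\varphi$ solves the relabeled problem. Note also that $M_x(|K^\varphi|)=M_x(|K|)$, by the same change of variables and the fact that $\varphi^{-1}$ maps null sets to null sets. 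Hence the uniqueness in Proposition~\ref{prop:graphon_bounds}\textup{(i)} applies, and $U^\varphi$ is \emph{the} solution of the relabeled problem.
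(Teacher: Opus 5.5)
Your proposal is correct and follows the paper's route. The paper's proof is just the change of variables $y=\varphi(z)$ in the defining integrals of \eqref{eq:KS-conti} combined with the measure-preserving property of $\varphi$; your null-set bookkeeping and your pullback of the mild formulation spell that argument out in more detail. One small caveat: your closing appeal to uniqueness in Proposition~\ref{prop:graphon_bounds}\textup{(i)} needs $M_x(|K|)<\infty$, which the lemma does not assume, but the statement only asserts that $U^\varphi$ is a solution, so that remark is not needed.
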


\begin{proof}
This follows by a change of variables in the defining integrals of \eqref{eq:KS-conti} and the measure-preserving property of $\varphi$.
\end{proof}

\subsubsection{A cut-norm consistency bound}

We apply the fundamental theorem of calculus in the first argument of $S$ to write the sensitivity factor as an integral of separated functions. This gives the following kernel estimate.

\begin{lemma}[Cut-norm consistency]
\label{lem:cut-consistency}
Assume $K,W\in L^1(I^2)$, and $\|U\|_{L^\infty(I)^2}\le R$.
Then
\begin{align*}
    \|F_K^{(c)}(U)-F_W^{(c)}(U)\|_{L^1}
    &\le \eta_c R \|K-W\|_{\infty\to1},\\
    \|F_K^{(n)}(U)-F_W^{(n)}(U)\|_{L^1}
    &\le \eta_n(2R+|\chi|R^2)\|K-W\|_{\infty\to1}.
\end{align*}
Consequently,
\begin{align*}
    \|\mathcal F_K[U]-\mathcal F_W[U]\|_{L^1(I)^2}
    \le C_{R,\chi}\|K-W\|_{\infty\to1},
\end{align*}
where $C_{R,\chi}$ depends only on $R$ and the model parameters.
\end{lemma}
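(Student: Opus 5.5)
Set $H:=K-W$. In each component of $\mathcal F_K[U]-\mathcal F_W[U]$ the reaction and linear local terms cancel, so only integrals against $H$ remain. I will bound these in $L^1$ by expressing each as $\int_I\int_I H(x,y)\,\phi(x)\psi(y)\,dx\,dy$ with $\|\phi\|_{L^\infty},\|\psi\|_{L^\infty}$ controlled, or as an average of such forms. The second characterization of $\|\cdot\|_{\infty\to1}$ in the definition above then applies directly. Writing $\|g\|_{L^1}=\int g\,\sigma$ with $\sigma=\operatorname{sgn} g$ handles the $L^1$ norm, because $|\sigma|\le1$ can be absorbed into the $x$-test function.

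\emph{The $c$-component.} Here
\begin{align*}
F_K^{(c)}(U)-F_W^{(c)}(U)=\tfrac{\eta_c}{2}\big(T_Hc-m_H\,c\big).
\end{align*}
The first term gives $\|T_Hc\|_{L^1}\le R\|H\|_{\infty\to1}$. For the second, $\|m_Hc\|_{L^1}=\int\int H(x,y)\,\sigma(x)c(x)\cdot 1\,dy\,dx$, where $\sigma$ is the sign of $m_Hc$. This is bounded by $R\|H\|_{\infty\to1}$. Adding the two bounds gives $\eta_cR\|H\|_{\infty\to1}$.

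\emph{The $n$-component.} The difference is $\eta_n\int H(x,y)\,G(x,y)\,dy$, where
\begin{align*}
G(x,y)=S(c(x),c(y))\big(n(y)+n(x)\big)-n(x).
\end{align*}
The term $-n(x)$ is handled exactly as $m_Hc$ above and contributes $R\|H\|_{\infty\to1}$. The obstacle is the factor $S(c(x),c(y))$, which does not separate in $x$ and $y$. As the lead-in to the lemma suggests, I will apply the fundamental theorem of calculus in the first argument of $S$. Since $|c|\le R$,
\begin{align*}
S(c(x),c(y))=S(-R,c(y))+\int_{-R}^{R}\mathbf 1_{\{s<c(x)\}}\,\partial_aS(s,c(y))\,ds.
\end{align*}
The term $S(-R,c(y))\big(n(x)+n(y)\big)$ is a sum of two separated products with sup norms at most $R$. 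Here I use $0\le S\le1$, and the result is a contribution of $2R\|H\|_{\infty\to1}$.

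For each fixed $s$, the integrand is $\mathbf 1_{\{s<c(x)\}}\partial_aS(s,c(y))\big(n(x)+n(y)\big)$. This is again a sum of two separated products, with sup norms at most $\tfrac{|\chi|}{4}R$ each (using $|\partial_aS|\le|\chi|/4$). Fubini moves the $s$-integral, of length $2R$, outside. After the $\sigma$ trick, each fixed-$s$ bilinear form is bounded by $\tfrac{|\chi|}{4}\cdot2R\,\|H\|_{\infty\to1}$. Integrating over $s$ gives at most $|\chi|R^2\|H\|_{\infty\to1}$. This matches the stated constant, with some slack.

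Collecting terms gives at most $\eta_n(3R+|\chi|R^2)\|H\|_{\infty\to1}$. To reach the stated $2R$, I will combine terms more carefully. Using the identity $S-1=-S(c(y),c(x))$, the $-n(x)$ part merges with the $S\,n(x)$ part, giving
\begin{align*}
G(x,y)=S(c(x),c(y))\,n(y)-S(c(y),c(x))\,n(x).
\end{align*}
Each of these two terms is treated by the same expansion: the base term contributes $R$, and the derivative term contributes $\tfrac{|\chi|}{2}R^2$. The totals are then $2R+|\chi|R^2$.

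\emph{Main difficulty.} The delicate points are justifying Fubini and measurability for the parametrized separated forms, and handling the sign function inside the $s$-integral. I will take a single $\sigma(x)$, the sign of the full $L^1$ integrand, outside all integrals. Then for each $s$ the form is bilinear in bounded functions, so the bound holds pointwise in $s$ before integration. The final estimate for $\mathcal F$ follows by adding the two component bounds.
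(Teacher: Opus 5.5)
Your proposal is correct and follows the paper's proof. You use the same splitting $T_Hc - c\,T_H1$ for the chemical component and the same fundamental-theorem-of-calculus expansion of $S$ in its first argument, and your final regrouping into $S(c(x),c(y))n(y)-S(c(y),c(x))n(x)$ is what the paper does by keeping $(1-S(-R,c(y)))\,n(x)$ as one term bounded via $0\le 1-S\le 1$, which gives $2R$ directly without the $3R$ detour; the only cosmetic difference is that you reduce $L^1$ norms to bilinear forms through a sign function and Fubini, whereas the paper uses $\|T_Hv\|_{L^1}\le\|H\|_{\infty\to1}\|v\|_{L^\infty}$, pulls out $\|n\|_{L^\infty}$, and applies Minkowski in $s$.
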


\begin{proof}
See Appendix~\ref{app:cut-consistency-proof}.
\end{proof}

\subsubsection{Deterministic cut-distance graphon limit}

\begin{theorem}[Deterministic cut-distance graphon limit]
\label{thm:det-cut}
Let $W,K_N\in L^1(I^2)$ be nonnegative kernels satisfying $M_x(|W|) + M_y(|W|)+\sup_N \big( M_x(|K_N|) + M_y(|K_N|) \big) < \infty$. Fix $T>0$, and assume $\delta_\square(K_N,W)\to 0$.
Let $U$ and $U_N$ solve $\partial_t U=\mathcal F_W[U]$ and $\partial_t U_N=\mathcal F_{K_N}[U_N]$ on $[0,T]$, with initial data $U_{\mathrm{in}}$ and $U_{\mathrm{in},N}$ in $L^\infty(I)^2$.
Assume there exists $R_T<\infty$ such that
\begin{align*}
    \sup_N\sup_{t\in[0,T]} \Big( \|U(t)\|_{L^\infty(I)^2} +\|U_N(t)\|_{L^\infty(I)^2} \Big) \le R_T.
\end{align*}

Let $\varphi_N$ be measure-preserving bijections satisfying
\begin{align*}
    \|K_N^{\varphi_N}-W\|_\square \le \delta_\square(K_N,W)+N^{-1}.
\end{align*}
If $\|U_{\mathrm{in},N}^{\varphi_N}-U_{\mathrm{in}}\|_{L^1(I)^2}\to0$, then there exists a constant $C_T<\infty$ independent of $N$ such that
\begin{align*}
    \sup_{t\in[0,T]}\|U_N^{\varphi_N}(t)-U(t)\|_{L^1(I)^2}
    \le
    e^{C_TT}
    \Big(
    \|U_{\mathrm{in},N}^{\varphi_N}-U_{\mathrm{in}}\|_{L^1(I)^2}
    +C_T\|K_N^{\varphi_N}-W\|_{\infty\to1}
    \Big).
\end{align*}
Consequently, $U_N^{\varphi_N}\to U$ in $C([0,T];L^p(I)^2)$ for every $1\le p<\infty$.
\end{theorem}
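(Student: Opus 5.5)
The plan is to reduce to the labeled stability--consistency argument of Theorem~\ref{thm:det-Lp-conv}, working in $L^1$ with the kernel error measured in $\|\cdot\|_{\infty\to1}$ instead of $L^p$. First, by Lemma~\ref{lem:equivariance}, $V_N:=U_N^{\varphi_N}$ solves $\partial_tV_N=\mathcal F_{\widetilde K_N}[V_N]$ with $\widetilde K_N:=K_N^{\varphi_N}$ and initial data $U_{\mathrm{in},N}^{\varphi_N}$. Since $\varphi_N$ is measure preserving, the $L^\infty$ bound $R_T$ transfers to $V_N$, and so do the row and column integral bounds: $M_x(\widetilde K_N)=M_x(K_N)$ and $M_y(\widetilde K_N)=M_y(K_N)$. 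Thus everything is posed with a fixed labeling.

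Next, subtract the mild formulations and split as in the proof of Theorem~\ref{thm:det-Lp-conv}:
\begin{align*}
V_N(t)-U(t)=V_N(0)-U_{\mathrm{in}}+\int_0^t\big(\mathcal F_{\widetilde K_N}[V_N]-\mathcal F_{\widetilde K_N}[U]\big)\,ds+\int_0^t\big(\mathcal F_{\widetilde K_N}[U]-\mathcal F_W[U]\big)\,ds.
\end{align*}
For the stability term, apply Lemma~\ref{lem:Lip-Lp} with $p=1$. Its constant depends on $\kappa_{\widetilde K_N,1}\le M_y(K_N)+M_x(K_N)$, which is bounded uniformly in $N$ by hypothesis. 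For the consistency term, apply Lemma~\ref{lem:cut-consistency} with $R=R_T$ to get a bound $C_{R_T,\chi}\|\widetilde K_N-W\|_{\infty\to1}$ uniformly in $s$. Gr\"onwall's inequality then yields the displayed estimate, with $C_T$ the maximum of these two constants (absorbing the factor $T$ into $C_T$).

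For convergence, Remark~\ref{rmk:cut-compare} together with the choice of $\varphi_N$ gives
\begin{align*}
\|\widetilde K_N-W\|_{\infty\to1}\le4\|\widetilde K_N-W\|_\square\le4\big(\delta_\square(K_N,W)+N^{-1}\big)\to0.
\end{align*}
Combined with the assumed $L^1$ convergence of the relabeled initial data, this gives convergence in $C([0,T];L^1(I)^2)$. For $1<p<\infty$, interpolate using the uniform bound:
\begin{align*}
\|V_N-U\|_{L^p}^p\le(2R_T)^{p-1}\|V_N-U\|_{L^1}
\end{align*}
componentwise, uniformly in $t$. This gives convergence in $C([0,T];L^p(I)^2)$.

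The only genuinely nontrivial input is the cut-norm consistency bound of Lemma~\ref{lem:cut-consistency}, which is already established. The main point to check is that the Lipschitz constant in Lemma~\ref{lem:Lip-Lp} is uniform for the relabeled kernels. This holds because the bound depends only on $M_x$ and $M_y$ (at $p=1$, $\|T_{|K|}\|_{L^1\to L^1}\le M_y(|K|)$), and these are relabeling invariant. No $L^p$ control of the kernels is needed.
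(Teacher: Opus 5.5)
Your proposal is correct and follows the paper's proof essentially step for step. You relabel via Lemma~\ref{lem:equivariance} using the relabeling invariance of $M_x,M_y$, split into stability (Lemma~\ref{lem:Lip-Lp} at $p=1$ with the relabeled kernel) and consistency (Lemma~\ref{lem:cut-consistency}), apply Gr\"onwall, use $\|\cdot\|_{\infty\to1}\le4\|\cdot\|_\square$, and interpolate against the uniform $L^\infty$ bound; your constant $(2R_T)^{p-1}$ is valid, though $R_T^{p-1}$ already suffices since the hypothesis bounds the sum of the two norms by $R_T$.
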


\begin{remark}[Relabeling of the initial data]
\label{rmk:cut-initial}
The same bijections must align the kernels and the initial data. This condition holds when the initial data are constant. Under the identity relabeling, it holds for $U_{\mathrm{in},N}=\Pi_NU_{\mathrm{in}}$ by Lemma~\ref{lem:PiN-Lp}.
\end{remark}

\begin{proof}[Proof of Theorem~\ref{thm:det-cut}]
The uniform integral bounds are invariant under relabeling:
\begin{align*}
    M_x(K_N^{\varphi_N})=M_x(K_N),
    \qquad 
    M_y(K_N^{\varphi_N})=M_y(K_N).
\end{align*}
Hence, Lemma~\ref{lem:Lip-Lp} applies with a constant independent of $N$.
By Lemma~\ref{lem:equivariance}, $\widetilde U_N:=U_N^{\varphi_N}$ satisfies the mild equation with the kernel $K_N^{\varphi_N}$. Subtracting the mild formulation for $U$ and applying Lemmas~\ref{lem:Lip-Lp} and~\ref{lem:cut-consistency} yields
\begin{align*}
    \|\widetilde U_N(t)-U(t)\|_{L^1(I)^2}
    &\le
    \|U_{\mathrm{in},N}^{\varphi_N}-U_{\mathrm{in}}\|_{L^1(I)^2}
    +C_T\int_0^t\|\widetilde U_N(s)-U(s)\|_{L^1(I)^2}\,ds\\
    &\quad+C_Tt\|K_N^{\varphi_N}-W\|_{\infty\to1}.
\end{align*}
Gr\"onwall's inequality provides the stated estimate. Norm equivalence implies
\begin{align*}
    \|K_N^{\varphi_N}-W\|_{\infty\to1}
    \le 4\|K_N^{\varphi_N}-W\|_\square
    \le 4\delta_\square(K_N,W)+4N^{-1}.
\end{align*}
Since this bound tends to zero, convergence holds in $C([0,T];L^1(I)^2)$. For $1<p<\infty$, interpolation with the uniform bounds yields
\begin{align*}
    \sup_{t\in[0,T]}
    \|U_N^{\varphi_N}(t)-U(t)\|_{L^p(I)^2}^p
    \le R_T^{p-1}\sup_{t\in[0,T]}
       \|U_N^{\varphi_N}(t)-U(t)\|_{L^1(I)^2},
\end{align*}
which completes the proof.
\end{proof}

\paragraph{Sampled graphs in cut distance.}
In the random-graph set-up of Section~\ref{ssec:random-limit}, the empirical step kernels $W^{(N)}_\xi$ concentrate around their mean step kernels $W^{(N)}$ in cut norm. Combined with the $L^1$ convergence $W^{(N)}\to W$, this gives $\|W^{(N)}_\xi-W\|_\square\to0$ in probability under the identity relabeling. The cell-average initial data are compatible with this labeling, so the stability estimate in the proof of Theorem~\ref{thm:det-cut} applies without any additional relabeling condition. The concentration estimate and the resulting convergence of solutions are stated and proved in Appendix~\ref{app:R5-cutconc} (Lemma~\ref{lem:cut-conc-step} and Corollary~\ref{cor:rand-cut}).

The results in this section justify the use of our graphon Keller--Segel model as the continuum limit of the graph metapopulation model on families of dense deterministic and random graphs, at least for the behavior of solutions on fixed time intervals $[0,T]$. This suggests that our results in Section \ref{sec:graph-model-stability} for the instability thresholds and the onset of pattern formation should reflect similar emergence of patterns on sufficiently large discrete graph models. While we have not demonstrated the convergence of long-time behavior for the graph model to the graphon model, our analysis of long-time behavior in the graphon model in the remaining sections can provide directions for future work regarding the correspondence of asymptotic behavior for the graph and graphon Keller--Segel models. We provide additional discussion of these potential research directions in Section~\ref{sec:discussiongraphonconvergence}.

%%%%%%%%%%%%%%%%%%%%%%%%%%%%%%%%%%%%%%%%%%%%%%%%%%%%%%%%%%
%%%%%%%%%%%%%%%%%%%%%%%%%%%%%%%%%%%%%%%%%%%%%%%%%%%%%%%%%%
\section{Stability Analysis for the Graphon Model}\label{sec:graph-model-stability}

In this section, we present our analytical characterization of the pattern-forming behavior of the graphon Keller--Segel model. We start by performing a linear stability analysis in Section \ref{sec:linear-stability-graphon}, illustrating the threshold condition on the strength of biased motion $\chi$ of individuals towards the patch with increased chemical concentration required to initiate the emergence of patterns in our graphon metapopulation. We then present a weakly nonlinear stability analysis for our graphon model in Section \ref{sec:graphon-weakly-general}, deriving general conditions for determining the conditions under which the pattern-forming instability constitutes a supercritical or subcritical pitchfork bifurcation and characterizing the reaction functions $f(n)$ for which spatial patterns are expected to result in increased or decreased population size relative to the uniform equilibrium value near the onset of instability. In Section \ref{sec:graphon-weakly-alternative}, we specialize our analysis to characterize the qualitative behavior predicted by the weakly nonlinear analysis for uniform-band and Watts--Strogatz graphons and for both reaction functions $f(n)$ corresponding to logistic growth or a model featuring both a weak Allee effect and logistic regulation. Finally, in Section \ref{sec:graphon-concave-mass-decay}, we use a comparison principle on the mass of the solution of our graphon model to show that, under the logistic regulation function, any long-time patterned state will feature lower overall population size than the population achieved by the spatially uniform equilibrium.

\subsection{Linear Stability Analysis for the Graphon Model}\label{sec:linear-stability-graphon}

We begin with the linear stability analysis of our graphon model, illustrating the conditions for the onset of pattern-forming instability for a special family of graphons for which we apply Fourier analysis to analyze our linearized system. Throughout this subsection, we restrict our attention to translation-invariant ring kernels of the form
\[
W(x,y) = W(y-x),
\]
where $W(\cdot) \in L^\infty([0,1])$ is nonnegative, even, $1$-periodic, and not identically zero. 

We assume there exists a population density $\bar{n} > 0$ satisfying the stable reaction conditions $f(\bar{n}) = 0$ and $f'(\bar{n}) < 0$. Letting $\bar{c} := \alpha\bar{n}/\beta$, the graphon model \eqref{eq:our-graphon-model} admits the spatially homogeneous coexistence equilibrium $(n(x), c(x)) \equiv (\bar{n}, \bar{c})$. To analyze the linear stability of this uniform state, we consider small perturbations of the form
\begin{subequations}
    \begin{align}
        n(t,x)&=\bar{n}+\varepsilon\tilde{n}(t,x),\\
        c(t,x)&=\bar{c}+\varepsilon\tilde{c}(t,x),
    \end{align}
\end{subequations}
for some small parameter $\varepsilon>0$, which helps us investigate the behavior of solutions that are close to the homogeneous coexistence equilibrium. Plugging this ansatz into Equation~\eqref{eq:our-graphon-model}, using the fact that $\bar{n}$ and $\bar{c}$ are equilibrium values of the reaction dynamics, and collecting all $O(\varepsilon)$ terms, we obtain the following system of nonlocal PDEs as the linearization of our system around the uniform coexistence equilibrium: 
\begin{equation} \label{eq:linearized-PDE}
    \frac{\partial}{\partial t}\begin{bmatrix}
        \tilde{n}(t,x)\\
        \tilde{c}(t,x)
    \end{bmatrix}=\mathcal{D}_{\chi}\begin{bmatrix}
        \tilde{n}(t,x)\\
        \tilde{c}(t,x)
    \end{bmatrix},
\end{equation}
where the nonlocal linear PDE operator $\mathcal{D}_\chi$ is given by
\begin{equation}
    \mathcal{D}_{\chi}:=\begin{bmatrix}
        \frac{\eta_n}{2}\mathcal{L}_W+f'(\bar{n}) & -\frac{\eta_n \chi \bar{n}}{2}\mathcal{L}_W\\
        \alpha & \frac{\eta_c}{2}\mathcal{L}_W-\beta
    \end{bmatrix}
\end{equation}
and the operator $\mc{L}_W$ describes the graphon Laplacian operator associated with the graphon $W(y-x)$ defined by
\begin{equation}
    \mathcal{L}_WF:=\int_{0}^{1}W(y-x)\big[F(y)-F(x)\big]\,\mathrm{d}y.
\end{equation}
We will use this linearized system of PDEs to determine the conditions required for spatial pattern formation due to the sufficiently strong chemotactic sensitivity $\chi$.

\begin{Remark}
In this paper, we typically consider the case of a reaction function $f(n)$ such that there is a single positive equilibrium value $\bar n$ satisfying $f(\bar n)=0$ and $f'(\bar n)<0$. This allows us to deduce that the linearization of the reaction dynamics will take the form 
\begin{equation}
  \begin{bmatrix}
        f'(\bar{n}) & 0\\
       \alpha & -\beta,
   \end{bmatrix}
\end{equation}
so this matrix will have two negative eigenvalues and the equilibrium $(n,c) = \left(\bar n, \frac{\alpha \bar n}{\beta}\right)$ will be stable under the reaction dynamics. We therefore look to explore how the inclusion of chemotactic motion in our graphon model can help to destabilize this spatially uniform equilibrium in the presence of small sinusoidal perturbations. 
\end{Remark}

Our spatial domain is the interval $I=[0,1]$, interpreted as the unit circle with its endpoints identified. Therefore the $1$-periodicity of the kernel $W$ and our periodic interpretation of the interval provides a natural periodic boundary conditions for our nonlocal PDE model. With periodic boundary conditions on $I = [0,1]$, each nonzero eigenvalue for our graphon Laplacian operator would be associated with a pair of eigenfunctions $\cos\left( 2 \pi k x\right)$ and $\sin\left(2\pi kx\right)$ for the relevant integer wavenumber $k$, so we will place a restriction on our initial condition so that each nonzero eigenvalue of $\mc{L}_W$ is associated only with the cosine eigenfunction. 

To eliminate the sine modes as eigenfunctions associated with each wavenumber $k$, we assume that our initial condition at time $t = 0$ and the resulting profiles $n(t,x)$ and $c(t,x)$ of the population and chemical attractant satisfy the following symmetry across the point $x = \frac{1}{2}$:
\begin{equation}
    n(t,x)=n(t,1-x),
    \qquad
    c(t,x)=c(t,1-x)
    \text{ for a.e. }x\in I.
    \label{eq:symmetric-condition}
\end{equation}
We can also interpret this symmetry in terms of our periodic domain $[0,1]$ as considering initial conditions and solutions that are even, 1-periodic functions, and therefore it is possible to expand solutions $n(t,x)$ and $c(t,x)$ in terms of a Fourier cosine series on $[0,1]$. 

We show in Appendix~\ref{sec:symmetry-invariant} that the choice of this reflection symmetry of our initial data will result in solutions $n(t,x)$ and $c(t,x)$ for the graphon Keller--Segel model for all positive time $t$. Furthermore, because this subset of initial conditions satisfies the assumptions of Proposition \ref{prop:graphon_bounds}, we know that these solutions will exist from the global well-posedness of the graphon model for $L^{\infty}([0,1])$ initial data. 

We will consider sinusoidal perturbations reflecting this symmetry of solutions, using the following ansatz for solutions to our linearized system from equation~\eqref{eq:linearized-PDE}:
\begin{equation}\label{eq:linear-perturbation} 
    \begin{aligned}
        \tilde{n}(t,x)&=q_n e^{\lambda t}\cos\left(2\pi k x\right),\\
        \tilde{c}(t,x)&=q_c e^{\lambda t}\cos\left(2\pi k x\right),
    \end{aligned}
\end{equation}
where $k$ is a positive integer wavenumber and $\lambda$ is the growth rate of of the spatial mode with wavenumber $k$. The case of $k=0$ corresponds to a spatially uniform perturbation, which will result in a growth rate $\lambda$ with negative real part due to the stability of the equilibrium $(\bar{n},\bar{c})$ under the reaction dynamics. 

Before plugging Equation~\eqref{eq:linear-perturbation} into Equation~\eqref{eq:linearized-PDE}, we provide the following proposition characterizing the properties of the graphon Laplacian operator $\mc{L}_W$.  

\begin{proposition}\label{prop:L_A-cos}
Assume that $W\in L^\infty([0,1])$ is nonnegative, even, $1$-periodic, and not identically zero, and define
\begin{equation}\label{eq:A-hat}
    \widehat{W}(k):=\int_0^1 W(z)\cos(2\pi k z)\,\mathrm{d}z,
    \qquad k\in\mathbb Z.
\end{equation}
Then, for the graphon Laplacian
\[
    \mathcal L_WF(x)=\int_0^1 W(x,y)\big(F(y)-F(x)\big)\,\mathrm{d}y,
\]
we have
\[
    \mathcal L_W\cos(2\pi kx)
    =
    -\big[\widehat{W}(0)-\widehat{W}(k)\big]\cos(2\pi kx).
\]
Furthermore, we observe that $\widehat{W}(0)-\widehat{W}(k)>0$ for all $k\ne0$, so all nonzero eigenvalues of the graphon Laplacian will be negative. Finally, $\widehat{W}(k)\to0$ as $|k|\to\infty$ by the Riemann--Lebesgue lemma.
\end{proposition}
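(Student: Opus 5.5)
The plan is to compute $\mathcal L_W\cos(2\pi kx)$ directly by splitting the graphon Laplacian as $\mathcal L_WF = T_WF - m_WF$, in the notation of Section~\ref{sec:graphon-limits}. Then I would evaluate each piece using translation invariance, and finally read off the sign and decay properties from the resulting Fourier coefficients.

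\textbf{The degree term.} With $W(x,y)=W(y-x)$, I write $m_W(x)=\int_0^1 W(y-x)\,dy$. Substituting $z=y-x$ and using $1$-periodicity, this integral equals $\int_0^1 W(z)\,dz=\widehat W(0)$ for every $x$.

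\textbf{The integral term.} For $T_W\cos(2\pi k\cdot)$ I use the same substitution to get $\int_0^1 W(z)\cos(2\pi k(x+z))\,dz$. I then expand with the addition formula:
\[
\cos(2\pi k(x+z))=\cos(2\pi kx)\cos(2\pi kz)-\sin(2\pi kx)\sin(2\pi kz).
\]
Because $W$ is even and $1$-periodic, $z\mapsto W(z)\sin(2\pi kz)$ is odd and periodic, so its integral over a period vanishes. What remains is $\widehat W(k)\cos(2\pi kx)$. Subtracting the degree term gives
\[
\mathcal L_W\cos(2\pi kx)=-\big[\widehat W(0)-\widehat W(k)\big]\cos(2\pi kx).
\]

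\textbf{Positivity for $k\neq 0$.} I would write
\[
\widehat W(0)-\widehat W(k)=\int_0^1 W(z)\big(1-\cos(2\pi kz)\big)\,dz.
\]
The integrand is nonnegative. The factor $1-\cos(2\pi kz)$ vanishes only on the finite set $z\in\{j/k\}$, which has measure zero. Since $W\ge0$ is not identically zero (as an $L^\infty$ function, so it is positive on a set of positive measure), the product is positive on a set of positive measure, and the integral is strictly positive.

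\textbf{Decay.} Since $W\in L^\infty([0,1])\subset L^1([0,1])$, the Riemann--Lebesgue lemma gives $\widehat W(k)\to0$.

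The main point needing care is the strict positivity step, specifically the measure-zero argument. Everything else is routine.
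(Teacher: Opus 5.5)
Your proposal is correct and matches the argument the paper takes for granted; the paper writes out no proof beyond invoking the Riemann--Lebesgue lemma. The steps you use are exactly what is needed: the substitution $z=y-x$ with periodicity, evenness of $W$ killing the sine term, the representation $\widehat W(0)-\widehat W(k)=\int_0^1 W(z)\bigl(1-\cos(2\pi kz)\bigr)\,dz$ whose integrand vanishes only on a null set, and Riemann--Lebesgue for the decay. If you want ``all nonzero eigenvalues'' to cover the whole point spectrum and not only the cosine modes, add that $\langle\mathcal L_W u,u\rangle=-\tfrac12\iint W(x,y)\bigl(u(y)-u(x)\bigr)^2\,dx\,dy\le0$ for the symmetric bounded kernel, so every eigenvalue is nonpositive.
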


Using Proposition~\ref{prop:L_A-cos}, we can plug the ansatz from Equation~\eqref{eq:linear-perturbation} into Equation~\eqref{eq:linearized-PDE} to see that our expression $(\tilde{n}(t,x), \tilde{c}(t,x))$ will solve our linearized system provided that the temporal growth rate $\lambda$ is a solution to the following eigenvalue problem:
\begin{equation}\label{eq:linear-perturbation-eigen}
    \lambda\begin{bmatrix}
        q_n\\
        q_c
    \end{bmatrix}=M_\chi(k)\begin{bmatrix}
        q_n\\
        q_c
    \end{bmatrix},
\end{equation}
where 
\begin{equation}\label{eq:L(k)}
    M_\chi(k):=\begin{bmatrix}
        -\frac{\eta_n}{2}\big(\widehat{W}(0)-\widehat{W}(k)\big)+f'(\bar{n}) & \frac{\eta_n\chi \bar{n}}{2}\big(\widehat{W}(0)-\widehat{W}(k)\big)\\
        \alpha & -\frac{\eta_c}{2}\big(\widehat{W}(0)-\widehat{W}(k)\big)-\beta
    \end{bmatrix}.
\end{equation}
When $M_\chi(k)$ admits at least one eigenvalue with positive real part, then the linear perturbation \eqref{eq:linear-perturbation} becomes unstable due exponential growth represented by the term proportional to $e^{\lambda t}$. 
As we have considered a spatially uniform equilibrium $(\bar{n},\bar{c})$ and reaction dynamics such that $f'(\bar{n}) < 0$, we see that the trace of our linearization matrix $M_{\chi}(k)$ will satisfying 
\begin{equation}
\operatorname{tr}(M_\chi(k))=-\beta+f'(\bar{n})-\frac{1}{2}\left(\eta_n+\eta_c\right)\Big[\widehat{W}(0)-\widehat{W}(k)\Big]<0
\end{equation}
for each wavenumber $k$. Therefore we see that a perturbation proportional to $\cos\left( 2 \pi k x\right)$ can only produce an eigenvalue of $M_\chi(k)$  with positive real part provided that $\det(M_\chi(k))<0$. We can rewrite this condition to see that instability will be possible provided that the chemotactic sensitivity $\chi$ satisfies
\begin{equation}\label{eq:threshold-w}
    \chi>\chi^*(k)=\frac{\frac{1}{2}\eta_n\eta_c\big[\widehat{W}(0)-\widehat{W}(k)\big]^2+\big[\beta\eta_n-\eta_cf'(\bar{n})\big]\big[\widehat{W}(0)-\widehat{W}(k)\big]-2\beta f'(\bar{n})}{\alpha\eta_n \bar{n}\big[\widehat{W}(0)-\widehat{W}(k)\big]}.
\end{equation}
Because we have assumed that our reaction function $f(n)$ satisfies $f'(\bar{n}) < 0$ and we have shown in Proposition \ref{prop:L_A-cos} that the eigenvalues of the graphon Laplacian satisfy $\widehat{W}(0)-\widehat{W}(k) > 0$, we can deduce that the threshold $\chi^*(k)$ in \eqref{eq:threshold-w} is positive for all nonzero integer wavenumbers $k$. 

In particular, we have shown that, give a perturbation proportional to $\cos\left( 2 \pi k x\right)$, the linearization matrix will satisfy 
$\operatorname{tr}(M_\chi(k))<0$ and $\det(M_\chi(k))<0$ when $\chi>\chi^*(k)$. Therefore $M_\chi(k)$ will have real eigenvalues of opposite signs for sufficiently large $\chi$, suggesting that we will expect the growth of perturbations with wavenumber $k$ without a temporally oscillatory behavior sufficient close to the onset of instability. Because this threshold $\chi^*(k)$ depends on the wavenumber under consideration, we can also look to characterize a threshold for pattern formation that holds over perturbations for all possible integer wavenumbers, allowing us to study the strength of chemotactic sensitivity $\chi$ required to produce spatial patterns for at least one wavenumber.

To do this, we define the overall instability threshold by
\begin{equation}\label{eq:threshold-wmin}
    \chi^* := \inf_{k\in\mathbb Z\setminus\{0\}}\chi^*(k).
\end{equation}
The homogeneous equilibrium is linearly stable against all spatial perturbations when $\chi<\chi^*$, and unstable whenever $\chi>\chi^*$. If this infimum is achieved for some finite positive wavenumber, we denote this critical wavenumber by $k^*$ and write $\chi^*=\chi^*(k^*)$.

However, it may be possible that the instability threshold $\chi^*(k)$ may be first achieved simultaneously by infinitely many wavenumbers simultaneously. In particular, there exists a second threshold $\chi^{**}$ on chemotactic instability that guarantees the instability of the uniform state for all sufficiently large wavenumbers $k$. When $\chi$ exceeds this threshold $\chi^{**}$, we see that are infinitely many wavenumbers $k \in \mathbb{Z}$ for which the perturbations proportional to $\cos\left(2 \pi k x\right)$ will become linearly unstable under the graphon model of equation~\eqref{eq:our-graphon-model}. We characterize this second threshold in the following proposition. 

\begin{proposition}
Assume the hypotheses of Proposition~\ref{prop:L_A-cos}. 
 Then the Fourier coefficients $\widehat{W}(k)$ defined in \eqref{eq:A-hat} satisfy $\widehat{W}(k)\to 0$ as $k\to \infty$. In addition, we see that the limit
\begin{equation}\label{eq:double-threshold-w}
    \chi^{**}:=\lim_{k\to\infty}\chi^*(k)
    =
    \frac{
    \frac12\eta_n\eta_c\widehat{W}(0)^2
    +\big[\beta\eta_n-\eta_c f'(\bar n)\big]\widehat{W}(0)
    -2\beta f'(\bar n)
    }{
    \alpha\eta_n\bar n\,\widehat{W}(0)
    }.
\end{equation}
exists. In particular, if $\chi>\chi^{**}$, then there exist infinitely many wavenumbers $k$ such that $\chi>\chi^*(k)$.
\end{proposition}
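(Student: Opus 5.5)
The argument runs in three steps. First establish the decay $\widehat W(k)\to 0$. Then pass to the limit in \eqref{eq:threshold-w} using continuity of a rational function of one variable. Finally deduce the statement about infinitely many wavenumbers from the definition of the limit.

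For the first step, $W\in L^\infty([0,1])\subset L^1([0,1])$, so the Riemann--Lebesgue lemma gives $\widehat W(k)=\int_0^1 W(z)\cos(2\pi kz)\,dz\to 0$ as $k\to\infty$. This is already recorded in Proposition~\ref{prop:L_A-cos}; we reuse it directly. Setting $\mu_k:=\widehat W(0)-\widehat W(k)$, we get $\mu_k\to \widehat W(0)=\int_0^1 W(z)\,dz$. This limit is strictly positive, because $W\ge 0$ is not identically zero, i.e.\ not a.e.\ zero.

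For the second step, write $\chi^*(k)=g(\mu_k)$ with
\[
g(\mu):=\frac{\tfrac12\eta_n\eta_c\mu^2+\big[\beta\eta_n-\eta_c f'(\bar n)\big]\mu-2\beta f'(\bar n)}{\alpha\eta_n\bar n\,\mu}.
\]
This function is continuous on $(0,\infty)$, since $\alpha,\eta_n,\bar n>0$. Because $\mu_k\to\widehat W(0)>0$, continuity gives $\chi^*(k)\to g(\widehat W(0))$, which is exactly the expression \eqref{eq:double-threshold-w}. We do need $\mu_k>0$ for every $k\neq 0$ so that $\chi^*(k)$ is defined; this also comes from Proposition~\ref{prop:L_A-cos}. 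The only point requiring care is that the limit value of $\mu_k$ stays away from the singularity of $g$ at $\mu=0$, and the positivity $\widehat W(0)>0$ secures this. It is the one place where "not identically zero" is used.

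For the third step, suppose $\chi>\chi^{**}$ and set $\varepsilon:=\chi-\chi^{**}>0$. Convergence gives some $K$ with $|\chi^*(k)-\chi^{**}|<\varepsilon$ for all $k\ge K$. Hence $\chi^*(k)<\chi$ for every $k\ge K$, so there are infinitely many such wavenumbers, and each of the corresponding cosine modes is linearly unstable by the trace/determinant analysis preceding \eqref{eq:threshold-wmin}.

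No step is a real obstacle. The argument reduces to Riemann--Lebesgue together with continuity of $g$ away from zero.
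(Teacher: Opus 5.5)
Your proof is correct and follows the paper's approach: the paper gives no separate proof and relies on exactly this reasoning, namely the Riemann--Lebesgue decay $\widehat W(k)\to 0$ already recorded in Proposition~\ref{prop:L_A-cos}, followed by substituting $\widehat W(0)-\widehat W(k)\to\widehat W(0)$ into \eqref{eq:threshold-w}. Your check that $\widehat W(0)=\int_0^1 W(z)\,dz>0$, which keeps the limit away from the pole of the threshold formula, and your $\varepsilon$-argument for infinitely many unstable wavenumbers make precise what the paper leaves implicit.
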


Under that assumption that $f'(\bar{n})<0$, the threshold quantity $\chi^{**}$ from Equation~\eqref{eq:double-threshold-w} is positive, so there exists sufficiently strong finite values of the chemotactic sensitivity $\chi$ such that infinitely many wavenumbers will become unstable under the graphon Keller--Segel model. This stands in contrast to the stability analysis for the PDE Keller--Segel model with equivalent reaction terms, for which the eigenvalues of the one-dimensional Laplacian operator $\partial_{xx}$ take the form $-4 \pi^2 k^2$ and prevent the possibility of destabilizing sufficiently large wavenumbers $k$. 

Since $\chi^*(k) \to \chi^{**}$ as $k \to \infty$, it follows that $\chi^* \leq \chi^{**}$. If $\chi^* < \chi^{**}$, the infimum $\chi^*$ is attained by a finite set of positive wavenumbers. A unique minimizer $k^*$ provides an isolated threshold required for the single-mode analysis below, whereas multiple minimizers necessitate a coupled amplitude system. If $\chi^* = \chi^{**}$, the infimum may not be attained at any finite wavenumber, and, at $\chi=\chi^*$, the maximal real part of the modal eigenvalues approaches zero as $k\to\infty$. Consequently, any $\chi > \chi^{**}$ destabilizes all sufficiently large wavenumbers. The subsequent weakly nonlinear analysis assumes $\chi^* < \chi^{**}$ with a unique minimizer $k^*$. 

\begin{Remark}
        Although there exist infinitely many integer wavenumbers $k$ that destabilize $M_\chi(k)$, the boundedness of $\widehat{W}(k)$ guarantees that the eigenvalues of $M_\chi(k)$ are a bounded function of the wavenumber $k$. In particular, we observe from the dispersion relation plotted in Figure \ref{fig:dispersion-relation} that a finite wavenumber $k^*$ has the maximal real part among all non-negative integer wavenumbers $k$ for the choice of parameters considered in that figure, so we expect the emergence of a spatial pattern with spatial profiles with a finite number of peaks across the spatial domain. This emergence of physically plausible spatial patterns and the convergence of numerical solutions to long-time equilibrium states suggests that the infinite family of unstable wavenumbers does not raise any issue with the qualitative behavior of the pattern-forming instability even for $\chi > \chi^{**}$.

        In contrast, the presence of infinitely many unstable wavenumbers has often served as an indication of the possibility of ill-posedness through finite-time blowup or a lack of continuous dependence on initial data in PDE models \citep{joseph1990short,bernoff2016biological,garcia2020illposedness,ellefsen2021equilibrium,yao2026pattern}. We do not see a similar issue with ill-posedness in our graphon Keller--Segel model, as we have been able to demonstrate global well-posedness for $L^\infty$ solutions to our model in Proposition \ref{prop:graphon_bounds} for non-negative initial densities of the population and chemical attractant. Notably, the form of eigenvalues taken for the graphon Laplacian operator as a bounded function of $k$ differs from the form of $-4 \pi^2 k^2$ taken for the local Laplacian $\partial_{xx}$ on our spatial domain, so our formulation of directed motion on graphons may be able to produce well-posed graphon models featuring negative diffusive effects that would produce physically unrealistic solutions for corresponding PDE models of directed motion. 
\end{Remark}

We can now provide further illustration of these conditions for the onset of chemotactic instability in our graphon model by considering the uniform band kernel on the unit circle:
\begin{equation}\label{eq:band-kernel}
    W(x,y)=\frac{1}{2\rho}\ind_{\{d(x,y)<\rho\}},
    \qquad
    d(x,y)=\min\{|x-y|,1-|x-y|\},
    \quad 0<\rho<\frac12.
\end{equation}
Writing $W(x,y)=W(y-x)$ with $W$ even and $1$-periodic, we obtain
\begin{equation} \label{eq:Wvaluesring}
    \widehat W(0)=1,
    \quad
    \widehat W(k)=\frac{\sin(2\pi k\rho)}{2\pi k\rho}
    =\operatorname{sinc}(2\pi k\rho)
    \quad(k\ne0),
\end{equation}
where $\operatorname{sinc}(z)=\sin(z)/z$ for $z\ne0$ and
$\operatorname{sinc}(0)=1$.
This normalized band kernel is a weighted interaction kernel and does not satisfy the edge-probability assumption $0\le W\le1$ used in Section~\ref{ssec:random-limit}. The normalization for finite-graph comparisons is discussed in Section~\ref{sec:simulation-graph}.
We can use these explicit expressions for the Fourier coefficients $ \widehat{W}(k)$ to provide the following analytical expressions for the instability thresholds $\chi^*(k)$ and $\chi^{**}$ for the graphon Keller--Segel model: 
\begin{subequations} \label{eq:chistaruniform}
    \begin{align}
        \chi^{*}&=\inf_{k\in\mathbb{Z}\backslash\{0\}}\left\{\frac{\frac{1}{2}\eta_n\eta_c\big[1-\operatorname{sinc}(2\pi k\rho)\big]^2+\big[\beta\eta_n-\eta_cf'(\bar{n})\big]\big[1-\operatorname{sinc}(2\pi k\rho)\big]-2\beta f'(\bar{n})}{\alpha\eta_n \bar{n}\big[1-\operatorname{sinc}(2\pi k\rho)\big]}\right\}\\
        \chi^{**}&=\frac{\frac{1}{2}\eta_n\eta_c+\big[\beta\eta_n-\eta_cf'(\bar{n})\big]-2\beta f'(\bar{n})}{\alpha\eta_n \bar{n}}.
    \end{align}
\end{subequations}
For the uniform band kernel \eqref{eq:band-kernel}, we numerically compute the minimal threshold $\chi^*$ and the second threshold $\chi^{**}$ as functions of the band width $\rho \in (0,1/2)$.  We note that the threshold $\chi^*$ will be piecewise smooth in $\rho$, with possible jumps in the derivative of this threshold occurring when the critical wavenumber $k^*$ changes as the radius $\rho$ increases. We note that $\chi^* < \chi^{**}$ for all $\rho \in (0,1/2)$, so there will always be a finite number of critical wavenumbers except for the complete graphon that arises in the case of $\rho = \frac{1}{2}$. 

\begin{Remark}
Notably, we find that, for $\rho = \frac{1}{2}$, $\operatorname{sinc}(2 \pi k \rho) = \operatorname{sinc}(2 \pi k) = 0$ for each nonzero integer wavenumber $k$, and therefore we can see from Equation \eqref{eq:chistaruniform} that $\chi^*(k) = \chi^{**}$ for each nonzero wavenumber $k$, and therefore we see that all wavenumber destabilize simultaneously as $\chi$ increases past $\chi^{**}$. This means that, for the case of the complete graphon, there is no characteristic spatial scale that emerges for patterned states in our graphon Keller--Segel model, and it will not be possible for us to analyze our instability through the weakly nonlinear analysis introduced in Section \ref{sec:graphon-weakly-general}.
\end{Remark}

We can also visualize properties of our condition pattern-forming instability for the case of the uniform band graphon. In Figure \ref{fig:thresholds-graph}, we plot the thresholds $\chi^*$ and $\chi^{**}$ presented in Equation \eqref{eq:chistaruniform} as a function of the band radius $\rho$, highlighting the piecewise smooth nature of the threshold $\chi^*$ and indicating the fact that $\chi^*$ increases to $\chi^{**}$ as $\rho \to \frac{1}{2}$. We also present the emergence of unstable wavenumbers for increasing chemotactic sensitivity $\chi$, plotting the dispersion relation for the case of the uniform band graphon in Figure \ref{fig:dispersion-relation}. The dispersion relation presents the largest real part for the eigenvalues of the linearization matrix $M_{\chi}(k)$ as a function of the wavenumber $k$, and we plot threshold examples of this dispersion relation for cases in which all wavenumbers are unstable (with $\chi < \chi^{**}$), when a single wavenumber crosses the instability threshold (with $\chi = \chi^*$), when finitely many wavenumbers have become unstable (for $\chi$ satisfying $\chi^* < \chi < \chi^{**}$), and once infinitely many wavenumbers have become unstable (for $\chi > \chi^{**}$). 

\begin{figure}[!ht]
    \centering
    \includegraphics[width=0.5\linewidth]{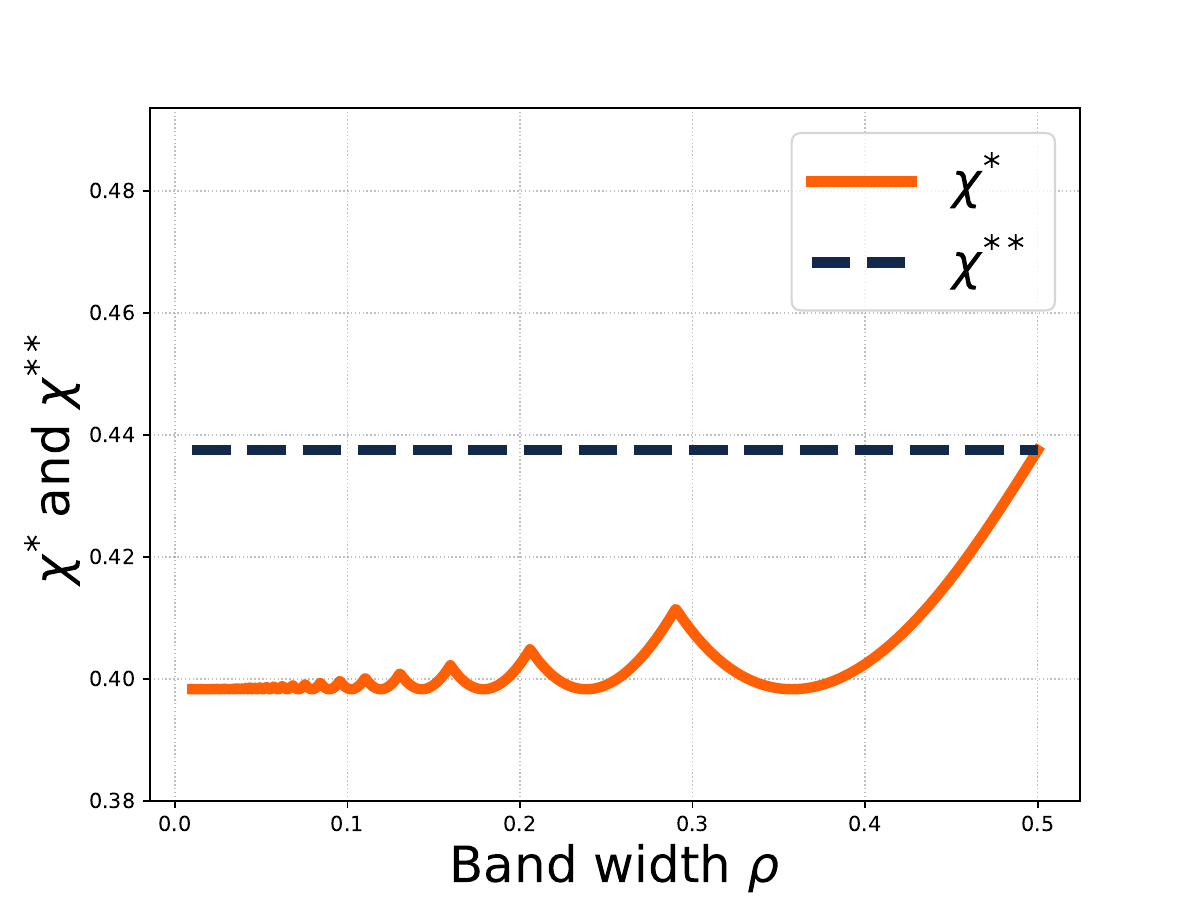}
    \caption{Thresholds $\chi^*$ and $\chi^{**}$ as functions of the band width $\rho$ for the uniform band kernel. The orange solid line represents $\chi^*$, while the blue dashed line represents $\chi^{**}$. The PDE parameters are $\eta_n=0.05$, $\eta_c=0.08$, $\alpha=0.2$, and $\beta=0.1$. The reaction term is given by $f(n)=rn(1-n/\bar{n})$ with $r=0.1$, $\bar{n}=8$, and $\bar{c}=16$.}
    \label{fig:thresholds-graph}
\end{figure}

\begin{figure}[!ht]
    \centering

    \begin{subfigure}[t]{0.46\linewidth}
        \centering
        \includegraphics[width=\linewidth]{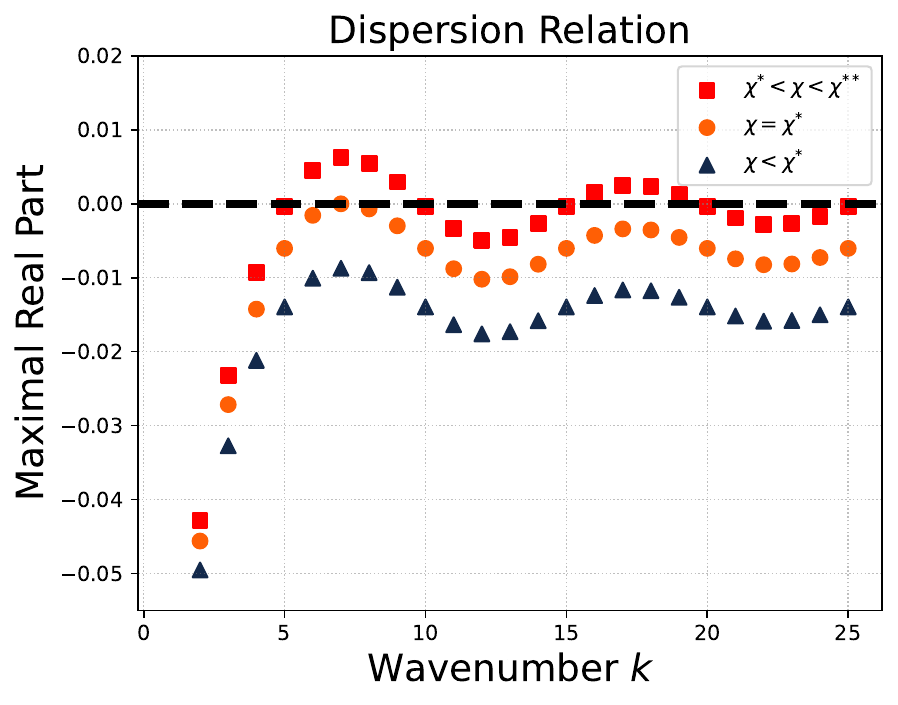}
    \end{subfigure}
    \hspace{0.05\linewidth}
    \begin{subfigure}[t]{0.46\linewidth}
        \centering
        \includegraphics[width=\linewidth]{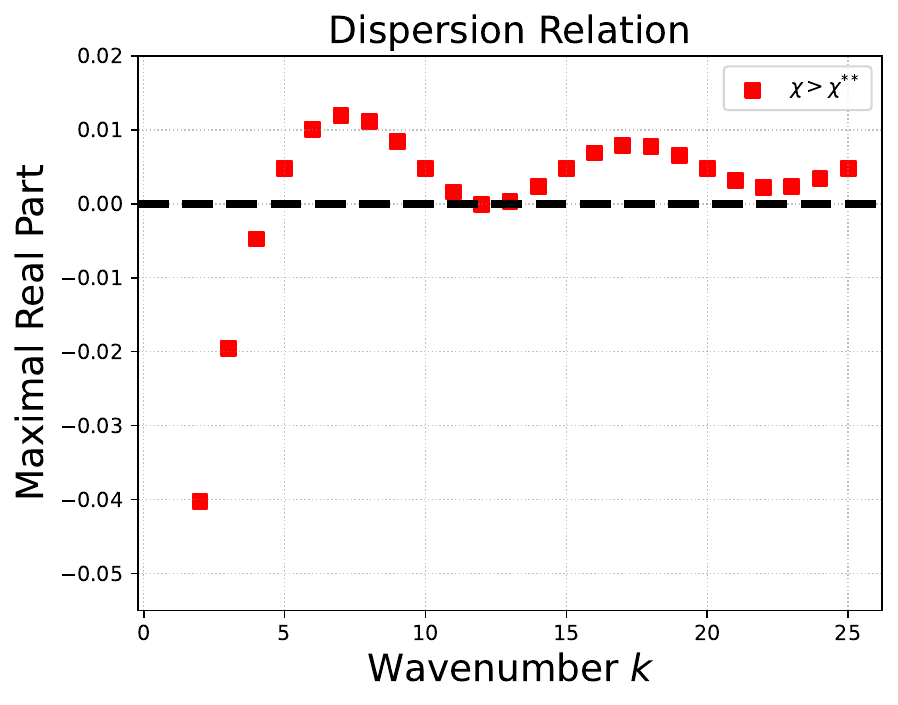}
    \end{subfigure}

    \caption{Dispersion relation for the uniform band kernel with width $\rho=0.1$. The left panel shows the case $\chi < \chi^{**}$, while the right panel shows the case $\chi > \chi^{**}$. We take $\chi = 0.35$, $0.3985$, $0.43$, and $0.47$. The PDE parameters are $\eta_n=0.05$, $\eta_c=0.08$, $\alpha=0.2$, and $\beta=0.1$. The reaction term is given by $f(n)=rn(1-n/\bar{n})$ with $r=0.1$, $\bar{n}=8$, and $\bar{c}=16$. The critical perturbation wavenumber $k^*=7$, the threshold is $\chi^*=\chi^*(7)\approx 0.3984$, while the second threshold is $\chi^{**}\approx 0.4375$.}
    \label{fig:dispersion-relation}
\end{figure}

%%%%%%%%%%%%%%%%%%%%%%%%%%%%%%%%%%%%%%%%%%%%%%%%%%%%%%%%%%
%%%%%%%%%%%%%%%%%%%%%%%%%%%%%%%%%%%%%%%%%%%%%%%%%%%%%%%%%%
\subsection{Weakly Nonlinear Analysis for the Graphon Model}
\label{sec:graphon-weakly-general}

In Section~\ref{sec:linear-stability-graphon}, we have studied the threshold of the bifurcation parameters $\chi$ crossing which our graphon model of Equation~\eqref{eq:our-graphon-model} becomes linearly unstable. To further understand the nature of the bifurcation and the ensuing pattern formation, we will now perform a formal weakly nonlinear analysis to describe the behavior of the emerging pattern in our graphon Keller--Segel model near the onset of instability. 

For this analysis, we will retain our assumption of a translation-invariant kernel $W(x,y)$ and consider solutions starting from an initial condition satisfying the reflection symmetry condition from Equation~\eqref{eq:symmetric-condition}. To perform our weakly nonlinear expansion of the graphon up to deriving an amplitude equation at cubic order, we will assume that our reaction function $f(n) \in C^4(\mathbb{R})$ near the equilibrium point $\bar{n}$. For this analysis, we will assume that there is a unique critical wavenumber $k^*$ at the threshold for instability $\chi^*$, so that there is a unique positive integer $k^*$
\begin{equation}
\begin{aligned}
    \chi^* = \chi^*(k^*) < \chi^{**}, 
    \text{ and }
    \chi^*(k) > \chi^* \text{ for all integers } k \ge 1 \text{ with } k \ne k^*.
\end{aligned}
\end{equation}
Combining our assumptions on the unique critical wavenumber $k^*$ and our reflection-symmetric solution, there will be a single unstable eigenfunction for our linearized system that is proportion $\cos(2\pi k^*x)$, which will guarantee that our linearization matrix satisfies $M_{\chi^*}(0)$ and $M_{\chi^*}(2k^*)$ are invertible. 

To study the behavior of our model near the onset of instability, we introduce the following small parameter $\varepsilon$ satisfying $0<\varepsilon \ll 1$ and we assume that the chemotactic sensitivity $\chi$ can be written as 
\begin{subequations}\label{eq:weakly-transformations}
\begin{equation}
\chi=\chi^*(k^*)+\varepsilon^2\tilde{\chi}
\end{equation} 
and we consider the following slow time variable $\tau$
\begin{equation}\quad \tau=\varepsilon^2 t
\end{equation}
\end{subequations}
which will allow us to describe the slow growth of the unstable spatial pattern when $\chi$ is sufficiently close to the critical threshold value $\chi^*$ for the onset of instability. Here, $\chi^*(k^*)$ is the threshold chemotactic sensitivity for pattern formation that can be obtained by Equation~\eqref{eq:threshold-wmin} and $\tilde{\chi}$ will be used a control parameter to describe the behavior of crossing through the threshold $\chi^*(k^*)$ as $\tilde{\chi}$ increases from negative values to positive values (allowing our weakly nonlinear stability analysis and the resulting amplitude equation to characterize the behavior of the bifurcation as $\chi$ passes through $\chi^*$). 

\begin{Remark}
Our choice of the scaling of the deviation from the threshold chemotactic sensitivity $\chi - \chi^*$ and the slow timescale $\tau$ proportion to $\varepsilon^2$ was chosen to reflect the periodic boundary conditions that we consider for our graphon model. It is possible to follow the approach highlighted by \cite{woolley2022boundary} to deduce the appropriate scaling of parameters required for our problem, although we have started directly with the scaling that is typically used to analysis instabilities for nonlocal pattern-formation problems with periodic boundary conditions \citep{topaz2006nonlocal,eftimie2009weakly,buttenschon2021non}.
\end{Remark}

To perform our weakly nonlinear analysis, we consider the following perturbation expansions for the profiles of population density and the concentration of the chemoattractant
\begin{equation}\label{eq:wnl-expansion}
    \begin{aligned}
        n(\tau,x)&=\bar{n}+\varepsilon n_1(\tau,x) + \varepsilon^2 n_2(\tau,x)+\varepsilon^3n_3(\tau,x)+\cdots\\
        c(\tau,x)&=\bar{c}+\varepsilon c_1(\tau,x) + \varepsilon^2 c_2(\tau,x)+\varepsilon^3 c_3(\tau,x)+\cdots,
    \end{aligned}
\end{equation}
where $(\bar{n},\bar{c})$ corresponds to the uniform equilibrium state and we consider solutions that depend only on the slow timescale $\tau = \varepsilon^2 t$ due to our assumption that $\chi$ is close to the threshold $\chi^*$ for the onset of pattern formation. By plugging this perturbation ansatz into Equation~\eqref{eq:our-graphon-model}, we show in Section \ref{sec:derivation_stability_analysis} of the appendix that the functions in our expansion will satisfy a hierarchy of integro-differential equations for each order of $\varepsilon$, with the problem at each order inheriting the periodic boundary conditions from the original graphon Keller--Segel model. Up to cubic order in $\varepsilon$, this hierarchy of equations will be given by
\begin{subequations}
\begin{alignat}{2}
    O(1) &: \:\:\: \hspace{10.6mm} \begin{bmatrix}
        0\\
        0
    \end{bmatrix}=&& \begin{bmatrix}
    f(\bar{n})\\
    \alpha \bar{n}-\beta \bar{c}
\end{bmatrix}, \label{eq:DM-graphon-order1} \\
O(\varepsilon) &: \:  -\mathcal{D}_{\chi^*}\begin{bmatrix}
        n_1\\
        c_1
    \end{bmatrix} =&& \begin{bmatrix}
        0\\
        0
    \end{bmatrix}, \label{eq:DM-graphon-orderep} \\
O(\varepsilon^2) &: \:     -\mathcal{D}_{\chi^*}\begin{bmatrix}
        n_2\\
        c_2
    \end{bmatrix} =&& \begin{bmatrix}
            \mathcal{H}_n^{(2)}[n_1,c_1]\\
            0
        \end{bmatrix}, \label{eq:DM-graphon-orderep2} \\
O(\varepsilon^3) &: \:   -\mathcal{D}_{\chi^*}\begin{bmatrix}
            n_3\\
            c_3
        \end{bmatrix} = && \,\,\begin{aligned}[t]
            &\,\,-\frac{\partial}{\partial \tau}\begin{bmatrix}
            n_1\\
            c_1
        \end{bmatrix}-\tilde{\chi}\begin{bmatrix}
            \frac{\eta_n}{2}\bar{n}\int_{0}^{1}W(y-x)[c_1(\tau,y)-c_1(\tau,x)]\,\mathrm{d}y\\
            0
        \end{bmatrix}\\
        &+\begin{bmatrix}
            \mathcal{H}_{n}^{(3)}[n_1,c_1,n_2,c_2]\\
            0
        \end{bmatrix},
        \end{aligned}
        \label{eq:DM-graphon-orderep3}
\end{alignat}
\end{subequations}
where the operator $\mathcal{D}_{\chi^*}$ appearing on the lefthand side corresponds to the linearization of the graphon model at the critical sensitivity of the chemoattractant to the individuals $\chi^*$ given by
\begin{equation}\label{eq:linearization-graphon}
    \mathcal{D}_{\chi^*}:=\begin{bmatrix}
        \frac{\eta_n}{2}\mathcal{L}_W+f'(\bar{n}) & -\frac{\eta_n \chi^* \bar{n}}{2}\mathcal{L}_W\\
        \alpha & \frac{\eta_c}{2}\mathcal{L}_W-\beta
    \end{bmatrix}
\end{equation}
and the nonlinear terms on the righthand sides of the problems at orders $\mc{O}(\varepsilon^2)$ and $\mc{O}(\varepsilon^3)$ are given by
\begin{equation}
    \begin{aligned}
        \mathcal{H}_n^{(2)}=&\,\,-\frac{\eta_n}{4}\chi^*\int_{0}^{1}W(y-x)\sigma_x^yn_1\delta_x^yc_1\,\mathrm{d}y+\frac{1}{2}f''(\bar{n})n_1^2,\\
        \mathcal{H}_n^{(3)}=&\,\,-\frac{\eta_n}{4}\chi^*\int_{0}^{1}W(y-x)\sigma_x^yn_2\delta_x^yc_1\,\mathrm{d}y-\frac{\eta_n}{4}\chi^*\int_{0}^{1}W(y-x)\sigma_x^yn_1\delta_x^yc_2\,\mathrm{d}y\\
        &\,\,+\frac{\eta_n}{24}(\chi^*)^3\bar{n}\int_{0}^{1}W(y-x)\left(\delta_x^yc_1\right)^3\,\mathrm{d}y+f''(\bar{n})n_1n_2+\frac{1}{6}f'''(\bar{n})n_1^3
    \end{aligned}
\end{equation}
with $\sigma_x^yn_i=n_i(\tau,y)+ n_i(\tau,x)$ and $\delta_x^yn_i=n_i(\tau,y)-n_i(\tau,x)$. Notably, the problem at order $\mc{O}(1)$ recovers the steady-state problem for the spatially homogeneous equilibrium state, the problem at order $\mc{O}(\varepsilon)$ describes the behavior of the linearized graphon system at the critical threshold $\chi = \chi^*$, and the problems at orders $\mc{O}(\varepsilon^2)$ and $\mc{O}(\varepsilon^3)$ describe the effects on the nonlinear movement and reaction terms on the emergent spatial patterns. The nonlinear terms on the righthand sides of the problems at order $\mc{O}(\varepsilon^2)$ and $\mc{O}(\varepsilon^3)$ only depend on solutions to the problem at lower orders, so we can look to successively solve the problem at each order by plugging in solutions from lower orders on the righthand side and solving the resulting inhomogeneous linear equations at the given order using the method of undetermined coefficients.

For the $\mc{O}(\varepsilon)$-problem, we are looking for a solution in the kernel of our linearized operator. This suggests that we consider a solution that is proportion to the eigenfunction associated with our critical eigenfunction $\cos(2\pi k^* x)$, so we take solution of the form
\begin{equation}\label{eq:first-order-eq}
    \begin{bmatrix}
        n_1(\tau,x)\\
        c_1(\tau,x)
    \end{bmatrix}
    =a(\tau)\mathbf{q}\cos(2\pi k^* x),
\end{equation}
where $a(\tau)$ is the amplitude of the sinusoidal profile of the $O(\varepsilon)$-solution and the vector $\mathbf{q} =(q_n,q_c)^\mathrm{t}$ with $q_n=1$ is an element of the kernel of the linearization matrix $M_{\chi^*}(k)$ when applied to functions proportional to sinusoidal functions of the form $\cos\left(2\pi k^* x\right)$, which is given by
\begin{equation*}
    M_{\chi^*}(k^*)=\begin{bmatrix}
        \frac{\eta_n}{2}\big(\widehat{W}(k^*)-\widehat{W}(0)\big)+f'(\bar{n}) & -\frac{\eta_n\chi^*\bar{n}}{2}\big(\widehat{W}(k^*)-\widehat{W}(0)\big)\\
        \alpha & \frac{\eta_c}{2}\big(\widehat{W}(k^*)-\widehat{W}(0)\big)-\beta
    \end{bmatrix}.
\end{equation*}
The equation $M_{\chi^*}(k^*)\mathbf{q}=\mathbf{0}$, together with the second row of the matrix $M_{\chi^*}(k^*)$, can be used to determine the expression for $q_c$ as 
\begin{equation}\label{eq:qc}
    q_c=\frac{2\alpha}{2\beta+\eta_c\big[\widehat{W}(0)-\widehat{W}(k^*)\big]}>0.
\end{equation}
We note that all components of the vector $\mathbf{q}=(q_n,q_c)$ have the same sign, which implies that the deviation of the spatial profiles of the populations and the chemical from the uniform coexistence state will be positively correlated.

Next, we turn to our problem at order $\mc{O}(\varepsilon^2)$. We can then plug our expression from Equation~\eqref{eq:first-order-eq} for $n_1(\tau,x)$ and $c_1(\tau,x)$ into the right-hand side of Equation~\eqref{eq:DM-graphon-orderep2} to see that our solutions at order $O(\varepsilon^2)$ satisfy an inhomogeneous system of ODEs for $n_2(\tau,x)$ and $c_2(\tau,x)$. This solution together with the identity $\cos^2(2\pi k^* x)=\frac{1}{2}[1+\cos(4\pi k^* x)]$ allow us to rewrite Equation~\eqref{eq:DM-graphon-orderep2} as
\begin{equation}
    -\mathcal{D}_{\chi^*}\begin{bmatrix}
            n_2\\
            c_2
        \end{bmatrix}=-\mathbf{s}_0a^2(\tau)-\mathbf{s}_2a^2(\tau)\cos(4k^*\pi x),
\end{equation}
where the coefficients $\mathbf{s}_0$ and $\mathbf{s}_2$ are given by
\begin{equation} \label{eq:second-order-RHS}
    \begin{aligned}
        \mathbf{s}_{0}=\begin{bmatrix}
        s_{0,n}\\
        s_{0,c}
    \end{bmatrix}&:=\begin{bmatrix}
        -\frac{1}{4}f''(\bar{n})q_n^2\\
        0
    \end{bmatrix},\\
    \mathbf{s}_{2}=\begin{bmatrix}
        s_{2,n}\\
        s_{2,c}
    \end{bmatrix}&:=\begin{bmatrix}
        -\frac{\eta_n \chi^*}{8}q_nq_c\big[\widehat{W}(0)-\widehat{W}(2k^*)\big]-\frac{1}{4}f''(\bar{n})q_n^2\\
        0
    \end{bmatrix}.
    \end{aligned}
\end{equation}
The complementary solution to this equation will be proportional to the solution at order $\mc{O}(\varepsilon)$, so the only new spatial behavior exhibited by the solution at this order will be described through a particular solution to the $\mc{O}(\varepsilon^2)$-problem. We guess a particular solution of $n_2$ and $c_2$ in the form
\begin{equation}\label{eq:second-order-eq}
    \begin{bmatrix}
        n_2(\tau,x)\\
        c_2(\tau,x)
    \end{bmatrix}=\mathbf{t}_0a^2(\tau)+\mathbf{t}_2a^2(\tau)\cos(4k^*\pi x),
\end{equation}
and we look to determine the corresponding coefficients for both the constant and sinusoidal term in this solution. Because we are assuming that the spatially uniform equilibrium is stable under the reaction dynamics and that $k^*$ is the only unstable wavenumber for our choice of diffusivity $\chi$ close to $\chi^*$ in \eqref{eq:threshold-wmin}, we are able to deduce that the matrices $M_{\chi^*}(0)$ and $M_{\chi^*}(2k^*)$ are invertible, and we can therefore see that a particular solution to the $O(\varepsilon^2)$ is given by the functions
\begin{equation}\label{eq:t-expressions}
    \begin{aligned}
        \mathbf{t}_0&=\begin{bmatrix}
            t_{0,n}\\
            t_{0,c}
        \end{bmatrix}:=\begin{bmatrix}
            f'(\bar{n}) & 0\\
            \alpha & -\beta
        \end{bmatrix}^{-1}\mathbf{s}_0,\\
        \mathbf{t}_2&=\begin{bmatrix}
            t_{2,n}\\
            t_{2,c}
        \end{bmatrix}:=\begin{bmatrix}
            \frac{\eta_n}{2}\big[\widehat{W}(2k^*)-\widehat{W}(0)\big]+f'(\bar{n}) & -\frac{\eta_n\chi^*\bar{n}}{2}\big[\widehat{W}(2k^*)-\widehat{W}(0)\big]\vspace{1.5pt}\\
            \alpha & \frac{\eta_c}{2}\big[\widehat{W}(2k^*)-\widehat{W}(0)\big]-\beta
        \end{bmatrix}^{-1}\mathbf{s}_2.
    \end{aligned}
\end{equation}
We can then use the solutions we have obtained for $(n_1(\tau,x), c_1(\tau,x))^{\mathrm{t}}$ and $(n_2(\tau,x),c_2(\tau,x))^{\mathrm{t}}$ to rewrite the right-hand side of our $O(\varepsilon^3)$ equation. In particular, we plug these expressions into Equation~\eqref{eq:DM-graphon-orderep3} and use the identity $\cos\left(2\pi k^* x \right)^3 = \frac{3}{4} \cos\left(2\pi k^*
 x\right) + \frac{1}{4} \cos\left(6\pi k^* x\right)$ to see that it is possible to write this equation in the form
\begin{equation} \label{eq:DM-graphon-ep3righthandexplicit}
- \mathcal{D}_{\chi^*} \begin{bmatrix}
    n_3\\
    c_3
\end{bmatrix} = \mathbf{H}_1 \cos\left(2\pi k^* x\right)  + \mathbf{H}_3 \cos\left(6\pi k^* x\right),
\end{equation}
where the entries vector $\mathbf{H}_1=(H_{1,n}, H_{1, c})^{\mathrm{t}}$ of coefficients for the term proportional to $\cos\left(2\pi k^* x\right)$ are given by
\begin{equation}\label{eq:H1-expressions}
    \begin{aligned}
        H_{1,n}=&\,\,-q_na'(\tau)+\frac{\eta_n}{2}\tilde{\chi}\bar{n}q_c\big[\widehat{W}(0)-\widehat{W}(k^*)\big]a(\tau)+\frac{\eta_n}{2}\chi^* q_ct_{0,n}\left[\widehat{W}(0)-\widehat{W}(k^*)\right]a^3(\tau)\\
        &\,\,+\frac{\eta_n}{8}\chi^* q_ct_{2,n}\left[\widehat{W}(0)+\widehat{W}(2k^*)-2\widehat{W}(k^*)\right]a^3(\tau)+\frac{\eta_n}{8}\chi^* q_nt_{2,c}\left[\widehat{W}(0)-\widehat{W}(2k^*)\right]a^3(\tau)\\
        &\,\,+\frac{\eta_n}{32}(\chi^*)^3\bar{n}q_c^3\left[4\widehat{W}(k^*)-\widehat{W}(2k^*)-3\widehat{W}(0)\right]a^3(\tau)+f''(\bar{n})q_n\left(t_{0,n}+\frac{1}{2}t_{2,n}\right)a^3(\tau)\\
        &\,\,+\frac{1}{8}f'''(\bar{n})q_n^3a^3(\tau),\\
        H_{1,c}=&\,\,-q_ca'(\tau).
    \end{aligned}
\end{equation}
To deduce the desired qualitative properties for our weakly nonlinear analysis, we do not need to use the coefficients of $\mathbf{H}_3$, the vector of coefficients for $\cos\left(6\pi k^* x\right)$. In particular, the terms proportional to $\cos(2 \pi k^* x)$ on the righthand side raise the possibility of a resonant solution due to the fact that solutions proportional to $\cos(2 \pi k^* x)$ form the solution to the associated linear homogeneous equation, so applying a solvability condition to the $\mc{O}(\varepsilon^3)$ problem will eventually allow us to derive the amplitude equation $a(\tau)$ describing the dependence of our solution to the $\mc{O}(\varepsilon)$ problem in terms of the slow time variable $\tau = \varepsilon^2 t$.

Because the right-hand side of Equation~\eqref{eq:DM-graphon-ep3righthandexplicit} contains terms proportional to the function $\cos\left(2\pi k^* x\right)$ that appears in the kernel of the linear operator $\mathcal{D}_{\chi^*}$, we need to ensure that this term does not produce a resonant solution. We can apply the Fredholm alternative to see that the term $\mathbf{H}_1 \cos\left(2\pi k^* x\right)$ must be orthogonal to the kernel of the adjoint operator $\mathcal{D}_{\chi^*}^{\dagger}$ given by
\begin{equation}
\mathcal{D}_{\chi^*}^{\dagger} = \begin{bmatrix}
\frac{\eta_n}{2} \mc{L}_W + f'(\bar n) & \alpha \\ - \frac{\eta_n \chi^* \bar n}{2} \mc{L}_W & \frac{\eta_c}{2} \mc{L}_W - \beta
\end{bmatrix}.
\end{equation}
Because this operator is evaluated at the critical chemotactic sensitivity $\chi^*$, the kernel of $\mathcal{D}_{\chi^*}^{\dagger}$ will take the form $\mathbf{p}\cos\left(2\pi k^* x\right)$ for some vector of constants $\mathbf{p} = (p_n,p_c)^{\mathrm{t}}$. Considering our solution space as $\left(L^2\left([0,1]\right)\right)^2$, we can then look to rule out the possibility of resonant solutions by showing that the term $\mathbf{H}_1 \cos\left(2\pi k^* x\right)$ is orthogonal to $\mathbf{p}\cos\left(2\pi k^* x\right) \in \ker\big(\mathcal{D}_{\chi^*}^{\dagger} \big)$ in the sense that
\begin{equation}
\int_0^1 \left[ \mathbf{H}_1 \cos\left(2\pi k^* x\right)  \right] \cdot \left[ \mathbf{p}  \cos\left(2\pi k^* x\right) \right] dx = 0 \Longrightarrow \frac{1}{2} \mathbf{H}_1 \cdot \mathbf{p} = 0.
\end{equation}
This allows us to deduce that we need the coefficient function $\mathbf{H}_1$ to be orthogonal to the vector $\mathbf{p} = (p_n,p_c)^{\mathrm{t}}$.

To proceed with our application of the Fredholm alternative, we need to find an explicit expression for the vector $\mathbf{p}$. One way that we can do this is to find a simplified expression for a representative element of $\mathcal{D}_{\chi^*}^{\dagger}$ by deriving an element of the kernel in the form $\mathbf{p_0} \cos\left( 2 \pi k^* x\right)$ with $\mathbf{p_0} = (1,p_{n,0})^{\mathrm{t}}$. We can then scale this representative element so that we have the following convenient property $\mathbf{p} \cdot \mathbf{q} = 1$ for our representations of the kernels of $\mathcal{D}_{\chi^*}$ and $\mathcal{D}_{\chi^*}^{\dagger}$. This procedure yields the following expressions for the elements $p_n$ and $p_c$ of $\mathbf{p}$ 
\begin{equation}\label{eq:pn-pc}
    \begin{aligned}
    p_n&=\frac{2\beta + \eta_c\big[\widehat{W}(0)-\widehat{W}(k^*)\big]}{\left(\eta_n+\eta_c\right)\big[\widehat{W}(0)-\widehat{W}(k^*)\big]+2\left(\beta-f'(\bar{n})\right)} \\
        p_c&=\frac{\Big\{\beta+\frac{1}{2}\eta_c\big[\widehat{W}(0)-\widehat{W}(k^*)\big]\Big\}\Big\{\eta_n\big[\widehat{W}(0)-\widehat{W}(k^*)\big]-2f'(\bar{n})\Big\}}{\alpha\left(\eta_n+\eta_c\right)\big[\widehat{W}(0)-\widehat{W}(k^*)\big]+2\alpha\left[\beta-f'(\bar{n})\right]},
    \end{aligned}
\end{equation}
and we can use these expressions to find the conditions required to satisfy the orthogonality condition $\mathbf{p} \cdot \mathbf{H}_1 = 0$.

Using the expressions for the components of $\mathbf{p}$ and $\mathcal{H}_1$ from Equations~\eqref{eq:pn-pc} and Equation~\eqref{eq:H1-expressions}, we can see that the condition $\mathbf{p} \cdot \mathbf{H}_1 = 0$ results in the following ODE that must be satisfied by the amplitude function $a(\tau)$ for the solution to the $\mc{O}(\varepsilon)$ problem:
\begin{equation}\label{eq:amplitude-eq}
    \frac{da(\tau)}{d\tau}=\kappa\tilde{\chi}a+\gamma a^3,
\end{equation}
where 
\begin{equation}\label{eq:wnl-coffcients}
    \begin{aligned}
        \kappa=&\,\,\frac{\eta_n}{2}\bar{n}p_nq_c\big[\widehat{W}(0)-\widehat{W}(k^*)\big],\\
        \gamma=&\,\,\frac{\eta_n}{2}\chi^* p_nq_ct_{0,n}\left[\widehat{W}(0)-\widehat{W}(k^*)\right]\\
        &\,\,+\frac{\eta_n}{8}\chi^* p_nq_ct_{2,n}\left[\widehat{W}(0)+\widehat{W}(2k^*)-2\widehat{W}(k^*)\right]+\frac{\eta_n}{8}\chi^* p_nq_nt_{2,c}\left[\widehat{W}(0)-\widehat{W}(2k^*)\right]\\
        &\,\,+\frac{\eta_n}{32}(\chi^*)^3\bar{n}p_nq_c^3\left[4\widehat{W}(k^*)-\widehat{W}(2k^*)-3\widehat{W}(0)\right]+f''(\bar{n})p_nq_n\left(t_{0,n}+\frac{1}{2}t_{2,n}\right)\\
        &\,\,+\frac{1}{8}f'''(\bar{n})p_nq_n^3.
    \end{aligned}
\end{equation}
This is a cubic Stuart-Landau equation, and we refer to this differential equation as the cubic amplitude equation for our pattern-forming instability. We will be able to use this amplitude equation to characterize the form of our bifurcation and to calculate approximate expressions for our spatial pattern close to the onset of instability. 

We can use information from our linearization and the previous steps of our weakly nonlinear analysis to show that the coefficient $\kappa$ is always positive. We know from Equation \eqref{eq:qc} that $q_c$ will be positive since all parameters are assumed to be positive. In addition, we can use the fact that the eigenvalues of the graphon Laplacian satisfy $\widehat{W}(k) - \widehat{W}(0) < 0$ and our assumption that $\beta > 0$, and our assumption that $f'(\bar n) < 0$ to deduce that $p_n$ and $p_c$ in \eqref{eq:pn-pc} are also positive. This allows us to deduce that the coefficient $\kappa$ in the amplitude Equation~\eqref{eq:amplitude-eq} is always positive. 

The lengthy expression for the cubic coefficient $\gamma$ is more difficult to assess, and the sign of this term does appear to be clear to characterize by inspection of the analytical expression in \eqref{eq:wnl-coffcients}. To determine the sign of $\gamma$, we will instead turn to a numerical approach to explore the qualitative behavior of the amplitude equation for sample reaction functions $f(n)$ and graphon kernels $W(x-y)$. We present this exploration in Section \ref{sec:graphon-weakly-alternative}.

The sign of $\gamma$ determines the local criticality of the bifurcation described by the cubic amplitude equation. Since $\kappa>0$, if $\gamma<0$ and $\tilde{\chi}>0$, the trivial state $a=0$ becomes unstable and
\begin{equation*}
    a_\pm = \pm\sqrt{-\frac{\kappa\tilde{\chi}}{\gamma}}
\end{equation*}
emerge as locally stable equilibria. This corresponds to a supercritical bifurcation, where the critical mode saturates at an amplitude of order $O(\varepsilon)$ near onset. Conversely, if $\gamma>0$, the bifurcation is subcritical: unstable nonzero equilibria exist for $\tilde{\chi}<0$, while $a=0$ remains locally stable below the threshold.

\begin{Remark}
Our approach is based on the assumption that there is a unique critical wavenumber $k^*$ satisfying $\chi^* = \chi^*(k^*)$, which allows us to study the change in the amplitude of this single unstable wavenumber and to assume that both eigenvalues of the linearization matrix $M_{\chi^*}(k)$ will have negative real part for all other integer wavenumbers $k$. If $\chi^* < \chi^{**}$ but there are multiple critical wavenumbers $k$ satisfying $\chi^*(k) = \chi^*$, then we would have to incorporate each of the unstable wavenumbers into our weakly nonlinear analysis and derive a system of amplitude equations to describe the emerging amplitudes for each unstable wavenumber. This possibility of multiple critical wavenumbers was addressed by \cite{bramburger2023pattern} for the case of the Swift-Hohenberg equation of graphons, who use a multi-dimensional center manifold reduction to study the stability of emerging patterns when multiple wavenumbers undergo Turing instability at the same parameter threshold.
\end{Remark}

\begin{remark}
In the case of a subcritical pitchfork bifurcation, the cubic amplitude equation does not make a prediction for a stable, nonzero equilibrium amplitude for any value of $\gamma$, and we instead see that the amplitude $a(\tau)$ from Equation \eqref{eq:amplitude-eq} would be expected to go off to infinity for $\gamma > 0$ and a nonzero initial amplitude. To make a more refined asymptotic approximation of the amplitude of emerging patterns, we could perform a multiple-timescale approximation in our weakly nonlinear expansion by considering two slow timescales $T_1 = \varepsilon^2 t$ and $T_2 = \varepsilon^4 t$. Such an expansion produces a quintic Stuart-Landau equation that features two stable nonzero equilibria for sufficiently large $\tilde{\gamma}$, and these predicted equilibrium amplitudes and weakly nonlinear expansion of our spatial patterns have been used to show good agreement between asymptotic expansions and numerical simulations of PDE models sufficiently close to the onset of the pattern-forming instability \citep{short2010nonlinear,tulumello2014cross,giunta2021pattern,farivar2025intraguild}.
\end{remark}

\subsection{Weakly Nonlinear Analysis for Different Reactions and Kernels}
\label{sec:graphon-weakly-alternative}

In this section, we apply our weakly nonlinear expansion to explore the pattern-forming bifurcation for example graphon adjacency functions and sample reaction functions of $f(n)$. We first characterize the form of pitchfork bifurcation for a uniform band kernel and a logistic reaction function, before exploring how changing the kernel to a small-world graphon or allowing a population growth function $f(n)$ that is locally convex at the equilibrium population size $\bar n$. Beyond characterizing the direction of the pitchfork bifurcation, we will also explore how the choice of reaction function may impact the total population size and the total chemical concentration achieved in patterned states close to the onset of equilibrium.

We start our analysis by considering the case of the logistic reaction function, which is given by
\begin{equation}
    f(n) = rn\left(1 - \frac{n}{\bar{n}}\right).
\end{equation}
Near the homogeneous equilibrium $n(t,x) \equiv \bar{n}$ and $c(t,x) \equiv \bar{c}$, we have
\begin{equation*}
    f'(\bar{n})=-r<0,\quad f''(\bar{n})=-\frac{2r}{\bar{n}},\quad f'''(\bar{n})=0.
\end{equation*}

We will now use these expressions for the derivatives of $f(n)$ at the equilibrium value $\bar n$ to determine the predictions of weakly nonlinear analysis. For the uniform band graphon, we use these properties of the reactions terms, the values of the Fourier coefficients $\widehat{W}(k)$ from Equation \eqref{eq:Wvaluesring}, and the threshold $\chi^*$ from Equation \eqref{eq:chistaruniform} to determine the values of the constant $\gamma$ and other appropriate quantities in our expansion. Correspondingly, for the case of the Watts--Strogatz graphon with adjacency function given by
\begin{equation*}
W(x,y)=
\begin{cases}
1-p, & d(x,y)\le \rho\\[2pt]
p, & d(x,y)>\rho,
\end{cases}
\end{equation*}
we can calculate that the corresponding Fourier coefficients are given by
\begin{subequations}
\begin{equation}
\widehat{W}(0) = p + 2 \rho \left( 1 - 2p\right) 
\end{equation}
and
\begin{equation}
\widehat{W}(k) = 2 \rho \left(1-2p\right) \operatorname{sinc}\left( 2 \pi k \rho\right)
\end{equation}
\end{subequations}
for nonzero integer wavenumbers $k$, which means that we can calculate the threshold for the onset of instability by plugging these expressions for $\widehat{W}(0)$ and $\widehat{W}(k)$ into our expression for $\chi^*(k)$ from Equation \eqref{eq:threshold-w}.

We then use these expressions and the definition of the cubic coefficient $\gamma$ from Equation \eqref{eq:wnl-coffcients} to characterize the nature of the pattern-forming instability for the case of the logistic reaction function and either the uniform band graph or the Watts--Strogatz graphon. We display the resulting values of $\gamma$ in Figure \ref{fig:bifurcation-type}, observing that $\gamma < 0$ when the band radius $\rho$ is sufficiently small for the uniform band kernel and when the rewiring probability is sufficiently small for the Watts--Strogatz graphon. This indicates that the pattern-forming instability corresponds to a supercritical pitchfork bifurcation for the majority of the range of possible band radii $\rho \in (0,1/2)$ or rewiring probabilities $p \in (0,1/2)$. However, we also see that $\gamma > 0$ for $\rho$ sufficiently close to $\frac{1}{2}$ for the uniform band graphon and for $p$ sufficiently close to $\frac{1}{2}$ for the Watts--Strogatz graphon, indicating that subcritical pitchfork bifurcations are possible when long-range connections are sufficiently abundant between patches in our metapopulation. As the uniform band graphon and the Watts-Strogatz graphon both approach a complete graphon in the respective limits as $\rho \to \frac{1}{2}$ and as $p \to \frac{1}{2}$, so we see that subcritical bifurcations and corresponding suggested potential bistability of the uniform state and patterned states appear to arise for sufficiently global connectivity among patches.

\begin{figure}[!ht]
    \centering
    \begin{subfigure}[t]{0.48\linewidth}
        \centering
        \includegraphics[width=\linewidth]{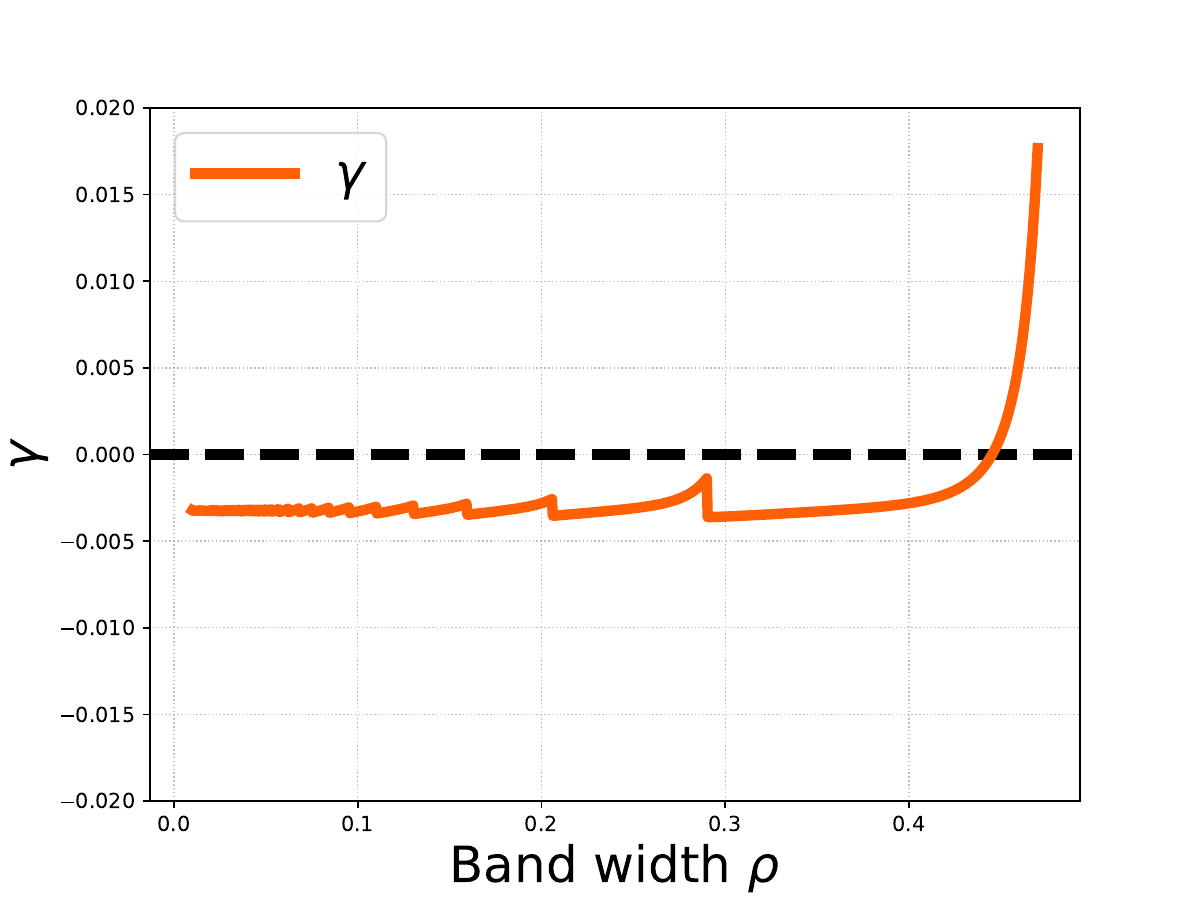}
        \caption{Uniform band graphon kernel}
    \end{subfigure}
    %\hspace{0.05\linewidth}
    \begin{subfigure}[t]{0.48\linewidth}
        \centering
        \includegraphics[width=\linewidth]{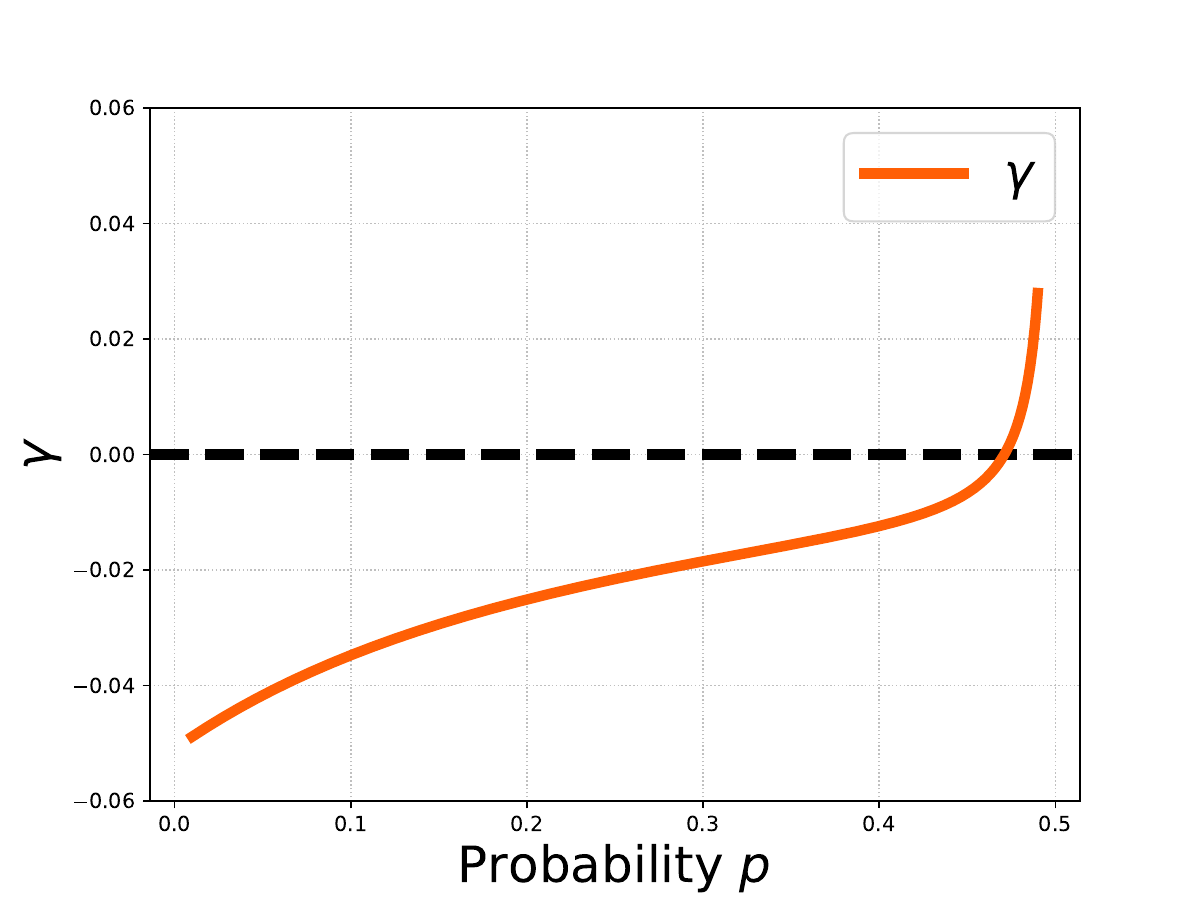}
        \caption{Small-World graphon kernel}
    \end{subfigure}
    \caption{Characterization of the form of pitchfork bifurcation for the graphon model in the cases of the uniform band kernel (left panel) and the small-world kernel (right panel). Each panel presents the value of the cubic-order coefficient $\gamma$ in the amplitude Equation~\eqref{eq:amplitude-eq} as a function of the band width $\rho$ in the left panel and the rewiring probability $p$ in the right panel. The weakly nonlinear analysis is performed for the logistic reaction function $f(n)=rn(1-n/\bar{n})$ with reaction parameters given by $\eta_n=0.05$, $\eta_c=0.08$, $\alpha=0.2$, $\beta=0.1$, $r=0.1$, $\bar{n}=8$, and $\bar{c}=16$. For each $\rho \in (0,1/2)$ or $p \in (0,1/2)$, the threshold $\chi^*$ is determined by Equation~\eqref{eq:threshold-wmin}, and the dashed lines correspond to the threshold value of $\gamma = 0$ separating the regimes of a supercritical pitchfork bifurcation (corresponding to $\gamma < 0$) and a subcritical pitchfork bifurcation (corresponding to $\gamma > 0$). The discontinuities of $\gamma$ in the left panel correspond to discrete changes in the critical wavenumber with the change of $\rho$.}
    \label{fig:bifurcation-type}
\end{figure}

We can also use our expansion obtained from weakly nonlinear analysis to obtain a heuristic approximation for our spatially patterned profiles of the population and chemical attractant. From solving the problems at orders $\mc{O}(\varepsilon)$ and $\mc{O}\left(\varepsilon^2\right)$, we can use our expressions from Equations \eqref{eq:first-order-eq} and \eqref{eq:second-order-eq} to write our approximate solution obtained from weakly nonlinear analysis in terms of the following expansion 
\begin{subequations} \label{eq:expanded-solution-tau}
\begin{align}
n(\tau,x) &= \bar n + \varepsilon  q_n a(\tau) \cos\left( 2 \pi k^* x\right) + \varepsilon^2 a(\tau)^2 \left[t_{0,n} + t_{2,n} \cos\left( 4 \pi k^* x \right) \right] + \mc{O}\left(\varepsilon^3\right) \\
c(\tau,x) &= \frac{\alpha \bar n}{\beta} + \varepsilon  q_c a(\tau) \cos\left( 2 \pi k^* x\right) + \varepsilon^2 a(\tau)^2 \left[t_{0,c} + t_{2,c} \cos\left( 4 \pi k^* x \right) \right] + \mc{O}\left(\varepsilon^3\right),
\end{align}
\end{subequations}
where $a(\tau)$ is the solution to our amplitude equation from Equation \eqref{eq:amplitude-eq}. In particular, this approximations suggests that a guess for the limiting spatial profiles of population and chemical attractant as $\tau \to \infty$ to is given by one of two possible asymptotic approximations for the steady-state densities of our model
\begin{subequations} \label{eq:expanded-solution-eq}
\begin{align}
\lim_{\tau \to \infty} n(\tau,x) &= \bar n + \varepsilon  q_n a_{\pm} \cos\left( 2 \pi k^* x\right) + \varepsilon^2 a_{\pm}^2 \left[t_{0,n} + t_{2,n} \cos\left( 4 \pi k^* x \right) \right] + \mc{O}\left(\varepsilon^3\right) \\
\lim_{\tau \to \infty} c(\tau,x) &= \frac{\alpha \bar n}{\beta} + \varepsilon  q_c a_{\pm} \cos\left( 2 \pi k^* x\right) + \varepsilon^2 a_{\pm}^2 \left[t_{0,c} + t_{2,c} \cos\left( 4 \pi k^* x \right) \right] + \mc{O}\left(\varepsilon^3\right),
\end{align}
\end{subequations}
where we note that this expansion for the steady state will depend on which of the two equilibria $a_-$ or $a_+$ are chosen for the amplitude equation. 

These expanded solutions and expanded steady-state densities will serve as a benchmark for numerical solutions that we will study in Section \ref{sec:numericalsimulations}. In addition, the form of these expansions will allow us to predict another key qualitative property of emerging patterns, the total population size $\int_0^{1}n(t,x) dx$ and the total chemical concentration $\int_0^1 c(t,x) dx$ across the spatial domain. In particular, we can integrate both sides of our expansions in Equation \eqref{eq:expanded-solution-tau} to write the total masses of our approximations of $n(\tau,x)$ and $c(\tau,x)$ as 
\begin{equation}
    \begin{aligned}
        \int_{0}^{1}n(\tau,x)\,dx&= \bar{n}+\varepsilon^2a^2(\tau)t_{0,n}+O(\varepsilon^3),\\
        \int_{0}^{1}c(\tau,x)\,dx&= \bar{c}+\varepsilon^2a^2(\tau)t_{0,c}+O(\varepsilon^3),
    \end{aligned}
\end{equation}
and we see that the terms proportional to $\cos(2 \pi k^*x)$ and $\cos(4 \pi k^*x)$ make no contribution to the mass of solutions because they integrate to $0$ across the spatial domain. As $\bar n$ and $\bar c$ are the equilibrium masses of the population and chemical attractant in the spatially uniform state, we see that the signs of the terms $t_{0,n}$ and $t_{0,c}$ will determine whether our emergent patterns states featured increases or decreases in total population size and total chemical attractant relative to the values achieved in the absence of spatial patterns. 

We can calculate the components of $\mathbf{t}_0$ by combining the expressions from Equations \eqref{eq:second-order-RHS} and \eqref{eq:second-order-eq}. This allows us to that, for an arbitary reaction function $f(n)$, the two components are given by
\begin{equation}\label{eq:t_0-expressions}
    \begin{aligned}
        t_{0,n}&=-\frac{f''(\bar{n})}{4f'(\bar{n})}q_n^2=-\frac{f''(\bar{n})}{4f'(\bar{n})} \\%=-\frac{1}{2\bar{n}},\\
        t_{0,c}&=-\frac{\alpha f''(\bar{n})}{4\beta f'(\bar{n})}q_n^2=-\frac{\alpha f''(\bar{n})}{4\beta f'(\bar{n})},%=-\frac{\alpha}{2\beta \bar{n}}.
    \end{aligned}
\end{equation}
where we used our convention that $q_n = 1$ to simplify the expression for these coefficients. Because we assume that $f'(\bar n) < 0$ to guarantee of the spatially uniform equilibrium under the reaction dynamics, we can deduce that the sign of $t_{0,n}$ and $t_{0,c}$ will be determined by the sign of the second derivative $f''(\bar n)$ of the reaction function at the equilibrium population size $\bar n$. Notably, we find that the mass of population and chemical attractant will either jointly increase or jointly decrease, consistent with the spatial correlation of the population and chemoattractant profiles due to the form of the $\mc{O}(1)$ solution and matching intuition with the underlying positive feedback between chemoattractant production and biased motion of individuals for the chemoattractant.

We also deduce from Equation \eqref{eq:t_0-expressions} that $t_{0,n}$ and  $t_{0,c}$ will be positive when $f''(\bar n) < 0$, while these quantities will both be negative when $f''(\bar n) > 0$. This means that whether the masses of patterned solutions exceed those achieved in the spatially uniform state depends on whether function describing the net growth of the population is locally convex or locally concave at the equilibrium population size $\bar n$. In particular, for the case of logistic growth $f(n) = r n \left( 1 - \frac{n}{\bar n} \right)$, we can calculate that
\begin{subequations}
\begin{align}
f'(\bar n) &= -r < 0  \\
f''(\bar n) &= - \frac{2r}{\bar n} < 0,
\end{align}
\end{subequations}
so the logistic reaction function is globally concave and we can deduce that $t_{0,n}, t_{0,c} < 0$ for the logistic reaction function, resulting in spatially patterned states with decreased overall mass of population and chemoattractant relative to the masses achieved in the spatially uniform equilibrium. 

\begin{Remark}
While weakly nonlinear analysis provides a heuristic prediction of the mass of solutions close to the onset of instability, it is actually possible to obtain stronger results on determining the long-time decay in mass of solutions in our graphon Keller--Segel model for a class of reaction functions including logistic growth. In particular, for reaction functions $f(n)$ that are concave for all $n > 0$ and feature a single positive stable equilibrium, we can show that the long-time mass of the solutions cannot exceed the mass of the uniform equilibrium $(\bar n, \bar c)$ and that any spatially patterned equilibrium features strictly less mass than that of the uniform state. We present these results on the decay of mass in Section \ref{sec:graphon-concave-mass-decay}.
\end{Remark}

In addition to the logistic function, it is possible to show that several other well-known functions $f(n)$ for the demographic dynamics of a single population are globally concave, including the $\vartheta$-logistic function and a model with Holling type II functional response that we explore further in Section \ref{sec:graphon-concave-mass-decay}. In addition, the Gompertz growth model $f(n) = rn \log\left(\bar n/n\right)$ is known to feature an accelerating growth rate at low populations but settles to a decelerating concave population growth rate at the carrying capacity $\bar n$\citep{gompertz1825nature,west2016evolutionary,west2019cellular,asadi2023generalized,wang2024gompertz}. Because we do not know of a famous population growth function $f(n)$ that is locally convex at the equilibrium population size, we create a new reaction function that will allow for the possibility of increased population size and chemical concentration in patterned states. 

In particular, we consider the following family of reproduction functions
\begin{equation}
    f_\theta(n) = re^{\theta(\bar{n} - n)} n \left(1-\frac{n}{\bar{n}}\right),
\end{equation}
where we have multiplied the typical logistic reaction function by an exponential weight function $e^{\theta(\bar{n} - n)}$, where the parameter $\theta$ describes the exponential decline of the effective intrinsic growth rate $r$ as the population size increases. Due to the form of this equation, we refer to this reaction function as the exponentially modified logistic reaction, and we note that the reaction function $f_{\theta}(n)$ reduces to the original logistic model when $\theta = 0$. We also observe that increasing the value of $\theta$ can impact the sign of terms appearing in our weakly nonlinear stability analysis, as we can calculate that
\begin{equation*}
    f'_\theta(\bar{n})=-r<0,\quad f''_\theta(\bar{n})=2r\left(\theta-\frac{1}{\bar{n}}\right),\quad f'''_\theta(\bar{n})=3r\theta\left(\frac{2}{\bar{n}}-\theta\right).
\end{equation*}
The form of these derivatives allows us to see that the equilibrium $\bar n$ is always stable and we see that the signs of $f''_{\theta}(\bar n)$ and $f'''_{\theta}(\bar n)$ both change sign as $\theta$ increase. 
When $\theta > 1/\bar{n}$, we have $f_\theta''(\bar{n})=2r\left(\theta-1/\bar{n}\right)>0$, which allows us to deduce that $t_{0,n}$ and $t_{0,c}$ will be positive and the masses of our patterned solutions will exceed the masses achieved in the spatially uniform state when the chemotactic sensitivity is close to the threshold for instability. We further explore the effects of the exponentially modified logistic function in numerical simulations in Section \ref{sec:numericalsimulations}, showing that these predictions on the increase in mass of solutions also hold far from the onset of patterns and highlighting a potentially surprising robustness for the predictions of weakly nonlinear stability analysis beyond is guaranteed region of validity. 

We can also use the form of the derivatives calculated above to determine whether the pattern-forming bifurcation is subcritical or supercritical for a given graphon kernel and value of the parameter $\theta$ under the exponentially modified logistic reaction function. In particular, we show in  Figure~\ref{fig:bifurcation-type-exponential} the numerically computed value of the cubic-order coefficient $\gamma$ in the amplitude Equation~\eqref{eq:amplitude-eq} for the uniform band kernel and small-world kernel and different values of $\theta$. We see that increasing the sensitivity parameter $\theta$ can result in subcritical bifurcations at band radii $\rho$ or rewiring probabilities $p$ for which a supercritical pitchfork bifurcation is predicted for the case of the logistic reaction function. In particular, this means that the exponentially modified logistic reaction functions appears to be more conducive to bistability between uniform and patterned states than the standard logistic model, and that more global coupling through a longer-range neighborhood band or a greater probability of long-range connections can work in concert with a decelerating net growth rate to promote resilience in spatial patterns when the strength of chemotactic sensitivity decreases. 

\begin{figure}[!ht]
    \centering
    \begin{subfigure}[t]{0.48\linewidth}
        \centering
        \includegraphics[width=\linewidth]{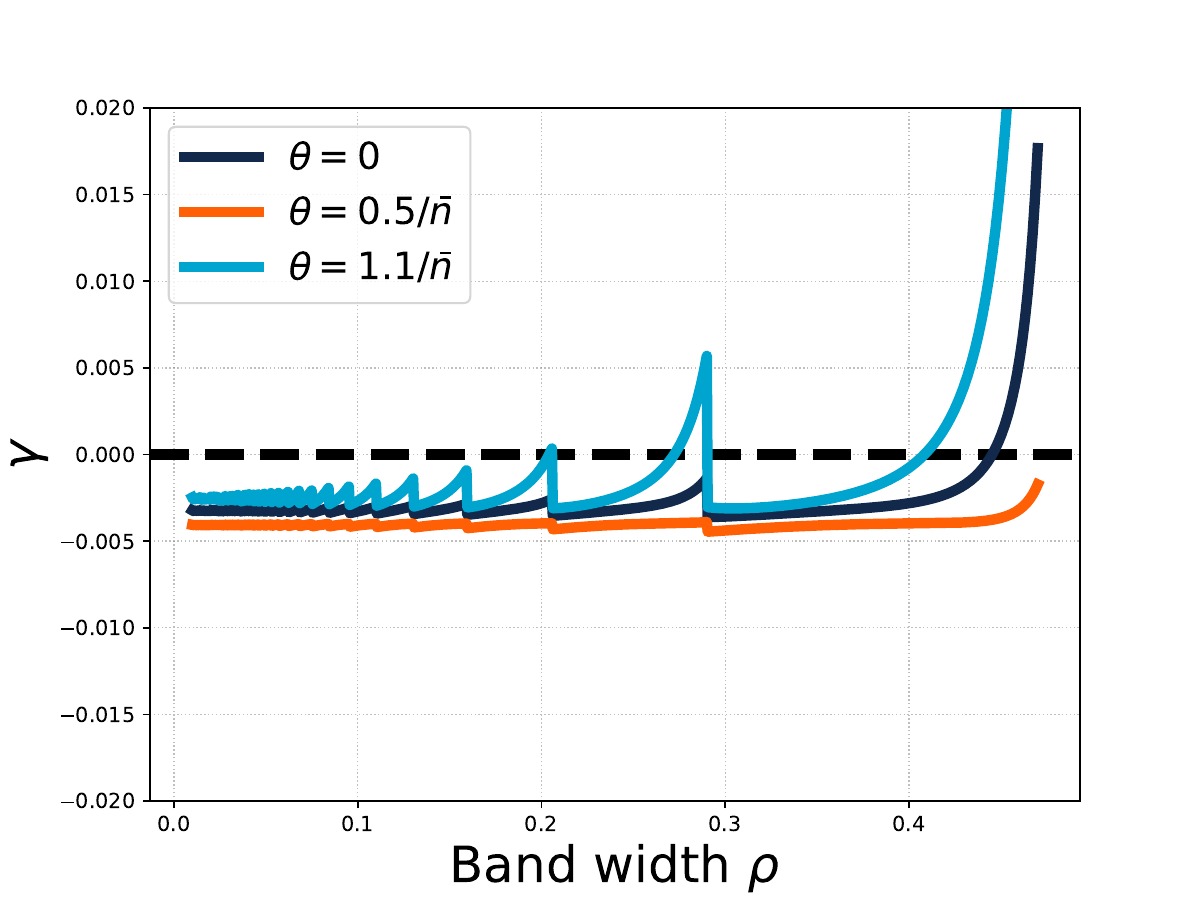}
        \caption{Uniform band graphon kernel}
    \end{subfigure}
    %\hspace{0.05\linewidth}
    \begin{subfigure}[t]{0.48\linewidth}
        \centering
        \includegraphics[width=\linewidth]{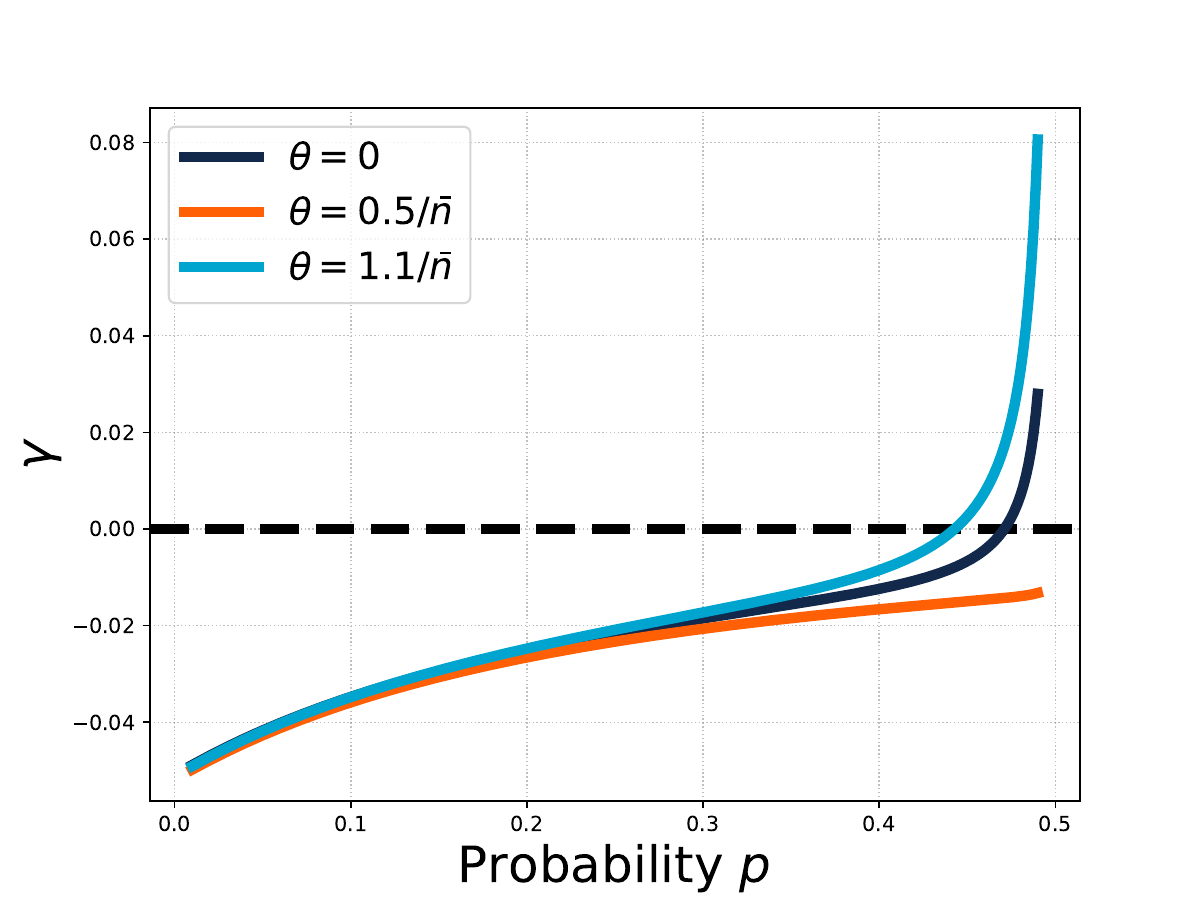}
        \caption{Small world graphon kernel}
    \end{subfigure}
    \caption{Predictions of the type of pattern-forming bifurcation for the uniform band kernel (left panel) and the small-world kernel (right panel). The panels show the cubic-order coefficient $\gamma$ in the amplitude Equation~\eqref{eq:amplitude-eq} as a function of the band width $\rho$ in the left panel and the probability $p$ in the right panel. The PDE parameters are $\eta_n=0.05$, $\eta_c=0.08$, $\alpha=0.2$, and $\beta=0.1$. The reaction term $f(n)$ is chosen to be the exponential-modified logistic function $re^{\theta(\bar{n}-n)}n(1-n/\bar{n})$, with $r=0.1$ and $\bar{n}=8$, and we set $\bar{c}=16$. The parameter $\theta$ is taken to be $0$, $0.5/\bar{n}$, and $1.1/\bar{n}$. We take $\rho\in(0,1/2)$ for the uniform band kernel and $p\in(0,1/2)$ for the small-world kernel, with $\chi^*$ determined by Equation~\eqref{eq:threshold-wmin}.}
    \label{fig:bifurcation-type-exponential}
\end{figure}

\begin{Remark}
    For the exponential-modified logistic function $f_\theta(n)$, the results of the linear stability analysis remain the same as those obtained with the logistic function $f(n) = r n \left( 1 - n / \bar n\right)$, since $f_\theta'(\bar{n}) = f'(\bar{n}) = -r$. This is a consequence of the choice to have the exponential weight take the form $e^{\theta (n- \bar n)}$, which simplifies to unity when $n = \bar n$. Although the form of the exponentially modified function is potentially somewhat artificial due to the choice of multiplying both the birth and death rate components of the logistic function by the same density-dependent exponential weight, the form of our reaction function allows us to keep the stability properties consistent with those of the logistic function while also allowing the possibility of local convexity. 
    
    Namely, by considering a reaction term of the form $f(n) = r g(n) n \left[ 1 - \frac{n}{\bar n} \right]$, we see that the 
    \[ f''(\bar n) = - 2 r \left[ \frac{g(\bar n)}{\bar n} - g'(\bar n) \right],\]
so picking a function $g(n)$ such that $g(\bar n) = 1$ and $g'(\bar n)$ is sufficiently negative will allow us to achieve a locally convex reaction function. In particular, the choice $g(n) = e^{\theta (\bar n - n)}$ with $\theta > \frac{1}{\bar n}$ will allow us to achieve local convexity at $n = \bar n$. 
\end{Remark}

\subsection{Global Mass Deficit of Steady States for Concave Reactions}
\label{sec:graphon-concave-mass-decay}

So far, we have show in Section \ref{sec:graphon-weakly-alternative} that weakly nonlinear analysis allows us to heuristically predict that the total population size and chemical concentration will be lower than in spatially uniform states provided that the reaction function $f(n)$ is locally concave at the equilibrium value $\bar n$. It turns out that we can prove that this decrease in solution mass for patterned states will hold whenever the reaction function $f(n)$ is strictly concave for non-negative population sizes $n$. In Lemma \ref{lem:logistic-mass-bounds}, we use Jensen's inequality and a comparison principle to show that the long-time total population and total chemical concentration cannot exceed $\bar n$ and $\bar c$. For steady-state solutions, we show in Proposition \ref{prop:logistic-total-mass-deficit} that the total mass of solutions will be less than the values achieved in the spatially uniform state unless $n(t,x) = \bar n$ and $c(t,x) = \bar c$ almost everywhere on the domain. For the case of the logistic reaction function, these results can be made more explicit due to having an explicit expression for the upper bound on solution mass and due to an explicit bound on the mass of the steady-state solution in terms of the variance of the steady-state density. 

\begin{lemma}[Mass identities and comparison]
\label{lem:logistic-mass-bounds}
Let $I=[0,1]$ and $W:I^2\to[0,\infty)$ be measurable and symmetric, with $\operatorname*{ess\,sup}_{x\in I}\int_I W(x,y)\,dy<\infty$. Assume $\eta_n,\eta_c\ge0$, $\alpha,\beta>0$, and $f:\mathbb R\to\mathbb R$ is locally Lipschitz. Let $U=(n,c)\in C^1([0,\infty);L^\infty(I)^2)$ be a nonnegative solution of $\partial_tU=\mathcal F_W[U]$, and define $M_n(t):=\int_I n(t,x)\,dx$ and $M_c(t):=\int_I c(t,x)\,dx$. Then
\begin{equation}
\label{eq:graphon-total-mass-identities}
    \frac{dM_n}{dt}=\int_I f\bigl(n(t,x)\bigr)\,dx,
    \qquad
    \frac{dM_c}{dt}=\alpha M_n(t)-\beta M_c(t).
\end{equation}
Suppose further that $f$ is concave on $[0,\infty)$ with $f(0)=0$ and a unique positive zero $\bar n$. The solution $z$ of
\begin{equation*}
    z'=f(z),\qquad z(0)=M_n(0)
\end{equation*}
exists globally and, for all $t\ge0$, satisfies
\begin{equation}
\label{eq:concave-mass-comparison}
    0\le M_n(t)\le z(t)\le\max\{M_n(0),\bar n\}.
\end{equation}
In addition, we may estimate that
\begin{equation}
\label{eq:chemical-mass-comparison}
    0\le M_c(t)\le e^{-\beta t}M_c(0)
    +\alpha\int_0^t e^{-\beta(t-s)}z(s)\,ds.
\end{equation}
Notating that $\bar c:=\alpha\bar n/\beta$, we have
\begin{equation}
\label{eq:logistic-mass-limsup}
    \limsup_{t\to\infty}M_n(t)\le\bar n,
    \qquad
    \limsup_{t\to\infty}M_c(t)\le\bar c.
\end{equation}
In particular, if $M_n(0)\le\bar n$, then $M_n(t)\le\bar n$ for all $t\ge0$.

The same conclusions apply to nonnegative global solutions of the discrete system~\eqref{eq:KS-discrete} under symmetric nonnegative weights, replacing spatial integrals with the normalized sums $M_n^{(N)}(t):=\frac1N\sum_{k=1}^N n_k^{(N)}(t)$ and $M_c^{(N)}(t):=\frac1N\sum_{k=1}^N c_k^{(N)}(t)$.
\end{lemma}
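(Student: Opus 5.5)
The plan has three stages: the mass identities, a scalar comparison for $M_n$, and a linear estimate for $M_c$.

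\textbf{Mass identities.} Integrate $\partial_t U=\mathcal F_W[U]$ over $I$. Since $U\in C^1([0,\infty);L^\infty(I)^2)$ and $|I|=1$, the map $t\mapsto M_n(t)$ is $C^1$ with $M_n'=\int_I F^{(n)}_W(U)\,dx$, and similarly for $M_c$. For the transport term, set $S_{xy}=S(c(x),c(y))$ and use $S_{yx}=1-S_{xy}$. The integrand of the movement term is then
\begin{align*}
  W(x,y)\bigl(S_{xy}n(y)-S_{yx}n(x)\bigr),
\end{align*}
which is antisymmetric under $x\leftrightarrow y$ because $W$ is symmetric. It is absolutely integrable on $I^2$, by the bound $\operatorname*{ess\,sup}_x\int_I W\,dy<\infty$ and $U\in L^\infty$. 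Fubini therefore shows that its double integral vanishes. The chemical diffusion $\int\!\!\int W(x,y)(c(y)-c(x))\,dy\,dx$ vanishes by the same argument. This gives \eqref{eq:graphon-total-mass-identities}. The discrete case is identical, with the antisymmetric double sum $\frac1{N^2}\sum_{k,\ell}$ in place of the double integral.

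\textbf{Comparison for $M_n$.} Since $n(t,\cdot)\ge0$ and $f$ is concave on $[0,\infty)$, Jensen's inequality on the probability space $I$ gives $\int_I f(n)\,dx\le f(M_n)$. Hence $M_n'\le f(M_n)$, with $M_n\ge0$.

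For the scalar ODE $z'=f(z)$, concavity together with $f(0)=0$ and the unique positive zero $\bar n$ gives $f>0$ on $(0,\bar n)$ and $f<0$ on $(\bar n,\infty)$. Positivity on $(0,\bar n)$ holds because otherwise concavity would force $f\le0$ beyond, contradicting the zero at $\bar n$ unless $f\equiv0$ there. Consequently $[0,\max\{z(0),\bar n\}]$ is forward invariant and $z$ exists globally. The sets $\{0\}$ and $\{\bar n\}$ are equilibria, and local Lipschitz continuity gives uniqueness.

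The step I expect to be the main obstacle is the comparison $M_n\le z$. Here $f$ is only locally Lipschitz, and the differential inequality is not strict. I would work on a bounded interval where both $M_n$ and $z$ lie in $[0,R]$; this is fine because $M_n(t)\le\|n(t)\|_{L^\infty}$ is continuous, hence bounded on $[0,T]$. Let $L$ be the Lipschitz constant of $f$ on $[0,R]$. Then
\begin{align*}
  \tfrac{d}{dt}(M_n-z)^+\le L\,(M_n-z)^+ \quad\text{a.e.},
\end{align*}
with $(M_n-z)^+(0)=0$, so Gr\"onwall's inequality yields $M_n\le z$ on $[0,T]$ for every $T$. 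The bound $z\le\max\{M_n(0),\bar n\}$ is the invariance just described. Moreover, $z(t)\to\bar n$ as $t\to\infty$ whenever $z(0)>0$, by monotone convergence to the only positive equilibrium. If $z(0)=0$, then $z\equiv0$. In either case $\limsup_{t\to\infty} M_n(t)\le\bar n$. If $M_n(0)\le\bar n$, then $z\le\bar n$ for all time, which gives the final claim.

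\textbf{Chemical mass.} The identity $M_c'=\alpha M_n-\beta M_c$ is linear, so variation of constants gives
\begin{align*}
  M_c(t)=e^{-\beta t}M_c(0)+\alpha\int_0^t e^{-\beta(t-s)}M_n(s)\,ds.
\end{align*}
Inserting $0\le M_n\le z$ yields \eqref{eq:chemical-mass-comparison}, with nonnegativity following from $c\ge0$. For the limsup, fix $\epsilon>0$ and choose $t_0$ with $z\le\bar n+\epsilon$ on $[t_0,\infty)$. Splitting the integral at $t_0$, the part on $[0,t_0]$ decays like $e^{-\beta(t-t_0)}$, while the part on $[t_0,t]$ is at most $\alpha(\bar n+\epsilon)/\beta$. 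Letting $\epsilon\to0$ gives $\limsup_{t\to\infty} M_c(t)\le\bar c$.

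The discrete statement follows verbatim: apply Jensen to the uniform average $\frac1N\sum_k f(n_k)\le f(M_n^{(N)})$, and use the antisymmetric cancellation of the symmetric weighted sums.
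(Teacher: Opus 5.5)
Your proposal is correct and follows the paper's proof essentially step for step. You use antisymmetry of the transport and diffusion integrands under $x\leftrightarrow y$ plus Fubini for the mass identities, Jensen to get $M_n'\le f(M_n)$, comparison with the scalar ODE $z'=f(z)$, and variation of constants for $M_c$. Beyond the paper, you spell out the sign structure of $f$ and the ``standard comparison'' it leaves implicit (the Gr\"onwall argument on $(M_n-z)^+$ with a Lipschitz constant of $f$ on $[0,R]$). Using $z\le\bar n+\epsilon$ rather than $M_n\le\bar n+\epsilon$ in the $\limsup$ step for $M_c$ is an immaterial variation.
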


\begin{proof}
Since $W \in L^1(I^2)$ and the solutions are bounded, Fubini's theorem applies. By the symmetry of $W$ and $S(a,b)+S(b,a)=1$, exchanging $x$ and $y$ cancels the transport terms, yielding~\eqref{eq:graphon-total-mass-identities}.

If $f$ is concave on $[0,\infty)$, Jensen's inequality gives
\begin{equation}
\label{eq:concave-mass-differential-inequality}
    M_n'(t)=\int_I f\bigl(n(t,x)\bigr)\,dx
    \le f\left(\int_I n(t,x)\,dx\right)=f(M_n(t)).
\end{equation}
The root conditions on $f$ ensure $z$ is global, bounded between $z(0)$ and $\bar n$, and converges monotonically to $\bar n$ (with $z\equiv 0$ if $z(0)=0$). Standard comparison yields~\eqref{eq:concave-mass-comparison} and the invariant bound $M_n\le\bar n$. Variation of constants in the $M_c$-equation gives~\eqref{eq:chemical-mass-comparison}.

Since $\limsup_{t\to\infty}z(t)\le\bar n$, the inequality $M_n(t)\le z(t)$ gives $\limsup_{t\to\infty}M_n(t)\le\bar n$. For any $\varepsilon>0$, choose $t_0$ such that $M_n(t)\le\bar n+\varepsilon$ for all $t\ge t_0$. Then,
\begin{align*}
    M_c(t)\le e^{-\beta(t-t_0)}M_c(t_0)
    +\frac{\alpha}{\beta}(\bar n+\varepsilon)
    \bigl(1-e^{-\beta(t-t_0)}\bigr),
    \qquad t\ge t_0.
\end{align*}
Sending $t\to\infty$ and $\varepsilon\downarrow0$ proves~\eqref{eq:logistic-mass-limsup}. The discrete case follows identically via normalized sums.
\end{proof}

\begin{proposition}[Steady-state mass deficit]
\label{prop:logistic-total-mass-deficit}
Assume the hypotheses of Lemma~\ref{lem:logistic-mass-bounds}, and let $f:\mathbb R\to\mathbb R$ be locally Lipschitz and concave on $[0,\infty)$, with $f(0)=0$ and unique positive zero $\bar n$. Let $U_*=(n_*,c_*)\in L^\infty(I)^2$ be a nonnegative steady state, and define $M_{n,*}:=\int_I n_*(x)\,dx$ and $M_{c,*}:=\int_I c_*(x)\,dx$. If $M_{n,*}>0$, then
\begin{equation*}
    M_{n,*}\le\bar n,
    \qquad
    M_{c,*}=\frac{\alpha}{\beta}M_{n,*}\le\bar c,
    \qquad \bar c:=\frac{\alpha}{\beta}\bar n.
\end{equation*}
If $f$ is strictly concave on $[0,\infty)$, equality in either bound holds if and only if $U_*\equiv(\bar n,\bar c)$ a.e., rendering the bounds strict for any nonconstant steady state.

For logistic growth $f(n)=rn(1-n/\bar n)$ with $r>0$, the deficit satisfies the exact variance identity
\begin{equation}
\label{eq:logistic-deficit-identity}
    \bar n-M_{n,*}
    =\frac{1}{M_{n,*}}
    \int_I\bigl(n_*(x)-M_{n,*}\bigr)^2\,dx.
\end{equation}
The same conclusions apply to~\eqref{eq:KS-discrete} under symmetric nonnegative weights and normalized masses.
\end{proposition}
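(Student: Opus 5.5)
A steady state is a time-independent solution of $\partial_t U=\mathcal F_W[U]$, so I will apply the mass identities of Lemma~\ref{lem:logistic-mass-bounds} to it. Since $M_n$ and $M_c$ are then constant, this gives
\begin{equation*}
0=\int_I f\bigl(n_*(x)\bigr)\,dx,\qquad 0=\alpha M_{n,*}-\beta M_{c,*}.
\end{equation*}
The second identity gives $M_{c,*}=\tfrac{\alpha}{\beta}M_{n,*}$ immediately. It follows that the chemical bound and its equality case are equivalent to the population bound and its equality case.

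For the population bound I will use Jensen's inequality on the probability space $I$, which applies because $n_*\ge0$ and $f$ is concave on $[0,\infty)$:
\begin{equation*}
0=\int_I f(n_*)\,dx\le f(M_{n,*}).
\end{equation*}
Concavity together with $f(0)=0$ and a unique positive zero $\bar n$ forces $f>0$ on $(0,\bar n)$ and $f<0$ on $(\bar n,\infty)$. To see this, note that $f$ cannot be negative somewhere in $(0,\bar n)$: concavity would then force $f$ to be nonpositive from that point on, so $f<0$ beyond it and no positive zero could occur. Since $f(M_{n,*})\ge0$ and $M_{n,*}>0$, we get $M_{n,*}\le\bar n$.

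For the equality case, suppose $f$ is strictly concave and $M_{n,*}=\bar n$. Then Jensen holds with equality, and strict concavity forces $n_*\equiv M_{n,*}=\bar n$ a.e. The $c$-equation then reduces to $\tfrac{\eta_c}{2}\mathcal L_W c_*-\beta c_*+\alpha\bar n=0$. Setting $w=c_*-\bar c$, this becomes $\tfrac{\eta_c}{2}\mathcal L_W w=\beta w$. I will pair this with $w$ and use symmetry of $W$:
\begin{equation*}
\langle \mathcal L_W w,w\rangle=-\tfrac12\iint W(x,y)\bigl(w(y)-w(x)\bigr)^2\,dx\,dy\le0.
\end{equation*}
This gives $\beta\|w\|_{L^2}^2\le0$, hence $w=0$ and $c_*\equiv\bar c$ a.e. The converse direction is trivial. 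I expect this step to be the main obstacle. Jensen only pins down $n_*$, and showing that $c_*$ is constant requires the dissipativity of $\mathcal L_W$, which in turn needs symmetry of $W$ and $\beta>0$. I also need Fubini here; it is justified because $W\in L^1(I^2)$ and $c_*\in L^\infty$.

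For the logistic variance identity, I will expand $\int_I f(n_*)\,dx=0$. This gives $r\bigl(M_{n,*}-\tfrac1{\bar n}\int n_*^2\,dx\bigr)=0$, so $\int n_*^2\,dx=\bar n M_{n,*}$. Using $\int(n_*-M_{n,*})^2\,dx=\int n_*^2\,dx-M_{n,*}^2=M_{n,*}(\bar n-M_{n,*})$ and dividing by $M_{n,*}>0$ yields \eqref{eq:logistic-deficit-identity}.

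The discrete case follows verbatim with normalized sums in place of integrals. Jensen applies to uniform averages, the transport cancellation in Lemma~\ref{lem:logistic-mass-bounds} holds for symmetric weights, and the discrete Laplacian quadratic form $-\tfrac1{2N^2}\sum_{k,\ell}\mathsf w_{k\ell}(w_\ell-w_k)^2\le0$ plays the role of $\langle\mathcal L_W w,w\rangle$.
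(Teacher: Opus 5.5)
Your proposal is correct and follows the paper's proof essentially step for step: the steady-state mass identities plus Jensen's inequality give $M_{n,*}\le\bar n$ and $M_{c,*}=\tfrac{\alpha}{\beta}M_{n,*}$, strict Jensen equality forces $n_*\equiv\bar n$, and the symmetric Dirichlet-form identity for $\tfrac{\eta_c}{2}\mathcal L_W w=\beta w$ forces $c_*\equiv\bar c$. The differences are only cosmetic: you justify the sign pattern of $f$ from concavity, which the paper simply asserts, and you derive the logistic variance identity directly at the steady state rather than from the paper's time-dependent mean--variance equation.
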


\begin{proof}
At steady state, the mass identities and Jensen's inequality yield
\begin{equation*}
    0=\int_I f(n_*(x))\,dx\le f(M_{n,*}),
    \qquad M_{c,*}=\frac{\alpha}{\beta}M_{n,*}.
\end{equation*}
Since $f(s)<0$ for $s>\bar n$, both mass bounds immediately follow.

If $f$ is strictly concave, equality in either bound requires $M_{n,*}=\bar n$, forcing equality in Jensen's inequality and thus $n_*\equiv\bar n$ a.e. Setting $\mathfrak{c}:=c_*-\bar c$, the steady-state chemical equation reduces to $\frac{\eta_c}{2}\mathcal L_W\mathfrak{c}-\beta\mathfrak{c}=0$. Multiplying by $\mathfrak{c}$ and integrating gives
\begin{equation*}
    -\frac{\eta_c}{4}\int_I\int_I
    W(x,y)\bigl(\mathfrak{c}(y)-\mathfrak{c}(x)\bigr)^2\,dy\,dx
    -\beta\int_I\mathfrak{c}(x)^2\,dx=0.
\end{equation*}
Since $W,\eta_c\ge0$ and $\beta>0$, both terms are nonpositive, forcing $\mathfrak{c}=0$ a.e., whence $c_*\equiv\bar c$. The converse is immediate.

For logistic growth, expanding $\int_I n^2\,dx=M_n^2+\int_I(n-M_n)^2\,dx$ in the mass identity yields
\begin{equation}
\label{eq:logistic-mean-var-identity}
    \frac{dM_n}{dt}
    =rM_n(t)\left(1-\frac{M_n(t)}{\bar n}\right)
    -\frac{r}{\bar n}\int_I\bigl(n(t,x)-M_n(t)\bigr)^2\,dx.
\end{equation}
At steady state, dividing by $(r/\bar n)M_{n,*}>0$ yields \eqref{eq:logistic-deficit-identity}. The discrete case follows identically via normalized sums.
\end{proof}

\paragraph{Examples.}
The following reaction terms are defined for $n\ge0$ and admit locally Lipschitz extensions to $\mathbb R$.

For logistic growth $f(n)=rn(1-n/\bar n)$, the comparison function is explicitly
\begin{equation}
\label{eq:logistic-mass-comparison}
    z(t)=\frac{\bar n M_n(0)}
    {M_n(0)+(\bar n-M_n(0))e^{-rt}},
\end{equation}
with $z\equiv0$ if $M_n(0)=0$.

Another family of population models featuring an unstable extinction equilibrium $n = 0$ and a stable, positive carrying-capacity equilibrium $\bar n$ is the theta-logistic model (also written as the $\vartheta$-logistic model), which generalizes logistic growth to incorporate different possible shapes for the term characterizing density-dependent regulation \citep{gilpin1973global,ross2009note}. The reaction function for the $\vartheta$-logistic model is given by $f(n)=rn\bigl(1-(n/\bar n)^{\vartheta}\bigr)$, recovering the standard logistic model for the case of the $\vartheta = 1$. We can show that $f''(n)=-r\vartheta(1+\vartheta)\bar n^{-\vartheta}n^{\vartheta-1}<0$ for $n>0$ provided that $r,\bar n,\vartheta>0$ . Therefore the $\vartheta$-logistic reaction function is strictly concave on $[0,\infty)$, and therefore the results of Lemma \ref{lem:logistic-mass-bounds} and Proposition \ref{prop:logistic-total-mass-deficit} also apply to this generalized form of logistic growth. 

Another way to describe one-species population dynamics with a unique positive and stable equilibrium point $\bar{n}$ is to describe a population with constant per-capita death rate and a saturating per-capita birth rate corresponding to Holling type-II functional response \citep{holling1966functional,britton2003essential,martinez2013vegetation,hermsen2022emergent}. Such a reaction term can be written in the form
\[f(n)=n\left(\frac{a}{1+a h_0 n}-\delta\right),\] where $a>\delta>0$ and $h_0>0$. Here the constant $h_0$ is the handling time (distinct from the generic per-capita growth function $h(n)$) and the parameter $a$ (sometimes called the attack rate) decreases the per-capita birth rate at low population density. Direct differentiation yields the unique positive zero $\bar n=\displaystyle\frac{a-\delta}{a h_0\delta}$ and $f''(n)=-\displaystyle\frac{2a^2h_0}{(1+a h_0 n)^3}<0$ for $n\ge0$.

Consequently, both families are strictly concave on $[0,\infty)$ and admit unique positive zeros, satisfying the comparison bounds and the strict steady-state mass deficit property.

%%%%%%%%%%%%%%%%%%%%%%%%%%%%%%%%%%%%%%%%%%%%%%%%%%%%%%%%%%
%%%%%%%%%%%%%%%%%%%%%%%%%%%%%%%%%%%%%%%%%%%%%%%%%%%%%%%%%%
\section{Numerical Simulations}
\label{sec:numericalsimulations}

In this section, we perform numerical simulations of our Keller--Segel models on both graph and graphon metapopulations. We start in Section \ref{sec:simulation-graph} by performing numerical simulations of the ODE Keller--Segel system on graphs from Equations~\eqref{eq:KS_graph_n} and~\eqref{eq:KS_graph_c} for the case of a uniform band network, showing the emergence of non-uniform states for sufficiently large chemotaxis sensitivity $\chi$ and showing that the long-time behavior approaches the sinusoidal patterns achieved in the graphon model when we consider a metapopulation graph with large enough patch number $N$. We then consider numerical simulations of the graphon model from Equations~\eqref{eq:KS_graphon_n} and~\eqref{eq:KS_graphon_c} in Section~\ref{sec:simulation-graphon}, demonstrating good agreement with the qualitative behavior predicted from the linear and weakly nonlinear stability analyses for our integro-differential equation model. Furthermore, we use numerical simulations to explore the total population size and total chemoattractant concentration for the case of large $\chi$, showing that some predictions from weakly nonlinear analysis continue to hold far beyond the onset of pattern-forming instability. 

\subsection{Simulations for the Graph Model}\label{sec:simulation-graph}

In this section, we present results from the numerical simulations of the ODE system from Equations~\eqref{eq:KS_graph_n} and~\eqref{eq:KS_graph_c}, exploring the pattern-forming behavior for the graph metapopulation in the case of logistic population dynamics $f(n)=rn(1-n/\bar{n})$ and a family of ring graphs. In particular, we consider ring graphs in which patches are connected to the nearest 10 percent of patches on each of the ring, corresponding to the family of graphs that corresponds to a uniform band graphon with radius $\rho = 0.1$ in the limit of infinite patch size. For each of the networks presented in this section, we perform numerical simulations of equations~\eqref{eq:KS_graph_n} and~\eqref{eq:KS_graph_c} using the built-in Runge-Kutta solver \textit{ode45} in MATLAB. 

We start by simulating the graph Keller--Segel dynamics for a network with $N = 20$ patches (with connectivity corresponding to a ring graph with two neighbors on each side), and we display numerical trajectories of the population size $n_i(t)$ and chemoattractant $c_i(t)$ for five selected patches in Figure \ref{fig:graph-simulations}. For this finite graph, the linear instability threshold is $\min_{\{k:\lambda_k<0\}}\chi_k^*$, where $\chi_k^*$ is given by~\eqref{eq:ODE-threshold} and $\lambda_k$ are the eigenvalues of the scaled graph Laplacian $L=(A-D)/N$. In particular, by starting from initial conditions with a small perturbation from the uniform equilibrium state with $n_i = \bar{n}$ and $c_i = \frac{\alpha}{\beta} \bar{n}$, we are able to see that $n_i(t)$ and $c_i(t)$ move from a near-equilibrium value and settle to different equilibrium levels after several thousand time units.

\begin{figure}[htbp]
    \centering

    \begin{subfigure}[b]{0.48\textwidth}
        \includegraphics[width=\linewidth]{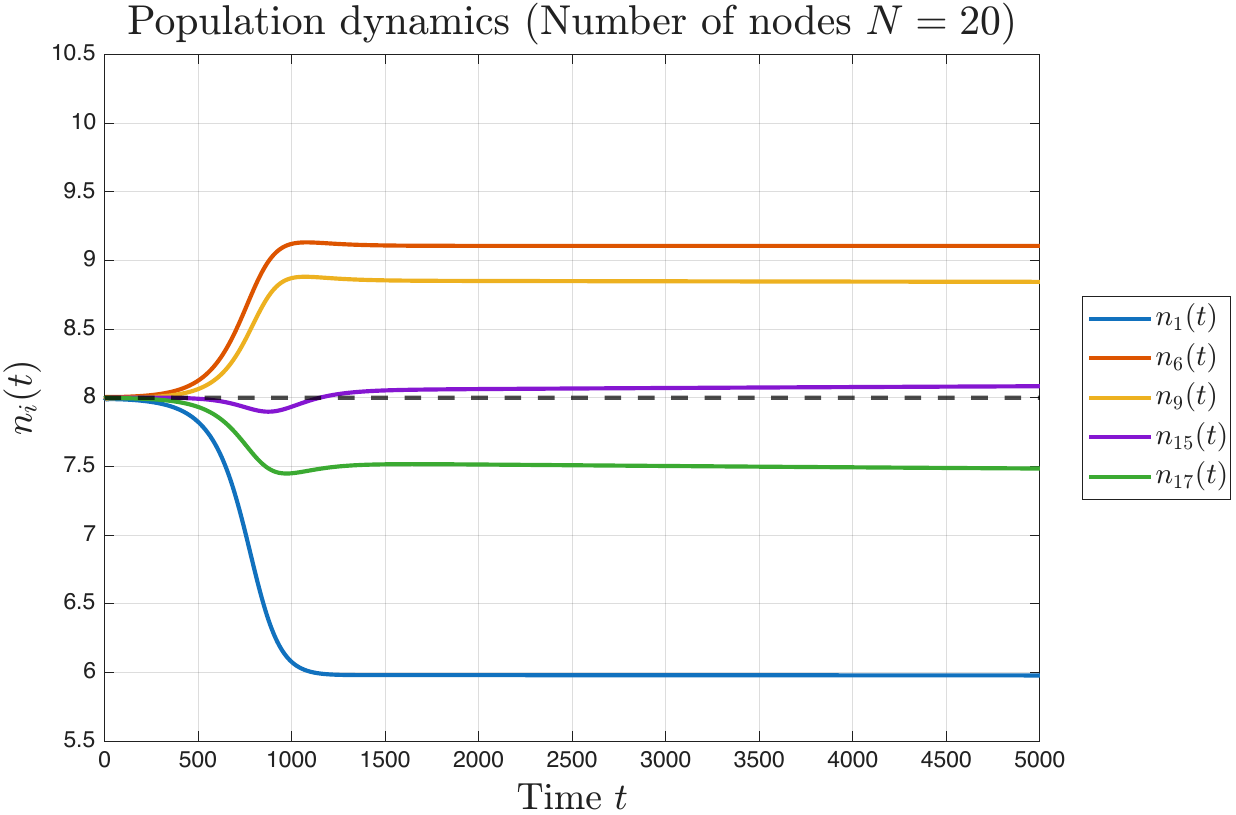}
    \end{subfigure}
    \hfill
    \begin{subfigure}[b]{0.48\textwidth}
        \includegraphics[width=\linewidth]{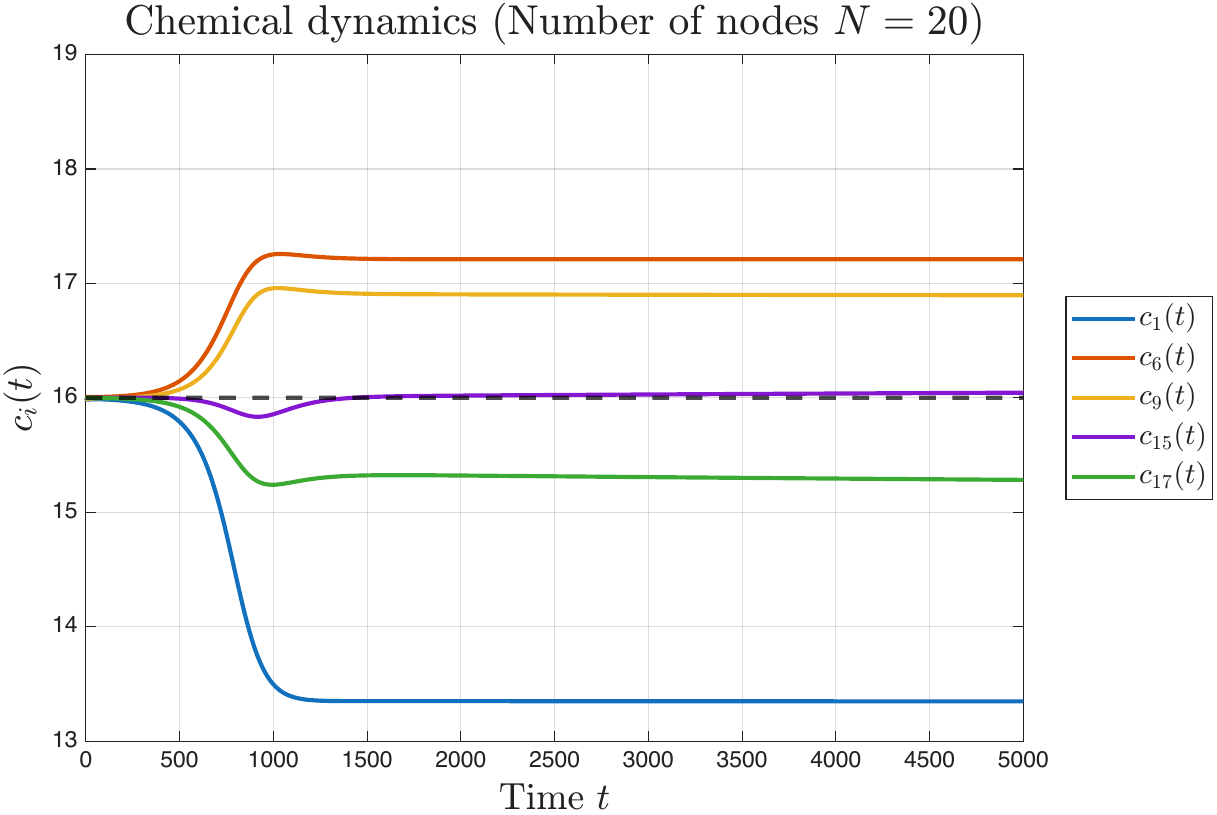}
    \end{subfigure}\vspace{-24pt}
    
    \vspace{2em}
    
    \caption{Numerical simulations of the graph Keller--Segel model from  Equations~\eqref{eq:KS_graph_n} and~\eqref{eq:KS_graph_c} for the case of 20 patches and a ring graph network connecting each patch to the closest two neighboring patches on each side (corresponding to the nearest 10\% on nodes on each side). The parameters are set to $\eta_n=0.05$, $\eta_c=0.08$, $\alpha=0.2$, $\beta=0.1$, $r=0.1$, $\bar{n}=8$, and $\bar{c}=16$. The horizontal axis represents time, and the vertical axis represents the solution value.}
    \label{fig:graph-simulations}
\end{figure}

Next, we show the states achieved at time $t=10{,}000$ under simulations of our graph metapopulation models starting from near-uniform initial states, varying the number of nodes in our metapopulation but fixing the proportion of neighboring patches to the closest 10 percent on nodes on each side. We display the resulting states achieved in Figure \ref{fig:graph-simulations-snapshots} for the cases of 20, 100, and 200 patches, and we see that a relatively irregular pattern of population density and chemoattractant concentration is achieved for the simulations with 20 nodes. For our simulations with 100 and 200 nodes, we see that the resulting patterns begin to appear more like a sinusoidal pattern as predicting for the limiting graphon model, and we see that the seven peaks appearing in the simulated patterns resembles the number of peaks predicted through linear stability analysis of the graphon model for the uniform band kernel with radius $\rho = 0.1$.

\begin{figure}[htbp]
    \centering

    \begin{subfigure}[b]{0.46\textwidth}
        \includegraphics[width=\linewidth]{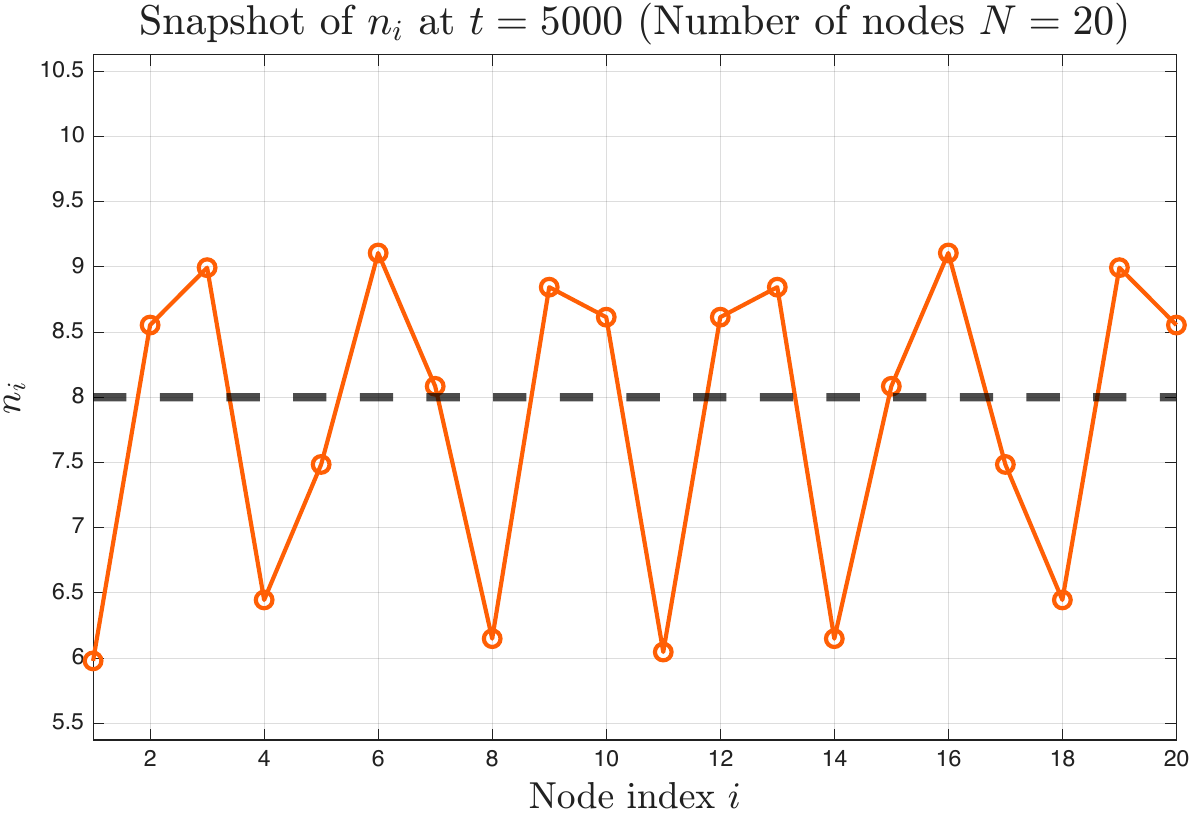}
    \end{subfigure}
    \hfill
    \begin{subfigure}[b]{0.46\textwidth}
        \includegraphics[width=\linewidth]{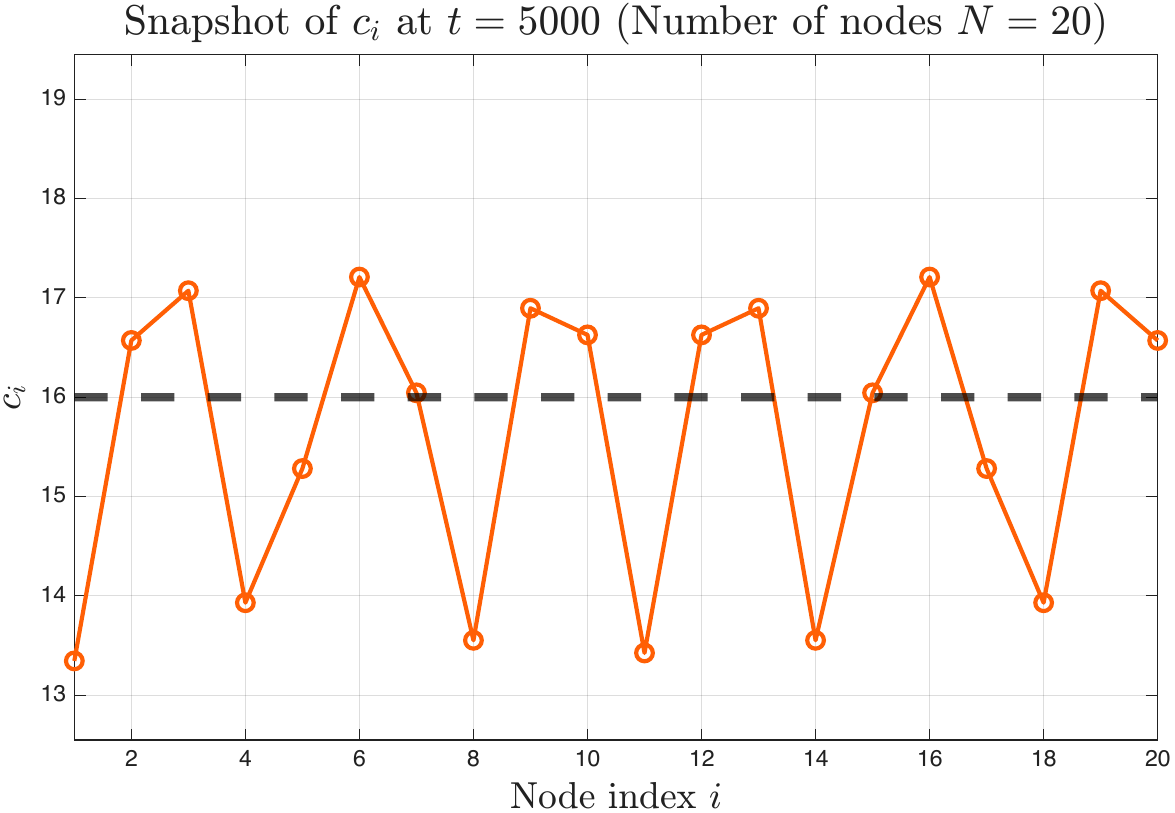}
    \end{subfigure}\vspace{-24pt}
    \begin{subfigure}[b]{\textwidth}
        \centering
        \vspace{2em}
        \caption*{(a) Node number $N=20$}
    \end{subfigure}

    \vspace{1em}

    \begin{subfigure}[b]{0.46\textwidth}
        \includegraphics[width=\linewidth]{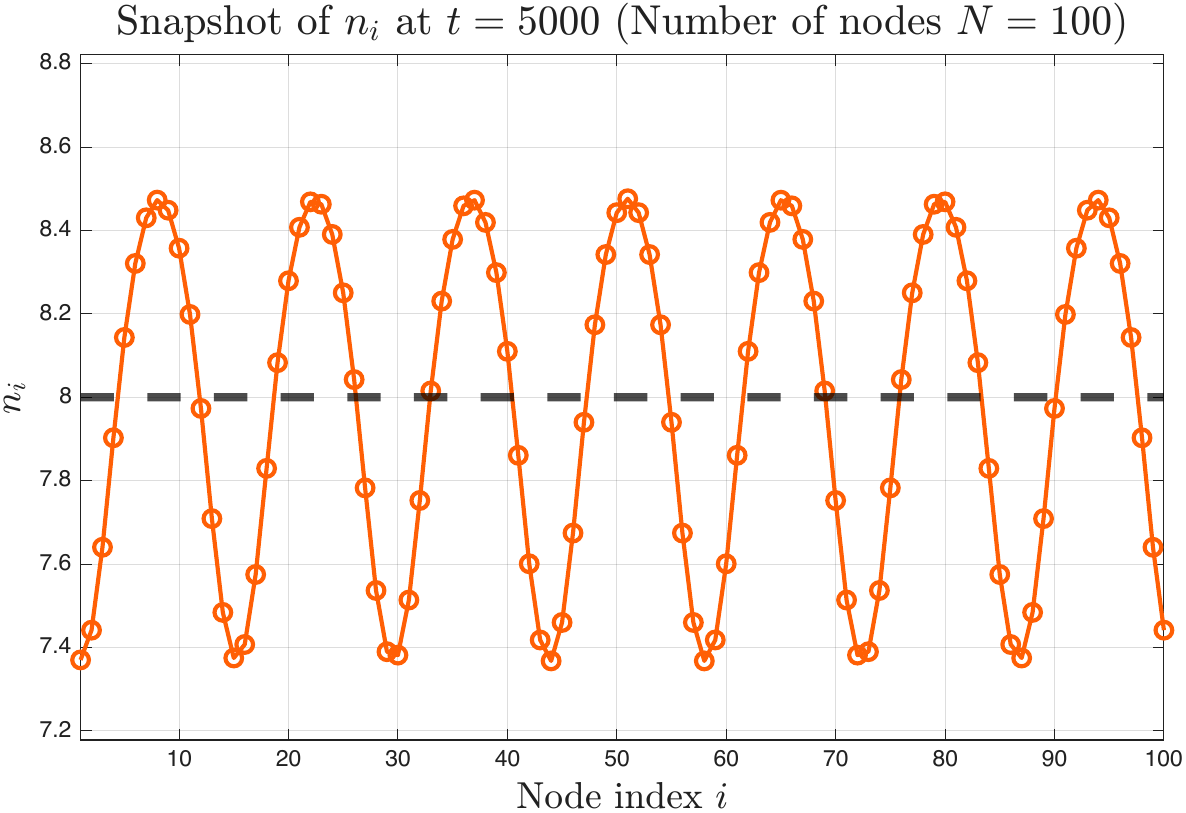}
    \end{subfigure}
    \hfill
    \begin{subfigure}[b]{0.46\textwidth}
        \includegraphics[width=\linewidth]{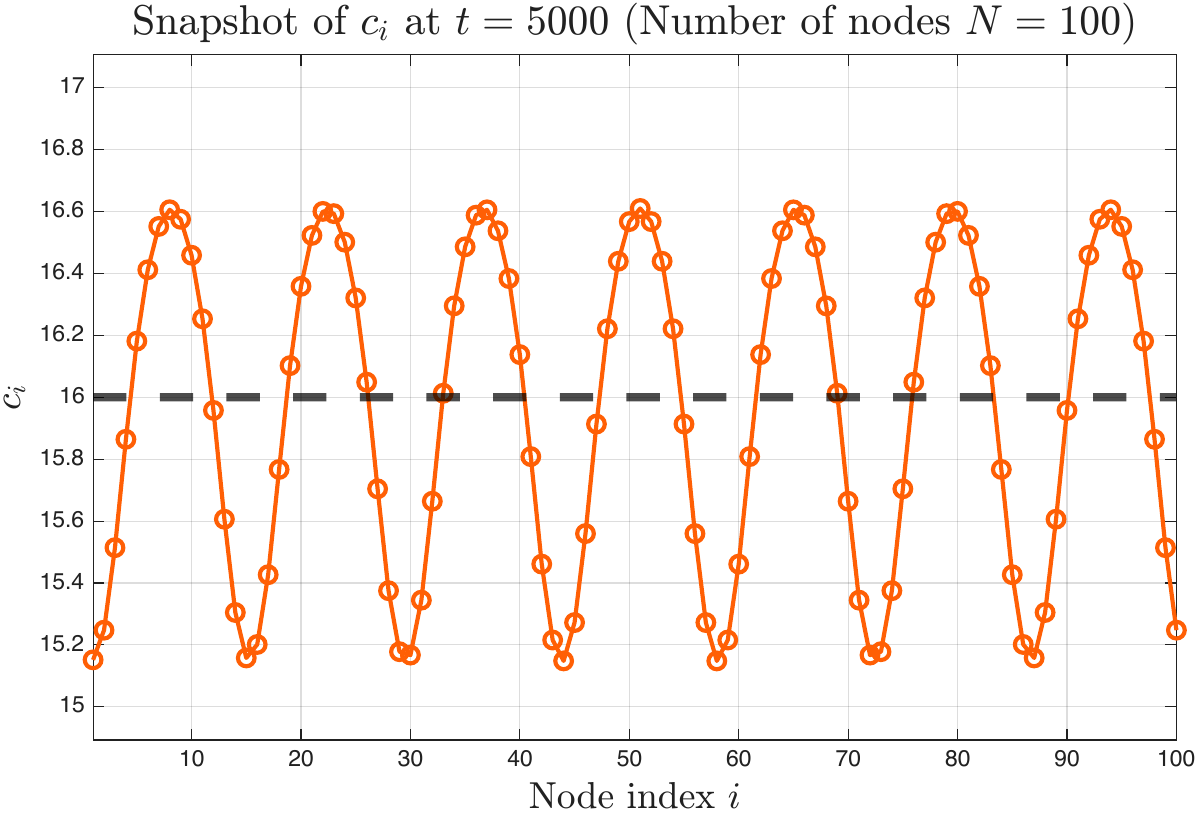}
    \end{subfigure}\vspace{-24pt}
    \begin{subfigure}[b]{\textwidth}
        \centering
        \vspace{2em}
        \caption*{(b) Node number $N=100$}
    \end{subfigure}

    \vspace{1em}

    \begin{subfigure}[b]{0.46\textwidth}
        \includegraphics[width=\linewidth]{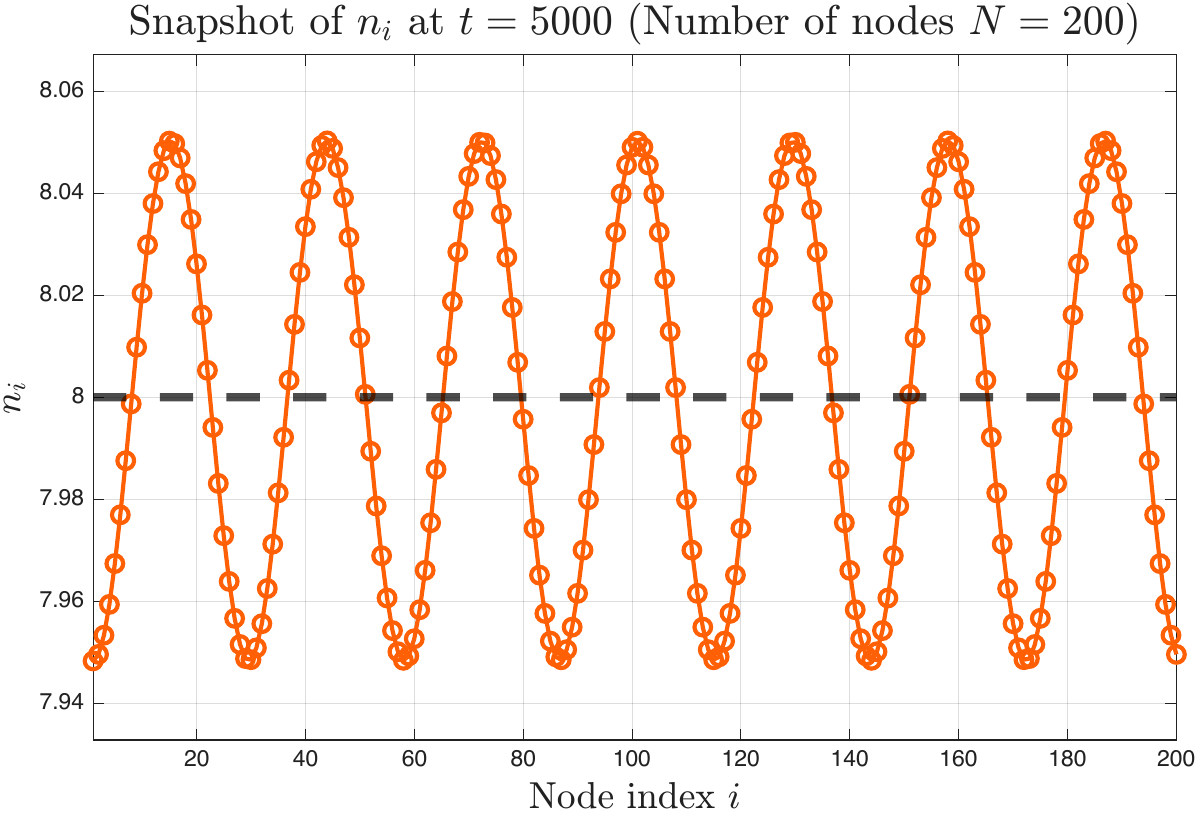}
    \end{subfigure}
    \hfill
    \begin{subfigure}[b]{0.46\textwidth}
        \includegraphics[width=\linewidth]{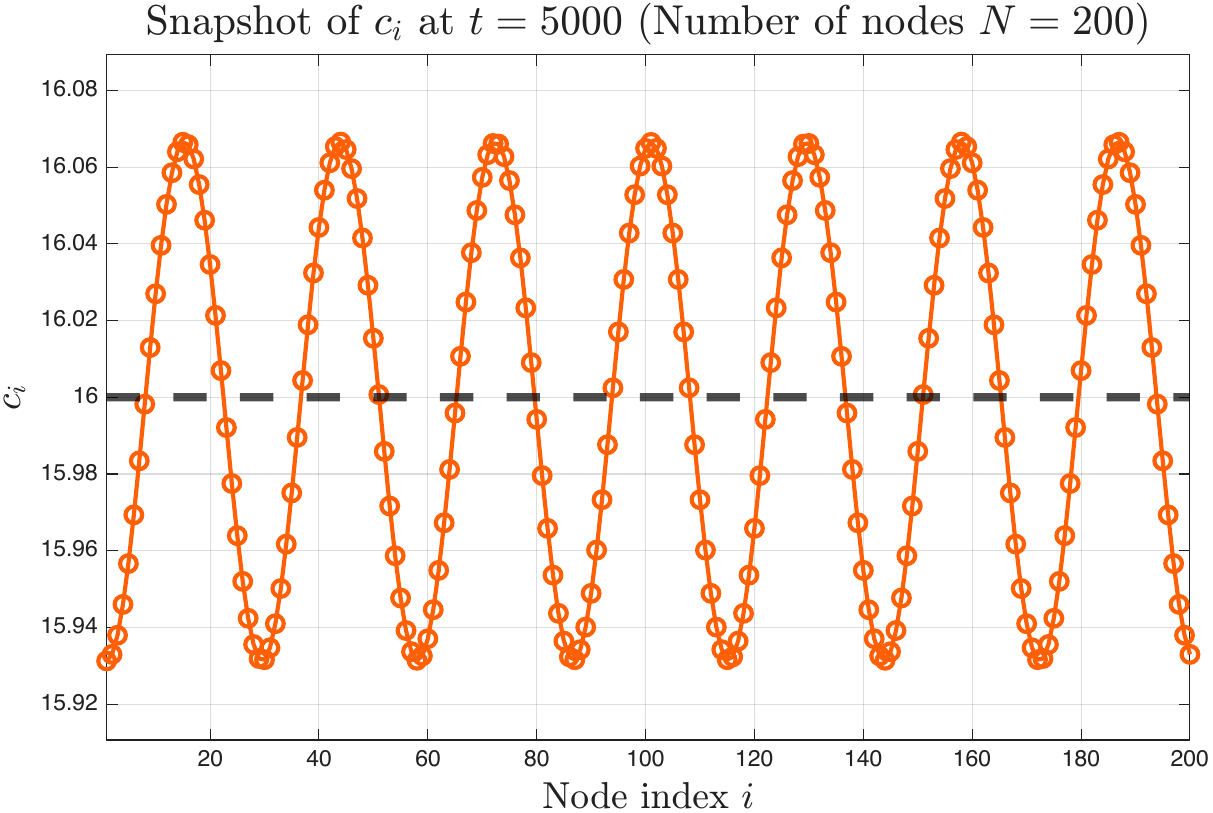}
    \end{subfigure}\vspace{-24pt}
    \begin{subfigure}[b]{\textwidth}
        \centering
        \vspace{2em}
        \caption*{(c) Node number $N=200$}
    \end{subfigure}

    \vspace{0em}
    
    \caption{Snapshots of the numerical solutions for the graph dynamics of Equations~\eqref{eq:KS_graph_n} and~\eqref{eq:KS_graph_c} on metapopulation networks with different numbers of patches at time $t = 5,000$ time units. Each panel describes the case of a uniform band graph with patches connected to the nearest 10 percent of patches to each side for the cases of a domain with $N = 20$ patches (a), $N = 100$ patches (b), and $N = 200$ patches (c). The reaction and movement parameters are set to $\eta_n=0.05$, $\eta_c=0.08$, $\alpha=0.2$, $\beta=0.1$, $r=0.1$, $\bar{n}=8$, and $\bar{c}=16$. The horizontal axis represents node index $i$, and the vertical axis represents the population size $n_i(t)$ (left panels) and the chemical concentration $c_i(t)$ (right panels).  }
    \label{fig:graph-simulations-snapshots}
\end{figure}

In particular, this suggests that, at least close to the onset of pattern-forming instability, the long-time behavior of models on large graph metapopulations may agree with the predicted long-time behavior of the graphon models that describe the infinite-node limit for these families of finite graphs. Notably, the convergence results in Section \ref{sec:graphon-limits} proved different properties of the convergence of solutions to the graphon model for the graph models on large families of deterministic or random graphs over a fixed finite time interval $[0,T]$, so our theoretical results on convergence do not guarantee agreement of the asymptotic behavior for the graphon model and that of the graph models for sufficiently many nodes. Furthermore, the similar forms of the threshold conditions on $\chi$ for the graph and graphon models from Equations~\eqref{eq:ODE-threshold} and~\eqref{eq:threshold-w} suggest that we expect the initial pattern-forming behavior to agree in the cases of graphons and large enough graphs, but the longer-time numerical agreement seen in our simulations suggests a goal of future work to prove long-time agreement of solutions. \cite{bramburger2023pattern} have demonstrated agreement of long-time behavior through convergence of center manifold dynamics between graph and graphon Swift-Hohenberg models, so it may be possible to demonstrate a similar approach for the case of metapopulation Keller--Segel models.

While we have primarily focused on simulations of the graph metapopulation model for a family of ring graphs that serve as a discretized approximation of the uniform band graphon with radius $\rho = 0.1$, we can also perform numerical simulations on other metapopulations with a small number of patches. In Figure~\ref{fig:graph-simulations-limit-cycles}, we show several samples of numerical trajectories $n_i(t)$ and $c_i(t)$ in our graph Keller--Segel model for a metapopulation with $N = 9$ patches and the case of a ring graph with patches connected only to their nearest neighbor on the left and right. Presenting the trajectories of five example nodes, we see that the population size and chemoattractant concentration appears to oscillate in time and may potentially suggest the presence of long-time convergence to a limit-cycle solution. While our prior simulations resulted in the emergence stationary patterns in Figure~\ref{fig:graph-simulations} for the case of metapopulation graphs that converge to the uniform band graphon, we see that different qualitative behaviors appear to be possible when more general connectivity patterns are allowed in the discrete graph model. This indicates that understanding the dynamics of our graph Keller-Segel model on arbitrary finite graphs may yield additional insight into possible behaviors of directed motion on metapopulations that does not appear in our analytically tractable setting of the graphon limit and distance-dependent connectivity, opening up possible directions for future work to explore a more careful exploration of nonlinear dynamics for small metapopulation networks. 

\begin{figure}[htbp]
    \centering

    \begin{subfigure}[b]{0.48\textwidth}
        \includegraphics[width=\linewidth]{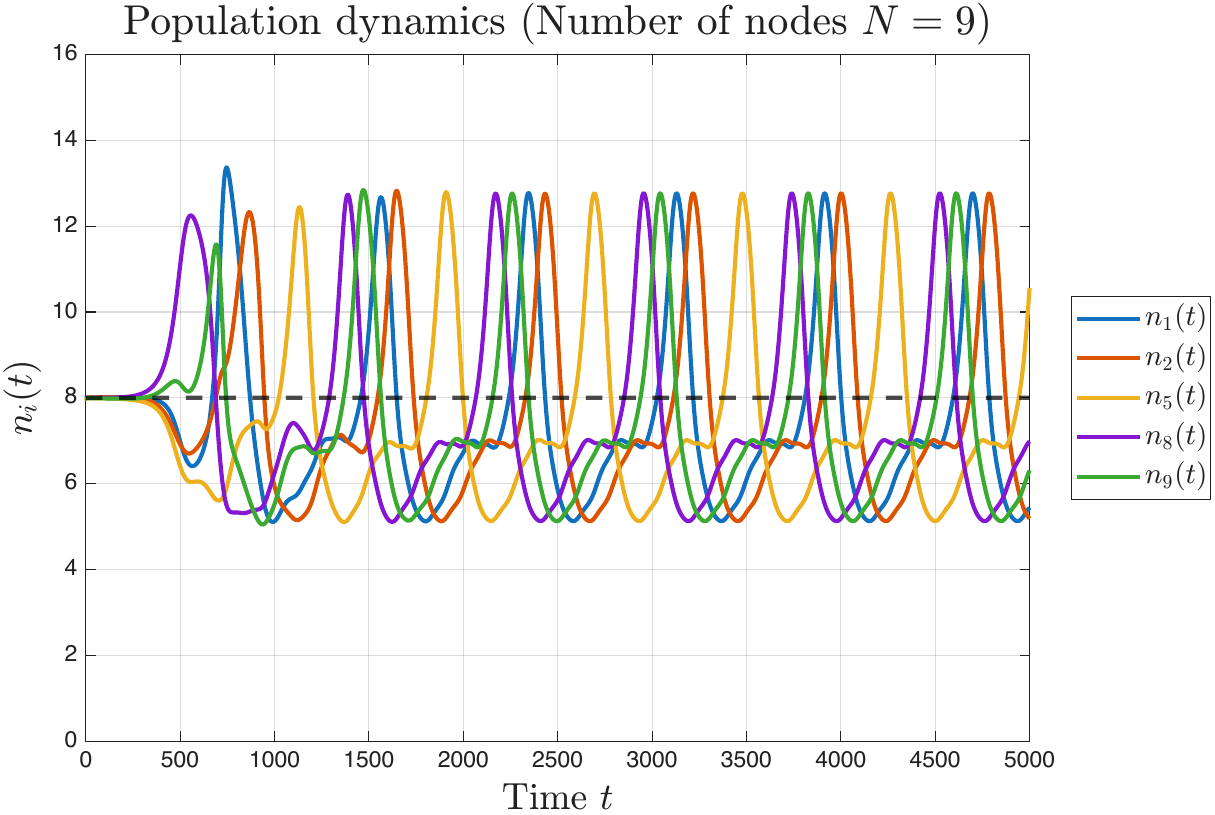}
    \end{subfigure}
    \hfill
    \begin{subfigure}[b]{0.48\textwidth}
        \includegraphics[width=\linewidth]{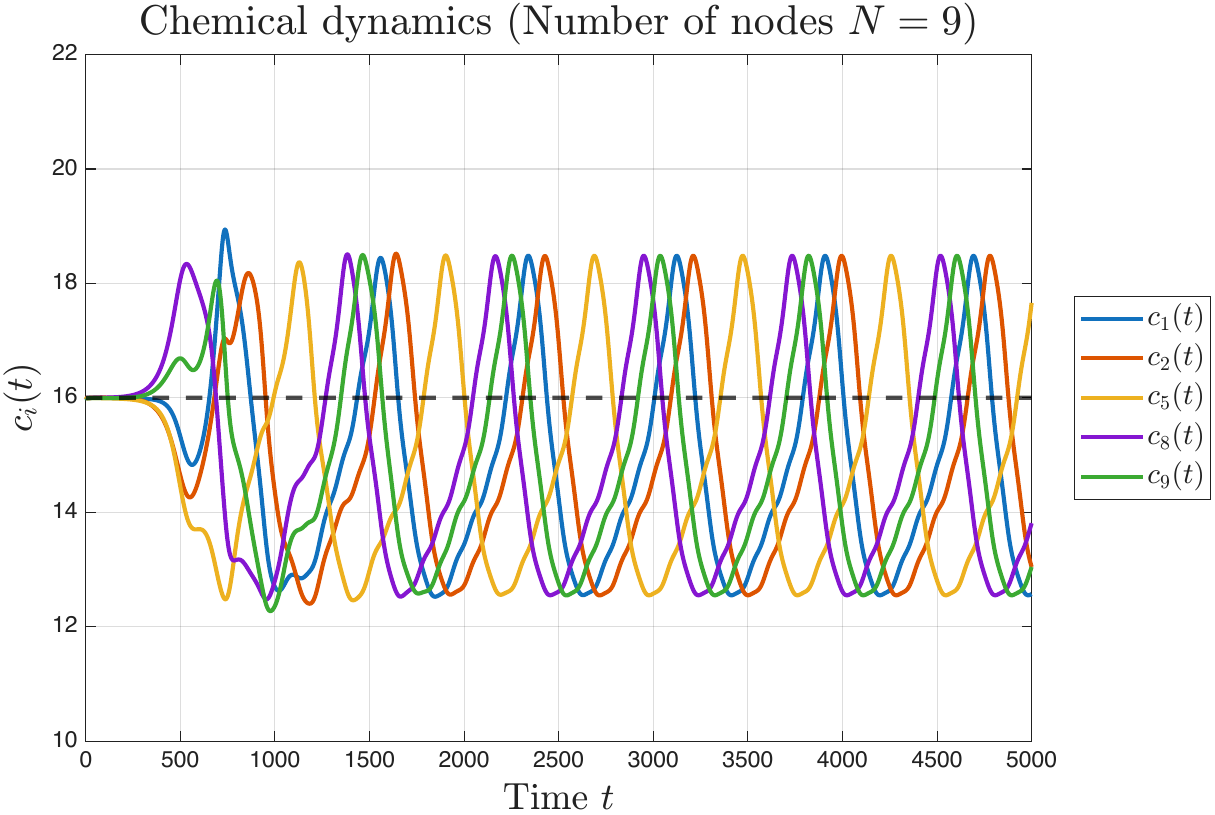}
    \end{subfigure}
    
    \caption{Numerical simulations of the graph dynamics from  equations~\eqref{eq:KS_graph_n} and~\eqref{eq:KS_graph_c} for the case of a ring graph with 9 nodes and patches connected to the nearest neighbor on each side. The curves describe the time-dependent behavior of numerical solutions for the population size $n_i(t)$ (left) and the chemoattractant concentration $c_i(t)$ (right) at patch $i$ for $i \in \{1,2,5,8,9\}$, each plotted as a function of time. The parameters are set to $\eta_n=0.05$, $\eta_c=0.08$, $\alpha=0.2$, $\beta=0.1$, $r=0.1$, $\bar{n}=8$, and $\bar{c}=16$.}
    \label{fig:graph-simulations-limit-cycles}
\end{figure}

\subsection{Simulations for the Graphon Model}\label{sec:simulation-graphon}

This section presents numerical simulations of the graphon Keller--Segel model~\eqref{eq:our-graphon-model} using the normalized band kernel~\eqref{eq:band-kernel}. Unless otherwise stated, we discretize the periodic domain $[0,1]$ with $N=128$ uniform grid points $x_j=j/N$ for $j=0,\ldots,N-1$, and set $\rho=0.1$. The nonlocal terms are computed pseudospectrally via exact Fourier multipliers $\widehat W(k)=\operatorname{sinc}(2\pi\rho k)$ with $\widehat W(0)=1$, avoiding the discretization error inherent in pointwise quadrature $W(x_i,x_j)/N$.

Writing $W(x,y)=J(y-x)$, the chemical diffusion operator $\mathcal L_Wc=J*c-c$ is evaluated using the Fast Fourier Transform (FFT). For the population transport, we first evaluate the non-separable preference term in physical space by defining the matrix $Q$ with entries
\begin{equation*}
    Q_{ij}
    =n_jS(c_i,c_j)-n_iS(c_j,c_i)
    =\frac{n_j e^{\chi c_i}-n_i e^{\chi c_j}}
           {e^{\chi c_i}+e^{\chi c_j}}.
\end{equation*}
The integral at $x_i$ is then approximated pseudospectrally by computing the row-wise discrete convolution. Specifically, for each $i$, we apply the FFT to the $i$-th row of $Q$, multiply by $\widehat W(k)$, apply the inverse FFT, and extract the $i$-th component. We integrate the resulting spatially discretized ODE system using MATLAB's adaptive solver \textit{ode15s}, with relative and absolute tolerances $\mathrm{RelTol}=10^{-8}$ and $\mathrm{AbsTol}=10^{-10}$, respectively.

For our simulations, we use initial conditions based on small random perturbations from the spatially uniform equilibrium, define our initial condition by
\begin{equation}
    \begin{aligned}
        n(0,x)&=\bar{n}+\zeta\mathcal{R}(x),\\
        c(0,x)&=\bar{c}+\zeta\mathcal{R}(x),
    \end{aligned}
\end{equation}
where $\zeta=0.01$. The discrete perturbation $\mathcal{R}$ is constructed to satisfy the reflection symmetry assumed in Section~\ref{sec:graph-model-stability}. For even $N$, we independently sample 
\begin{equation}
    \mathcal{R}(x_j)\sim \operatorname{Unif}[-1,1],\quad j= 0,...,N/2.
\end{equation}
and determine the remaining values by reflection according to
\begin{equation}
    \mathcal R(x_j)=\mathcal R(x_{(-j)\bmod N}),\quad j=N/2+1,...,N-1.
\end{equation}
Each figure in this section shows the result of a single numerical simulation with one realization of the random perturbation, except for simulations reported in Figure~\ref{fig:w-parameters}, where the quantities are averaged over the results obtained from ten simulations with different random initial conditions. To guarantee reproducibility of the simulations, we fix the random seeds in MATLAB so that the same realizations of the random distributions are generated each time. The simulation code is available in the GitHub repository linked in statement on code and data availability.

\begin{Remark}
    The condition $\mathcal{R}(x) = \mathcal{R}(1-x)$ can also be understood as symmetry of $\mathcal{R}$ with respect to $x=1/2$. We impose this symmetry so that the $\sin(2\pi kx)$ mode in the eigenfunction pair $\{\sin(2\pi kx), \cos(2\pi kx)\}$ of $\mathcal{L}_W$, corresponding to the eigenvalue $\widehat{W}(k) - \widehat{W}(0)$, is eliminated, allowing us to compare our numerical simulations with the results of weakly nonlinear analysis based on the case of a single unstable eigenfunction near the onset of pattern-forming instability. 
\end{Remark}

For the parameter values in Figure~\ref{fig:dispersion-relation}, the continuum threshold is $\chi^*\approx0.3984580819$, with a critical wavenumber $k^*=7$. The near-onset simulations in Figure~\ref{fig:simulation-graphon} use $\chi=\chi^*+\varepsilon^2$ with $\varepsilon=0.01$. The other parameters are taken to be $\eta_n=0.05$, $\eta_c=0.08$, $\alpha=0.2$, $\beta=0.1$, $r=0.1$, with the homogeneous equilibrium $(\bar n,\bar c)=(8,16)$. Here $r$, $\bar n$ are parameters appearing in the population reaction term for both the logistic and exponentially modified logistic growth functions. For the exponentially modified logistic reaction function, we additionally take $\theta = 1.1/\bar n$. Other choices of $\chi$, $\varepsilon$, or reaction parameters are specified in the corresponding figure captions.

We first use our numerical simulations to see the emergence of spatial patterns starting from a small perturbation of the uniform state, exploring the time-dependent behavior of the pattern-forming dynamics in Figure \ref{fig:simulation-graphon}. We study the dynamics of the spatial profiles of $n(t,x)$ and $c(t,x)$ and the corresponding total quantities $\int_0^1 n(t,x) dx$ and $\int_0^1 c(t,x) dx$ across the spatial domain, presenting these quantities for both the case of the logistic reaction function (\ref{fig:simulation-graphon}a) and the exponential Allee-logistic reaction function (\ref{fig:simulation-graphon}b). We use the same parameters used to generate the dispersion relation in Figure~\ref{fig:dispersion-relation} and perform our simulation for the chemotactic sensitivity $\chi=\chi^*+\varepsilon^2$ with $\varepsilon=0.01$, showing that the numerical simulations for both reaction functions produce sinusoidal patterns with seven peaks. This number of peaks is consistent with the theoretical prediction of a dominant wavenumber of $k^* = 7$ shown in Figure~\ref{fig:dispersion-relation}, and the correlation of peaks in the population and chemoattractant densities is consistent with agreement in the signs of the coefficients $q_n$ and $q_c$ for the solution of the $\mc{O}(\varepsilon)$ problem in our weakly nonlinear analysis. Furthermore, we seek that, for the case of the logistic reaction function, the total population size and total chemoattractant concentration decrease in time before settling to levels below those achieved in the spatially uniform state, consistent with the predictions of the signs of the coefficients $t_{0,n}$ and $t_{0,c}$ from the $\mc{O}\left(\varepsilon^2\right)$-solution under weakly nonlinear analysis and with the guarantee from Proposition \ref{prop:logistic-total-mass-deficit} of decreased population and chemoattractant mass for patterned equilibrium states. For the case of the exponential Allee-logistic model, we see that the total population and chemoattractant increase in time before equilibrating to a level above that achieved in the uniform state, again agreeing with the predictions for weakly nonlinear analysis for our choice of the parameter $\theta$ characterizing the shape of the exponential modification term.

\begin{figure}[htbp]
    \centering

    %==================== First group ====================

    \begin{subfigure}{0.4\linewidth}
        \centering
        \includegraphics[width=\linewidth]{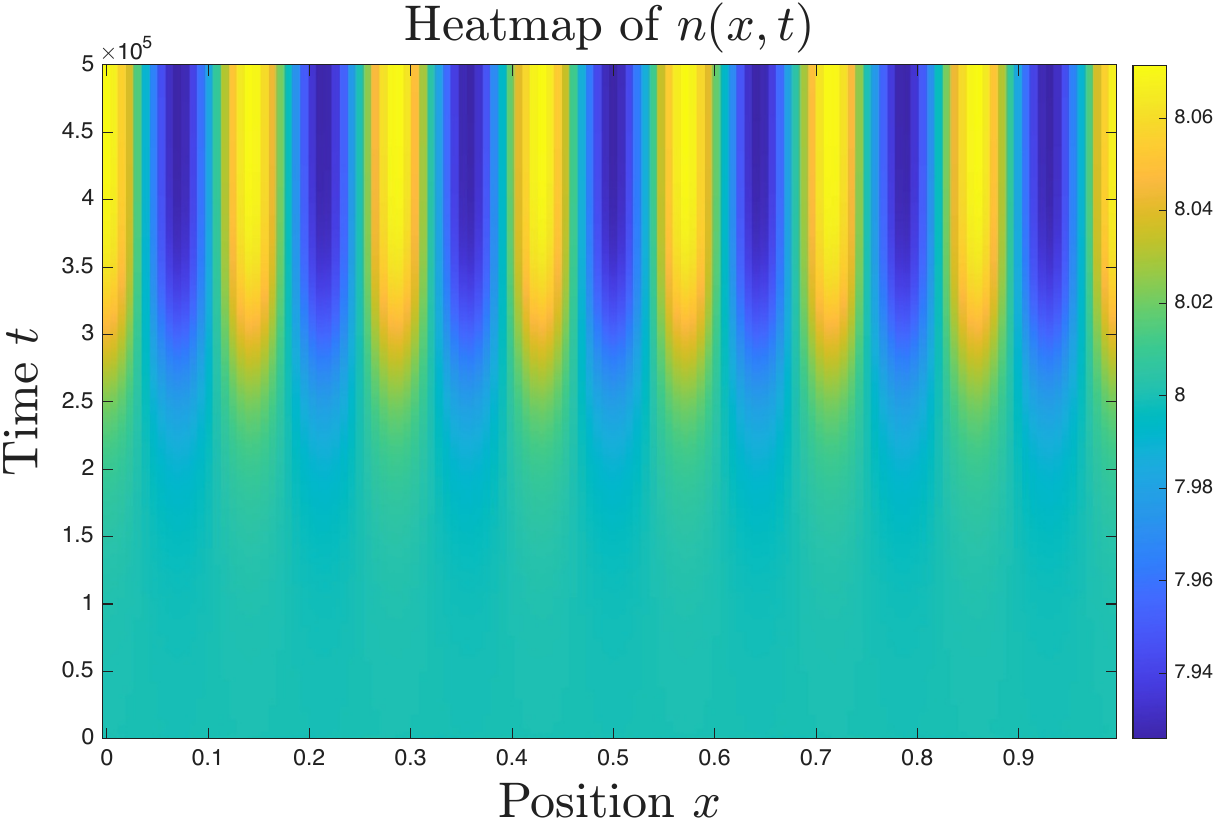}
    \end{subfigure}
    \begin{subfigure}{0.4\linewidth}
        \centering
        \includegraphics[width=\linewidth]{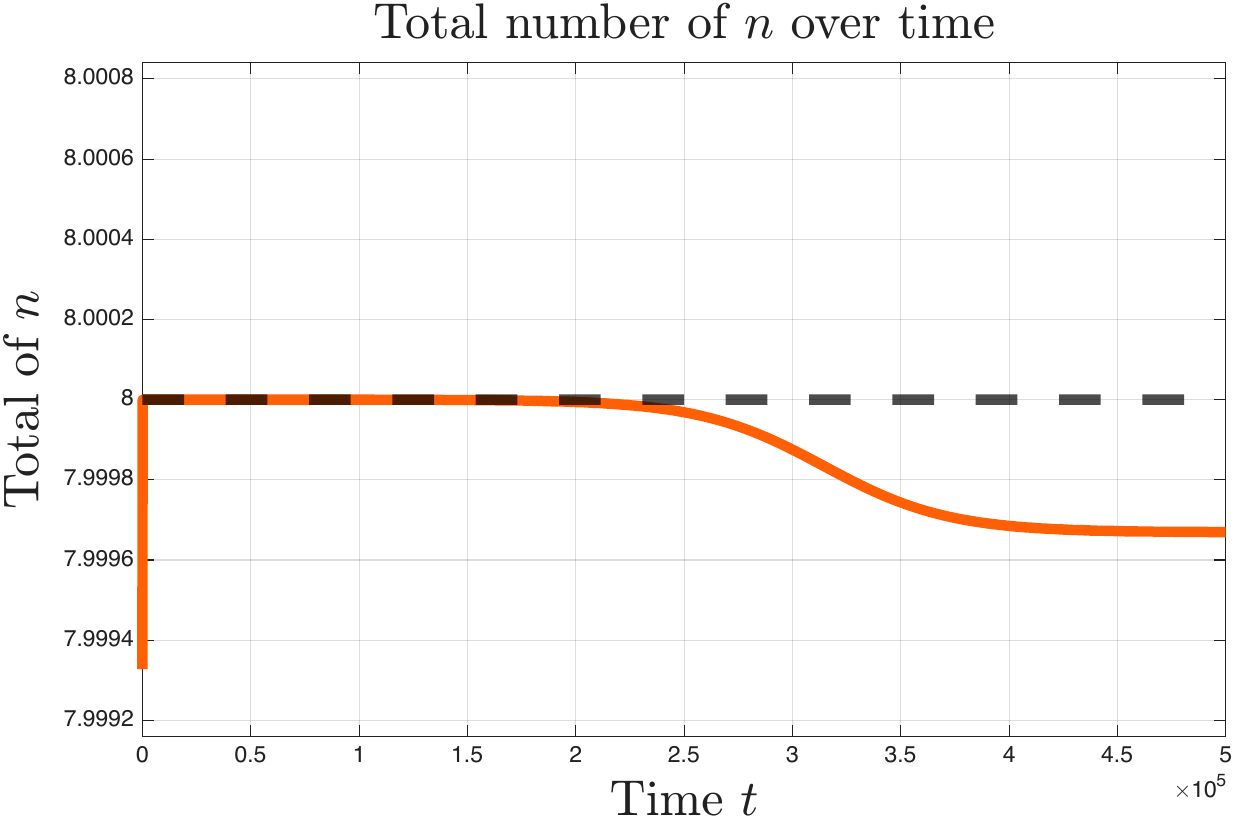}
    \end{subfigure}

    \vspace{0.3em}

    \begin{subfigure}{0.4\linewidth}
        \centering
        \includegraphics[width=\linewidth]{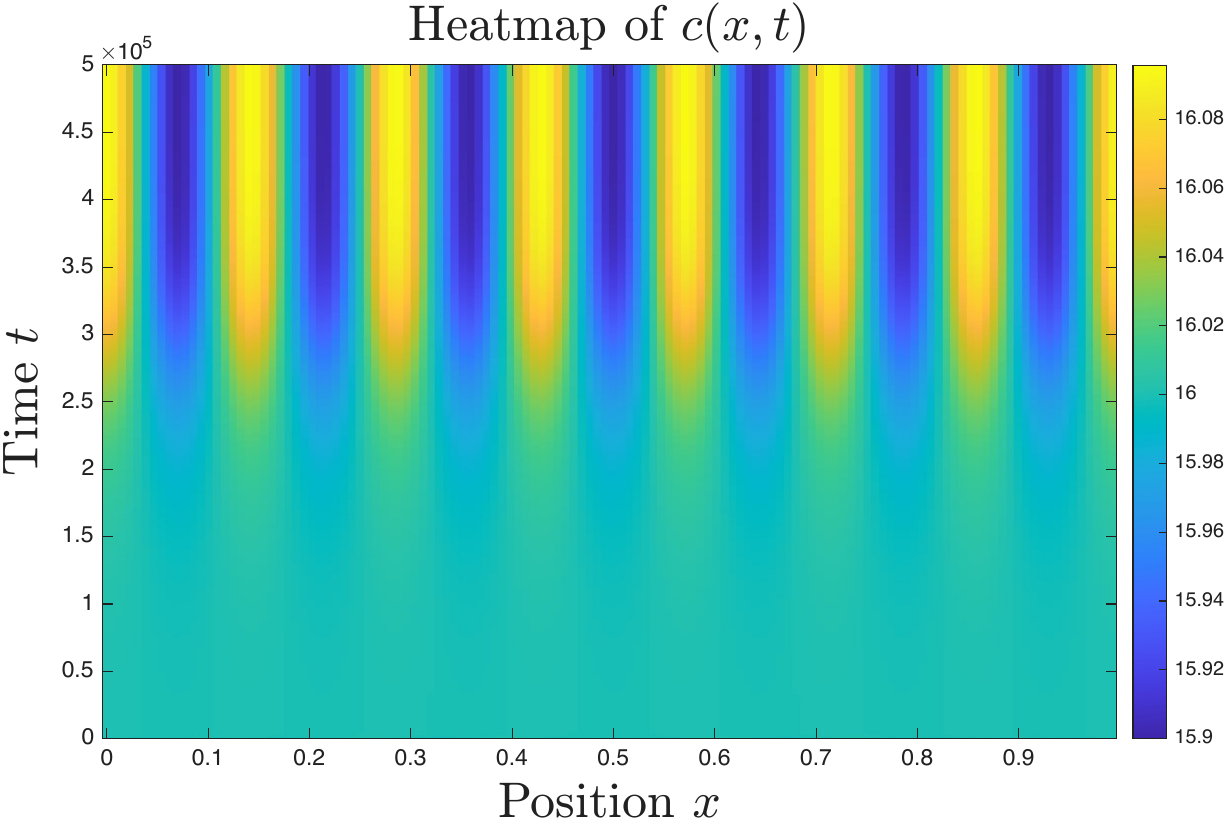}
    \end{subfigure}
    \begin{subfigure}{0.4\linewidth}
        \centering
        \includegraphics[width=\linewidth]{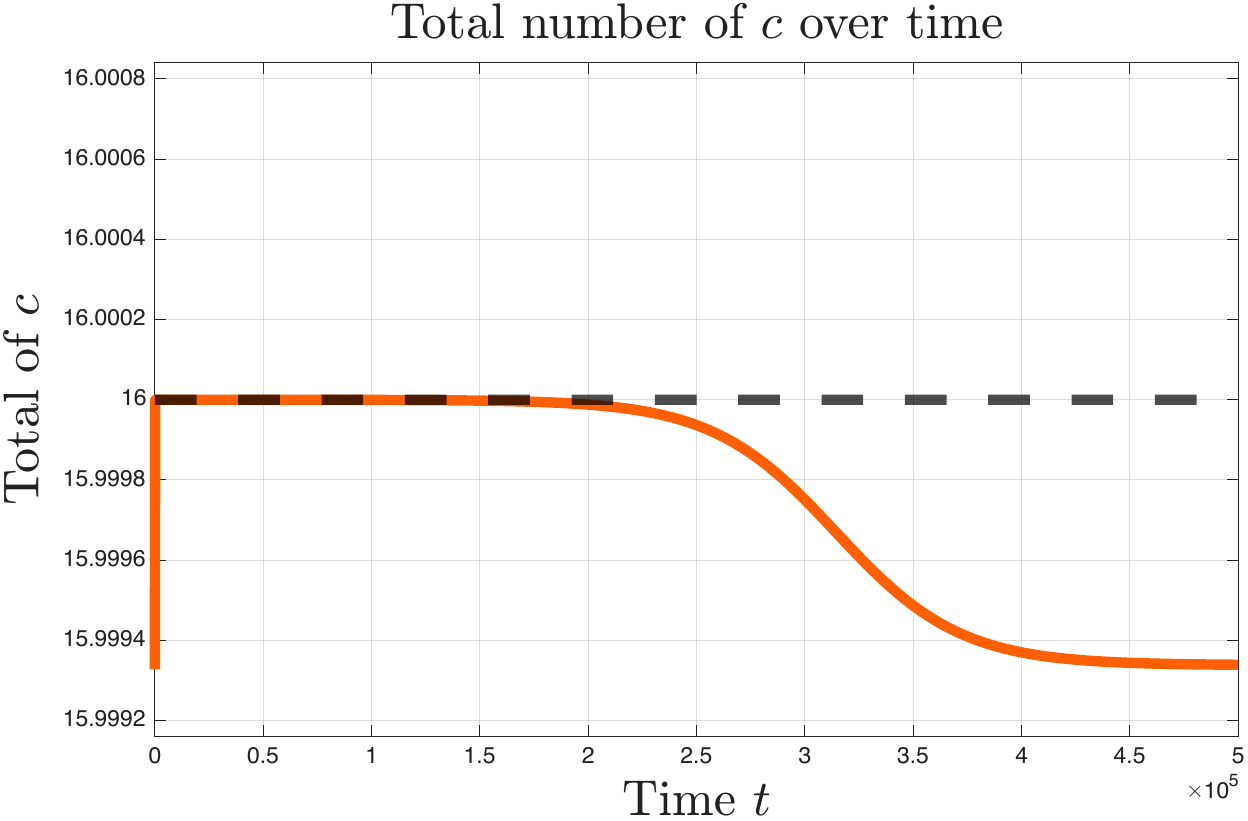}
    \end{subfigure}

    \caption*{(a) Numerical simulation results for the logistic reaction function.}

    \vspace{0.3em}

    %==================== Second group ====================

    \begin{subfigure}{0.4\linewidth}
        \centering
        \includegraphics[width=\linewidth]{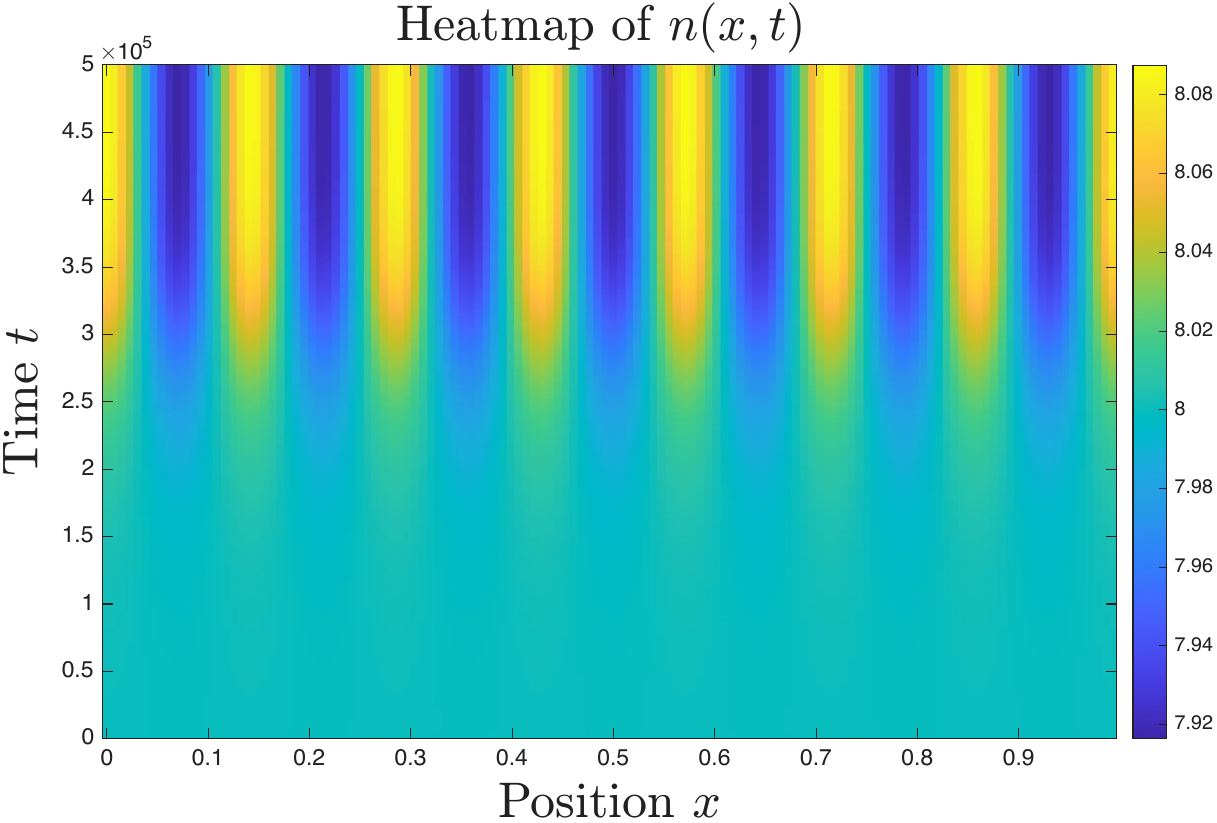}
    \end{subfigure}
    \begin{subfigure}{0.4\linewidth}
        \centering
        \includegraphics[width=\linewidth]{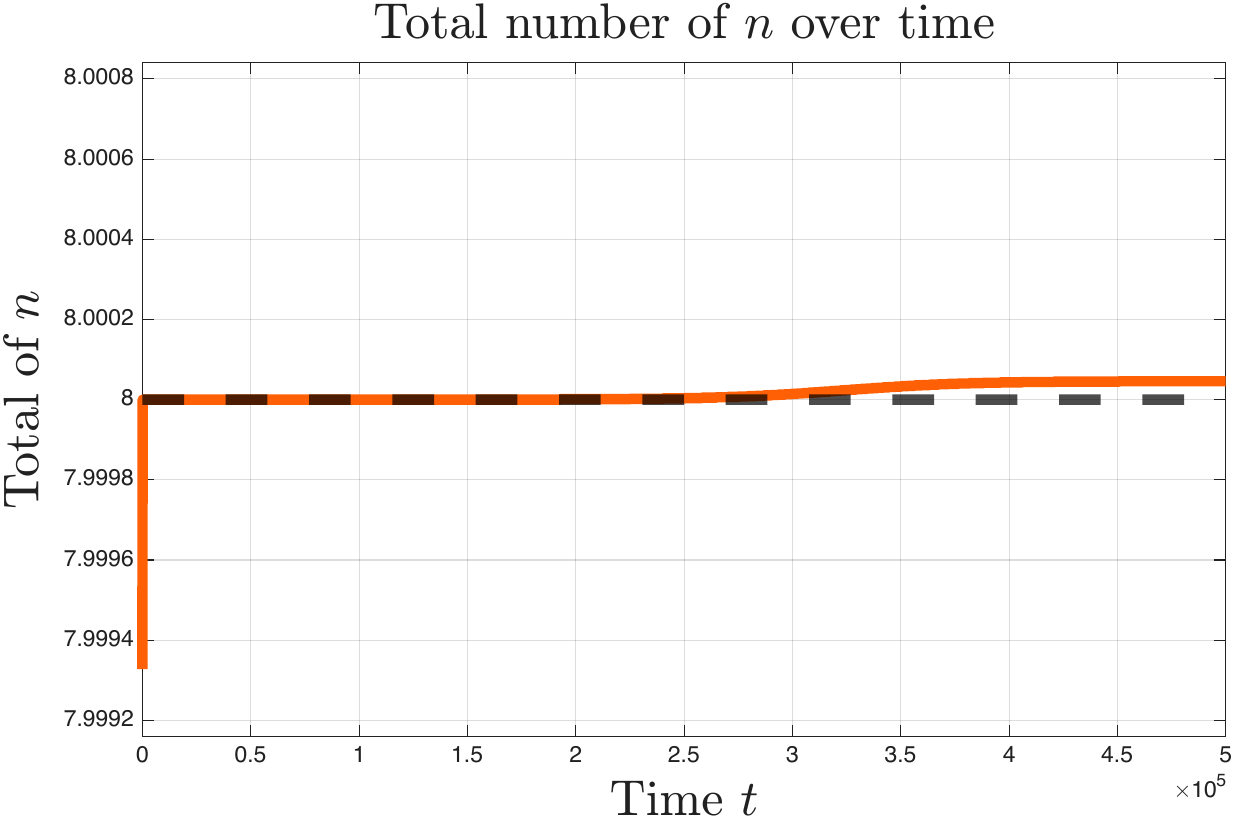}
    \end{subfigure}

    \vspace{0.3em}

    \begin{subfigure}{0.4\linewidth}
        \centering
        \includegraphics[width=\linewidth]{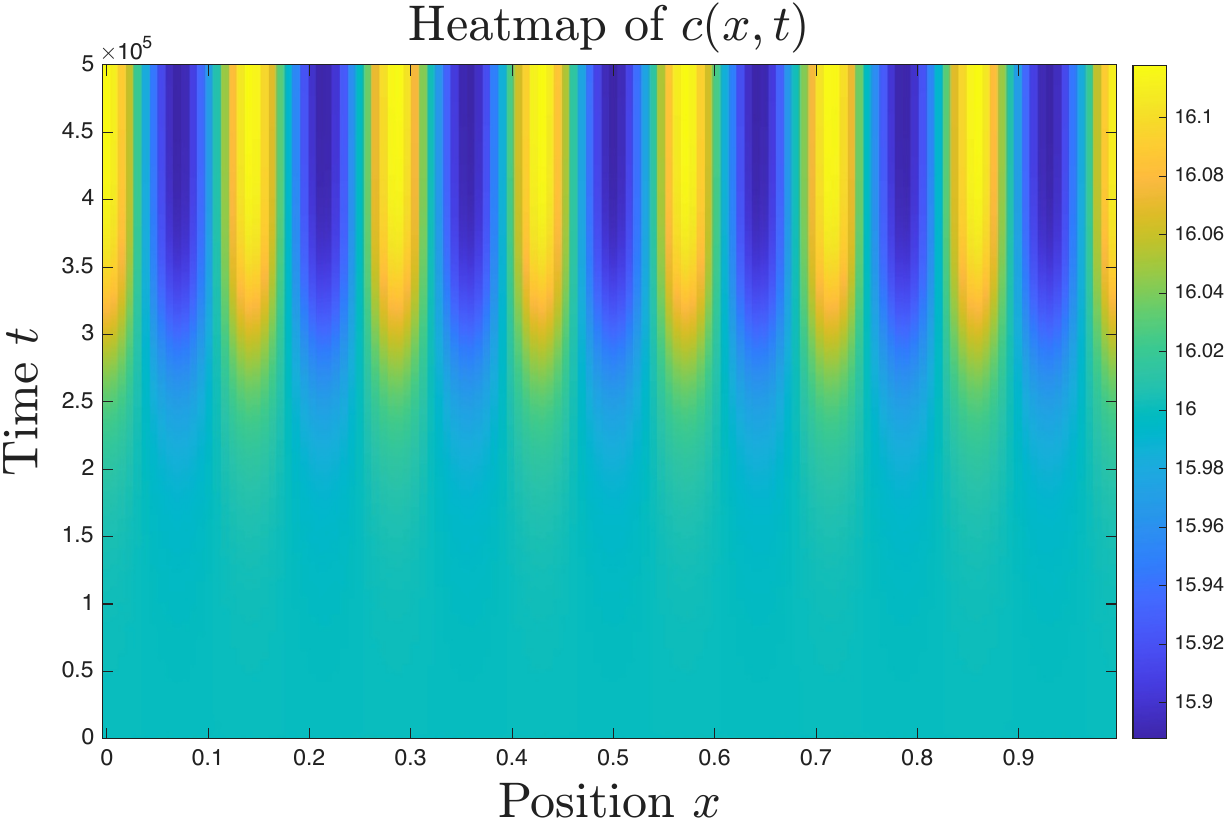}
    \end{subfigure}
    \begin{subfigure}{0.4\linewidth}
        \centering
        \includegraphics[width=\linewidth]{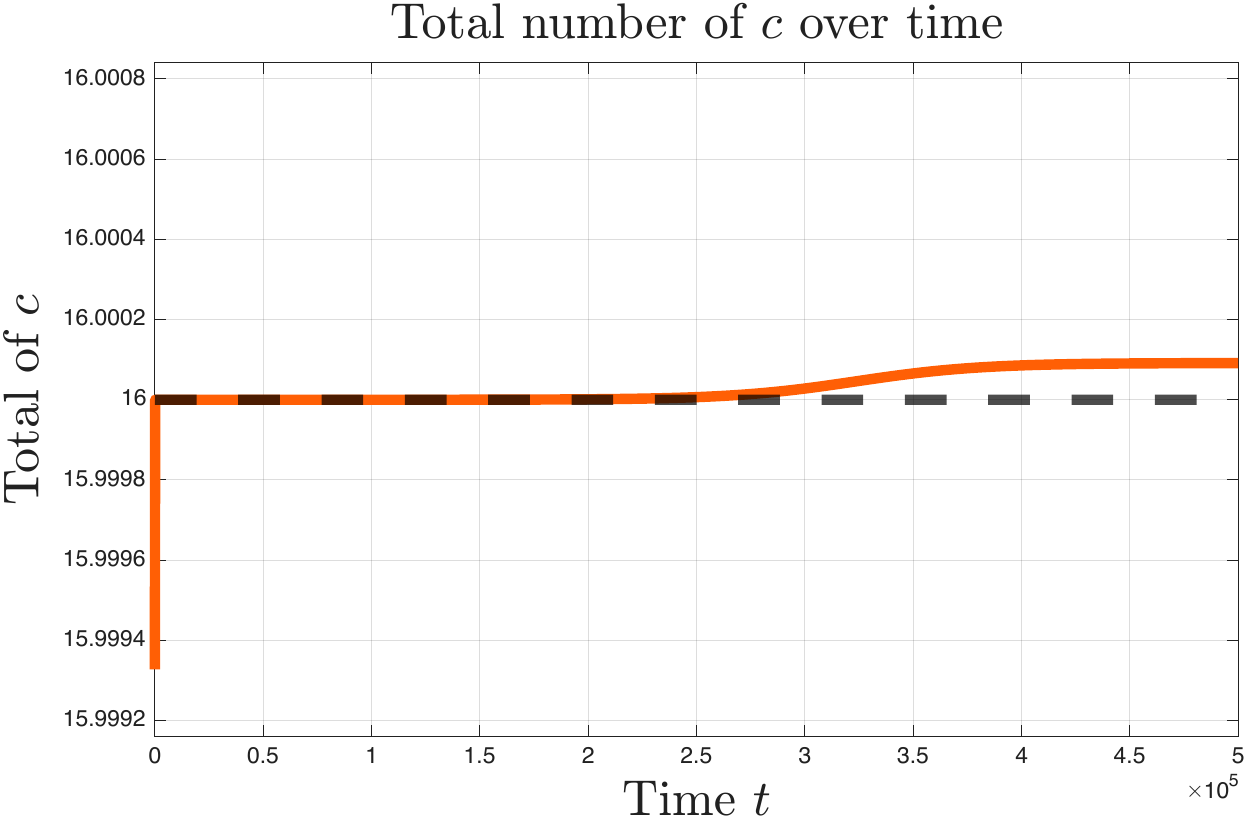}
    \end{subfigure}

    \caption*{(b) Numerical simulation results for the exponential logistic reaction function with $\theta=1.1/\bar{n}$.}

    \caption{
  Numerical simulation results are shown with the PDE parameters taken as $\eta_n=0.05$, $\eta_c=0.08$, $\alpha=0.2$, $\beta=0.1$, $r=0.1$, and $\chi=\chi^*+\varepsilon^2\approx0.3985$. The homogeneous equilibria are $\bar{n}=8$ and $\bar{c}=16$. In the heatmap, the horizontal axis represents the spatial domain $[0,1]$, while the vertical axis denotes time, increasing from bottom to top.
    }

    \label{fig:simulation-graphon}
\end{figure}

We can also use our numerical simulations of the graphon model to validate the analytical approximation of the patterned steady states provided by the weakly nonlinear analysis from Section \ref{sec:graphon-weakly-general}. We present this comparison in Figure \ref{fig:sol-wna}, showing the numerical solutions achieved for the graphon model up to time $t = 500{,}000$ starting from a random initial profile close to the uniform equilibrium state, as well as the asymptotic approximation for the long-time limit of $n(\tau,x)$ and $c(\tau,x)$ from weakly nonlinear analysis from Equation~\eqref{eq:expanded-solution-eq} (taking the appropriate equilibrium amplitude value $a_+$ or $a_-$ based on which choice produces closer qualitative agreement between the resulting solutions). We provide this comparison for values of the chemotactic sensitivity $\chi = \chi^* + \varepsilon^2$ for three values of $\varepsilon$, showing that agreement between the numerical solution and analytical prediction increases as we decrease $\varepsilon$. While we see minor deviations between the numerical and asymptotic solutions for $\varepsilon = 0.1$, we see that the numerical and asymptotic curves have strong agreement for $\varepsilon = 0.05$ and $\varepsilon = 0.01$, suggesting that the asymptotic approximations from weakly nonlinear analysis provide an accurate description of the spatial profiles achieved by the population and chemoattractant for sensitivity $\chi$ close to the onset of our pattern-forming instability.

\begin{figure}[!htbp]
    \centering
    \begin{subfigure}[b]{0.48\textwidth}
       \includegraphics[width=\linewidth]{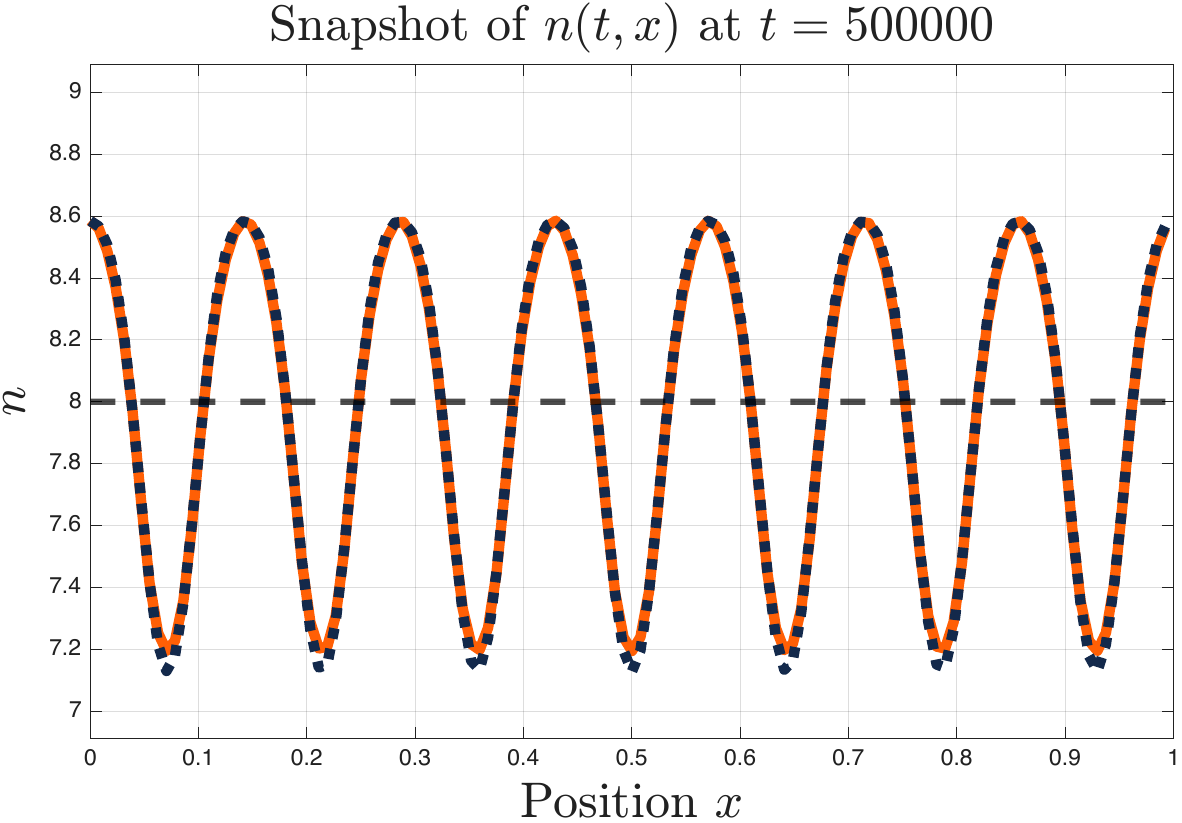}
    \end{subfigure}
    \hfill
    \begin{subfigure}[b]{0.48\textwidth}
        \includegraphics[width=\linewidth]{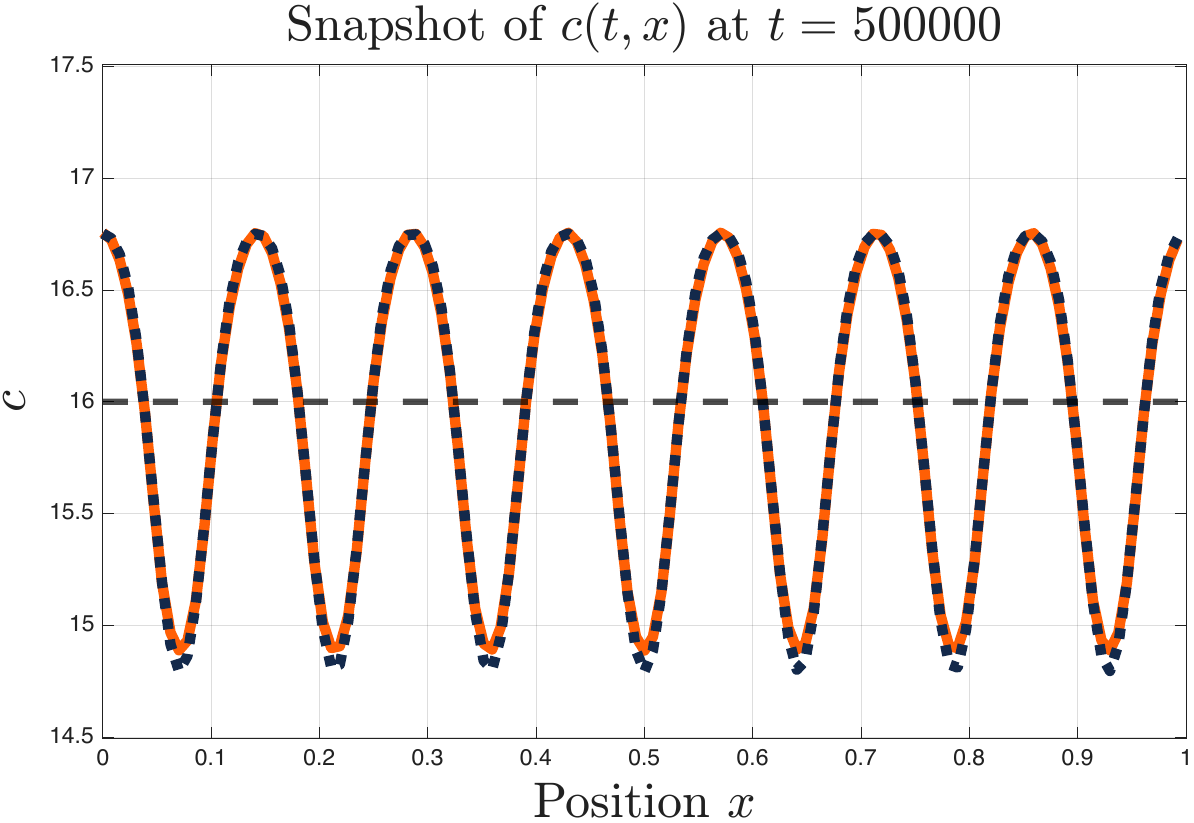}
    \end{subfigure}\vspace{-24pt}
    \begin{subfigure}[b]{\textwidth}
        \centering
        \vspace{2em}
        \caption*{(a) The results when $\varepsilon = 0.1$ at $t=500{,}000$}
    \end{subfigure}

    \vspace{0.5em}

    \begin{subfigure}[b]{0.48\textwidth}
       \includegraphics[width=\linewidth]{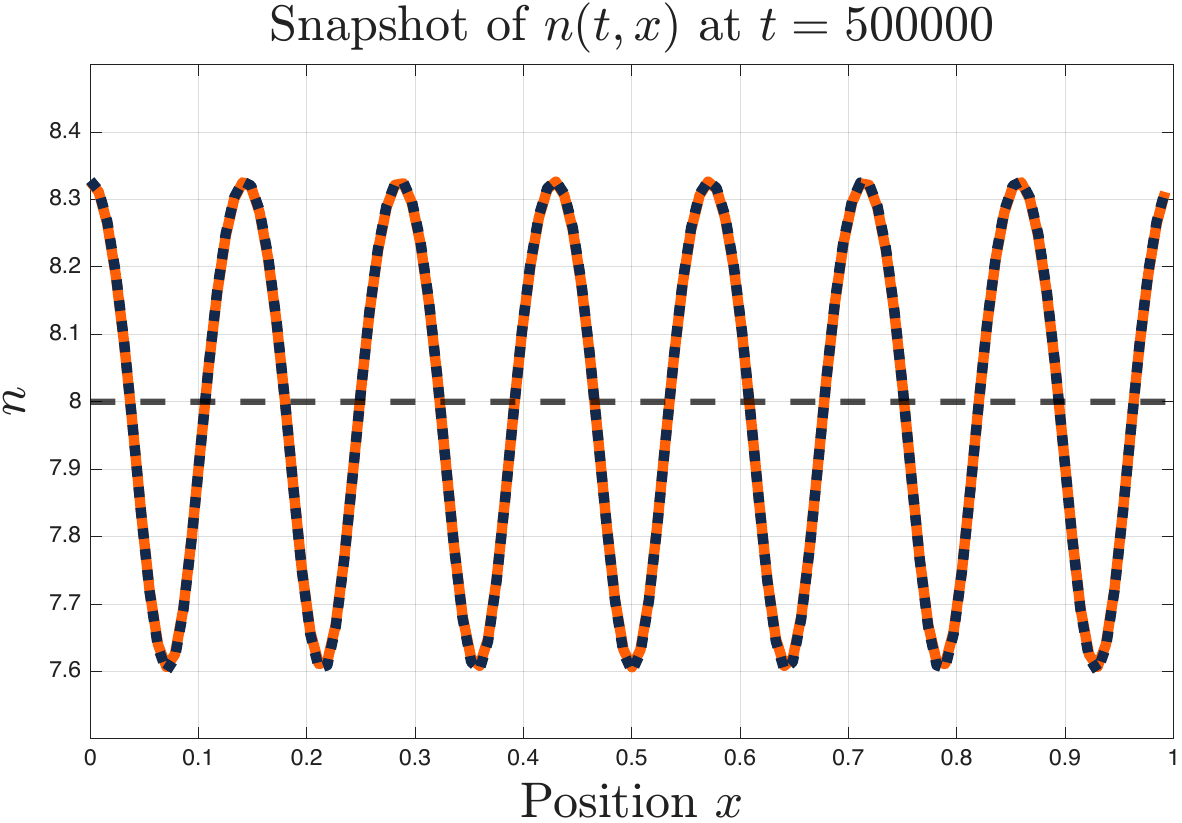}
    \end{subfigure}
    \hfill
    \begin{subfigure}[b]{0.48\textwidth}
        \includegraphics[width=\linewidth]{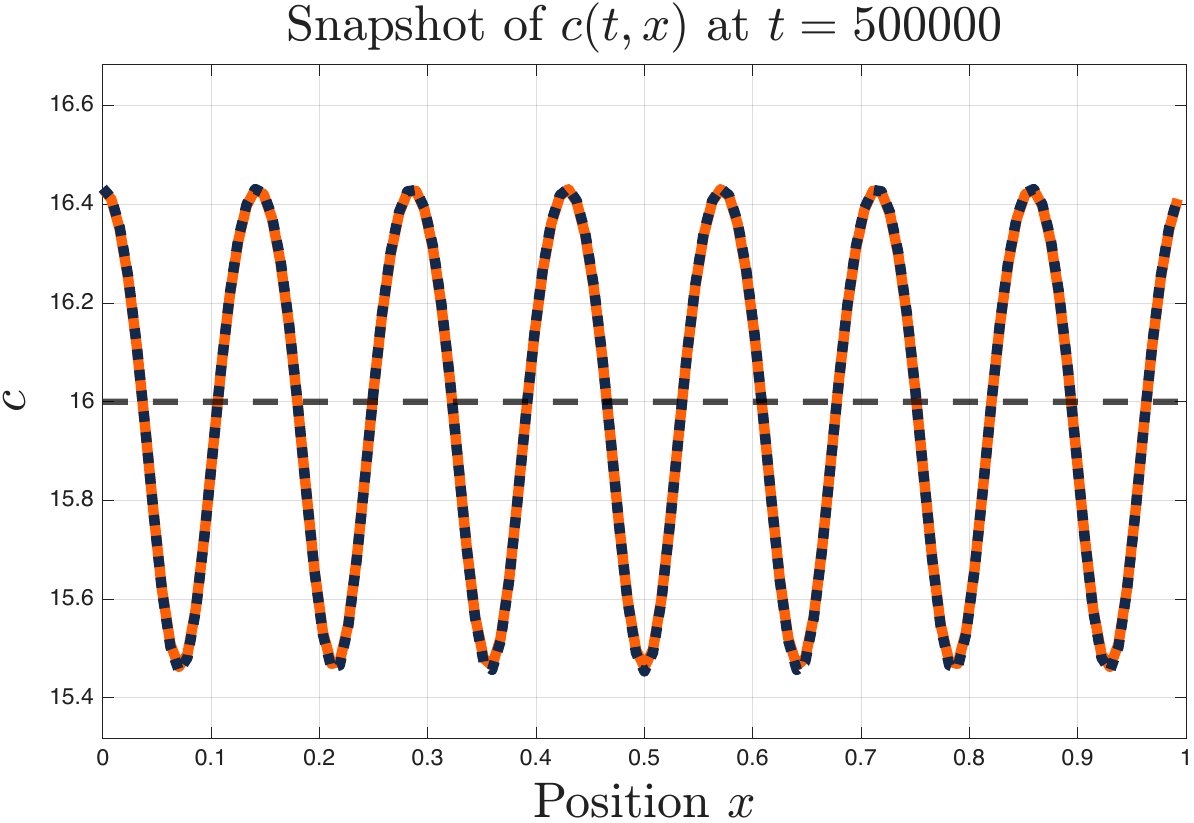}
    \end{subfigure}\vspace{-24pt}
    \begin{subfigure}[b]{\textwidth}
        \centering
        \vspace{2em}
        \caption*{(b) The results when $\varepsilon = 0.05$ at $t=500{,}000$}
    \end{subfigure}

    \vspace{0.5em}

   \begin{subfigure}[b]{0.48\textwidth}
        \includegraphics[width=\linewidth]{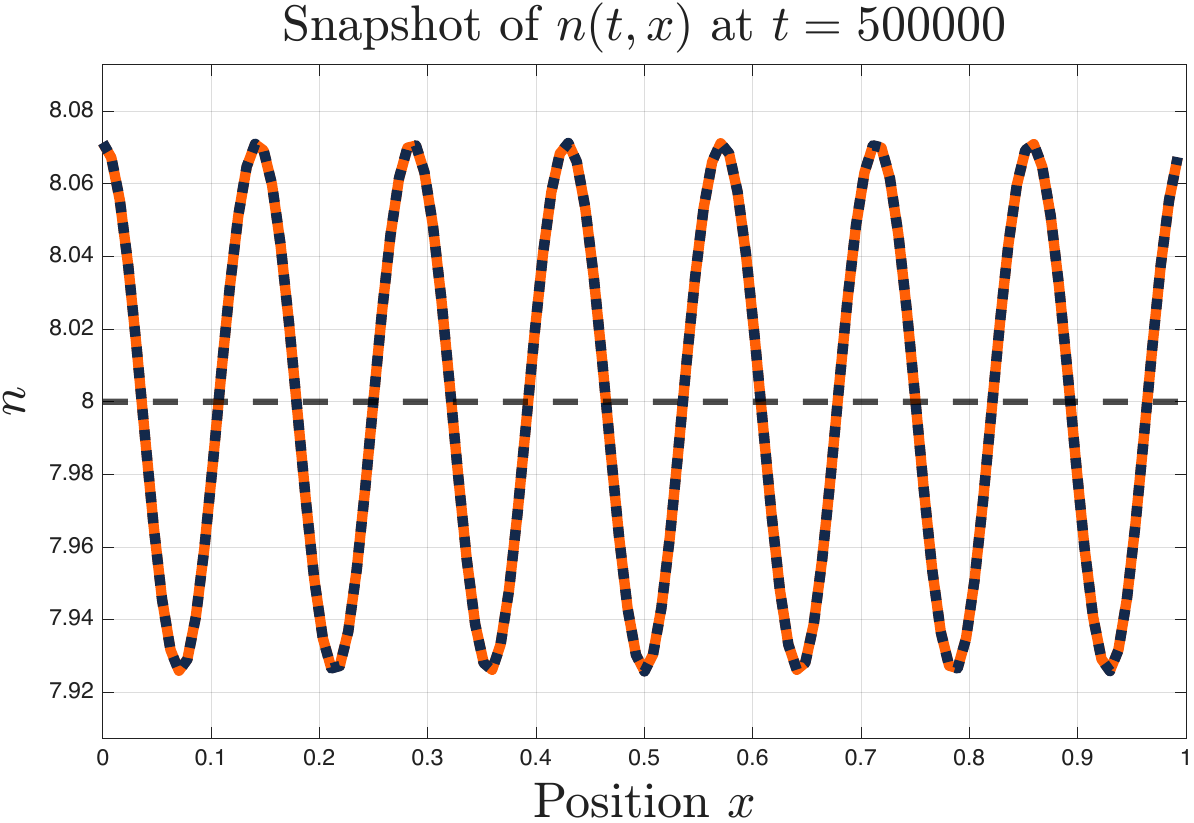}
    \end{subfigure}
    \hfill
    \begin{subfigure}[b]{0.48\textwidth}
        \includegraphics[width=\linewidth]{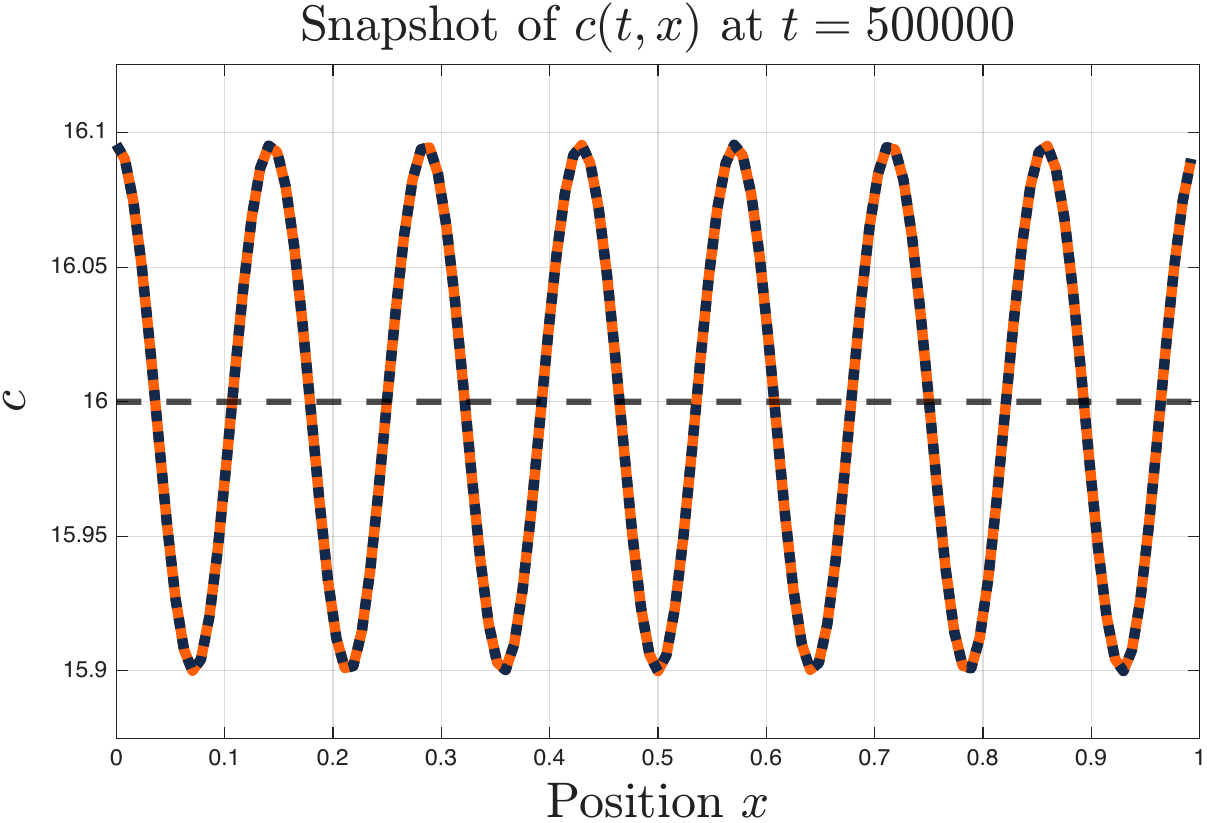}
    \end{subfigure}\vspace{-24pt}
    \begin{subfigure}[b]{\textwidth}
        \centering
        \vspace{2em}
        \caption*{(c) The results when $\varepsilon = 0.01$ at $t=500{,}000$}
    \end{subfigure}

    \vspace{0.5em}

    \caption{Snapshots of numerical solutions for $n(t,x)$ and $c(t,x)$ at time $t=500{,}000$ and the corresponding analytical approximation for the steady-state population and chemical profiles. Here the orange lines represent the numerical solutions of our graphon model at $t = 500{,}000$, while the dark blue dashed lines represent the approximations of the solutions obtained by the weakly nonlinear analysis. We set $\chi=\chi^*(k^*)+\varepsilon^2$ with critical wavenumber $k^*=7$ for different values of $\varepsilon$, while using the same values of parameters and the same random initial condition as in Figure~\ref{fig:simulation-graphon}.}\label{fig:sol-wna}
\end{figure}

We also explore the spatial profiles of emergent patterns in Figure \ref{fig:snapshots}, plotting snapshots of $n(t,x)$ and $c(t,x)$ for the logistic reaction function and uniform band graphon after a large time $t$ for different values of the chemotactic sensitivity parameter $\chi$. In particular, we see that, for the case of $\chi$ slightly above the instability threshold $\chi^*$ (with $\chi=\chi^*+\varepsilon^2$ and $\varepsilon=0.01$), the spatial profiles roughly resemble a sinusoidal pattern with small amplitude with the oscillations in density centered around the equilibrium densities achieved in the spatially uniform state. As chemotactic sensitivity $\chi$ increases past the threshold $\chi^{**}$ upon which infinitely many wavenumbers become unstable, we see that the peaks of the pattern appear to flatten and we see the overall mass of solutions appears to have decreased below the uniform equilibrium value. When the chemotactic instability further increases substantially beyond the threshold $\chi^{**}$, we see that the resulting patterns begin to resemble mesa patterns (also referred to as plateau patterns) featuring a relatively flat region at the high point of population density or chemoattractant concentration. Such mesa patterns are often seen in reaction-diffusion models \citep{kolokolnikov2007self,mckay2012stability} and have also been demonstrated in chemotaxis models that feature globally existing solutions \citep{hillen2009user}. As PDE models of chemotaxis and related phenomena have often been shown to produce spike-like patterns in the limit of strong chemotactic sensitivity \citep{potapov2005metastability,hillen2007classification,kolokolnikov2014basic,buttenschoen2020cops,kong2022stability,kong2023existence,kong2025boundary}, this qualitative behavior appears to be a notable feature of the nonlocal form of our model of chemotactic motion and its formulation in terms of biased random walks on graphon metapopulations. 

\begin{figure}[htbp]
    \centering
    
    \begin{subfigure}{0.48\linewidth}
        \centering
        \includegraphics[width=\linewidth]{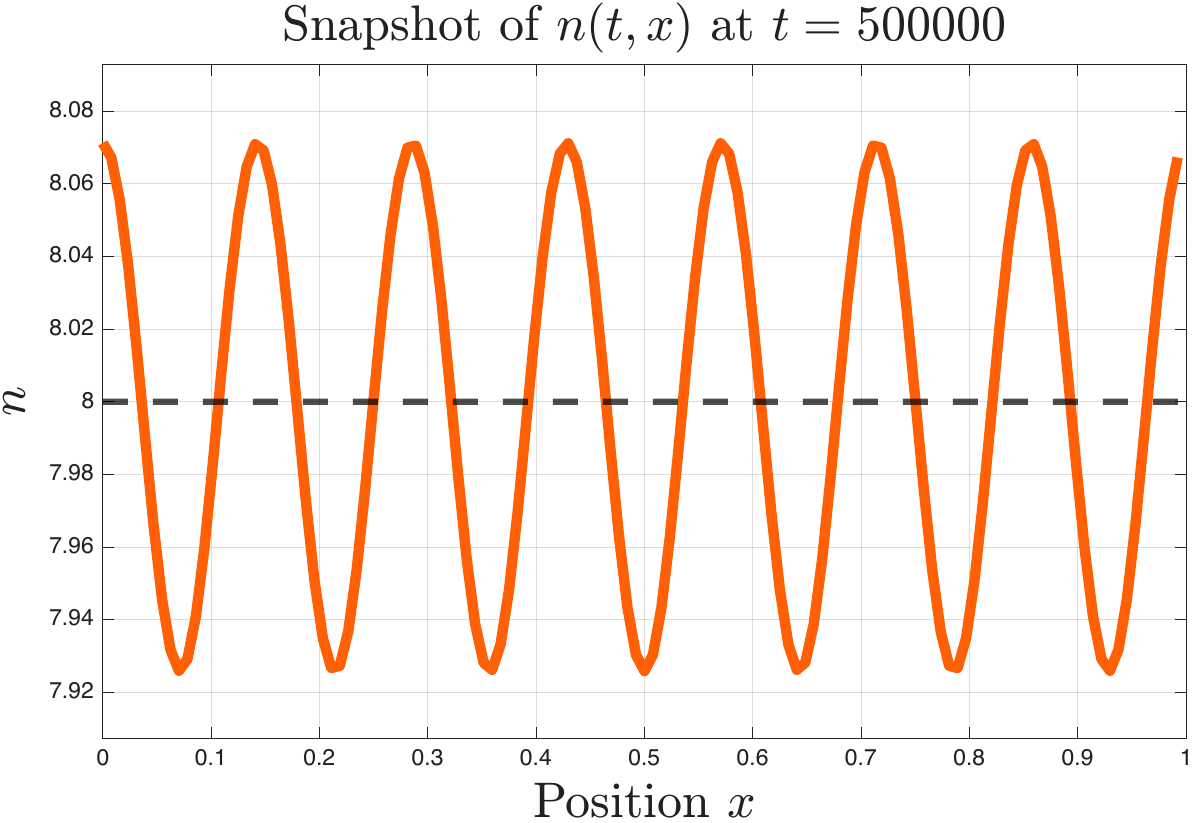}
        \caption{Snapshot of $n(t,x)$ with $\chi=\chi^{*}+\varepsilon^2$}
    \end{subfigure}
    \begin{subfigure}{0.48\linewidth}
        \centering
        \includegraphics[width=\linewidth]{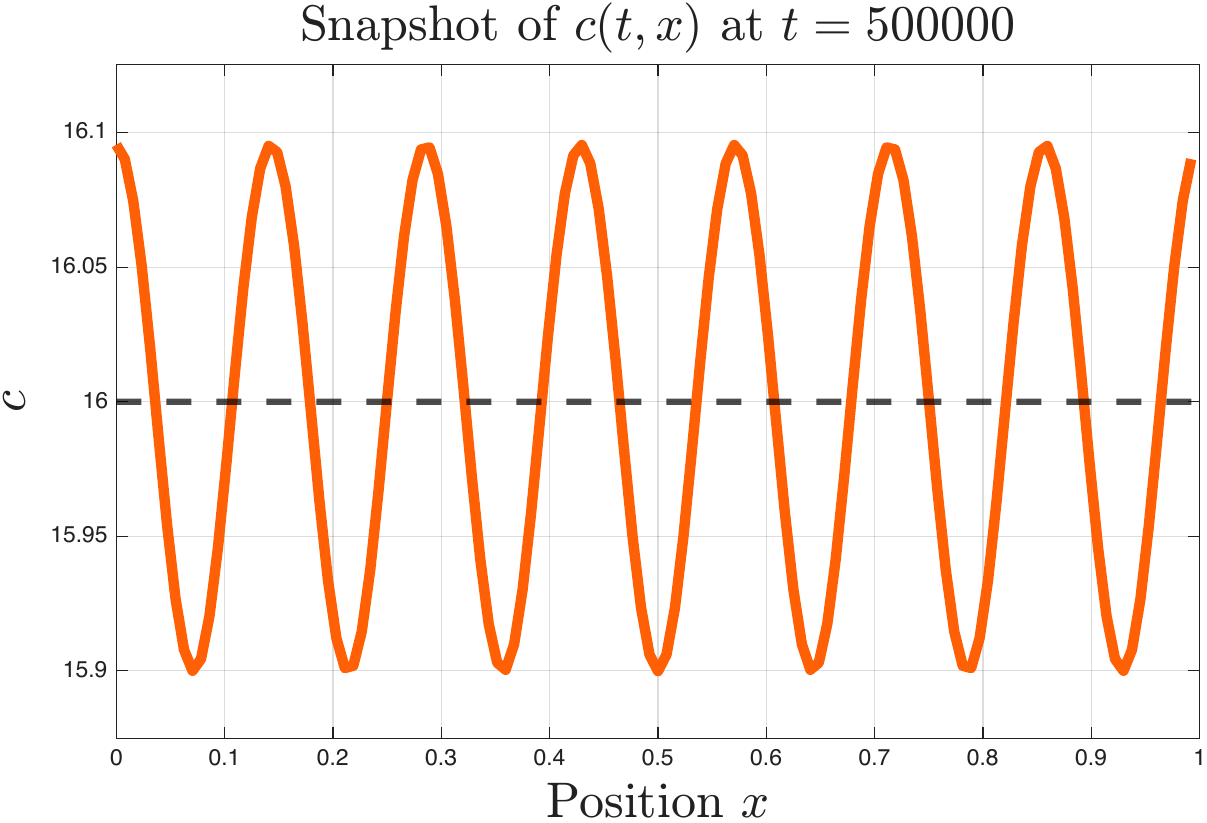}
        \caption{Snapshot of $c(t,x)$ with $\chi=\chi^{*}+\varepsilon^2$}
    \end{subfigure}

    \vspace{0.5em}

    \begin{subfigure}{0.48\linewidth}
        \centering
        \includegraphics[width=\linewidth]{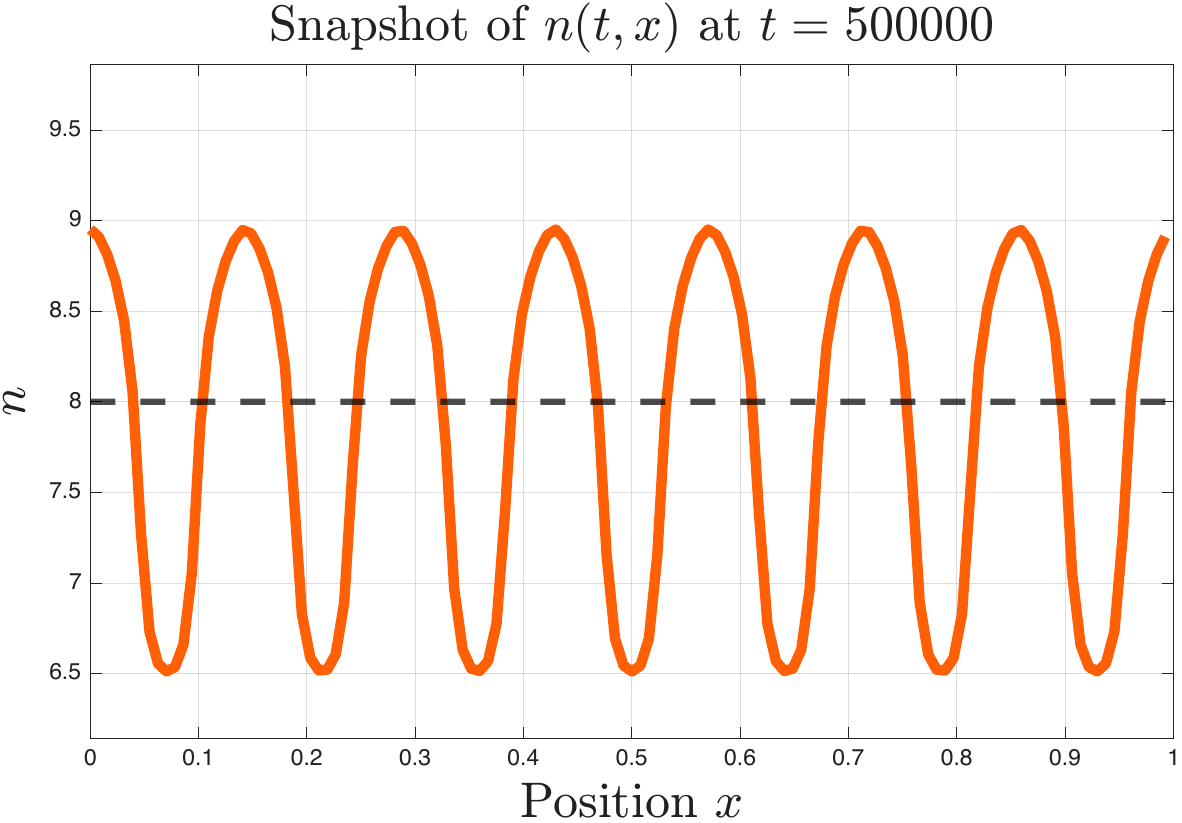}
        \caption{Snapshot of $n(t,x)$ with $\chi=\chi^{**}+\varepsilon^2$}
    \end{subfigure}
    \begin{subfigure}{0.48\linewidth}
        \centering
        \includegraphics[width=\linewidth]{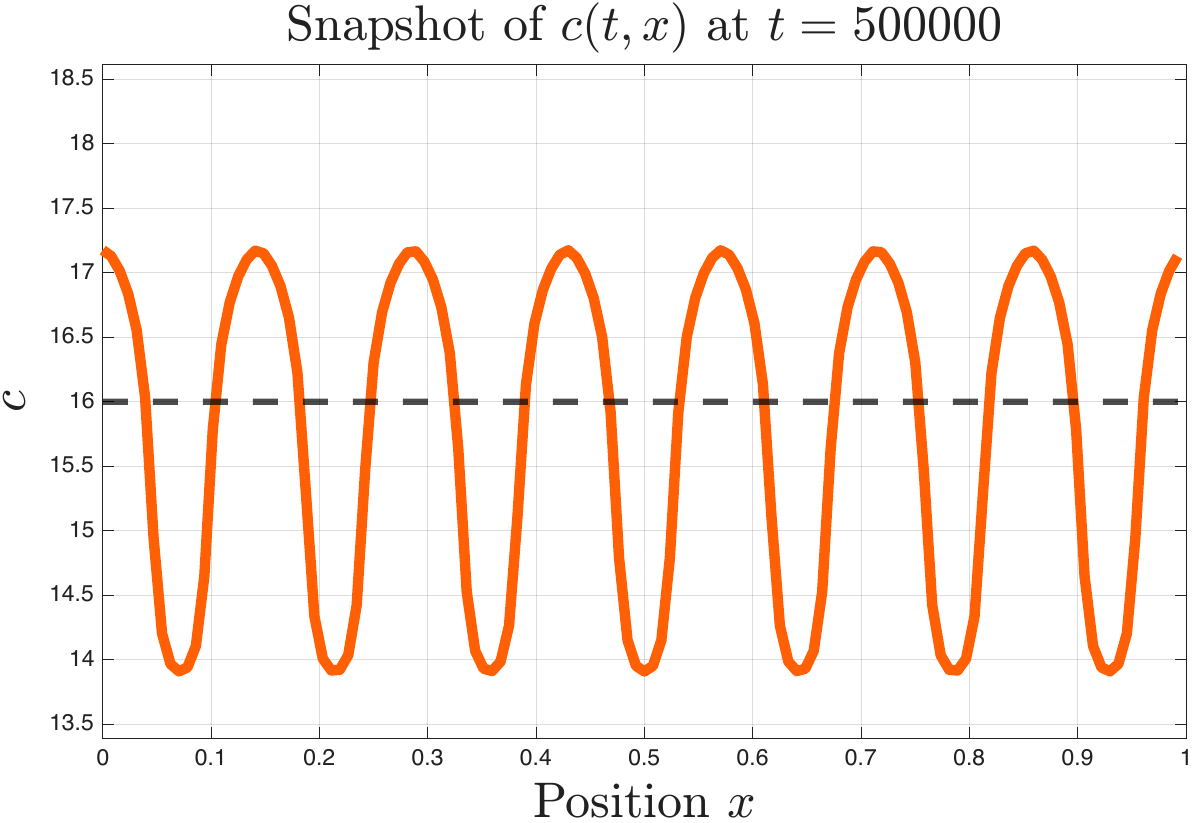}
        \caption{Snapshot of $c(t,x)$ with $\chi=\chi^{**}+\varepsilon^2$}
    \end{subfigure}

    \vspace{0.5em}

    \begin{subfigure}{0.48\linewidth}
        \centering
        \includegraphics[width=\linewidth]{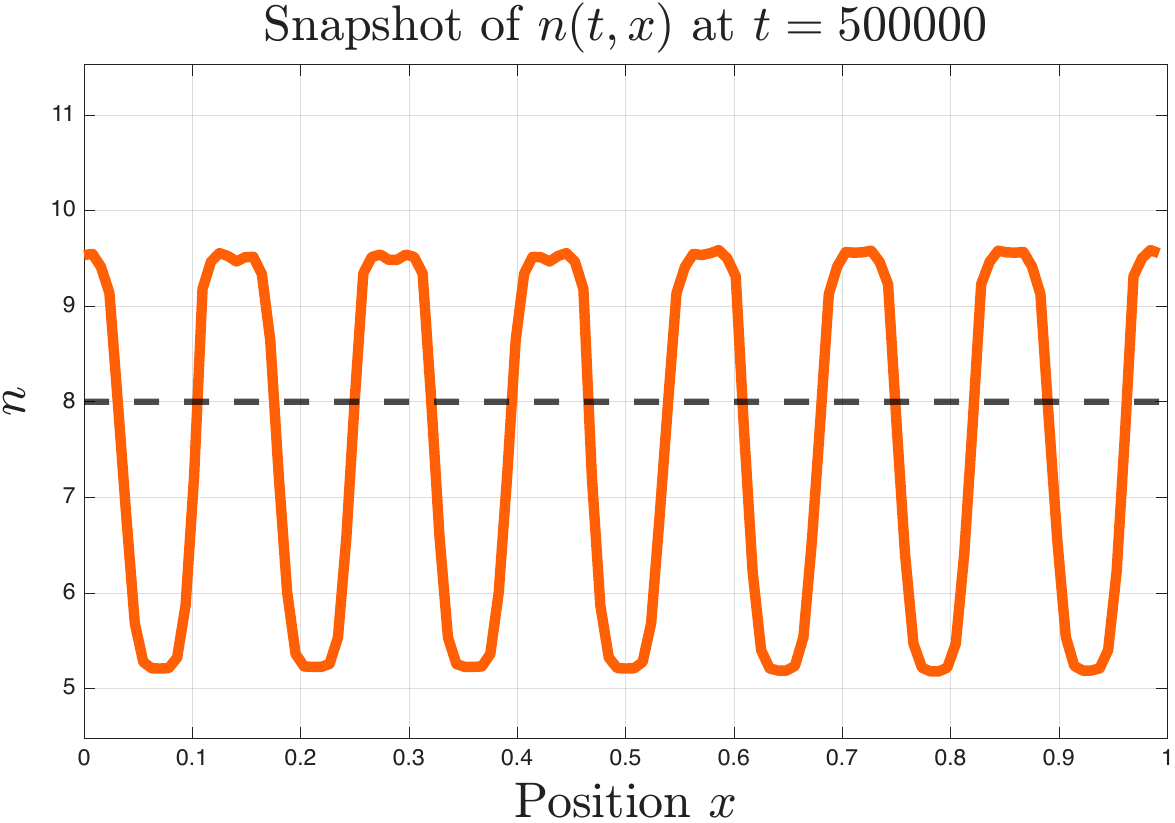}
        \caption{Snapshot of $n(t,x)$ with $\chi=1.8\chi^{**}$}
    \end{subfigure}
    \begin{subfigure}{0.48\linewidth}
        \centering
        \includegraphics[width=\linewidth]{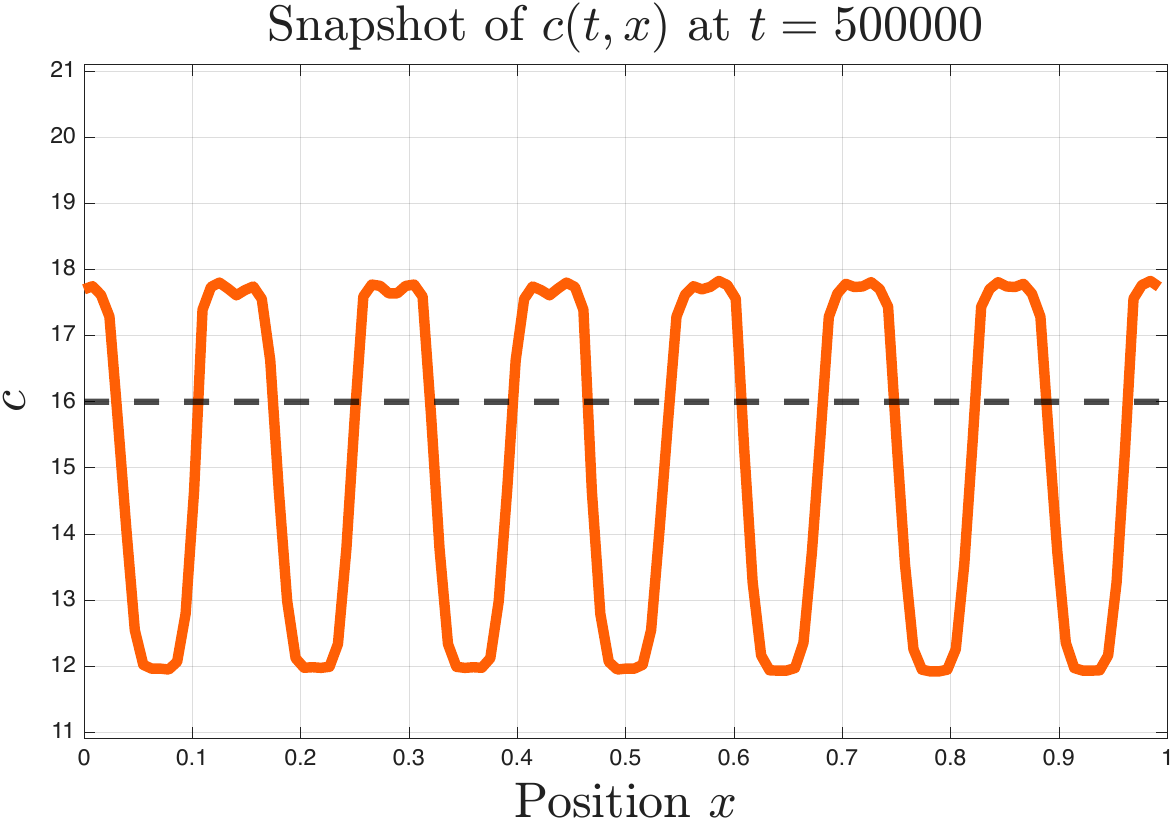}
        \caption{Snapshot of $c(t,x)$ with $\chi=1.8\chi^{**}$}
    \end{subfigure}

    \caption{Numerical simulation results are shown for the PDE parameters $\eta_n=0.05$, $\eta_c=0.08$, $\alpha=0.2$, $\beta=0.1$, $r=0.1$. In panels (a) and (b), we take $\chi=\chi^*+\varepsilon^2\approx0.3985$, in panels (c) and (d), we take $\chi=\chi^{**}+\varepsilon^2\approx0.4376$, and in panels (e) and (f), we take $\chi=1.8\chi^{**}\approx 0.7875$. The homogeneous equilibria are $\bar{n}=8$ and $\bar{c}=16$. In each figure, the horizontal axis represents the spatial domain $[0,1]$, the horizontal black dashed lines represent the homogeneous equilibria $\bar{n}$ and $\bar{c}$, and the orange solid lines represent snapshots of the solutions $n(500{,}000,x)$ and $c(500{,}000,x)$. }
    \label{fig:snapshots}
\end{figure}

Finally, we use numerical simulations to explore how the long-time values of the total population side $\int_0^1 n(t,x) dx$ and the total chemical concentration $\int_0^1 c(t,x) dx$ across the spatial domain change with increasing chemotactic sensitivity $\chi$. In Figure \ref{fig:w-parameters}, we show plot the values of these quantities achieved at time $t = 50,000$ as a function of $\chi$ for the case of the logistic reaction function (Figure \ref{fig:w-parameters}a) and for the case of our exponential Allee-logistic model for a sufficiently large value of the parameter $\theta$ governing the sensitivity of the Allee effect (Figure \ref{fig:w-parameters}b). For each value of $\chi$ considered, the plot depends on the average values of the integrals for over a set of 10 of simulations, with initial conditions drawn according to the same set of 10 random seeds. We see that both the total population size and chemical concentration achieve the values of uniform equilibrium state when $\chi \leq \chi^*$, and then these quantities move away from the uniform equilibrium value as $\chi$ increases past $\chi^*$. Above $\chi^*$, we see that the total masses of $n(t,x)$ and $c(t,x)$ decrease with $\chi$ for the logistic reaction function and increase with $\chi$ for the exponential Allee-logistic model for our chosen value of the parameter $\theta$. These observations on the change of total mass agree with the predictions from weakly nonlinear analysis based on the signs of the quantities $t_{0,n}$ and $t_{0,c}$ in~\eqref{eq:t_0-expressions} for these two choices of reaction functions, but the qualitative prediction actually holds for a range of $\chi$ much larger than the range over which the amplitude equation and weakly nonlinear expansions are expected to produce good agreement with numerical solutions. Furthermore, the continued decrease in population size and chemical concentration for larger $\chi$ for the logistic case and the corresponding gain in the exponential Allee-logistic model highlights the role that increased chemotactic bias plays in changing the ecological conditions across the spatial domain.

\begin{figure}[!ht]
    \centering

    \begin{subfigure}[b]{0.48\textwidth}
        \includegraphics[width=\linewidth]{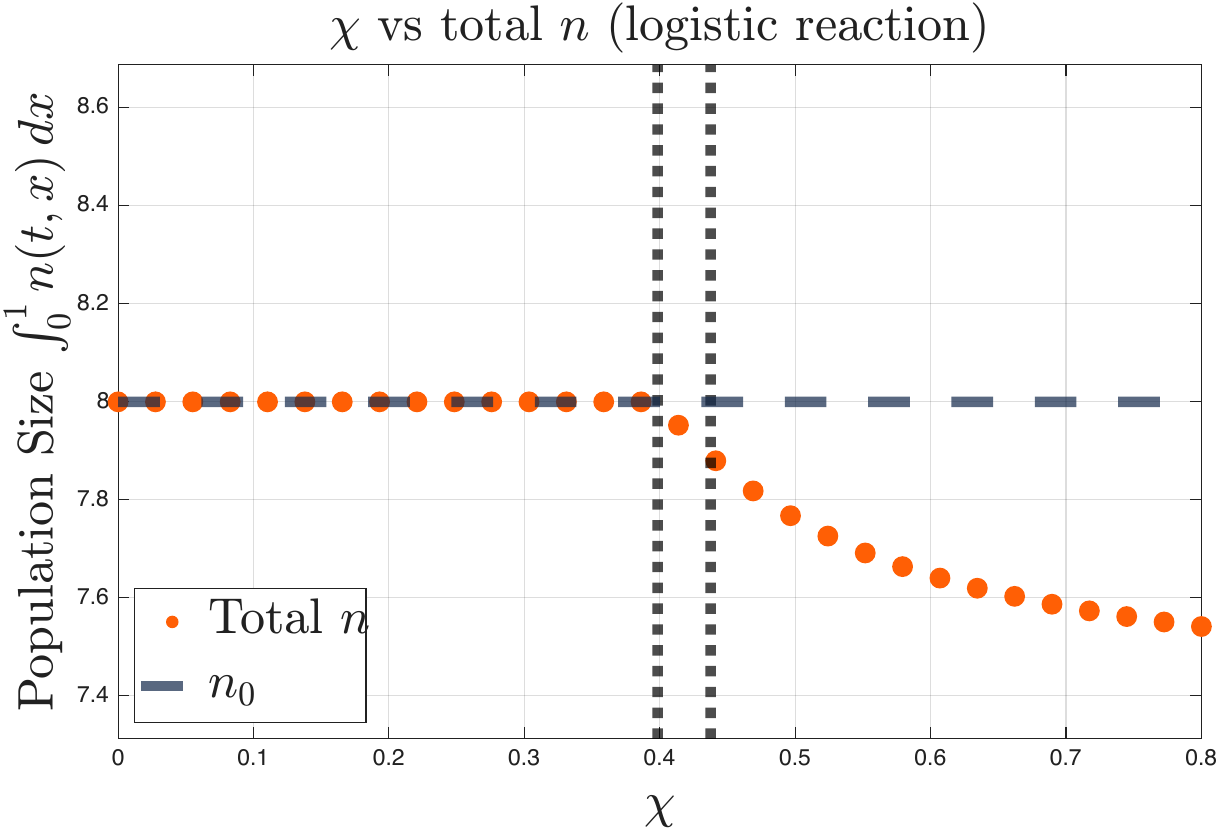}
    \end{subfigure}
    \hfill
    \begin{subfigure}[b]{0.48\textwidth}
        \includegraphics[width=\linewidth]{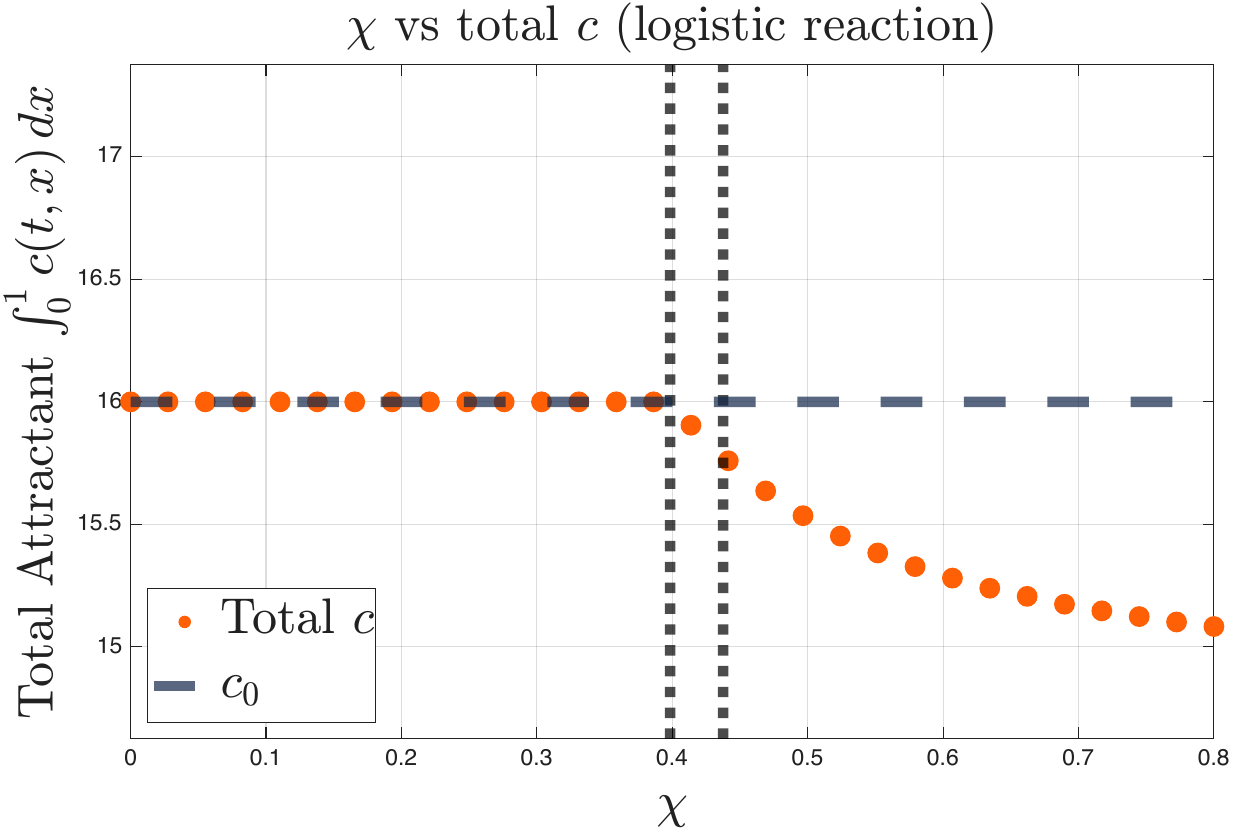}
    \end{subfigure}\vspace{-24pt}
    \begin{subfigure}[b]{\textwidth}
        \centering
        \vspace{2em}
        \caption{Total number of $n$ and $c$ with logistic reaction at $t=50{,}000$}
    \end{subfigure}

    \vspace{1em}

    \begin{subfigure}[b]{0.48\textwidth}
        \includegraphics[width=\linewidth]{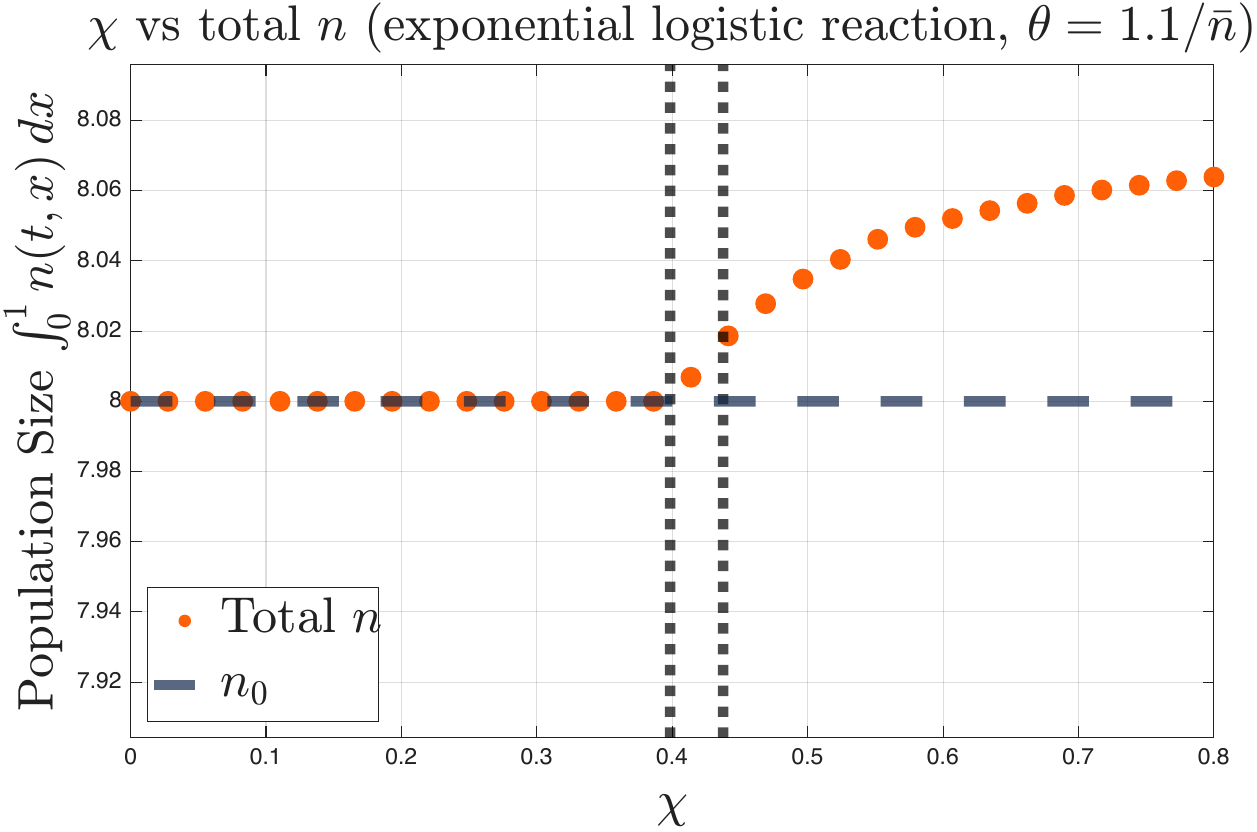}
    \end{subfigure}
    \hfill
    \begin{subfigure}[b]{0.48\textwidth}
        \includegraphics[width=\linewidth]{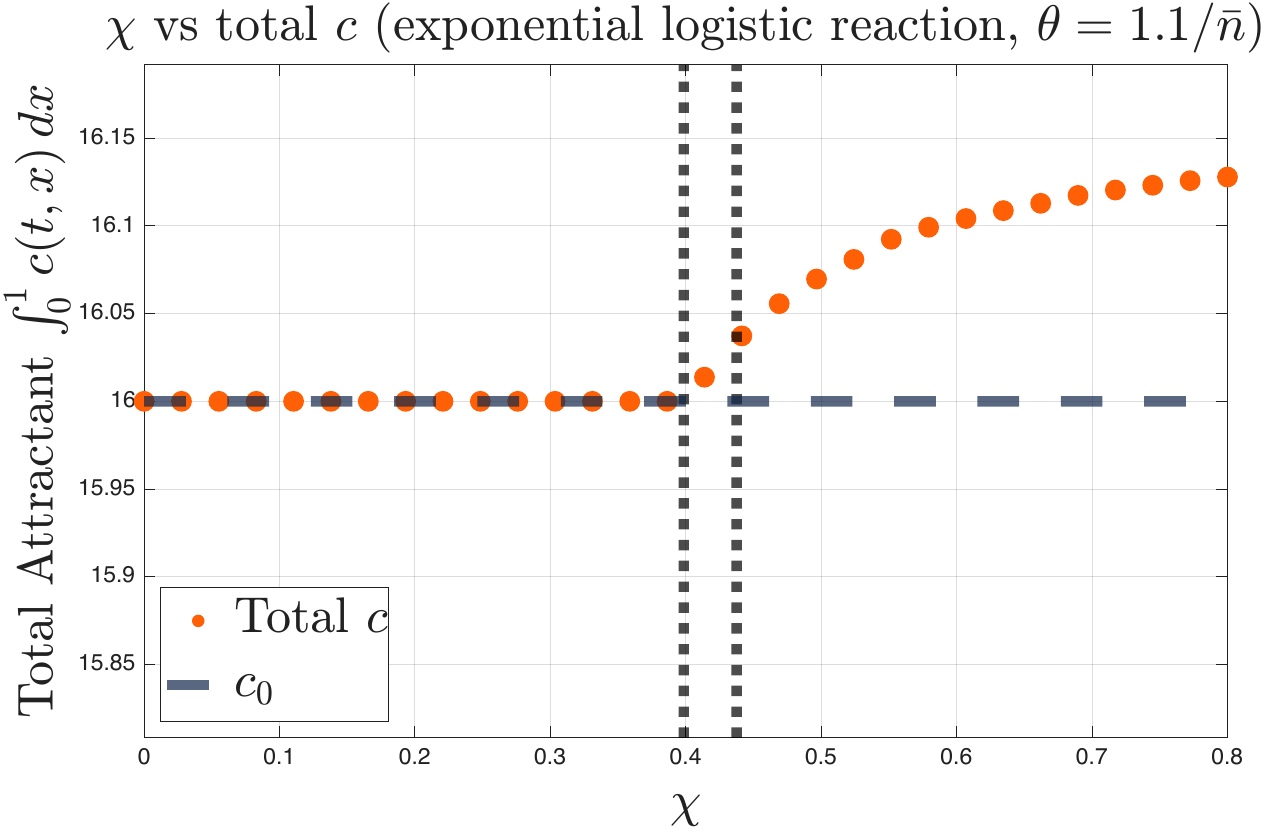}
    \end{subfigure}\vspace{-24pt}
    \begin{subfigure}[b]{\textwidth}
        \centering
        \vspace{2em}
        \caption{Total number of $n$ and $c$ with exponential logistic reaction at $t=50{,}000$}
    \end{subfigure}

    \vspace{1em}

    \caption{Spatial masses $\int_0^1n(t,x)\,dx$ and $\int_0^1c(t,x)\,dx$ at $t=50{,}000$, averaged over ten initial conditions. The upper panels use $f(n)=rn(1-n/\bar n)$, and the lower panels use $f_\theta(n)=r e^{\theta(\bar n-n)}n(1-n/\bar n)$ with $\theta=1.1/\bar n$. The parameters are $\eta_n=0.05$, $\eta_c=0.08$, $\alpha=0.2$, $\beta=0.1$, $r=0.1$, and $\bar n=8$. We use thirty values of $\chi$ in $[0,0.8]$. Horizontal dashed lines show $\bar n=8$ and $\bar c=16$; vertical dashed lines show $\chi^*$ and $\chi^{**}$.}
\label{fig:w-parameters}
\end{figure}

\section{Discussion}
\label{sec:discussion}

In this paper, we formulated and analyzed a version of the Keller--Segel model for chemotaxis in a discrete metapopulation, describing the directed motion of individuals in terms of biased random walks on the edge between a focal patch and a chosen neighbor in a metapopulation. We show that directed motion towards increasing concentrations of the chemical attractant can destabilize a spatially uniform equilibrium state, but that the conditions for instability and resulting pattern-forming properties rely heavily on the eigenvalues for the graph Laplacian for the connectivity network between patches. To obtain a greater degree of analytical tractability for these metapopulation models, we use graph limit theory to derive a continuum limit of our Keller--Segel model and obtain a system of integro-differential equations to describe spatial pattern formation with directed motion on graphons. Finally, we use a mix of linear stability analysis, weakly nonlinear analysis, and numerical simulations to highlight qualitative properties of the patterns that emerge from chemotactic motion on graphons. 

Prior work on directed motion on metapopulation models has considered piecewise motion rules describing a discrete analogue of advection \citep{shaw2024dynamical,young2018fast,zou2017mathematical,bezgachev2024identification}, while recent work on integro-differential models have also considered rules for spatial motion based on an underlying attractiveness field for spatial motion \citep{whiteley2022modelling}. However, the framework for directed motion introduced in this paper both formulates the rules for directed motion in terms of an underlying biased random walk on a metapopulation network and highlights the connection to continuum limits using the convergence of (random) graph models to graphons. The use of the Keller--Segel model on graphs and graphons in this paper serves as a proof of concept for exploring instabilities due to directed motion on metapopulations, as well as the use of weakly nonlinear stability analysis to study qualitative behavior for graphon models. As there have been similar recent developments in the formulation models of nonlinear diffusion equations introduced in the context of random walks on graphs \citep{fahimipour2022sharp,falco2022random,skardal2023pattern,koike2024bifurcation}, our approach for studying graphon models with state-dependent movement rules may be helpful for generalizing features of PDE models for continuous spatial domains to understanding related questions for spatial pattern formation and spreading phenomena in metapopulation models of biological and social systems.

In the following subsections, we present additional discussion on our results related to pattern formation on graphs and graphons (Section \ref{sec:patternformingdiscussion}), provide additional discussion on our convergence results and the derivation of our graphon model (Section \ref{sec:discussiongraphonconvergence}), and highlight directions for future research for modeling directed motion on metapopulation graphs and graphons (Section \ref{sec:futurework}).

\subsection{Pattern-Forming Behavior on Graphs and Graphons}
\label{sec:patternformingdiscussion}

Using weakly nonlinear stability analysis for our graphon model, we were able to characterize the conditions under which chemotactic motion produced a supercritical or subcritical pitchfork bifurcation. This kind of analysis is often used to characterize the form and criticality of spatial patterns in PDE and integro-differential models \citep{SegelLevin1976,topaz2006nonlocal,woolley2022boundary,woolley2025pattern,aydogmus2015patterns,aydogmus2018phase,zhao2026pattern}, and we are able to study how the form of the reaction term $f(n)$ helps to determine the type of pitchfork bifurcation that emerges from directed motion. Beyond characterizing the bifurcation type, we also use the asymptotic approximation we obtain from the resulting amplitude equation to validate numerical simulations close to the onset of pattern formation, and we can integrate the approximate patterned state from weakly nonlinear analysis to determine whether spatial pattern formation tends to increase or decrease the overall population size or level of chemoattractant relative to the levels achieved in the spatially uniform state. 

Notably, we see that the predicted total population densities and chemoattractant concentrations was always lower in the spatially patterned state than in the uniform equilibrium state in the case of logistic growth for the population, indicating that the aggregation mechanism of the Keller--Segel model tends to hinder population growth in the case of logistic kinetics. Logistic growth is typically taken as a baseline assumption in biological models of chemotaxis that incorporate population dynamics \citep{hillen2009user,painter2019mathematical,kong2023existence,kong2024existence}, and logistic reaction terms have also been used to model population dynamics and wealth accumulation in spatial models of socioeconomic systems \citep{juchem2015capital,neto2019returns,hasan2020transport,kong2025boundary}, suggesting that understanding the interaction between directed motion of a population and ecological dynamics featuring logistic growth may be important for understanding predictions for emergent patterns. Furthermore, numerical simulations stochastic process and PDE models of evolutionary games with payoff-driven or environment-driven motion have similarly shown a decrease in population size or payoff for both cheater and cooperator game-theoretic strategies \citep{yao2025spatial,yao2026pattern,zhang2026spatial}, and weakly nonlinear analysis has shown similar declines in population size and resource availability in a PDE model of public good dynamics with a chemotaxis-like term describing directed motion towards increasing concentration of public goods \citep{zhao2026pattern}.

This result from our weakly nonlinear analysis suggests that logistic growth and directed motion may be incompatible as a means for increasing population size through directed motion. In fact, we show in Proposition \ref{prop:logistic-total-mass-deficit} that, for any globally concave reaction function, the total population size across the ring graphon can never exceed the carrying capacity $\bar{n}$, and that the mass will always be lower than the carrying capacity $\bar{n}$ in a spatially patterned state. This means that directed motion and spatial pattern formation can only increase population size in our graph and graphon Keller--Segel model when the population dynamics feature local convexity of the reaction function $f(n)$ at the equilibrium population size $\bar{n}$, so we turned to an alternative reaction function $f_{\theta}(n) = r e^{\theta (\bar{n}-n)} n \left( 1- \frac{n}{\bar{n}} \right)$ to describe locally accelerating growth for $n = \bar{n}$. Incorporating this locally convex function form for the population dynamics allowed for the achievement of spatial patterns with an increased total population size and chemoattractant concentration when the parameter $\theta$ was sufficiently large, indicating that directed motion can potentially lead to greater population abundance depending on the functional form of birth and death rates for a species. In particular, this observation motivates a more careful exploration of the rules for population dynamics and the formulation of interaction terms with attractive or repulsive species when considering models of chemotaxis-like directed motion with birth and death dynamics.

In addition to our observations about the total population size and abundance of chemoattractant, we also used our weakly nonlinear analysis to characterize the form of pattern-forming bifurcations achieved for different choices of graphon $W(x,y)$ and reaction function $f(n)$ for the population dynamics. For the logistic kernel on a uniform band graphon, we found that directed motion produces a supercritical pitchfork bifurcation for low band radius $\rho$ and that a subcritical pitchfork bifurcation could be achieved when $\rho$ was sufficiently close to $\frac{1}{2}$. We saw a similar behavior for the case of a logistic reaction term and a Watts--Strogatz graphon with fixed band radius $\rho$, with a supercritical pitchfork bifurcation occurring for low rewiring probability $p$ and a transition to subcritical pitchfork bifurcation for higher rewiring probabilities. In each of these cases, we see that supercritical patterns tend to be supported when the movement kernel is more concentrated on local motion, while either the extension of the local neighborhood radius $\rho$ or the rewiring probability $p$ allowing longer movement ranges tends to allow the emergence of subcritical bifurcations and the corresponding bistability between uniform and patterned states below the linear instability threshold $\chi^*$. As we also see a transition between subcritical and supercritical patterns in the case of the exponential Allee-logistic model by changing the parameter $\theta$ characterizing the Allee effect, it appears that the resilience of spatial patterns can be achieved by a variety of movement and reaction parameters in the graphon Keller--Segel model. 

We also supplemented the analytical study of the graph and graphon models with the numerical simulations in Section \ref{sec:numericalsimulations}. Through the simulations of the Keller--Segel model on discrete graphs in Section \ref{sec:simulation-graph}, we saw that cycling behavior is possible for metapopulations with small numbers of patches. As the patch number $N$ increased, we saw that the long-time behavior of the graph Keller--Segel model began to resemble the behavior predicted for the graphon model through linear and weakly nonlinear analysis, resulting in sinusoidal patterns along the nodes of a ring graph for sufficiently strong $\chi$. We then used numerical simulations of the graphon model to confirm the accuracy of our predictions from weakly nonlinear stability analysis for $\chi$ close to the onset of the pattern-forming instability and to explore the behavior of the model far from the onset of instability. In particular, we see that the qualitative predictions on change in total population size and the total mass of chemoattractant achieved through spatial patterns continue to hold far from onset, and that a more substantial deviation of these quantities from their values in the uniform state is achieved as the chemotactic sensitivity $\chi$ further increases. 

Furthermore, by plotting the patterns achieved for high chemotactic sensitivity, we are able to observe that the patterns achieved far from threshold have the form of mesa patterns that are typically associated with the behavior of local reaction--diffusion models \citep{kolokolnikov2007self,hillen2007classification,mckay2012stability,zhao2026pattern}. This behavior stands in contrast to the spike-like patterns that often arise in local chemotaxis-like models \citep{potapov2005metastability,kolokolnikov2014basic,buttenschoen2020cops,kong2022stability,kong2023existence,kong2024existence,zhao2026pattern}, suggesting that the nonlocal nature of our model of directed motion may smooth out the localizing nature of chemotactic instabilities and allow for different qualitative behaviors in the limit of strong directed motion. An interesting question for future work would be to further explore the properties of patterns far from onset in a range of graphon models, exploring how modes of motion including diffusion, cross-diffusion, and directed motion can interact with different reaction dynamics and network connectivity to produce different possible long-time equilibrium states. 

Finally, another notable feature gleaned from the analysis of our graphon Keller--Segel model is the form of the eigenvalues achieved in our linearized system. In particular, for both the uniform-band and small-world graphons, the magnitudes of the eigenvalues of the graphon Laplacian remained bounded even in the limit as the wavenumber $k$ tended to infinity, ruling out the possibility of a short-wave instability characterized by the dominance of emerging patterns oscillating on arbitrarily fine spatial scales. Although short-wave instability is not a property of the local PDE version Keller--Segel model with logistic reaction kinetics in any spatial dimension \citep{hillen2009user,wrzosek2004global}, there have been many PDE models of directed motion in the theoretical ecology and evolutionary games literature featuring short-wave instabilities due to directed motion of individuals towards members of their own population \citep{helbing2009pattern,deforest2013spatial,xu2017strong,deforest2018game,funk2019directed,yao2026pattern}. Such instabilities do not appear to be possible in our framework for modeling directed motion on metapopulation graphs or graphons for the example connectivity kernels studied in this paper, suggesting that our approach for modeling directed motion between patches in metapopulations may present a natural setting for exploring more general rules for directed motion in multispecies population models of ecological and social systems.

\subsection{Discussion on Graphon Convergence Results}
\label{sec:discussiongraphonconvergence}

\paragraph{Graph limits (Section~\ref{sec:graphon-limits}).}
For every fixed $T>0$ and $1\le p<\infty$, Theorem~\ref{thm:det-Lp-conv} establishes the convergence of the cell-average weighted-graph dynamics in $C([0,T];L^p(I)^2)$, subject to uniform kernel and solution bounds. For independent-edge random graphs sampled from the same cell averages, Theorem~\ref{thm:RG-limit} establishes convergence in probability in the same space. The associated error estimate separates the deterministic discretization error from the $O_{\pr}(N^{-1/2})$ fluctuation between the sampled and weighted solutions.

Under the assumptions of Theorem~\ref{thm:det-cut}, cut-distance convergence of the kernels combined with $L^1$ convergence of the initial data under identical measure-preserving relabelings implies convergence of the relabeled solutions in $C([0,T];L^p(I)^2)$ for every $1\le p<\infty$. The proof first gives an $L^1$ estimate for the solution error and then interpolation with the uniform $L^\infty$ solution bounds gives convergence in $L^p$ for every $1\le p<\infty$.

\paragraph{Scope of the convergence results.}
These results rigorously justify the comparison of population and chemoattractant profiles between the discrete graph and continuous graphon models on fixed finite time intervals, which allows us to observe good agreement with the behavior of the initial emergence of spatial patterns starting from initial conditions close to the uniform equilibrium in both the graph and graphon model. This convergence result provides further confirmation of the intuition gleaned from analogous pattern-formation threshold formulas obtained for the classes of the graph and graphon Keller--Segel model, and the extension of our convergence results to the case of large families of dense random graphs extends our utility of the graphon model to glean some understanding of the statistical behavior of the onset of pattern formation for sufficiently large random metapopulation structures. However, these finite-time convergence results do not allow us to conclude that the long-time behavior of our graphon model should necessarily agree with the corresponding asymptotic behavior of our graph Keller--Segel model for a sufficiently large number of patches. Establishing such asymptotic correspondence requires additional spectral and nonlinear analysis, as discussed in Section~\ref{sec:futurework}.

More broadly, this graphon-limit approach readily extends to other directed-motion models derived from biased random walks on networks. The resulting nonlocal continuum equations provide a unified setting to analyze how network topology drives macroscopic pattern formation while naturally mediating high-frequency spatial modes. Existing models of metapopulation dynamics in ecological and social systems have frequently been formulated using piecewise modes of directed motion on discrete patches \citep{young2018fast,shaw2024dynamical,zou2017mathematical}. Exploring analytical characterizations of pattern formation using the smooth movement rules derived in our framework may therefore yield deeper insights into the dynamics of these existing models.

\subsection{Future Work and Open Problems}
\label{sec:futurework}

Finally, we outline several directions for future research motivated by this work. These include generalizing the families of metapopulation networks that can be described in terms of our graphon dynamical system, extending the analysis of dynamical behavior in the graph and graphon models to explore long-time agreement between these models for sufficiently large $N$, and connecting these deterministic systems to underlying individual-based stochastic processes that describe finite populations with edge-based directed motion.  

\paragraph{Beyond dense graph sequences.}
Extending the present framework to sparse random graphs is an interesting direction for modeling realistic metapopulations. In the sparse regime, the scaling of the average degree fundamentally alters the relevant Laplacian spectrum and topology. This structural shift requires a modified analysis of the linear instability thresholds and the critical modes selected by the Keller--Segel dynamics \citep{kaliuzhnyi2017semilinear,lovasz2012large,janson2010graphons}. In particular, the graphons arising from the limits of sparse graphs typically feature heterogeneous degree distributions, and therefore our approach using Fourier series to study distance-dependent graphons will not be able to hold for these families of graphons. Although prior work on graphon games \citep{carmona2022stochastic} and Kuramoto models on graphons \citep{chiba2018bifurcations,chiba2019meanfield2,Chiba2019MeanField} has studied eigenvalues and eigenfunctions of the adjacency function for power-law graphons, these calculations cannot be extended to study eigenvalues of the graph Laplacian in the power-law case. Therefore we may need to develop different approaches for analyzing the onset of spatial patterns in the Keller--Segel model on metapopulations with heterogeneous patch-degree distributions.

\paragraph{Center Manifold Reduction and Persistence of bifurcations on large random graphs.}  In our study of solutions to the graphon model, we employed formal asymptotic methods to describe heuristic approximations for emerging spatial patterns and used these to predict qualitative behaviors for example kernels and reaction functions. This suggests that one natural direction for future research is to pursue a more rigorous of the asymptotic behavior of our graphon model, as well as to link these behaviors with a center-manifold reduction for the graph model as well. Center manifold reduction typically produces an analogous expansion to the one achieved through the formal approach from weakly nonlinear stability analysis \citep{clifton2026kernel}, with the added advantage of a guarantee that the solution of the spatial model is described by the amplitude equation on the center manifold. By extending our exploration of the instability to this approach, we can further justify our theoretical predictions for the mass of solutions close to the onset of instability.

In addition, exploring center manifold reduction for the graphon model may also serve as a first step in understanding the long-time behavior for the Keller--Segel model on sufficiently large discrete metapopulations. Because our convergence results apply only on time intervals of fixed length, we are not guaranteed that the long-time behavior of our graph and graphon models will agree even for arbitrarily large families of graphs. Such comparisons between the center manifold dynamics and steady state solutions for graph and graphon models have been provided in recent work on a network version of the Swift-Hohenberg equation \citep{bramburger2023pattern,bramburger2025persistence}, showing the agreement in qualitative behavior between the graphon model and corresponding large graphs with sufficiently many nodes and demonstrating how the convergence of the behaviors depends on the eigenvalues of the graph and graphon Laplacians. As the Swift-Hohenberg graphon model represents a scalar problem, there may be some challenges in extending these kinds of results featuring multiple equations per node and a resulting system of graphon equations. Correspondingly, it may be helpful to see how the Kronecker decomposition of the linearization matrix for our graph Keller--Segel model may be helping in performing a center manifold reduction or other nonlinear analysis for our graph Keller--Segel model, and to explore additional exploration on the behavior of our discrete metapopulation model.

\paragraph{Connection to Pattern Formation with Directed Motion in Individual-Based Models.}

In this paper, we started with a baseline model of an infinite population living on a spatially discrete metapopulation, then explored the behavior of incorporating a continuum of patches into our metapopulation model. However, the motivation for our model of directed motion through a biased random walk on the edges of the metapopulation graph naturally suggests a corresponding individual-based particle system for a finite population of individuals living on a graph metapopulation, with birth, death, and directed motion between patches characterized by rates in a continuous-time stochastic process that depend on the current spatial arrangement of the population. A similar stochastic model on graphs was recently considered by \cite{zhang2026graphon} in the case of simple random walks between patches, who proved the convergence in probability of a family of birth-death-migration processes on a growing family of graphs to a nonlocal reaction-diffusion equation with movement on a corresponding graphon metapopulation. The comparison between stochastic spatial models and corresponding infinite-population limits has long been a point of interest in the study of spatial modeling of populations \citep{DURRETT1994363,shnerb2000importance,cantrell2004deriving,pacala2020importance}, and similar particle models of reaction-diffusion and reaction-transport processes have also been considered to explore the role of finite-population effects on the emergence of Turing patterns on spatial lattices \citep{butler2009robust,biancalani2010stochastic,butler2011fluctuation,asslani2012stochastic,woolley2011power,woolley2011influence,maini2012turing,cao2014stochastic,karig2018stochastic,kim2020stochastic}. The use of stability analysis in Fokker-Planck descriptions and direct stochastic simulation, it has often been observed that Turing patterns can arise in stochastic spatial models for ratios of diffusivities that would not produce pattern formation in the corresponding PDE limit \citep{butler2011fluctuation,kim2020stochastic,yao2026pattern}, and therefore establishing corresponding pattern-formation thresholds in a particle-based Keller--Segel model on graphs would be a first step in exploring the role of the discreteness of individuals and space on pattern formation can play in spatial dynamics on graphs.

In addition, the current paper focused only on the case of local interactions within patches, while the coupling between patches was limited to the terms involving spatial movement. Many nonlocal PDE models of population dynamics have been proposed to study distance-dependent interactions in continuous space, exploring how nonlocal facilitation and competition can help to promote spatial patterns in ways that cannot be achieved by local interactions alone \citep{martinez2013vegetation,clifton2026kernel}. This suggests that one possible direction for future research is to formulate metapopulation models featuring both movement between patches and interactions between individuals with neighboring graphs. In particular, it may make sense to build on recent work in the evolutionary games literature featuring game-theoeretic interactions across neighboring patches in metapopulations \citep{DURRETT1994363,aydogmus2015patterns,hwang2013deterministic,tan2019graphical,tan2021distributed,giles2026scaling}, which may produce graphon models with pattern-formation due to anti-coordination effects \citep{hwang2013deterministic} or due to a mixture of game-theoretic payoffs and density-dependent regulation \citep{DURRETT1994363}. For such graph and graphon models with interactions across patches, it may be possible that a Kronecker decomposition of the linearization fo the corresponding patch model may depend both on the graph Laplacian $L$ and directly on the graph's adjacency matrix $A$ as well.

\paragraph{Generalization of Chemotactic Sensitivity Function.}

The choice to formulate the biased random walk along edges in terms of the exponential weight $e^{\chi c_i}$ was made for analytical convenience, as the exponential terms simplified calculations in performing the linear and weakly nonlinear stability analysis of our model. However, exploring the choice of functional form for chemotactic sensitivity has been a point of emphasis in the literature on PDE versions of the Keller--Segel model \citep{hillen2009user,kalinin2009logarithmic,rodriguez2010local,desvillettes2019logarithmic,shemtaga2025well}. We can consider a similar approach for generalizing the form of chemotactic sensitivity used in the biased random walk, formulating a new version of our graph Keller--Segel model with the system of ODEs
\begin{subequations}
\begin{align}
\frac{dn_i}{dt} &= \frac{\eta_n}{N} \sum_{j=1}^N A_{ij} \left[ \frac{g\left( \chi c_i \right) n_j - g\left( \chi c_j\right) n_i}{g\left( \chi c_i \right) + g\left( \chi c_j \right)} \right] + f(n_i) \\
\frac{dc_i}{dt} &= \frac{\eta_c}{N} \sum_{j=1}^N A_{ij} \left( \frac{c_j - c_i}{2}\right) + \alpha n_i - \beta c_i
\end{align}
\end{subequations}
for a non-decreasing smooth function $g(\cdot)$ satisfying $g(0) > 0$. This function $g(\cdot)$ can describe the rules for the bias individuals have in moving towards patches with higher concentrations of chemoattractant.

The corresponding generalization of the graphon Keller--Segel model is then given by the following system of integro-differential equations
\begin{subequations} \label{eq:our-graphon-model-generalized}
\begin{align}
\partial_t n(x,t)
&= \eta_n \int_0^1 W(x,y)
\left[
\frac{g\left(\chi c(x,t)\right) n(y,t) - g\left(\chi c(y,t) \right) n(x,t)}
     {g\left(\chi c(x,t)\right) + g\left(\chi c(y,t)\right)}
\right] \, dy
+ f(n(x,t)),
\label{eq:KS_graphon_n-generalized}
\\
\partial_t c(x,t)
&= \eta_c \int_0^1 W(x,y)\left[\frac{c(y,t)-c(x,t)}{2}\right]\,dy
+ \alpha n(x,t) - \beta c(x,t),
\label{eq:KS_graphon_c-generalized}
\end{align}
\end{subequations}
and one can apply similar techniques like linear and weakly nonlinear stability analysis for this model to see how the chemotactic sensitivity function $g(\cdot)$ impacts the emergence and qualitative properties of spatial patterns. 

Notably, the equivalent generalization of the one-dimensional PDE Keller--Segel model with chemotactic sensitivity function $g(\cdot)$ is given by the system 
\begin{subequations}
\begin{align}
\dsdel{n}{t} &= \eta_n \nabla \cdot \left( \nabla n - n \nabla \log\left(g(\chi c) \right) \right) + f(n)\\
\dsdel{c}{t} &= \eta_c \Delta c + \alpha n - \beta c,
\end{align}
\end{subequations}
so the choices of sensitivity function of an exponential mapping $g(\chi c) = e^{\chi c}$ and a linear mapping $g(\chi c) = 1 + \chi c$ respectively produce the traditional Keller--Segel model of Equation~\eqref{eq:KS-PDE} and the logarithmic Keller--Segel model~\citep{sleeman2005existence,hillen2009user,shemtaga2025well}. In particular, exploring our graph and graphon Keller--Segel model with sensitivity function $g(\chi c) = 1 + \chi c$ would serve as interesting next step in exploring the role of the sensitivity function on pattern formation due to directed motion in metapopulations.

\begin{quote}
\begin{center}
\textbf{Acknowledgements}
\end{center}
Yefei Zhang and Daniel B. Cooney are grateful for support from the Mrs. E.J. Hoover Program for Advanced Students in Mathematics, and would like to thank Amy Bradu, Stanley Combes, and Rui (David) Sang for many helpful conversations on spatial pattern formation on graph metapopulations through the 2025 Illinois Mathematics Laboratory (IML) Summer Program. 
\end{quote}

\begin{quote}
\begin{center}
    \textbf{Statement on Code and Data Availability}
\end{center}
    The numerical simulations in this paper were implemented in Python and MATLAB. The code used to generate the figures and results is publicly available in the GitHub repository: \href{https://github.com/yxuanzhao/Keller-Segel-Model-on-Graphs-and-Graphons}{yxuanzhao/Keller-Segel-Model-on-Graphs-and-Graphons}. There is no additional data required to reproduce the results in the paper. 
\end{quote}

%\medskip\noindent\textbf{Declaration on the use of AI.}
\begin{quote}
\begin{center}
\textbf{Declaration on the Use of AI}
\end{center}
In the spirit of the transparency recommendations in the Leiden Declaration on Artificial Intelligence and Mathematics~\citep{LeidenDeclaration2026}, large language models were used as assistive tools for grammar and language editing and for improving the formal presentation of mathematical arguments, including assistance in formulating theorem statements. The code used for the numerical simulations of the Keller–Segel dynamics was also drafted with the assistance of large language models. The mathematical ideas, arguments, and proofs were developed by the authors. All mathematical statements and proofs were subsequently checked and reviewed by the authors, who take full responsibility for the content of the manuscript.
\end{quote}

\bibliography{References}

\newpage
%%%%%%%%%%%%%%%%%%%%%%%%%%%%%%%%%%%%%%%%%%%%%%%%%%%%%%%%
\appendix

\section*{Summary of Appendix Sections}
In the following appendices, we have included additional auxiliary results on convergence and well-posedness of our graphon model, and we have also provided additional derivations for our linear and weakly nonlinear stability analysis of our metapopulation Keller--Segel dynamics. Appendix~\ref{app:graph-jacobian} provides the detailed Jacobian calculations for the graph Keller--Segel model at the homogeneous equilibrium.
Appendix~\ref{app:graph-limits-proofs} proves graphon well-posedness and uniform finite-time bounds, from which the corresponding discrete results follow by step reconstruction. It also contains the proof of the cut-norm consistency estimate and the auxiliary results on pointwise discretization and cut-norm concentration for sampled graphs.
Appendix~\ref{sec:symmetry-invariant} proves that the graphon dynamics preserve reflection symmetry when the kernel and initial data have this symmetry.
Finally, Appendix~\ref{sec:derivation_stability_analysis} derives the perturbation equations through cubic order used in the weakly nonlinear analysis of Section~\ref{sec:graphon-weakly-general}.
%%%%%%%%%%%%%%%%%%%%%%%%%%%%%%%%%%%%%%%%%%%%%%%%%%%%%%%
\section{Additional Steps in Deriving Jacobian for Graph Metapopulation Model}\label{app:graph-jacobian}

In this section, we will provide additional details in the derivation of the linearization for the Keller--Segel model on metapopulation graphs. As mentioned in Section \ref{sec:graph-LSA}, we may represent the state variables for our system of ODEs from Equation \eqref{eq:KS-graph} in terms of the vector
\begin{equation}
x = \left( n_1, \dots, n_N, \, c_1, \dots, c_N \right)^{\top} \in \mathbb{R}^{2N}.
\end{equation}
Using these variables, we can rewrite our system of ODEs in the form
\begin{subequations}
\begin{align}
\dot n_i &= F_i(x) \\
\dot c_i &= G_i(x),
\end{align}
\end{subequations}
where the righthand sides $F_i(x)$ and $G_i(x)$ are a shorthand notation for the ODEs from Equation \eqref{eq:KS-graph}, and are therefore respectively given by
\begin{subequations}
\begin{align}
F_i(x) &= \frac{\eta_n}{N} \sum_{j=1}^N A_{ij} \left[  \frac{e^{\chi c_i} n_j - e^{\chi c_j} n_i}{e^{\chi c_i} + e^{\chi c_j}}\right] + f(n_i) \\
G_i(x) &= \frac{\eta_c}{N} \sum_{j=1}^N A_{ij} \left[\frac{c_j - c_i}{2} \right] + \alpha n_i - \beta c_i
\end{align}
\end{subequations}

With this notation, we can describe the linearization of our nonlinear system around the uniform equilibrium based on the values taken by eight representative partial derivatives. Namely, the Jacobian matrix can be characterized by the partial derivatives of the righthand sides $F_i(x)$ and $G_i(x)$ for patch $i$ with respect to both the population $n_i$ and chemical concentration $c_i$ at patch $i$ as well as with respect to the equivalent quantities $n_j$ and $c_j$ corresponding to a different patch $j$. 

Using the convention that $A_{ii} = 0$ (corresponding to the case of graph with no self loops), we see that the partial derivatives for $F_i(x)$ are given by
\begin{subequations}
\begin{align}
\frac{\partial F_i(x)}{\partial n_i}\bigg|_{(\bar{n},\bar{c})} &= - \frac{ \eta_n}{2N} \sum_{\substack{j=1 \\ j \ne i}}^{N} A_{ij} + f'(\bar{n}) \\
\frac{\partial F_i(x)}{\partial c_i}\bigg|_{(\bar{n},\bar{c})}  &= \frac{\chi \eta_n \bar{n}}{2N} \sum_{j=1}^{N} A_{ij} \\
\frac{\partial F_i(x)}{\partial n_j}\bigg|_{(\bar{n},\bar{c})} &=  \left(\frac{ \eta_n}{2N}\right) A_{ij} \\
\frac{\partial F_i(x)}{\partial c_j}\bigg|_{(\bar{n},\bar{c})}  &= -\left(\frac{\chi \eta_n \bar{n}}{2N} \right)A_{ij}.
\end{align}
\end{subequations}
In the righthand side $G_i(x)$, each of our quantities of interest is either absent or appears as a linear factor in the expression for $G_i(x)$. The corresponding partial derivatives for these terms are given by
\begin{subequations}
\begin{align}
\frac{\partial G_i(x)}{\partial n_i}\bigg|_{(\bar{n},\bar{c})} &= \alpha \\
\frac{\partial G_i(x)}{\partial c_i}\bigg|_{(\bar{n},\bar{c})}  &= - \frac{\eta_c}{2N} \sum_{j=1}^{N} A_{ij} - \beta \\
\frac{\partial G_i(x)}{\partial n_j}\bigg|_{(\bar{n},\bar{c})} &=  0 \\
\frac{\partial G_i(x)}{\partial c_j}\bigg|_{(\bar{n},\bar{c})}  &= \left(\frac{\eta_c}{2N} \right) A_{ij}.
\end{align}
\end{subequations}

We can then use each of the partial derivatives computed above to see that our Jacobian matrix for the graph Keller--Segel system can be described in terms of the reaction dynamics at each patch, considering the rules for motion between any given patch $i$ and $j$, and using the corresponding graph Laplacian describing the connectivity between patches. In particular, we can see these partial derivatives can be used to show that the $2N\times 2N$ linearization matrix $J$ around the spatially uniform equilibrium can be written in terms of the following decomposition
\begin{equation}
    J = P \otimes I_N + C \otimes L,
\end{equation}
where $\otimes$ represents the Kronecker product, $I_N$ is the $N \times N$ identity matrix, $L$ is the ($N \times N$) graph Laplacian matrix for our metapopulation network, and the $2\times2$ matrices $P$ and $C$ respectively describe the rules for local reactions and the rules for motion between patches due to diffusive effects and chemotactic motion. The matrices $P$ and $C$ can be written as
\begin{subequations}
\begin{align}
P &= \begin{pmatrix}
f'(\bar{n}) & 0 \\
\alpha & -\beta
\end{pmatrix} \\
C &= \frac{1}{2} \begin{pmatrix}
\eta_n & -\eta_n \chi \bar{n} \\
0 & \eta_c 
\end{pmatrix},
\end{align}
\end{subequations}
where we see that $P$ is the Jacobian matrix for the reaction dynamics in a single patch and $C$ describes the linearized effects of spatial motion (analogous to the linearization matrix describing the movement terms in a PDE Keller--Segel model).

%%%%%%%%%%%%%%%%%%%%%%%%%%%%%%%%%%%%%%%%%%%%%%%%%%%%%%%
\section{Auxiliary results and proofs for graph limits}\label{app:graph-limits-proofs}
\subsection{Proof of graphon well-posedness (Proposition~\ref{prop:graphon_bounds})}\label{app:graphon-bounds}

We use the following integral comparison principle.

\begin{lemma}[Integral comparison]\label{lem:app-integral-comparison}
Let $q\in C^1([0,T];L^\infty(I))$ and let $a\in L^1(0,T)$ be nonnegative. 
If $\partial_t q(t,x) \ge -a(t)$ for a.e.\ $(t,x)$ with $q(t,x) < 0$, then $\|q^-(t)\|_{L^\infty} \le \|q^-(0)\|_{L^\infty} + \int_0^t a(s)\,ds$. Analogously, if $\partial_t q(t,x) \le a(t)$ on the set $\{q > 0\}$, the same bound holds for $\|q^+(t)\|_{L^\infty}$.
\end{lemma}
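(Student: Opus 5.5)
The plan is to prove the statement for $q^-$ and then obtain the $q^+$ statement by applying it to $-q$. On the set $\{-q<0\}=\{q>0\}$ the hypothesis $\partial_t q\le a(t)$ becomes $\partial_t(-q)\ge -a(t)$, and $(-q)^-=q^+$. Set $\phi(t):=\|q^-(0)\|_{L^\infty}+\int_0^t a(s)\,ds$. This is absolutely continuous and nondecreasing, with $\phi'=a$ a.e. The claim is then that $q(t,x)\ge -\phi(t)$ for a.e.\ $x$ and every $t\in[0,T]$.

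The approach is to work pointwise in $x$. Since $q\in C^1([0,T];L^\infty(I))$, we can choose a representative such that, for a.e.\ $x$,
\[
q(t,x)=q(0,x)+\int_0^t \partial_t q(s,x)\,ds
\]
for all $t$. We can also arrange that $\partial_tq(\cdot,x)$ is measurable and bounded on $[0,T]$. For this we use the Bochner integral identity $q(t)=q(0)+\int_0^t\partial_tq(s)\,ds$ in $L^\infty$, take a jointly measurable version of $\partial_tq$, and apply Fubini on $[0,T]\times I$. By Fubini again, the hypothesis ``$\partial_tq\ge -a$ a.e.\ on $\{q<0\}$'' holds, for a.e.\ $x$, for a.e.\ $t$ in $\{t:q(t,x)<0\}$.

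Now fix such an $x$ and set $g(t):=q(t,x)+\phi(t)$. This function is absolutely continuous, and $g(0)\ge0$ because $q(0,x)\ge-\|q^-(0)\|_{L^\infty}$ for a.e.\ $x$. Suppose $g(t_1)<0$ for some $t_1$. Let $t_0:=\sup\{t<t_1: g(t)\ge0\}$, so that $g(t_0)=0$ and $g<0$ on $(t_0,t_1]$. On this interval $q(t,x)<-\phi(t)\le0$, so $g'=\partial_tq+a\ge0$ for a.e.\ $t$ there. Integrating gives $g(t_1)\ge g(t_0)=0$, which is a contradiction. Hence $q^-(t,x)\le\phi(t)$ for a.e.\ $x$, and taking the essential supremum gives the claim.

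The main obstacle is the measure-theoretic step of passing from the Banach-valued $C^1$ statement and an a.e.\ $(t,x)$ hypothesis to a pointwise absolutely continuous representative in $t$. The ODE comparison itself is elementary. If that step becomes awkward, I would fall back on a penalization argument that avoids a pointwise derivative. The idea is to consider $\Psi(t):=\int_I \big((q+\phi)^-\big)^2\,dx$, or an $L^r$ version with $r\to\infty$. By the chain rule in $L^\infty$, $\Psi$ is absolutely continuous with
\[
\Psi'(t)=-2\int_I (q+\phi)^-\,(\partial_tq+a)\,dx .
\]
The integrand is supported in $\{q<-\phi\}\subset\{q<0\}$, where $\partial_tq+a\ge0$, so $\Psi'\le0$. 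Since $\Psi(0)=0$, this forces $\Psi\equiv0$, which gives the same conclusion.
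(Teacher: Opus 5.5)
Your proof is correct and is essentially the paper's argument: both pass to a representative that is absolutely continuous in $t$ for a.e.\ $x$, work with the scalar function $t\mapsto q(t,x)$, and take the essential supremum at the end. Your Bochner/Fubini construction of that representative fills in a step the paper only asserts. The one difference is the scalar step. The paper uses the chain-rule identity $q^\pm(t,x)=q^\pm(0,x)\pm\int_0^t\ind_{\{\pm q(s,x)>0\}}\partial_sq(s,x)\,ds$ and bounds the integrand directly. Your first-crossing argument for $g=q+\phi$ instead needs only the fundamental theorem of calculus for absolutely continuous functions.
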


\begin{proof}
Absolute continuity $q(t,x)=q(0,x)+\int_0^t\partial_sq(s,x)\,ds$ and the scalar chain rule yield
\begin{align*}
q^\pm(t,x)=q^\pm(0,x)\pm\int_0^t\ind_{\{\pm q(s,x)>0\}}\partial_sq(s,x)\,ds.
\end{align*}
Applying the assumed bounds and taking the essential supremum over $x$ yields the claims.
\end{proof}

\begin{proof}[Proof of Proposition~\ref{prop:graphon_bounds}]
\emph{Step 1: Local well-posedness.} On every $L^\infty$-ball, $0\le S\le1$,
$|\partial_aS|=|\partial_bS|\le|\chi|/4$, and $M_x(|W|)<\infty$ imply
\begin{align*}
\|\mathcal F_W[U_1]-\mathcal F_W[U_2]\|_{L^\infty(I)^2}
\le C_R\|U_1-U_2\|_{L^\infty(I)^2}.
\end{align*}
The Picard--Lindel\"of theorem in Banach spaces thus guarantees the unique maximal $C^1$ solution and the
blow-up alternative in part \textup{(i)}.

\smallskip
\emph{Step 2: Positivity.} Assume the hypotheses of part \textup{(ii)} and fix $T'<t_{\max}$. Define $R:=\sup_{t\in[0,T']}\|U(t)\|_{L^\infty(I)^2}$, and let $L_f$ be a Lipschitz constant of $f$ on $[-R,R]$.

Setting $Q_n(t):=\|n^-(t)\|_{L^\infty}$, the evolution equation implies that for a.e.\ $(t,x)$ where $n(t,x)<0$, 
\begin{align*}
    \partial_tn(t,x)\ge-(\eta_nM_x+L_f)Q_n(t).
\end{align*}
Lemma~\ref{lem:app-integral-comparison} and Gr\"onwall's inequality yield $Q_n\equiv0$. Similarly, defining
$Q_c(t):=\|c^-(t)\|_{L^\infty}$ and using the fact that $n\ge0$, we have on the set $\{c<0\}$, $\partial_tc(t,x)\ge-\frac{\eta_cM_x}{2}Q_c(t)$, so the same integral argument gives $Q_c\equiv0$. Since $T'<t_{\max}$ was arbitrary,
$n,c\ge0$ on the maximal interval.

\smallskip
\emph{Step 3. Global bounds.}
Let $M_n(t):=\|n(t)\|_{L^\infty}$. Positivity and $h\le C_h$ imply $\partial_tn(t,x)\le(\eta_nM_x+C_h)M_n(t)$.
Integrating in time and taking the essential supremum over $x$ yields
\begin{align*}
M_n(t)
\le
\|n_{\mathrm{in}}\|_{L^\infty}
+(\eta_nM_x+C_h)\int_0^tM_n(s)\,ds,
\end{align*}
hence $M_n(t)\le R_1(T)$ on $[0,T]$ by Gr\"onwall's inequality.

Fix $T<t_{\max}$ and let $\bar c$ solve
\begin{align*}
\dot{\bar c}=\alpha R_1(T)-\beta\bar c,
\qquad
\bar c(0)=\|c_{\mathrm{in}}\|_{L^\infty}.
\end{align*}
Set $\mathfrak{c}=c-\bar c$ and $Q_{\mathfrak{c}}(t):=\|\mathfrak{c}^+(t)\|_{L^\infty}$. Since $\mathfrak{c}^+(0)=0$, and on the set
$\{\mathfrak{c}>0\}$,
\begin{align*}
\partial_t\mathfrak{c}(t,x)
&\le
\frac{\eta_c}{2}\int_IW(x,y)\mathfrak{c}^+(t,y)\,dy
\le
\frac{\eta_cM_x}{2}Q_\mathfrak{c}(t),
\end{align*}
Lemma~\ref{lem:app-integral-comparison} gives
$Q_\mathfrak{c}(t)\le\frac{\eta_cM_x}{2}\int_0^tQ_\mathfrak{c}(s)\,ds$.
Thus $Q_\mathfrak{c}\equiv0$ and $c\le\bar c$. Solving the scalar equation yields
$\sup_{t\in[0,T]}\|c(t)\|_{L^\infty}\le R_2(T)$.

These bounds hold on every compact subinterval of $[0,t_{\max})$. If $t_{\max}<\infty$,
they give
\begin{align*}
\sup_{t<t_{\max}}\|U(t)\|_{L^\infty(I)^2}
\le\max\{R_1(t_{\max}),R_2(t_{\max})\}<\infty,
\end{align*}
contradicting the blow-up alternative. Hence $t_{\max}=\infty$.
\end{proof}

%%%%%%%%%%%%%%%%%%%%%%%%%%%%%%%%%%%%%%%%%%%%%%%%%%%%%%%%%%
\subsection{Proof of the cut-norm consistency estimate}\label{app:cut-consistency-proof}

\begin{proof}[Proof of Lemma~\ref{lem:cut-consistency}]
Set $H:=K-W$ and $h:=\|H\|_{\infty\to1}$. 
For the $c$-component, we have
\begin{align*}
    F_K^{(c)}(U)-F_W^{(c)}(U)
    =\frac{\eta_c}{2}\bigl(T_Hc-cT_H1\bigr).
\end{align*}
By the definition of the operator norm, $\|T_Hv\|_{L^1}\le h\|v\|_{L^\infty}$ for any $v\in L^\infty(I)$. This yields $\|F_K^{(c)}(U)-F_W^{(c)}(U)\|_{L^1}\le \eta_c Rh$.
For the $n$-component, since $\|c\|_{L^\infty}\le R$, the fundamental theorem of calculus gives
\begin{align*}
    S(c(x),c(y))
    =S(-R,c(y))+\int_{-R}^R
       \ind_{\{s<c(x)\}}\partial_1S(s,c(y))\,ds.
\end{align*}
Substituting this identity into the transport term difference yields
\begin{align*}
    F_K^{(n)}(U)-F_W^{(n)}(U)
    &=\eta_n\bigg(
        T_H\big(S(-R,c)n\big)-nT_H\big(1-S(-R,c)\big)\\
        &\qquad+\int_{-R}^R\ind_{\{s<c\}}
          \Big(T_H\big(\partial_1S(s,c)n\big)+nT_H\big(\partial_1S(s,c)\big)\Big)\,ds
      \bigg).
\end{align*}
Using $0\le S\le 1$, $|\partial_1S|\le |\chi|/4$, and the $L^1$ bound for $T_H$, we obtain
\begin{align*}
    \|F_K^{(n)}(U)-F_W^{(n)}(U)\|_{L^1}
    \le\eta_n h
       \left(2R+\int_{-R}^R 2R\frac{|\chi|}{4}\,ds\right)
    =\eta_n(2R+|\chi|R^2)h.
\end{align*}
Combining the estimates concludes the proof.
\end{proof}

%%%%%%%%%%%%%%%%%%%%%%%%%%%%%%%%%%%%%%%%%%%%%%%%%%%%%%%%%%
\subsection{Pointwise discretization}
\label{app:R4-pointwise}

While the cell-averaged approximation $W^{(N)}=(\Pi_N\otimes\Pi_N)W$ accommodates arbitrary $W\in L^p(I^2)$ and depends only on its a.e.\ equivalence class, applications often evaluate $W$ directly at prescribed nodes. The following result establishes that this pointwise discretization yields the same deterministic limit, provided $W$ admits a uniformly continuous representative and the corresponding solutions satisfy the bounds required in Theorem~\ref{thm:det-Lp-conv}. We use this representative in all pointwise evaluations below.

Fix gridpoints $x_k^{(N)}:=(k-\tfrac12)/N$ and define
\begin{align*}
W^{\mathrm{pw},(N)}(x,y):=W\big(x_k^{(N)},x_\ell^{(N)}\big),
\quad (x,y)\in I_k\times I_\ell.
\end{align*}

\begin{lemma}[Pointwise step-kernel convergence]
\label{lem:pw-Lp}
Assume that $W$ admits a uniformly continuous representative on $I^2$.
Then
\begin{align*}
\|W^{\mathrm{pw},(N)}-W\|_{L^\infty(I^2)}\to0
\quad\text{as }N\to\infty.
\end{align*}
Consequently, convergence also holds in $L^p(I^2)$ for every $1\le p<\infty$.
\end{lemma}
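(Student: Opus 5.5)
The plan is to use the modulus of continuity of the uniformly continuous representative (still denoted $W$) directly. Let $\omega(\delta):=\sup\{|W(x,y)-W(x',y')|:\ |x-x'|\le\delta,\ |y-y'|\le\delta\}$. Uniform continuity on the compact set $I^2$ gives $\omega(\delta)\to0$ as $\delta\downarrow0$.

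First, fix $N$ and a cell $I_k\times I_\ell$. For every $(x,y)$ in this cell we have $|x-x_k^{(N)}|\le\frac1{2N}$ and $|y-x_\ell^{(N)}|\le\frac1{2N}$, since $x_k^{(N)}$ is the midpoint of $I_k$. Hence
\begin{align*}
\big|W^{\mathrm{pw},(N)}(x,y)-W(x,y)\big|
=\big|W(x_k^{(N)},x_\ell^{(N)})-W(x,y)\big|\le\omega\!\left(\tfrac1{2N}\right).
\end{align*}
This holds at every point of the cell, not merely almost everywhere, because we evaluate the fixed continuous representative. Taking the supremum over all cells gives
\[
\|W^{\mathrm{pw},(N)}-W\|_{L^\infty(I^2)}\le\omega\!\left(\tfrac1{2N}\right)\to0 .
\]
The point $x=1$ is not covered by the half-open partition $\{I_k\}$, but it contributes only a null set and so does not affect the $L^\infty$ norm.

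Second, since $|I^2|=1$, we have $\|K\|_{L^p(I^2)}\le\|K\|_{L^\infty(I^2)}$ for $1\le p<\infty$, as noted in Remark~\ref{rmk:cut-compare}. This gives the $L^p$ convergence.

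I see no real obstacle. The only subtlety is to insist that pointwise evaluation uses the continuous representative, which is needed for the bound to be meaningful in view of Remark~\ref{rmk:cell-vs-point}. As a byproduct, if $W$ is Lipschitz with constant $L$, the bound becomes $\omega(\frac1{2N})\le L\,\frac{\sqrt2}{2N}$, an $O(N^{-1})$ rate analogous to Remark~\ref{rmk:Linfty-step}.
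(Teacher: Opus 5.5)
Your proof is correct and follows essentially the same route as the paper: you bound the error cellwise via the modulus of continuity of the continuous representative, then use $L^p\le L^\infty$ on the unit square. The only difference is cosmetic: you use a coordinatewise modulus and exploit the midpoint placement to get $\omega(1/(2N))$, whereas the paper uses the cruder Euclidean bound $\omega(\sqrt2/N)$.
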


\begin{proof}
Let $\omega$ be a modulus of continuity for $W$. For $(x,y)\in I_k\times I_\ell$,
\begin{align*}
|W^{\mathrm{pw},(N)}(x,y)-W(x,y)|
\le\omega(\sqrt2/N).
\end{align*}
Taking the essential supremum and letting $N\to\infty$ proves the claim.
\end{proof}

\begin{corollary}[Deterministic limit for pointwise discretization]
\label{cor:pw-det-limit}
Let $W$ satisfy the hypotheses of Lemma~\ref{lem:pw-Lp}. Consider the discrete system with weights $\mathsf w^{\mathrm{pw},(N)}_{k\ell}:=W\big(x_k^{(N)},x_\ell^{(N)}\big)$ and cell-average initial data. Retain the hypotheses of Theorem~\ref{thm:det-Lp-conv} on the initial data and corresponding solutions, including its uniform $L^\infty$ bound. Then the estimate and convergence conclusion of that theorem hold with $W^{(N)}$ replaced by $W^{\mathrm{pw},(N)}$.
\end{corollary}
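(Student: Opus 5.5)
The corollary follows from Remark~\ref{rmk:general-KN}, taking $K_N:=W^{\mathrm{pw},(N)}$ and $U_{\mathrm{in},N}:=\Pi_NU_{\mathrm{in}}$. First I would check that the reconstruction argument of Lemma~\ref{lem:recon} still applies. The kernel $W^{\mathrm{pw},(N)}$ is constant on each cell $I_k\times I_\ell$, with value $\mathsf w^{\mathrm{pw},(N)}_{k\ell}$. Likewise $\Pi_NU^{(N)}$ is constant on each $I_k$. Hence every integral in $\mathcal F_{W^{\mathrm{pw},(N)}}[\Pi_NU^{(N)}]$ equals the corresponding normalized sum, and the step reconstruction solves the mild equation
\begin{align*}
\Pi_NU^{(N)}(t)=\Pi_NU_{\mathrm{in}}+\int_0^t\mathcal F_{W^{\mathrm{pw},(N)}}\big[\Pi_NU^{(N)}(s)\big]\,ds .
\end{align*}

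Second, I would verify the uniform kernel bounds. Since $I^2$ is compact and $W$ has a uniformly continuous representative, that representative is bounded. This gives the pointwise bound $|W^{\mathrm{pw},(N)}|\le\|W\|_{L^\infty(I^2)}$ for all $N$, and therefore
\begin{align*}
\sup_N\big(M_x(|W^{\mathrm{pw},(N)}|)+M_y(|W^{\mathrm{pw},(N)}|)\big)\le 2\|W\|_{L^\infty(I^2)} .
\end{align*}
By Lemma~\ref{lem:Lip-Lp}, the stability constant $C_{R_T,p}(W^{\mathrm{pw},(N)})$ is then bounded uniformly in $N$.

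Third, I would establish consistency. Lemma~\ref{lem:Lp-cons} uses only $\|U\|_{L^\infty}\le R$, $0\le S\le1$, and Hölder's inequality applied to $H=K-W$; it never uses the cell-average structure. The same bound therefore holds with $W^{(N)}$ replaced by $W^{\mathrm{pw},(N)}$. By Lemma~\ref{lem:pw-Lp}, the resulting error term $\|W^{\mathrm{pw},(N)}-W\|_{L^p(I^2)}$ tends to zero. The initial error $\|\Pi_NU_{\mathrm{in}}-U_{\mathrm{in}}\|_{L^p}$ tends to zero by Lemma~\ref{lem:PiN-Lp}.

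Finally, I would repeat the proof of Theorem~\ref{thm:det-Lp-conv} with these ingredients. I would subtract the two mild formulations and split the drift difference into a stability part and a consistency part. The assumed uniform $L^\infty$ bound $R_T$ puts both solutions in the ball where the stability and consistency estimates hold, and Grönwall's inequality then gives the stated estimate with $W^{(N)}$ replaced by $W^{\mathrm{pw},(N)}$.

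The only point needing care is that pointwise evaluation is made meaningful by fixing the continuous representative. That same choice gives the uniform row and column bounds. With both in place, none of the steps presents a real obstacle.
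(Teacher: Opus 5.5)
Your proposal is correct and follows essentially the same route as the paper. The paper's proof likewise notes that boundedness of the continuous representative gives $M_x(|W^{\mathrm{pw},(N)}|),M_y(|W^{\mathrm{pw},(N)}|)\le\|W\|_{L^\infty(I^2)}$ uniformly in $N$, takes kernel convergence from Lemma~\ref{lem:pw-Lp}, and then applies the stability--consistency argument of Theorem~\ref{thm:det-Lp-conv} via Remark~\ref{rmk:general-KN}; you simply spell out the details the paper leaves implicit, namely that the step reconstruction still solves the mild equation and that Lemma~\ref{lem:Lp-cons} never uses the cell-average structure.
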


\begin{proof}
Uniform continuity on $I^2$ implies that $W$ is bounded, so
$M_x(|W^{\mathrm{pw},(N)}|),M_y(|W^{\mathrm{pw},(N)}|)\le\|W\|_{L^\infty(I^2)}$ uniformly in $N$. Lemma~\ref{lem:pw-Lp} gives convergence of the kernels in $L^p(I^2)$. The stability--consistency argument of Theorem~\ref{thm:det-Lp-conv} therefore applies to the corresponding step reconstructions, as in Remark~\ref{rmk:general-KN}.
\end{proof}

%%%%%%%%%%%%%%%%%%%%%%%%%%%%%%%%%%%%%%%%%%%%%%%%%%%%%%%%%%
\subsection{Cut-norm concentration for sampled graphs}
\label{app:R5-cutconc}

We use the random-graph construction of Section~\ref{ssec:random-limit}. The following estimate compares the empirical step kernel with its mean in cut norm. Together with cell-average convergence, it gives convergence to $W$ under the fixed labeling and connects the sampled graphs with the cut-distance formulation.

\begin{lemma}[Cut-norm concentration for sampled step graphs]
\label{lem:cut-conc-step}
Under the random-graph set-up of Section~\ref{ssec:random-limit}, for every $\delta\in(0,1)$, with probability at least $1-\delta$,
\begin{align*}
\|W_\xi^{(N)}-W^{(N)}\|_\square
\le \frac1N+\frac2N\sqrt{N\log4+\log(2/\delta)}.
\end{align*}
\end{lemma}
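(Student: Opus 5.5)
The plan is to reduce the cut norm to a finite supremum over pairs of index sets, control each term with Hoeffding's inequality, and take a union bound. The structure of the right-hand side already suggests this: $\frac1N$ absorbs a diagonal or discretization error, $\log 4$ pays for $4^N$ set pairs, and $\log(2/\delta)$ comes from the tail.

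\emph{Step 1 (reduction to vertex subsets).} Set $H:=W^{(N)}_\xi-W^{(N)}$. This is a step kernel on $\mathcal P_N\times\mathcal P_N$ with values $h_{k\ell}=\xi^{(N)}_{k\ell}-\mathsf p^{(N)}_{k\ell}$. For measurable $A,B\subset I$ we have
\begin{align*}
\int_A\int_B H=\frac1{N^2}\sum_{k,\ell}a_kb_\ell h_{k\ell},
\qquad a_k=N|A\cap I_k|\in[0,1].
\end{align*}
This expression is bilinear in $(a,b)\in[0,1]^N\times[0,1]^N$, so its extreme absolute value is attained at vertices. Hence
\begin{align*}
\|H\|_\square=\max_{S,T\subset\{1,\dots,N\}}\frac1{N^2}\Big|\sum_{k\in S,\ell\in T}h_{k\ell}\Big|.
\end{align*}
Next I would remove the diagonal terms $k=\ell$. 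Each satisfies $|h_{kk}|\le1$, so together they contribute at most $N/N^2=\frac1N$. This is the first summand in the bound.

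\emph{Step 2 (independence structure).} For fixed $S,T$, the off-diagonal sum $\sum_{k\ne\ell,\,k\in S,\ell\in T}h_{k\ell}$ groups into independent centered variables indexed by unordered pairs $\{k,\ell\}$ with $k<\ell$. Each such pair appears with coefficient $c_{k\ell}=\mathbf 1_{k\in S,\ell\in T}+\mathbf 1_{\ell\in S,k\in T}\in\{0,1,2\}$, and the pair variable takes values in an interval of length $c_{k\ell}\le 2$. Hoeffding's inequality then gives
\begin{align*}
\pr\Big(\Big|\sum c_{k\ell}h_{k\ell}\Big|>u\Big)\le2\exp\Big(-\frac{2u^2}{\sum c_{k\ell}^2}\Big)\le 2\exp\Big(-\frac{u^2}{N^2}\Big),
\end{align*}
using $\sum c_{k\ell}^2\le 4\binom N2\le 2N^2$.

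\emph{Step 3 (union bound).} There are $4^N$ pairs $(S,T)$, so
\begin{align*}
\pr\Big(\max_{S,T}\Big|\sum_{\rm off}\Big|>u\Big)\le 2\cdot4^N e^{-u^2/N^2}.
\end{align*}
I would choose $u=N\sqrt{N\log4+\log(2/\delta)}$, which makes the right-hand side equal to $\delta$. Dividing by $N^2$ gives the deviation $\frac1N\sqrt{N\log4+\log(2/\delta)}$. This is a factor $2$ better than the stated bound, so there is slack: even a cruder estimate, such as range $2$ per pair together with $\sum c^2\le 4N^2$, lands within the claimed $\frac2N\sqrt{\cdot}$. Adding the diagonal error $\frac1N$ completes the bound.

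\emph{Main obstacle.} The main technical point is Step 1: justifying that the supremum over arbitrary measurable $A,B$ reduces to unions of cells. This follows from multilinearity in the cell weights $a_k,b_\ell$. The remaining care is the symmetry bookkeeping ($\xi_{\ell k}=\xi_{k\ell}$), which makes the summands dependent unless they are grouped by unordered pairs. That grouping is exactly what the coefficients $c_{k\ell}$ in Step 2 handle.
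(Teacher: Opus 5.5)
Your proof is correct and follows essentially the same route as the paper: you reduce to unions of partition cells, bound the diagonal contribution by $1/N$, apply Hoeffding to the off-diagonal sum grouped by unordered pairs with coefficients in $\{0,1,2\}$, and take a union bound over the $4^N$ pairs of cell unions. Your bilinearity/vertex argument proves the reduction to cell unions, which the paper only asserts, and your variance bound $\sum c_{k\ell}^2\le 2N^2$ is sharper than the paper's tail $2\cdot 4^N\exp(-N^2\tau^2/4)$, so you get the fluctuation term with constant $1$ instead of $2$, which still implies the stated bound.
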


\begin{proof}
Set $H^{(N)}:=W_\xi^{(N)}-W^{(N)}$. Since this is a step kernel, its cut norm is attained by sets that are unions of partition cells. For any fixed pair of such sets, the diagonal contribution to the cut integral is at most $1/N$ in absolute value, and the off-diagonal contribution has the form
\begin{align*}
\frac1{N^2}\sum_{1\le k<\ell\le N}
   a_{k\ell}\big(\xi_{k\ell}^{(N)}-\mathsf p_{k\ell}^{(N)}\big),
\qquad a_{k\ell}\in\{0,1,2\}.
\end{align*}
The variables in this sum are independent and centered. Hoeffding's inequality and a union bound over at most $4^N$ pairs of cell unions give, for $\tau>0$,
\begin{align*}
\pr\big(\|H^{(N)}\|_\square>1/N+\tau\big)
\le 2\cdot4^N\exp(-N^2\tau^2/4).
\end{align*}
Taking $\tau=2N^{-1}\sqrt{N\log4+\log(2/\delta)}$ proves the estimate.
\end{proof}

\begin{corollary}[Random cut-distance graphon limit]
\label{cor:rand-cut}
Under the random-graph set-up of Section~\ref{ssec:random-limit},
\begin{align*}
\|W_\xi^{(N)}-W\|_\square\stackrel{\pr}{\to}0,
\qquad
\delta_\square(W_\xi^{(N)},W)\stackrel{\pr}{\to}0.
\end{align*}
For every fixed $T>0$ and $1\le p<\infty$, the sampled-graph solution with cell-average initial data satisfies
\begin{align*}
\sup_{t\in[0,T]}
\|\Pi_NU^{\xi,(N)}(t)-U(t)\|_{L^p(I)^2}
\stackrel{\pr}{\to}0.
\end{align*}
\end{corollary}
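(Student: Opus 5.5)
The argument splits into three parts: cut-norm convergence of the sampled kernels to $W$, upgrading the cut norm to cut distance, and then transferring this to solutions through the stability estimate in the proof of Theorem~\ref{thm:det-cut}, taking the identity as the relabeling.

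\emph{Kernel convergence.} By the triangle inequality and Remark~\ref{rmk:cut-compare},
\begin{align*}
\|W_\xi^{(N)}-W\|_\square\le\|W_\xi^{(N)}-W^{(N)}\|_\square+\|W^{(N)}-W\|_{L^1(I^2)}.
\end{align*}
The second term is deterministic and tends to zero by Lemma~\ref{lem:PiN-Lp}(ii), since $W\in L^\infty\subset L^1$. For the first term, fix $\delta\in(0,1)$. Lemma~\ref{lem:cut-conc-step} gives, with probability at least $1-\delta$, a bound of order
\begin{align*}
N^{-1}+2N^{-1/2}\sqrt{\log4+\log(2/\delta)/N},
\end{align*}
and this bound tends to zero. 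Hence $\|W_\xi^{(N)}-W\|_\square\to0$ in probability. Since $\delta_\square(W_\xi^{(N)},W)\le\|W_\xi^{(N)}-W\|_\square$ (take $\varphi=\mathrm{id}$), the cut-distance statement follows immediately.

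\emph{Solution convergence.} I would not invoke Theorem~\ref{thm:det-cut} as stated, because it is deterministic and uses near-optimal relabelings. Instead I would rerun its Gr\"onwall argument with $\varphi=\mathrm{id}$ for each realization. The sampled kernels are nonnegative with row and column integrals at most $1$, and $W$ has integrals at most $1$. The uniform bound $R_T$ from Section~\ref{ssec:random-limit} holds deterministically for all realizations and all $N$. Therefore Lemma~\ref{lem:Lip-Lp} (with $p=1$) and Lemma~\ref{lem:cut-consistency} give a constant $C_T$, independent of $N$ and $\xi$, such that
\begin{align*}
\sup_{t\le T}\|\Pi_NU^{\xi,(N)}(t)-U(t)\|_{L^1(I)^2}\le e^{C_TT}\Big(\|\Pi_NU_{\mathrm{in}}-U_{\mathrm{in}}\|_{L^1(I)^2}+4C_T\|W_\xi^{(N)}-W\|_\square\Big).
\end{align*}
This uses Lemma~\ref{lem:recon}: the reconstructed sampled solution solves the mild equation with kernel $W_\xi^{(N)}$. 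The first term on the right is deterministic and vanishes by Lemma~\ref{lem:PiN-Lp}(i); the second vanishes in probability by the kernel step. For $1<p<\infty$, I would use the interpolation $\|E\|_{L^p}^p\le (2R_T)^{p-1}\|E\|_{L^1}$, which holds with the deterministic $R_T$. Since $x\mapsto x^{1/p}$ is continuous, convergence in probability carries over to $L^p$.

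\emph{Main obstacle.} The only delicate point is making sure every constant is deterministic and independent of the realization, so that the random kernel error enters only linearly. This follows from the uniform row-sum bound $1$ and Proposition~\ref{prop:graphon_bounds}(ii), as already recorded in Section~\ref{ssec:random-limit}.
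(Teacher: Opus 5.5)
Your proposal is correct and follows essentially the same route as the paper. The kernel convergence comes from the triangle inequality with Lemma~\ref{lem:cut-conc-step} and Lemma~\ref{lem:PiN-Lp}, and the cut distance is handled by the identity relabeling. The solution convergence reruns the Gr\"onwall stability estimate from the proof of Theorem~\ref{thm:det-cut} under the identity relabeling, then uses $L^\infty$--$L^1$ interpolation. You also spell out the factor $4$ from $\|\cdot\|_{\infty\to1}\le4\|\cdot\|_\square$ and the realization-independent constants, which the paper leaves implicit.
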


\begin{proof}
By Lemmas~\ref{lem:cut-conc-step} and~\ref{lem:PiN-Lp},
\begin{align*}
\|W_\xi^{(N)}-W\|_\square
\le \|W_\xi^{(N)}-W^{(N)}\|_\square
   +\|W^{(N)}-W\|_{L^1(I^2)}
\stackrel{\pr}{\to}0.
\end{align*}
The cut-distance convergence follows since
$\delta_\square(W_\xi^{(N)},W)\le\|W_\xi^{(N)}-W\|_\square$.

Both kernels have row and column integrals bounded by $1$, and the solutions have deterministic uniform $L^\infty$ bounds on $[0,T]$. The stability estimate in the proof of Theorem~\ref{thm:det-cut}, applied under the identity relabeling, gives
\begin{align*}
\sup_{t\in[0,T]}\|\Pi_NU^{\xi,(N)}(t)-U(t)\|_{L^1(I)^2}
\le C_T\big(
   \|\Pi_NU_{\mathrm{in}}-U_{\mathrm{in}}\|_{L^1(I)^2}
   +\|W_\xi^{(N)}-W\|_\square
\big).
\end{align*}
The right-hand side converges to zero in probability. Interpolation with the uniform bounds gives convergence in $C([0,T];L^p(I)^2)$ for $1<p<\infty$.
\end{proof}

%%%%%%%%%%%%%%%%%%%%%%%%%%%%%%%%%%%%%%%%%%%%%%%%%%%%%%%
\section{Symmetry Invariance of the Graphon Model}\label{sec:symmetry-invariant}

In this section, we prove the invariance of symmetry for the nonlocal graphon PDE model \eqref{eq:our-graphon-model}. The result is stated in the following theorem.

\begin{theorem}[Symmetry invariance]
    Assume the hypotheses of Proposition~\ref{prop:graphon_bounds}~\textup{(ii)}. If the graphon kernel satisfies $W(x,y)=W(1-x,1-y)$ for all $x,y\in [0,1]$, and the initial conditions satisfy 
    \begin{equation}\label{eq:initial-symmetry}
        n(0, x)=n(0,1-x)\geq 0,\quad c(0,x)=c(0,1-x)\geq 0,\text{ for a.e. }x\in[0,1],
    \end{equation}
    then $n(t,x)=n(t,1-x)$, $c(t,x)=c(t,1-x)$ for all $t>0$ and $x\in[0,1]$.
\end{theorem}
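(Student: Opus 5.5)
The plan is to use uniqueness of solutions from Proposition~\ref{prop:graphon_bounds}\textup{(i)} together with the equivariance of the drift under the reflection $\varphi(x):=1-x$. This map is a measure-preserving bijection of $I$ (up to the null set of endpoints). The kernel hypothesis $W(x,y)=W(1-x,1-y)$ says exactly that $W^\varphi=W$ in the notation of the cut-distance subsection.

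The steps, in order, are as follows.

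\textbf{Step 1: equivariance.} By Lemma~\ref{lem:equivariance}, for every bounded state $U$ we have
\[
\mathcal F_{W^\varphi}[U^\varphi]=(\mathcal F_W[U])^\varphi .
\]
Since $W^\varphi=W$, this becomes $\mathcal F_W[U^\varphi]=(\mathcal F_W[U])^\varphi$. It can also be checked directly by the substitution $y\mapsto 1-y$ in the integrals of \eqref{eq:KS-conti}. The reaction terms $f(n)$ and $\alpha n-\beta c$ are local, so they commute with composition by $\varphi$.

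\textbf{Step 2: the reflected function is a solution.} Let $U=(n,c)$ be the global nonnegative solution given by Proposition~\ref{prop:graphon_bounds}\textup{(ii)}, and set $V(t):=U(t)^\varphi$. Composition with $\varphi$ is a linear isometry of $L^\infty(I)^2$, so $V\in C^1([0,\infty);L^\infty(I)^2)$ and $\partial_tV=(\partial_tU)^\varphi$. Hence
\[
\partial_t V=(\mathcal F_W[U])^\varphi=\mathcal F_W[V].
\]
By \eqref{eq:initial-symmetry}, $V(0)=U_{\mathrm{in}}^\varphi=U_{\mathrm{in}}$.

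\textbf{Step 3: uniqueness.} By the uniqueness part of Proposition~\ref{prop:graphon_bounds}\textup{(i)}, which rests on the local Lipschitz bound on $L^\infty$-balls, $V=U$ on the common existence interval. That interval is all of $[0,\infty)$ by part \textup{(ii)}. Therefore $n(t,x)=n(t,1-x)$ and $c(t,x)=c(t,1-x)$ for a.e.\ $x$ and every $t$.

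The main obstacle is the interpretation of ``for all $x$'' in the conclusion. Solutions live in $L^\infty$, so the identity holds almost everywhere, and that is the natural sense in which I will state it. A secondary point is checking that the $n$-drift is equivariant, which is only a matter of bookkeeping: the integrand depends on $(x,y)$ only through $W(x,y)$, $n(x)$, $n(y)$, $c(x)$ and $c(y)$. After the change of variables $y\mapsto\varphi(y)$ the factor $S(c(\varphi x),c(\varphi y))$ transforms correctly, and the Jacobian of the substitution has modulus $1$. Nonnegativity of the data plays no role beyond guaranteeing global existence.
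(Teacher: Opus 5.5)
Your proposal is correct and follows the paper's proof essentially verbatim: the reflection $\varphi(x)=1-x$ fixes $W$ and the initial data, Lemma~\ref{lem:equivariance} shows that $U^\varphi$ solves the same initial-value problem, and uniqueness from Proposition~\ref{prop:graphon_bounds} gives $U^\varphi=U$. Your remark that the conclusion should be read almost everywhere in $x$ is a sensible refinement of the stated ``for all $x$'', since the solutions live in $L^\infty(I)^2$.
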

\begin{proof}
The map $\varphi(x)=1-x$ is a measure-preserving bijection, with $W^\varphi=W$ and $U_{\mathrm{in}}^\varphi=U_{\mathrm{in}}$. By Lemma~\ref{lem:equivariance}, $U^\varphi$ solves the same initial-value problem as $U$. Uniqueness in Proposition~\ref{prop:graphon_bounds} gives $U^\varphi=U$.
\end{proof}

%%%%%%%%%%%%%%%%%%%%%%%%%%%%%%%%%%%%%%%%%%%%%%%%%%%%%%%
\section{Derivation of Equations for Weakly Nonlinear Stability Analysis}\label{sec:derivation_stability_analysis}

In this section, we use our perturbation expansion of the profiles $n(t,x)$ and $c(t,x)$ for population and chemoattractant near the spatially uniform state to derive a hierarchy of graphon models in which the nonlinear terms at a given order are expressed in terms of solutions that have been obtained in previous orders. 

By introducing a small bookkeeping parameter $\varepsilon>0$, considering chemoattractant sensitivity $\chi=\chi^* +\varepsilon^2\tilde{\chi}$ close to the stability threshold, and defining a slow timescale $\tau=\varepsilon^2 t$, we can look to perform a perturbation expansion of our graphon Keller--Segel in terms of the following asymptotic series
\begin{equation}\label{eq:linear_perturbation_3}
    \begin{aligned}
        n(\tau,x)&=\bar{n} + \varepsilon n_1(\tau,x) + \varepsilon^2 n_2(\tau,x) + \varepsilon^3 n_3(\tau,x) + \cdots,\\
        c(\tau,x)&=\bar{c} + \varepsilon c_1(\tau,x) + \varepsilon^2 c_2(\tau,x) + \varepsilon^3 c_3(\tau,x) + \cdots.
    \end{aligned}
\end{equation}

To perform the expansion of the differential equation, we use the following lemma to expand the reaction term $f(n)$.

\begin{lemma}\label{lemma:analytic-fn-expansion}
    Let $F$ be $C^4$ in a neighborhood of $x_0$, and let $x(\varepsilon)=x_0+\varepsilon x_1+\varepsilon^2 x_2 + \varepsilon^3 x_3$, Then as $\varepsilon\to 0$,
    \begin{equation}\label{eq:analytic-fn-expansion}
        \begin{aligned}
            F\big(x(\varepsilon)\big)=&\,\,F(x_0)+\varepsilon F'(x_0)x_1+\varepsilon^2\left[F'(x_0)x_2+\frac{1}{2}F''(x_0)x_1^2\right]\\
            &\,\,+\varepsilon^3\left[F'(x_0)x_3+F''(x_0)x_1x_2+\frac{1}{6}F'''(x_0)x_1^3\right]+O(\varepsilon^4).
        \end{aligned}
    \end{equation}
\end{lemma}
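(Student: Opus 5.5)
The plan is a direct application of Taylor's theorem with remainder at $x_0$, followed by collecting powers of $\varepsilon$. Set $\delta(\varepsilon):=x(\varepsilon)-x_0=\varepsilon x_1+\varepsilon^2x_2+\varepsilon^3x_3$. Since $x_1,x_2,x_3$ are fixed (for the graphon application they are evaluated at a fixed $(\tau,x)$, or are bounded functions), we have $|\delta(\varepsilon)|\le C\varepsilon$ for $0<\varepsilon\le 1$. Hence $x(\varepsilon)$ lies in the neighborhood of $x_0$ where $F$ is $C^4$ for all sufficiently small $\varepsilon$.

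On that neighborhood, Taylor's theorem with Lagrange remainder to third order gives
\begin{equation*}
F(x_0+\delta)=F(x_0)+F'(x_0)\delta+\tfrac12F''(x_0)\delta^2+\tfrac16F'''(x_0)\delta^3+R_4(\delta),
\end{equation*}
where $|R_4(\delta)|\le \tfrac1{24}\sup|F^{(4)}|\,|\delta|^4$. The supremum is taken over a compact subinterval of the neighborhood, so it is finite because $F\in C^4$. Consequently $R_4(\delta(\varepsilon))=O(\varepsilon^4)$.

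Next expand the powers of $\delta$ and keep terms up to $\varepsilon^3$:
\begin{equation*}
\delta=\varepsilon x_1+\varepsilon^2x_2+\varepsilon^3x_3,\qquad \delta^2=\varepsilon^2x_1^2+2\varepsilon^3x_1x_2+O(\varepsilon^4),\qquad \delta^3=\varepsilon^3x_1^3+O(\varepsilon^4).
\end{equation*}
The truncated remainders here are polynomials in $\varepsilon$ with fixed coefficients, so they are genuinely $O(\varepsilon^4)$. Substituting these into the Taylor expansion and grouping by powers gives:
\begin{itemize}
\item order $\varepsilon$: $F'(x_0)x_1$;
\item order $\varepsilon^2$: $F'(x_0)x_2+\tfrac12F''(x_0)x_1^2$;
\item order $\varepsilon^3$: $F'(x_0)x_3+F''(x_0)x_1x_2+\tfrac16F'''(x_0)x_1^3$.
\end{itemize}
This is exactly \eqref{eq:analytic-fn-expansion}.

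The only point needing care is the uniformity of the $O(\varepsilon^4)$ constant. It depends on $\sup|F^{(4)}|$ near $x_0$ and on bounds for $|x_1|,|x_2|,|x_3|$. In the weakly nonlinear application, $n_1$ and $n_2$ are bounded trigonometric profiles, so the remainder is uniform in $x$. I expect this bookkeeping to be the main (though minor) obstacle, since the expansion itself is routine.
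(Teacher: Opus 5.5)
Your proof is correct: Taylor's theorem with Lagrange remainder at third order, combined with expanding $\delta^2$ and $\delta^3$ through order $\varepsilon^3$, produces exactly the stated coefficients. The $C^4$ hypothesis is what makes the remainder $O(\varepsilon^4)$ rather than merely $o(\varepsilon^3)$. The paper states this lemma without proof, implicitly relying on the same standard Taylor computation, and your remark on how the $O(\varepsilon^4)$ constant depends on $\sup|F^{(4)}|$ near $x_0$ and on bounds for $x_1,x_2,x_3$ is the only bookkeeping needed.
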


As part of our expansion, we will often have to deal with the sums and differences of quantities measured at two locations $x$ and $y$ on the graphon that have connectivity $W(x-y)$ under the graphon adjacency function. To describe these sums and differences, we introduce the following helpful notation
\begin{equation}
    \sigma_x^yh:=h(x)+h(y),\quad \delta_x^yh:=h(y)-h(x).
\end{equation}
We then apply the expansions provided in Equation \eqref{eq:linear_perturbation_3} and our assumed forms of $\chi$ and $\tau$ to  expand our model \eqref{eq:our-graphon-model} with respect to $\varepsilon$. Before dealing with the entire equation, we first expand the following term that describes the effects of biased movement between locations $x$ and $y$
\begin{equation*}
    \begin{aligned}
        \mathcal{F}[n, c](\tau, x, y)=&\,\,\frac{n(\tau,y)e^{\chi c(\tau,x)}-n(\tau,x)e^{\chi c(\tau,y)}}{e^{\chi c(\tau,x)}+e^{\chi c(\tau,y)}}=\frac{1}{1+e^{\chi \delta_x^y c}}n(\tau, y)-\frac{1}{1+e^{-\chi\delta_x^yc}}n(\tau, x)\\
        =&\,\,\varepsilon\left[\frac{1}{2}\delta_x^y n_1-\frac{1}{2}\chi^* \bar{n}\delta_x^y c_1\right]\\
        &\,\,+\varepsilon^2\left[\frac{1}{2}\delta_x^yn_2-\frac{1}{2}\chi^* \bar{n}\delta_x^y c_2-\frac{1}{4}\chi^* \sigma_x^y n_1\delta_x^y c_1\right]\\
        &\,\,+\varepsilon^3\bigg[\frac{1}{2}\delta_x^yn_3-\frac{1}{2}\chi^* \bar{n}\delta_x^y c_3-\frac{1}{4}\chi^* \sigma_x^y n_2\delta_x^y c_1\\
        &\,\,\qquad\,\,-\frac{1}{4}\chi^*\sigma_x^y n_1\delta_x^y c_2+\frac{1}{24}(\chi^*)^3\bar{n}(\delta_x^y c_1)^3-\frac{1}{2}\tilde{\chi}\bar{n}\delta_x^yc_1\bigg]+O(\varepsilon^4).
    \end{aligned}
\end{equation*}
Finally, we use the following notation to represent the graphon Laplacian operator in the following form
\begin{equation*}
    \mathcal{L}_WF:=\int_{0}^{1}W(y-x)\delta_x^y F\,\mathrm{d}y=\int_{0}^{1}W(y-x)\big[F(y)-F(x)\big]\,\mathrm{d}y,
\end{equation*}
which will allow us to describe both diffusive and directed motion in the linear portion of each equation in our perturbation hierarchy. 

After using our perturbation expansion and applying Lemma~\ref{lemma:analytic-fn-expansion} to $f(n)$, we can describe the equations at each order in terms of the quantities defined above. In particular, we see that all terms at order $\mathcal{O}(1)$ recover the fact that $(n(x),c(x)) = \left( \bar{n}, \frac{\alpha \bar{n}}{\beta} \right)$ is a spatially uniform equilibrium for our graphon system. At order $\mathcal{O}(\varepsilon)$, we see that our graphon equation is the linearized system we also obtained through linear stability analysis in Section \ref{sec:linear-stability-graphon}:
\begin{equation}\label{eq:first_order}
    \begin{aligned}
        0&=\frac{\eta_n}{2}\mathcal{L}_W n_1-\frac{\eta_n}{2} \chi^* \bar{n}\mathcal{L}_W c_1 + f'(\bar{n})n_1,\\
        0&=\frac{\eta_c}{2}\mathcal{L}_W c_1+\alpha n_1-\beta c_1.
    \end{aligned}
\end{equation}
Next, at order $\mathcal{O}(\varepsilon^2)$, we obtain the following graphon system featuring linear terms involving $n_2$ and $c_2$ and nonlinear forcing terms depending only on $n_1$ and $c_1$:
\begin{equation}\label{eq:second_order}
    \begin{aligned}
        0=&\,\,\frac{\eta_n}{2}\mathcal{L}_W n_2-\frac{\eta_n}{2}\chi^* \bar{n}\mathcal{L}_W c_2 + f'(\bar{n}) n_2\\
        &\,\,-\frac{\eta_n}{4}\chi^*\int_{0}^{1}W(y-x)\sigma_x^yn_1\delta_x^y c_1\,\mathrm{d}y+\frac{1}{2}f''(\bar{n})n_1^2,\\
        0=&\,\,\frac{\eta_c}{2}\mathcal{L}_W c_2+\alpha n_2-\beta c_2.
    \end{aligned}
\end{equation}
Finally, at order $\mathcal{O}(\varepsilon^3)$, we obtain the following graphon system featuring linear terms in $n_3$ and $c_3$ and nonlinear forcing functions depending on the solutions to problems at the two previous orders:
\begin{equation}\label{eq:third_order}
    \begin{aligned}
        \frac{\partial n_1}{\partial \tau}
        ={}&\frac{\eta_n}{2}\mathcal{L}_W n_3
        -\frac{\eta_n}{2}\chi^* \bar{n}\mathcal{L}_W c_3
        +f'(\bar{n}) n_3\\
        &-\frac{\eta_n}{4}\chi^*
        \int_{0}^{1}W(y-x)\sigma_x^yn_2\,\delta_x^yc_1\,\mathrm{d}y
        -\frac{\eta_n}{4}\chi^*
        \int_{0}^{1}W(y-x)\sigma_x^yn_1\,\delta_x^yc_2\,\mathrm{d}y\\
        &+\frac{\eta_n}{24}(\chi^*)^3\bar{n}
        \int_{0}^{1}W(y-x)\left(\delta_x^yc_1\right)^3\,\mathrm{d}y
        -\frac{\eta_n}{2}\tilde{\chi}\bar{n}\mathcal{L}_W c_1\\
        &+f''(\bar{n})n_1n_2
        +\frac{1}{6}f'''(\bar{n})n_1^3,\\
        \frac{\partial c_1}{\partial \tau}
        ={}&\frac{\eta_c}{2}\mathcal{L}_W c_3+\alpha n_3-\beta c_3.
    \end{aligned}
\end{equation}

\end{document}